\newtheorem{theorem}{Theorem}[section]
\newtheorem*{theo*}{Theorem}
\newtheorem{lemma}[theorem]{Lemma}
\newtheorem{proposition}[theorem]{Proposition}
\newtheorem{corollary}[theorem]{Corollary}
\theoremstyle{definition}
\newtheorem{definition}[theorem]{Definition}
\newtheorem{remark}[theorem]{Remark}
\newtheorem{conjecture}[theorem]{Conjecture}
\newcommand{\Z}{\mathbb{Z}}
\newcommand{\N}{\mathbb{N}}
\newcommand{\Q}{\mathbb{Q}}
\newcommand{\C}{\mathbb{C}}
\newcommand{\mG}{\mathcal{G}}
\DeclareMathOperator{\op}{op}
\DeclareMathOperator{\sgn}{sgn}
\DeclareMathOperator{\DP}{DP} 
\newcommand{\DPi}{\DP_{(a)}} 
\DeclareMathOperator{\PD}{PD} 
\DeclareMathOperator{\CM}{CM} 
\DeclareMathOperator{\HL}{HL} 
\DeclareMathOperator{\HR}{HR} 
\DeclareMathOperator{\Ann}{Ann} 
\DeclareMathOperator{\id}{Id} 
\DeclareMathOperator{\In}{in} 
\DeclareMathOperator{\im}{Im} 
\DeclareMathOperator{\tr}{tr}
\DeclareMathOperator{\dd}{d} 
\DeclareMathOperator{\cd}{cd} 
\newcommand*{\bigcdot}{%
  {\mathbin{\mathpalette\bigcdot@{}}}%
}
\newcommand*{\bigcdot@scalefactor}{.75}
\newcommand*{\bigcdot@widthfactor}{1.4}
\newcommand*{\bigcdot@}[2]{%
  \sbox0{$#1\vcenter{}$}
  \sbox2{$#1\cdot\m@th$}%
  \hbox to \bigcdot@widthfactor\wd2{%
    \hfil
    \raise\ht0\hbox{%
      \scalebox{\bigcdot@scalefactor}{%
        \lower\ht0\hbox{$#1\bullet\m@th$}%
      }%
    }%
    \hfil
  }%
}
\g@addto@macro{\UrlBreaks}{\UrlOrds}
\g@addto@macro{\UrlBreaks}{%
\do\/\do\d%
}
\newcommand\thankssymb[1]{\textsuperscript{\@fnsymbol{#1}}}
\begin{document}
\title{Hodge Theory for Polymatroids}
\author[R. Pagaria]{Roberto Pagaria}
\thanks{The first author is supported by PRIN 2017YRA3LK}
\address{Roberto Pagaria \newline \textup{Università di Bologna, Dipartimento di Matematica}\\ Piazza di Porta San Donato 5 - 40126 Bologna\\ Italy.}
\email{roberto.pagaria@unibo.it}

\author[G. M. Pezzoli]{Gian Marco Pezzoli}
\address{Gian Marco Pezzoli \newline \textup{Università di Bologna, Dipartimento di Matematica}\\ Piazza di Porta San Donato 5 - 40126 Bologna\\ Italy.}
\email{gianmarco.pezzoli2@unibo.it}

\begin{abstract}
We construct a Leray model for a discrete polymatroid with arbitrary building set and we prove a generalized Goresky-MacPherson formula.
The first row of the model is the Chow ring of the polymatroid; we prove Poincaré duality, Hard Lefschetz, and Hodge-Riemann theorems for the Chow  ring.
Furthermore, we provide a relative Lefschetz decomposition with respect to the deletion of an element.
\end{abstract}

\maketitle

\tableofcontents

\section{Introduction}
Recently, long standing conjectures about log-concavity of graphs and matroids have been brilliantly solved by studying the Chow ring of matroids \cite{Huh,HK,Lenz,HW,AHK,SingularHodgeTheory,ADHlagrangian,tautological}.
A natural question is whether the corresponding results hold for polymatroids \cite[Question 1.5]{tautological}.

\emph{Discrete polymatroids} generalize arrangements of subspaces in the same way as matroids generalize hyperplane arrangements.
Moreover polymatroids codify invariants of hypergraphs as matroids do with graphs.
Polymatroids have application in linear programming \cite{Edmonds,optimization}, in code theory \cite{codes}, and in commutative algebra \cite{HerzogHibi}.
The base polytope of a polymatroids is a \emph{generalized permutahedron} with a vertex in the origin, this yields a criptmorphism between polymatroids and generalized permutahedra \cite{Postnikov,AguiarArdila}.

The easiest definition of polymatroid is a pair $P=(E,\cd)$ where $E$ is a finite ground set and $\cd \colon 2^E \to \N$ is a \emph{increasing submodular function}.
If $P$ is realized by a subspace arrangement, then $\cd$ is the codimension of the corresponding flat.

In the case of arrangements of subspaces, De Concini and Procesi \cite{DeConciniProcesi} have constructed a \emph{wonderful model}, i.e.\ a smooth projective variety that contains the  complement of the arrangement as open subset.
This wonderful model $Y_\mG$ is obtained from $\mathbb{P}^r$ by a sequence of blowups along some linear subspaces; this collection of subspaces is called \emph{building set} $\mG$.
The variety $Y_\mG$ is used for studying the complement of the subspace arrangement, by considering the Leray spectral sequence for the inclusion of the complement in the wonderful model $Y_\mG$.
The spectral sequence collapses at the third page yielding a \emph{Leray model} (also known as Morgan algebra \cite{Morgan}) for the rational homotopy type.

Inspired by the realizable case, we provide a combinatorial definition of \emph{building set} for polymatroids and we introduce a \emph{Leray model} $B(P,\mG)$ for a polymatroid with building set.
In the case of matroids the Leray model was recently studied by Bibby, Denham, and Feichtner \cite{BDF}.
The last combinatorial object that we need is the $\mG$\emph{-nested set complex} $n(P,\mG)$.
In the realizable case this complex remembers whether the intersection of the corresponding divisors in $Y_\mG$ is non-empty.

The problem of computing the cohomology of the complement of a subspace arrangement was solved by Goresky and MacPherson \cite{GoreskyMacPherson} and by De Concini and Procesi \cite{DeConciniProcesi} with different techniques. 
Goresky and MacPherson have used the stratified Morse theory to describe the additive cohomology with integer coefficients of the complement in terms of a poset.
De Concini and Procesi have provided the aforementioned Morgan algebra (alias the Leray model) that describes the ring structure of the cohomology with rational coefficients.
These two results were connected by Yuzvinsky \cite{SmallRatModel,RatModel} constructing a smaller rational model $\CM$, however this connection was found only for the maximal building set.

We extend these results to the non-realizable setting and to arbitrary building sets, see Theorems \ref{thm:main_CM} and \ref{thm:main_Yuzvinsky}, by using the \emph{critical monomial algebra} $\CM(P,\mG)$.
\begin{theo*}
The inclusion $\CM(P,\mG) \hookrightarrow B(P,\mG)$ is a quasi-isomorphism. Moreover
\[H^{\bigcdot}(B(P,\mG)) \cong H^{\bigcdot}(\CM(P,\mG)) \cong \bigoplus_{f \in L} \bigotimes_{g \in F} \tilde{H}_{2\cd(g)-2- \bigcdot} \Bigl(  n((\hat{0},g), \mG) \Bigr). \let\qedsymbol\openbox\qedhere\]
\end{theo*}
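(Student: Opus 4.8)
\emph{Outline.} The statement splits into two essentially independent parts. The first isomorphism is immediate once we know that $\CM(P,\mG)\hookrightarrow B(P,\mG)$ is a quasi-isomorphism; the second then follows from a direct combinatorial computation of $H^{\bigcdot}(\CM(P,\mG))$. I would establish the quasi-isomorphism first and carry out the computation second.

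\emph{Step 1: the quasi-isomorphism.} Besides the cohomological grading, both complexes carry a polynomial (weight) grading preserved by the differential, so in each fixed weight we are comparing finite complexes of finitely generated free abelian groups. Fixing a linear order on $E$ endows $B(P,\mG)$ with a natural monomial $\Z$-basis, and by construction $\CM(P,\mG)$ is the subcomplex spanned by the ``critical'' monomials. The plan is to build an acyclic matching on the complementary set of non-critical monomials --- pairing each of them with a companion so that, relative to the pairing, the differential of $B(P,\mG)$ is triangular with invertible diagonal entries --- and then to invoke the algebraic Morse cancellation lemma (equivalently, a finite telescope of mapping cones) to conclude that the non-critical part is acyclic, hence that $\CM(P,\mG)\hookrightarrow B(P,\mG)$ is a quasi-isomorphism. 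As a cross-check one can run the parallel induction on $|\mG|$: deleting a maximal $\mG$-irreducible element $g_0$ realizes $B(P,\mG\setminus\{g_0\})$ as a subcomplex of $B(P,\mG)$ whose quotient is, up to a shift, a tensor product of the Leray models of the ``star'' and ``link'' of $g_0$, and the inductive hypothesis together with acyclicity of the star factor closes the induction, the base case being a building set with $B=\CM$. The step I expect to be the genuine obstacle is the acyclicity of these local factors: for matroids they are Koszul-type complexes governed by the rank function and their acyclicity is classical (Yuzvinsky; Bibby--Denham--Feichtner), whereas here the relevant intervals are themselves polymatroids and the analogous complexes are controlled by the submodular function $\cd$; showing that submodularity still forces the required vanishing in every weight is the crux.

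\emph{Step 2: computing $H^{\bigcdot}(\CM(P,\mG))$.} Every critical monomial has a well-defined support $f\in L$, and via the $\mG$-decomposition $f=\bigvee_{g\in F}g$ of $f$ into its $\mG$-factors it factors uniquely as a product of critical monomials supported on the intervals $(\hat 0,g)$, $g\in F$. Since the differential respects this factorization, $\CM(P,\mG)$ decomposes, as a complex, into $\bigoplus_{f\in L}\bigotimes_{g\in F}C_g$, where $C_g$ is the ``local'' critical complex of the interval $(\hat 0,g)$. It remains to identify $H^{\bigcdot}(C_g)$ with $\tilde H_{2\cd(g)-2-\bigcdot}\bigl(n((\hat 0,g),\mG)\bigr)$: after the degree reversal $\bigcdot\mapsto 2\cd(g)-2-\bigcdot$ the complex $C_g$ is isomorphic to the augmented simplicial chain complex of the nested set complex $n((\hat 0,g),\mG)$, since in each weight its surviving basis monomials biject with the faces of that complex and the induced differential is the simplicial boundary up to sign; this computes the reduced homology of $n((\hat 0,g),\mG)$. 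Substituting these identifications into the decomposition of $\CM(P,\mG)$ and using the Künneth formula to evaluate $H^{\bigcdot}\bigl(\bigotimes_{g\in F}C_g\bigr)$ as $\bigotimes_{g\in F}H^{\bigcdot}(C_g)$ yields the displayed formula. Here the two points demanding care are the consistent bookkeeping of the two degree shifts, so that the tensor-product grading matches the claimed one, and the compatibility of the Künneth signs with the differential graded algebra structure.
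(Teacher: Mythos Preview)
Your outline matches the paper's approach: algebraic Morse theory for Step~1 and the support/factor decomposition of $\CM$ for Step~2. Two corrections to your emphasis.

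Drop the inductive ``cross-check'' in Step~1. The paper does not argue by induction on $|\mG|$ for this theorem; removing an element of $\mG$ need not leave a building set, and the partial-building-set machinery of Bibby--Denham--Feichtner is precisely what does not adapt to polymatroids. Your worry about acyclicity of Koszul-type local factors governed by submodularity is thus a concern about an approach the paper does not take, and is not the crux.

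The actual technical content of Step~1, which your outline glosses over, is (a)~the explicit matching --- for a non-critical standard monomial $\tau_T\sigma_S^b$, locate the $\prec$-smallest non-critical element $g$ of $S\cup T$ and toggle $\tau_g\leftrightarrow\sigma_g$ --- and (b)~the proof that this matching is acyclic. Part~(b) is where the work lies. The paper's device is a multiset invariant $m(T,S,b)$ recording the support with multiplicities, ordered lexicographically via the reverse order on $\mG$; one shows $m$ is weakly decreasing along directed paths in the Morse graph, and then rules out cycles on a level set of $m$ by a secondary monotonicity argument on the $\tau$-index set. Your phrase ``triangular with invertible diagonal'' is the desired conclusion, not the argument.

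Step~2 is exactly the paper's computation. Minor point: everything here is over $\Q$, not $\Z$; the integral statement is left open in the paper, so K\"unneth is immediate and your caveat about signs is not an issue.
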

In the realizable case with maximal building set, the above decomposition specializes to the Goresky-MacPherson formula.

The Leray model contains a subalgebra $\DP(P,\mG)$ as the first row of the spectral sequence, we call this algebra the \emph{Chow ring} of the polymatroid.
For subspace arrangements, $\DP(P,\mG)$ is the cohomology (indeed the Chow ring) of the wonderful model $Y_\mG$.
The combinatorial Chow ring for matroids was studied by Feichtner and Yuzvinsky \cite{FeichtnerYuzvinsky} and later by Huh, Katz, and Adiprasito \cite{Huh,HK,AHK} and others.

We prove that the Chow ring $\DP(P,\mG)$ of a polymatroid  satisfies the K\"ahler package (see Theorems \ref{thm:Poinc_duality} and \ref{thm:HL_HR}).
\begin{theo*}
The algebra $\DP(P,\mG)$ satisfy the Poincaré duality property.
Moreover, there exists a simplicial cone $\Sigma_{P,\mG}$ contained in $\DP^1(P,\mG)$ such that for each $\ell \in \Sigma_{P,\mG}$ the Hard-Lefschetz theorem and the Hodge-Riemann relations hold.
\end{theo*}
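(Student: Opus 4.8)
The plan is to prove Poincaré duality first and unconditionally, and then Hard Lefschetz together with the Hodge-Riemann relations by a joint induction on a complexity measure for $(P,\mG)$, such as the lexicographic order on $(\lvert E\rvert,\lvert\mG\rvert)$, the base case $E=\emptyset$ being vacuous. For Poincaré duality I would exhibit the polymatroid analogue of the Feichtner-Yuzvinsky monomial basis of $\DP(P,\mG)$ — indexed by $\mG$-nested sets decorated with multiplicities bounded by the values of $\cd$ — by producing a Gröbner basis of the defining ideal of $\DP(P,\mG)$. This yields in one stroke that the top nonzero graded piece $\DP^{N}(P,\mG)$ is one-dimensional, a canonical degree isomorphism $\deg\colon\DP^{N}(P,\mG)\xrightarrow{\ \sim\ }\Z$, and the nondegeneracy of the pairing $\DP^{k}(P,\mG)\otimes\DP^{N-k}(P,\mG)\to\Z$; alternatively, since $\DP(P,\mG)$ is the first row of the Leray model $B(P,\mG)$, Poincaré duality can be extracted from the computation of $H^{\bigcdot}(B(P,\mG))$ recalled above.

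Next I would set up the geometry. Define $\Sigma_{P,\mG}\subseteq\DP^{1}(P,\mG)$ as the image of the strictly submodular functions on $E$ — equivalently, of the strictly convex piecewise-linear functions on the nested-set fan of $(P,\mG)$ — which is an open simplicial cone whose elements play the role of ample classes. The crucial tool is the promised \emph{relative Lefschetz decomposition} for the deletion of a fixed element $e\in E$: a splitting of $\DP(P,\mG)$, viewed as a graded module over $\DP(P\setminus e,\mG')$ (with $\mG'$ the induced building set) equipped with its own Lefschetz operator, as a direct sum of Lefschetz submodules, each of which is a degree shift of a tensor product of Chow rings of polymatroids with building set of strictly smaller complexity, the ``new'' summands being indexed by the building-set elements containing $e$ and governed by the associated links and truncations of $P$. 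This is the combinatorial avatar of the cohomological decomposition attached to an iterated blow-up, and I expect establishing it — by isolating the contribution of the building-set elements through $e$ and pinning down the strata they introduce — to be the technical heart.

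Granting the decomposition, Hard Lefschetz for $\DP(P,\mG)$ and $\ell\in\Sigma_{P,\mG}$ becomes formal: after restricting $\ell$ to the relevant cone of $P\setminus e$ (a compatibility that has to be checked), each summand satisfies Hard Lefschetz by the inductive hypothesis, Hard Lefschetz passes to direct sums of Lefschetz modules, and one transfers it from the associated graded back to $\DP(P,\mG)$ by a standard filtration argument. The Hodge-Riemann relations then follow by the signature-continuity argument: by Hard Lefschetz the Hodge-Riemann form $(x,y)\mapsto\deg(\ell^{\,N-2k}xy)$ is nondegenerate on each $\ell$-primitive subspace for all $\ell\in\Sigma_{P,\mG}$, so its signature is locally constant on the connected cone $\Sigma_{P,\mG}$; it is therefore enough to compute that signature at one boundary point $\ell_{0}\in\overline{\Sigma_{P,\mG}}$ chosen to be a pullback along the deletion map (a degeneration along a ray of the cone), where the relative Lefschetz decomposition reduces the computation to the Hodge-Riemann relations for the strictly smaller polymatroids, known by induction. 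Matching the resulting signs closes the induction.

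The main obstacle is the relative Lefschetz decomposition of the middle step. One must carry it out for an \emph{arbitrary} building set, identify every direct summand with (a shift of a tensor product of) the Chow ring of a genuinely smaller polymatroid so that the induction actually closes, and understand how the ample cone behaves under deletion so that the restricted Lefschetz class stays ample. The Gröbner-basis bookkeeping for Poincaré duality with arbitrary $\mG$, and the degeneration identifying $\DP(P,\mG)$ with the appropriate associated graded module in the Hard Lefschetz step, are the remaining technical points but are routine once the decomposition is in place.
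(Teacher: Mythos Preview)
Your Poincar\'e duality approach via the Gr\"obner/monomial basis matches the paper's.

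For Hard Lefschetz and Hodge--Riemann there is a genuine gap. You propose the semismall-decomposition strategy of \cite{semismall}, using the relative Lefschetz decomposition for the deletion $P\setminus a$. The paper does establish exactly that decomposition, but explicitly observes that the underlying blow-down is \emph{not} semismall for polymatroids and that the K\"ahler package cannot be deduced this way. Concretely, the summands are $x_f^k\DPi$ for $k=1,\dots,n_f$, each isomorphic to a fixed Poincar\'e duality algebra $A_f$ shifted by $k$; as soon as codimensions exceed $1$ one gets $n_f>1$, and then the shifted copies $A_f[k]$ are not all centered at the middle degree of $\DP(P,\mG)$. For a pulled-back $\ell\in\DPi$ the map $\ell^{N-2k}$ already annihilates $x_f^k\neq 0$ whenever $N-2k>\dim A_f$, so $\DP_f$ is not a Lefschetz module for any such $\ell$ and your step ``Hard Lefschetz passes to direct sums of Lefschetz modules'' fails; for $\ell$ genuinely in the interior of $\Sigma_{P,\mG}$, multiplication by $\ell$ does not preserve the decomposition at all, so no filtration argument rescues it.

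The paper instead follows the \cite{AHK} template, and its cone is not built from submodular functions on $E$ but is simply $\Sigma_{P,\mG}=\{-\sum_{g\in\mG}d_g\sigma_g:d_g>0\}$ in the second presentation. Hard Lefschetz at $\ell\in\Sigma_{P,\mG}$ is deduced from Hodge--Riemann on the quotients $\DP(P,\mG)/\Ann(\sigma_g)\cong\DP(\tr_gP,\tr_g\mG)$, where $\tr_gP$ is a truncation with strictly smaller $\cd(\hat 1)$. For Hodge--Riemann, the inductive step removes an atom $a$ not from $E$ but from the \emph{building set}: one constructs a polymatroid $P(a)$ with $\mG(a)=\mG\setminus\{a\}$, proves $\DP(P,\mG)\cong\DP(P(a),\mG(a))[x]/(x\Ann(\mu_0),p(x))$ for an explicit monic $p$ of degree $\cd(a)$, and transfers HR via the analogue of \cite[Proposition~8.2]{AHK}. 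The induction runs on $\lvert\mG\rvert$ and $\cd(\hat 1)$, not on $\lvert E\rvert$.
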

We prove the above theorem using methods similar to ones in \cite{AHK}.
A second and easier proof of the K\"ahler package for matroid was given in \cite{semismall} using a semismall decomposition; the decomposition is the first step through the singular Hodge theory \cite{SingularHodgeTheory}.
In the realizable setting the decomposition is induced by a map between wonderful models that is semismall (for semismall maps in algebraic geometry see \cite{dCM02,dCM09}).
In the case of polymatroids the corresponding map is not semismall, hence we cannot deduce the Kahler package using this method.
However, we obtain a relative Lefschetz decomposition of the Chow ring (see Theorem \ref{thm:Lef_dec}).
\begin{theo*}
Let $\DP_{(a)}$ be the Chow ring for the polymatroids $P\setminus a$ where an element $a \in E$ is removed from the ground set.
The Chow ring $\DP(P,\mG)$ decomposes into irreducible $\DP_{(a)}$-modules as
\[\DP(P,\mG) = \DPi \oplus \bigoplus_{f \in S_a} \bigoplus_{k=1}^{n_f} x_f^k \DPi.\]
Moreover, these irreducibles are explicitly described by:
\[ x_f^k \DPi \cong \DP((P\setminus a)^{f\setminus a}, (\mG \setminus a)^{f\setminus a}) \otimes \DP(P_f, \mG_f)  [k]. \let\qedsymbol\openbox\qedhere\]
\end{theo*}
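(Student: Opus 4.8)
The plan is to establish the two assertions --- the $\DP_{(a)}$-module decomposition and the identification of the summands --- by a hands-on analysis of the combinatorial presentations of the Chow rings involved, mimicking combinatorially what the decomposition theorem would give for the (non-semismall) morphism of wonderful models and following the strategy used for the semismall decomposition of matroids in \cite{semismall}, but accommodating the features special to polymatroids, most importantly that a generator $x_g$ need no longer be nilpotent of order two, so the relevant monomial bases are longer and the bookkeeping is more delicate.

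First I would pin down the ring homomorphism $\varphi\colon \DP_{(a)}=\DP(P\setminus a,\mG\setminus a)\to\DP(P,\mG)$ induced by the deletion (geometrically, the pullback along the map of wonderful models), check that it is injective so that $\DPi$ sits inside $\DP(P,\mG)$ as a faithful $\DP_{(a)}$-submodule, and view all of $\DP(P,\mG)$ as a $\DP_{(a)}$-module through $\varphi$. Next I would isolate the set $S_a\subseteq\mG$ of building-set elements that are genuinely modified by removing $a$: those $f\ni a$ for which the deletion strictly lowers the relevant values of $\cd$ (equivalently, for which no building-set element of $\mG\setminus a$ plays the role of $f$), and attach to each such $f$ the integer $n_f$ measuring the size of that drop, chosen precisely so that $x_f,x_f^2,\dots,x_f^{n_f}$ are exactly the powers of $x_f$ that are not already $\DP_{(a)}$-combinations of lower ones.

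The heart of the argument is the decomposition itself, in two halves. For spanning: using the linear (``polymatroid'') relations to rewrite each $x_f$ with $f\in S_a$ in terms of an unmodified generator, one shows by induction on degree that every element of $\DP(P,\mG)$ is a $\DP_{(a)}$-combination of $1$ and of the monomials $x_f^k$ with $f\in S_a$, $1\le k\le n_f$. For directness: I would compare Hilbert series --- the right-hand side has Hilbert series computed from those of $\DP((P\setminus a)^{f\setminus a},(\mG\setminus a)^{f\setminus a})$ and $\DP(P_f,\mG_f)$, each palindromic by the Poincaré-duality statement already proved (Theorem~\ref{thm:Poinc_duality}), while the Hilbert series of $\DP(P,\mG)$ is likewise controlled by that theorem, and matching the two forces the spanning set to be a basis and the sum to be direct. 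A probably more robust alternative is an induction performing the deletion one \emph{polymatroid flip} (elementary change of building set) at a time, where the decomposition can be checked directly on the presentation.

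For the identification of $x_f^k\DPi$ I would compute the annihilator $\Ann_{\DP_{(a)}}(x_f^k)$: the quadratic relations of $\DP(P,\mG)$ show that multiplication by $x_f^k$ kills exactly the generators incomparable to $f$ and splits the remaining ``local at $f$'' structure into a restriction part and a contraction part, exactly as localizations of Feichtner--Yuzvinsky-type rings factor (cf.\ \cite{FeichtnerYuzvinsky}), which yields $x_f^k\DPi\cong \DP((P\setminus a)^{f\setminus a},(\mG\setminus a)^{f\setminus a})\otimes\DP(P_f,\mG_f)$ as graded modules up to the shift by $k=\deg(x_f^k)$. The main obstacle, as I see it, is precisely the directness and exhaustiveness of the decomposition: absent the relation $x_g^2=0$ available for matroids, determining which monomials $x_f^k$ are genuinely new requires a careful Gröbner-basis analysis of the polymatroid relations (or the flip-by-flip induction), and getting the definitions of $S_a$ and $n_f$ exactly right is what makes that analysis work. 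Finally I would note that the decomposition deserves the name relative Lefschetz: each summand is a shift of a tensor product of Chow rings satisfying the K\"ahler package (Theorem~\ref{thm:HL_HR}), hence carries its own Hard-Lefschetz operator, the decomposition is compatible with multiplication by an ample class pulled back from $\Sigma_{P\setminus a,\mG\setminus a}$, and this is exactly the input one needs to run an inductive proof of Hard Lefschetz and the Hodge-Riemann relations for $\DP(P,\mG)$ itself.
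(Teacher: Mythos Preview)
Your outline has the right overall shape (define $\varphi$, show spanning, show directness, identify the pieces), but it diverges from the paper's argument at the two technically decisive points, and in one place rests on a false premise.

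\textbf{Directness.} The paper does \emph{not} count Hilbert series. It proves directness by showing that the summands are pairwise \emph{orthogonal} for the Poincar\'e pairing (with the single exception of $\DPi$ and $\DP_{\hat 1}$ when $a$ is a coloop, handled separately). Concretely, for $f\neq g$ in $S_a$ one checks that $\DP_f\cdot\DP_g\subseteq\DP_{\min(f,g)}$, and for $f\neq\hat 1$ the top degree of $\DP_f$ is $\cd(\hat 1)-2<\cd(\hat 1)-1$, so the pairing vanishes; Poincar\'e duality (Theorem~\ref{thm:Poinc_duality}) then forces the sum to be direct once spanning is known. Your Hilbert-series route would instead require an independent combinatorial identity matching the standard-monomial bases of $\DP(P,\mG)$ and of the summands, which you do not indicate how to prove; the orthogonality argument bypasses this entirely.

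\textbf{Identification of the summands.} The paper does not compute $\Ann_{\DPi}(x_f^k)$ directly. It passes through the auxiliary basis $\{x_f\sigma_f^{k-1}\}_{k=1}^{n_f}$ of $\DP_f$, shown to be unitriangularly related to $\{x_f^k\}_k$ (Lemma~\ref{lemma:DP_f_modulo_libero}). The point is that $x_f\sigma_f^{k-1}\DPi$ can be identified in one stroke via the isomorphisms $\psi_f$ and $\zeta_f$ already established in \S\ref{subsect:tensor_dec} (Lemmas~\ref{lemma:iso_psi_g} and~\ref{lemma:zeta_g}), which describe $\DP(P,\mG)/\Ann(x_f)$ and the effect of multiplying by $\sigma_f$. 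The crucial enabling fact, which your sketch does not mention, is the relation $x_f\sigma_g=x_f\sigma_{g\setminus a}$ whenever $f,g\in S_a$ and $f\not\geq g$ (first part of Lemma~\ref{lemma:DP_f_modulo_libero}); this is exactly what collapses the $\DPi$-action on $x_f^k$ to the tensor product of the restriction and contraction Chow rings.

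\textbf{A false step.} You propose to use ``the linear (polymatroid) relations to rewrite each $x_f$ with $f\in S_a$''. In the polymatroid Chow ring there are in general \emph{no} linear relations: the relations $c_g^b$ start in degree $\cd(g)$, and atoms may have $\cd>1$. The matroidal identity $\sum_{g\ni e}x_g=0$ from \cite{semismall} is simply unavailable here. The paper's spanning argument (Lemma~\ref{lemma:generates}) works instead in the $\sigma$-presentation, using the relation $\sigma_f^{\cd(f)-\cd(f\setminus a)}\prod_i(\sigma_{g_i}-\sigma_f)=0$ (with $g_i$ the $\mG$-factors of $f\setminus a$) to reduce high powers of $\sigma_f$, together with the identity above to push the remaining $\sigma$-variables into $\DPi$.
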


The \emph{reduced characteristic polynomial} of a polymatroid is defined by
\begin{equation} \label{eq:def_chi_intro}
    \overline{\chi}_P(\lambda)= \frac{\sum_{A\subseteq E} (-1)^{\lvert A \rvert} \lambda^{\cd(E)-\cd(A)}}{\lambda-1}.
\end{equation} 
As final step we relate the coefficients of the reduced characteristic polynomial to the Hodge-Riemann bilinear form (see Theorem \ref{lemma:chi_deg}).
In order to do that, we restrict to the case of the maximal building set and we fix an isomorphism $\deg \colon \DP^r(P,\mG_{\max}) \to \Q$.
\begin{theo*}
There exist elements $\alpha,\beta \in \DP^1(P,\mG_{\max})$ such that 
\[\overline{\chi}_P(\lambda) = \sum_{i=0}^{r} (-1)^{i} \deg (\alpha^i \beta^{r-i}) \lambda^i. \let\qedsymbol\openbox\qedhere\] 
\end{theo*}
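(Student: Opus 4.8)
The plan is to adapt the deletion--contraction argument of \cite{AHK} to polymatroids, with the relative Lefschetz decomposition of Theorem~\ref{thm:Lef_dec} playing the role of the semismall push-forward used there (which, as noted above, is not available here). Assume $P$ loopless and fix a non-coloop $a\in E$. I would set $\alpha:=\sum_{f\in\mG_{\max},\,a\in f}x_f\in\DP^1(P,\mG_{\max})$ --- in the realizable case this is the pull-back of the hyperplane class along $Y_{\mG_{\max}}\to\mathbb P^{\cd(E)-1}$, and the linear relations in the presentation of $\DP(P,\mG_{\max})$ show it is independent of $a$. For the second class I would take $\beta\in\DP^1(P,\mG_{\max})$ to be the image of the analogous ``hyperplane class'' of $P\setminus a$ under the inclusion $\DPi=\DP(P\setminus a,\mG_{\max}\setminus a)\hookrightarrow\DP(P,\mG_{\max})$ furnished by Theorem~\ref{thm:Lef_dec}; concretely $\beta$ is an explicit combination of the $x_f$ with $a\notin f$, the coefficients encoding the $\cd$-increments at $a$ (so $\beta=\sum_{a\notin f}x_f$ when all relevant $\cd$-values are $1$, as for matroids). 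Finally I would fix the isomorphism $\deg$ by the requirement $\deg(\alpha^{r})=(-1)^{r}$, which is precisely the degree-$r$ instance of the asserted identity since $\overline\chi_P$ is monic of degree $r=\cd(E)-1$.

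The core is an induction on $|E|$. Splitting the sum in \eqref{eq:def_chi_intro} according to whether $a\in A$, and using $\cd(A)=\cd_{P/a}(A\setminus a)+\cd(\{a\})$ for $a\in A$, gives the deletion--contraction identity $\overline\chi_P=\overline\chi_{P\setminus a}-\overline\chi_{P/a}$; it thus suffices to show that $Q_P(\lambda):=\sum_{i=0}^{r}(-1)^{i}\deg(\alpha^{i}\beta^{r-i})\lambda^{i}$ obeys the same recursion. Writing $\alpha=\beta+\gamma$ with $\gamma$ supported on the $x_f$ with $a\in f$, one has $\alpha^{i}\beta^{r-i}=\beta^{r}+\sum_{j\ge1}\binom{i}{j}\gamma^{j}\beta^{r-j}$. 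The term $\beta^{r}$ lies in the subalgebra $\DPi$, where $\beta$ is the hyperplane class of $P\setminus a$; collecting these contributions over $i$ and invoking the inductive hypothesis for $P\setminus a$ produces $Q_{P\setminus a}(\lambda)$. Every summand with $j\ge1$ lies in the sum of the pieces $x_f^{k}\DPi$ of Theorem~\ref{thm:Lef_dec}; projecting onto them and using that the degree factors through the isomorphism $x_f^{k}\DPi\cong\DP((P\setminus a)^{f\setminus a},(\mG_{\max}\setminus a)^{f\setminus a})\otimes\DP(P_f,(\mG_{\max})_f)[k]$ as a product of two degree maps, each such contribution becomes a sum of products of top-degree pairings on the smaller polymatroids $(P\setminus a)^{f\setminus a}$ and $P_f$. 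Applying the inductive hypothesis to those factors and collecting the results over $f\in S_a$ and $1\le k\le n_f$ --- along the factorization of $\overline\chi_{P/a}$ governed by the flats $f$ --- identifies the total of these pieces with $-Q_{P/a}(\lambda)$. The base case $r=0$, where $\DP(P,\mG_{\max})=\Q$ and $\overline\chi_P=1$, is immediate, closing the induction.

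The main obstacle is the bookkeeping in the last step: one must check that, after each $\gamma^{j}\beta^{r-j}$ is expanded through Theorem~\ref{thm:Lef_dec} and its degree is factored across the tensor isomorphism, the resulting double sum over $(f,k)$ reassembles --- coefficient by coefficient in $\lambda$, and with the correct signs --- into $\overline\chi_{P/a}$. This requires an explicit description of the index set $S_a$, of the shifts $n_f$ (which for polymatroids carry the $\cd$-data at $a$ and make $\gamma$ a genuine linear combination, rather than a single $x_f$, when $\cd(\{a\})>1$), of how powers of $\gamma$ are distributed among the $x_f^{k}\DPi$, and of the compatibility between the normalization of $\deg$ and those on the pieces; aligning all the signs $(-1)^{i}$ is where the real work sits. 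An alternative route, trading the recursion for a different enumeration, is to expand $\alpha^{i}\beta^{r-i}$ directly in a monomial (or $\nbc$) basis of $\DP^{r}(P,\mG_{\max})$, read off $\deg$, and compare with a broken-circuit formula for the coefficients of $\overline\chi_P$; in either approach the sign bookkeeping is the crux.
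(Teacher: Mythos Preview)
Your approach is genuinely different from the paper's, but it has a concrete gap at the very first step. You set $\alpha = \sum_{f\in\mG_{\max},\,a\in f} x_f$, which in the paper's notation is $\sigma_a$, and assert that the linear relations in $\DP(P,\mG_{\max})$ make it independent of $a$. That is a matroid phenomenon: when $\cd(a)=1$ the relation $\sigma_a^{\cd(a)}=0$ is linear and forces $\sigma_a=0$. For polymatroids there are in general no linear relations among the $\sigma_g$ at all (the defining relations start in degree $\min_g \cd(g)$), so $\sigma_a$ genuinely depends on $a$. In the example of Section~\ref{sect:example}, $\sigma_a=x_a+x_{ab}+x_{\hat 1}$ and $\sigma_c=x_c+x_{\hat 1}$ are linearly independent in $\DP^1$. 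Thus your $\alpha$ is not well defined, and the recursion you propose compares quantities attached to different choices of $a$ on its two sides.

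There is a second problem in the inductive step even if one fixes a choice of $a$ throughout. With $\alpha=\beta+\gamma$ you expand $\alpha^i\beta^{r-i}=\sum_{j\ge 0}\binom{i}{j}\gamma^j\beta^{r-j}$; the $j=0$ term is the single element $\beta^r$, the same for every $i$. Hence the ``$\DPi$--part'' of $Q_P(\lambda)$ is $\deg(\beta^r)\sum_{i=0}^r(-\lambda)^i$, not $Q_{P\setminus a}(\lambda)$. So the deletion half of the recursion does not fall out of your expansion, even before one attempts the contraction-side bookkeeping you already flag as the main obstacle (and which, for polymatroids, involves $n_f$ many pieces per $f\in S_a$ rather than one).

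The paper avoids both issues by taking $\alpha=-x_{\hat 1}$ and $\beta=\sum_{g\in\mG_{\max}}x_g$ explicitly and running a recursion on \emph{truncation} rather than deletion--contraction. The isomorphism $\zeta_{\hat 1}$ of Lemma~\ref{lemma:zeta_g} gives $\deg_P(\alpha_P^i\beta_P^{r-i})=\deg_{\tr_{\hat 1}P}(\alpha_{\tr P}^{i-1}\beta_{\tr P}^{r-i})$ for $i>0$, so both sides satisfy $q_P(\lambda)-\lambda\,q_{\tr_{\hat 1}P}(\lambda)=-\mu_L(\hat 0,\hat 1)$. The constant term $\deg(\beta_P^r)=(-1)^{\cd(E)}\mu_L(\hat 0,\hat 1)$ is handled separately by a M\"obius-function argument (Lemmas~\ref{lemma:rec_relation} and~\ref{lemma:deg_beta}); the relative Lefschetz decomposition of Section~\ref{sect:rel_Lef} is not used here at all.
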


The element $\alpha$ belongs to the closure of the $\sigma$-cone (morally it is nef), but in general $\beta$ is not in the closure of the ample cone.
Hence, the coefficients of the reduced characteristic polynomial do not form a log-concave sequence (see Remark \ref{remark:not_log_concave}). Indeed every finite sequence of non-positive integers can appear as a substring of the coefficients.

\subsection*{Techniques}
The techniques used for our proof are various and inspired by \cite{FeichtnerYuzvinsky,BDF,SmallRatModel,AHK,semismall}.

In \Cref{sect:model} we make use of Gr\"obner basis to give two explicit additive bases of the Leray model.
In \Cref{sect:GMP_formula}, by using algebraic Morse theory, we compute the cohomology of the Leray model generalizing the Goresky-MacPherson formula.
In \Cref{sect:Kahler}, we use an inductive procedure 
to prove the K\"ahler package.
The main difference with the previous methods is that we do not have partial building sets as in \cite{BDF} nor order filters as in \cite{AHK}.
Our induction is based on the cardinality of the building set, and the inductive step involved completely different polymatroids.
\Cref{sect:rel_Lef} is devoted to the proof of the relative Lefschetz decomposition using some lemmas from the previous sections.
The reduced characteristic polynomial is studied in \Cref{sect:char_poly}. 
We prove the claimed equality by showing that both polynomials satisfy the same recursion. In this proof we use the properties of the M\"obius function for posets.

Finally, \Cref{sect:example} contains an explicit and exhaustive example that illustrates our definitions and properties.

\subsection*{Acknowledgements}
The authors thank Corrado De Concini for holding a mini-course on log-concavity results at the winter school ``Geometry, Algebra and Combinatorics of Moduli Spaces and Configurations IV''.
The authors also thank the organizers of the conference for having created this opportunity.

\section{The Leray model} \label{sect:model}
For general references about polymatroids we suggest \cite{WelshBook}.

\begin{definition}\label{def:polymatroid}
A \textit{polymatroid} $P=(E,\cd)$ on the ground set $E$ is a \textit{codimension} function $\cd \colon 2^{E} \to \N$ satisfying:
\begin{enumerate}
    \item[(C1)] $\cd(\emptyset)=0$,
    \item[(C2)] if $A \subseteq B$, then $\cd(A)\leq \cd(B)$, and
    \item[(C3)] if $A,B \subseteq E$, then $\cd(A \cup B)+\cd(A \cap B) \leq \cd(A)+\cd(B).$
\end{enumerate}
A polymatroid is a \textit{matroid} if the codimension of singletons are either zero or one.
\end{definition}
The closure of a subset $A\subseteq E$ is the subset 
    \[\{a \in E \ | \ \cd(A \cup \{a\})=\cd(A)\}.\]
A \textit{flat} is a closed set and the collection of flats forms a lattice $L_P$, that we call the \textit{poset of flats}. 
We use the notation $\max (X)$ with $X \subseteq L_P$ for the set of maximal elements of $X$.

\begin{definition}[Geometric building set]
Let $P=(E, \cd)$ be a polymatroid and let $L$ be its lattice of flats. A subset $\mathcal{G}$ in $L \setminus \{\hat{0}\}$ is called a \textit{geometric building set} if for any $x \in L$ the morphism of lattices:
    \[
      \varphi_x \colon  \prod_{y \in \max(\mathcal{G}_{\leq x})} \ [\hat{0},y]  \to  [\hat{0},x]
    \] 
induced by the inclusions is an isomorphism and the equality 
    \[\cd(x) = \sum_{y \in \max(\mathcal{G}_{\leq x})} \cd(y)
    \]
holds.

We define $F(P,\mathcal{G},x)=\max (\mathcal{G}_{\le x})$ the set of $\mathcal{G}$\textit{-factors} of $x$.
\end{definition}

\begin{definition}[$\mG$-nested set complex]
A subset $S$ of $\mathcal{G}$ is called $\mG$-nested if, for any set of incomparable elements $x_1,\dots,x_t$ in $S$ of cardinality at least two, the join $x_1 \vee \cdots \vee x_t$ is not contained in $\mG$. The $\mG$-nested sets form an abstract simplicial complex $n(P,\mG)$. 
\end{definition}

We suggest to visualize a (realizable) polymatroid $(E,\cd)$ as a collection of linear subspaces $S_e$ for $e\in E$ in a fixed complex vector space $V$.
For each $A \subseteq E$, the codimension $\cd(A)$ is the codimension of the corresponding flat $\cap_{a \in A} S_a$.
The (geometric) building set $\mathcal{G}$ is a good choice of some flats to blow up, in order to obtain a wonderful model $Y_\mathcal{G}$ with some exceptional divisors $D_g \subset Y_\mathcal{G}$, $g \in \mathcal{G}$ indexed by $\mathcal{G}$.
A subset $S$ of $\mathcal{G}$ is $\mathcal{G}$-nested if and only if the corresponding divisors $\{D_W\}_{W\in S}$ have non empty intersection.

The following proposition summarizes the main properties of building and nested sets.

\begin{proposition} \label{prop:comb_nested_sets}
Let $P$ be a polymatroid with poset of flats $L$ and $\mathcal{G}$ be a building set. Then:
\begin{enumerate}
    \item For each $g \in \mathcal{G}$, $x \in L$ with $x \geq g$, there exists a unique $\mathcal{G}$-factor $p$ of $x$ such that $p\geq g$.
    \item If $g,h \in \mathcal{G}$ and $g\wedge h > \hat{0}$, then $g\vee h \in \mathcal{G}$.
    \item If $S$ is a $\mathcal{G}$-nested set, then the $\mathcal{G}$-factors of $\bigvee S$ are the maximal elements in $S$ (i.e.\ $F(P,\mG,\bigvee S) = \max(S)$).
    \item Let $S$ be a $\mathcal{G}$-nested set, the Hasse diagram of $S$ (as subset of $L$) is a forest. \let\qedsymbol\openbox\qedhere
\end{enumerate}
\end{proposition}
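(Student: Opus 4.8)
The plan is to prove the four items in the stated order, each feeding the next, using the product decomposition $\varphi_x$ from the building-set axiom as the single recurring tool. First I would record its structural consequence: since $\varphi_x \colon \prod_{y \in F(P,\mG,x)} [\hat{0},y] \to [\hat{0},x]$ is a lattice isomorphism it preserves meets, and applying this to the images of the coordinate axes shows that any two distinct $\mG$-factors $y \neq y'$ of $x$ satisfy $y \wedge y' = \hat{0}$, while the join of all factors of $x$ equals $x$. Item (1) is then immediate: existence holds because $g \in \mG_{\leq x}$ lies below some maximal element of the finite poset $\mG_{\leq x}$, i.e.\ below a $\mG$-factor of $x$; and if $g$ were below two distinct factors $p \neq p'$ we would get $g \leq p \wedge p' = \hat{0}$, contradicting $g \in \mG \subseteq L \setminus \{\hat{0}\}$.

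Items (2) and (4) follow quickly. For (2), set $x = g \vee h$ and let $p \geq g$ and $q \geq h$ be the $\mG$-factors of $x$ supplied by (1); if $p \neq q$ then $g \wedge h \leq p \wedge q = \hat{0}$, against the hypothesis, so $p = q$ dominates both $g$ and $h$ and hence equals $g \vee h$, which therefore lies in $\mG$. For (4), suppose some $s \in S$ is dominated by two incomparable $t_1, t_2 \in S$; then $t_1 \wedge t_2 \geq s > \hat{0}$, so $t_1 \vee t_2 \in \mG$ by (2), contradicting that $S$ is $\mG$-nested. Hence $\{t \in S : t \geq s\}$ is a chain for every $s \in S$, so each vertex of the Hasse diagram of $S$ is covered by at most one element and the diagram is a forest.

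The real content is item (3), and I expect the only genuine subtlety to lie there. Write $x = \bigvee S$. By (1) every $s \in S$ lies below a unique $\mG$-factor $\pi(s)$ of $x$, and since $s \wedge y \leq \pi(s) \wedge y = \hat{0}$ for every factor $y \neq \pi(s)$, the element $s$ occupies the single coordinate $\pi(s)$ in the decomposition $[\hat{0},x] \cong \prod_{f} [\hat{0},f]$. Computing the join $\bigvee S = x$ coordinate-wise then gives $\bigvee_{s \in \pi^{-1}(f)} s = f$ for every factor $f$; in particular $\pi$ is onto. Now $\max(\pi^{-1}(f))$ is an antichain in $S$ whose join is $f \in \mG$, so $\mG$-nestedness forces it to be a single element, which (being an upper bound for all of $\pi^{-1}(f)$) must equal $\bigvee \pi^{-1}(f) = f$; hence $f \in S$, and so $F(P,\mG,x) \subseteq S$. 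Conversely, if $f$ is a $\mG$-factor of $x$ with $f \leq s \in S$, then $f \leq \pi(s)$, which forces $f = \pi(s)$ since distinct factors meet in $\hat{0}$, and then $s = f$; thus every factor is maximal in $S$, while every maximal element of $S$ coincides with its own $\pi$-image and is therefore a factor. This yields $\max(S) = F(P,\mG,x)$, after disposing of the trivial case $S = \emptyset$. The one place that demands care is the coordinate-wise join computation together with the repeated appeal to uniqueness in (1); the remainder is formal.
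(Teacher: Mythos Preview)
Your argument is correct. The paper does not actually prove items (1)--(3): it simply cites Feichtner--Kozlov for (1) and (3) and Bibby--Denham--Feichtner for (2), and then gives a direct proof only of (4), which is essentially identical to yours (two incomparable elements of $S$ lying above a common $t\in S$ have nonzero meet, so by (2) their join lies in $\mG$, contradicting nestedness).

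What you add is a self-contained treatment of (1)--(3) directly from the building-set axiom, extracting everything from the single observation that the lattice isomorphism $\varphi_x$ forces distinct $\mG$-factors of $x$ to meet in $\hat{0}$. This is exactly the mechanism behind the cited results, so your route is not conceptually different from what the references do, but it has the virtue of making the proposition independent of the literature. Your handling of (3) via the coordinate projection $\pi\colon S \to F(P,\mG,x)$ is clean; the only delicate point, as you note, is the coordinate-wise join computation $\bigvee_{s\in\pi^{-1}(f)} s = f$, and you justify it correctly by identifying the $f$-coordinate of $s$ with $s\wedge f$.
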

\begin{proof}
For $(1)$ see \cite[Proposition 2.5(1)]{FeichtnerKozlov}, for $(2)$ see \cite[Proposition 2.5.3(b)]{BDF}, and for $(3)$ see \cite[Proposition 2.8]{FeichtnerKozlov}.
In order to prove $(4)$ we suppose that the Hasse diagram $\Gamma_S$ of $S$ is not a forest.
Thus there exist two incomparable elements $g,h \in S$ and $t\in S$ such that $g \wedge h \geq t \in S$;
in particular $g \wedge h > \hat{0}$. By part $(2)$ we get that $g \vee h \in \mG$ but this contradicts the definition of nested set. 
Therefore $\Gamma_S$ is a forest. 
\end{proof}

Let $P=(E, \cd)$ be a polymatroid, $L$ be its poset of flats, and $\mG$ be a building set in $L$.
Let $\mathcal{R}(\mathcal{G})=\mathbb{Q}[e_g,x_g \mid g \in \mathcal{G}]$ be the bigraded commutative algebra with exterior generators $e_g$ in bidegree $(0,1)$ and commutative generators $x_g$ in bidegree $(2,0)$. 

This algebra is equipped with a differential $d$ of bidegree $(2,-1)$ defined on generators by $d(e_g)=x_g$, $d(x_g)=0$.
Fix a linear extension $\succ$ of the order on $\mathcal{G}$, this gives a reverse order among the $e$-variables and among the $x$-variables, i.e.\ $x_h \prec x_g$ and $e_h \prec e_g$ if and only if $h \succ g$.
We also set $x_g \prec e_h$ for each $g,h$.
The algebra $\mathcal{R}(\mathcal{G})$ has a monomial basis given by:
    \[
    e_T x_S^{b}:=e_{g_1} \cdots e_{g_t} x_{h_1}^{b_1} \cdots x_{h_s}^{b_s}
    \]
where $T=\{g_1,\dots, g_t\}$ with $g_i \in \mG$ satisfying $g_1 \prec g_2 \prec \cdots \prec g_t$, $S=\{h_1, \dots, h_s\}$ with $h_i \in \mG$ and $b=(b_1,\dots, b_s)$ is a $s$-tuple of positive integers.
We define the element:
    \[
    c_g=\sum_{\substack{h \in \mathcal{G} \\ h \geq g}} x_h.
    \]

\begin{definition}[The Leray model of a polymatroid] \label{def:B}
Let $I(\mG)$ be the ideal of $\mathcal{R}(\mG)$ generated by
\begin{enumerate}[label=(\roman*)]
    \item $e_T x_S$ whenever $S \cup T \notin n(P, \mathcal{G})$,
    \item $e_T x_S c_g^{b}$ whenever $S, T \subseteq \mathcal{G}$, $g \in \mathcal{G}$ and $b\geq \cd (g) - \cd (\bigvee(S\cup T)_{<g})$.
\end{enumerate}
The ideal $I(\mG)$ is preserved by the differential $\dd$, so the quotient
    \[B(P, \mG)=\faktor{\mathcal{R}(\mG)}{I(\mG)}\]
is a bigraded differential algebra, called the \textit{Leray model} of the polymatroid.
\end{definition}

In the realizable case, the Leray model $B(P,\mathcal{G})$ is the second page of the Leray spectral sequence for the natural inclusion 
\[V \setminus \cup_{e \in E} S_e \cong Y_\mathcal{G} \setminus \cup_{g \in \mathcal{G}} D_g \hookrightarrow Y_\mathcal{G}.\]
This spectral sequence collapses at the third page, hence it becomes a differential bigraded algebra also known as the Morgan algebra \cite{Morgan}.

\begin{remark}\label{remark:secondarelazione}
Let $e_T x_S c_g^b$ be a monomial of type $(ii)$ and let
    \[S'=S \cap \max(S \cup T)_{<g} \quad \hbox{and} \quad T'= \max(S \cup T)_{<g} \setminus S.\]
The monomial $e_{T'} x_{S'} c_g^b$ divides $e_T x_S c_g^b$.
Thus, when we consider a monomial of type $(ii)$ we can always assume that $S\cap T= \emptyset$, $S \cup T$ is an antichain and $\bigvee(S \cup T)<g$.
\end{remark}

\begin{theorem} 
\label{thm:grobner_basis}
The generators of type (i) and (ii) of the ideal $I(\mathcal{G})$ of Definition \ref{def:B} form a Gr\"obner basis with respect to the deg-lex order.
\end{theorem}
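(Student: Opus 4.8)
The goal is to show that the type (i) and (ii) generators of $I(\mathcal{G})$ form a Gröbner basis with respect to the deg-lex order induced by $\succ$. By Buchberger's criterion, it suffices to check that every $S$-polynomial of a pair of these generators reduces to zero modulo the generating set. The plan is to organize the argument by the three types of pairs: (i)-(i), (i)-(ii), and (ii)-(ii), and in each case to identify the leading term, form the $S$-polynomial, and exhibit an explicit standard reduction to zero — typically by recognizing that the $S$-polynomial (or each monomial appearing after one reduction step) is again divisible by a generator of type (i) or (ii).

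First I would pin down the leading terms. For a type (i) generator $e_T x_S$ with $S\cup T\notin n(P,\mathcal{G})$, the generator is itself a monomial, so it is its own leading term. For a type (ii) generator $e_T x_S c_g^b$, expanding $c_g^b=(\sum_{h\geq g}x_h)^b$ shows it is a sum of monomials $e_T x_S x_{h_1}\cdots x_{h_b}$ with $h_i\geq g$; since $x_g\preceq x_h$ for all $h\geq g$ (because $h\succeq g$ in the linear extension forces $x_h\preceq x_g$ — wait, one must be careful with the orientation), the deg-lex-largest monomial is $e_T x_S x_g^b$, obtained by taking the $\succ$-largest available $x$-variable $b$ times; by Remark \ref{remark:secondarelazione} one may moreover assume $S\cap T=\emptyset$, $S\cup T$ an antichain, and $\bigvee(S\cup T)<g$, which keeps the leading monomial squarefree in the $e$ and $S$-part. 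I would record this as a preliminary lemma: the leading term of the type (ii) generator is $\pm e_T x_S x_g^b$.

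Next, the pairwise $S$-polynomial analysis. For two type (i) generators $m_1=e_{T_1}x_{S_1}$, $m_2=e_{T_2}x_{S_2}$, the $S$-polynomial is (up to sign) $\mathrm{lcm}(m_1,m_2)$ divided by each — but since both are monomials, the $S$-polynomial is $\pm(\text{lcm}/m_1)\,m_2 \mp (\text{lcm}/m_2)\,m_1 = 0$ after the obvious single reductions, so nothing to check. For a type (i) generator $m=e_T x_S$ (with $S\cup T\notin n$) against a type (ii) generator $g$ with leading term $e_{T'}x_{S'}x_{p}^{b}$: the overlap forces the lcm to be a monomial divisible by the squarefree part of both; the $S$-polynomial then equals a monomial multiple of the lower-order terms of $g$, each of which is $e_{T'}x_{S'}x_{h_1}\cdots x_{h_b}$ with $h_i\geq p$. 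I would argue that multiplying such a monomial by $e_T x_S/(\gcd)$ produces an $e$-$x$ monomial whose support, restricted to the squarefree generators, still contains $S\cup T$, hence is divisible by the type (i) generator $e_T x_S$ — so it reduces to zero. The genuinely delicate case is (ii)-(ii): two generators $e_{T_1}x_{S_1}c_{p}^{a}$ and $e_{T_2}x_{S_2}c_{q}^{b}$. Here the $S$-polynomial, after cancelling the two leading monomials $\pm e_{T_1}x_{S_1}x_p^a\cdot(\cdots)$ and $\pm e_{T_2}x_{S_2}x_q^b\cdot(\cdots)$, is a combination of products of the form $e_{T}x_{S}\cdot(\text{tail of }c_p^a)\cdot x_q^b$ and $e_T x_S\cdot x_p^a\cdot(\text{tail of }c_q^b)$. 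I would show each such monomial is divisible by a type (ii) generator: the exponent bound $b\geq \cd(g)-\cd(\bigvee(S\cup T)_{<g})$ is \emph{monotone} in the sense that enlarging the monomial can only make the required bound easier to meet, and using Proposition \ref{prop:comb_nested_sets}(2)-(3) and the building-set axioms one checks that $c_p$'s tail contributes $x_h$ with $h\geq p$ so that $p$ (or $p\vee q$, when $p\wedge q>\hat 0$) witnesses a type (ii) relation on the product. When $p\wedge q=\hat 0$ the two "packets" are independent and the reduction is immediate; when $p\wedge q>\hat 0$, part (2) gives $p\vee q\in\mathcal{G}$ and one reduces using the relation centered at $p\vee q$, which is where the submodularity of $\cd$ (axiom (C3)) enters to guarantee the exponent inequality.

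The main obstacle I anticipate is precisely this last (ii)-(ii) bookkeeping: correctly matching the exponent bound $b\geq \cd(g)-\cd(\bigvee(S\cup T)_{<g})$ across the $S$-polynomial reduction when the two centers $p,q$ interact, and verifying that $\bigvee(S\cup T)_{<g}$ behaves well under the combinatorial operations (Proposition \ref{prop:comb_nested_sets}(1),(3)) — in particular that passing from $S\cup T$ to the larger support appearing in the $S$-polynomial does not decrease $\cd(\bigvee(\cdot)_{<g})$ in a way that breaks the inequality. A secondary, more technical point is keeping track of signs from the exterior variables $e_g$; but since the reductions only ever multiply by monomials and the ideal is $d$-stable, the signs can be handled uniformly and do not affect whether the remainder is zero. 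I would therefore structure the write-up as: (1) leading-term lemma for type (ii); (2) trivial cases (i)-(i) and (i)-(ii); (3) the (ii)-(ii) case split according to whether $p\wedge q=\hat 0$, invoking Proposition \ref{prop:comb_nested_sets} and axiom (C3) for the exponent estimate; then conclude by Buchberger's criterion.
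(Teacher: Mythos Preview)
Your overall strategy (Buchberger's criterion, three pair types) matches the paper, but you have a genuine gap in case (i)--(ii), and your organization of case (ii)--(ii) misses the actual difficulty.

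For (i)--(ii): you claim the $S$-polynomial is ``divisible by the type (i) generator $e_T x_S$'' because its support still contains $S\cup T$. This fails precisely when the center $g$ of the type (ii) relation lies in $S$. Writing $f_1=e_Tx_S$ and $f_2=e_Ax_Bc_g^b$, the $S$-polynomial is $e_Ux_V(x_g^b-c_g^b)$ with $V=(S\cup B)\setminus\{g\}$; its nonzero terms carry $x_h$ with $h>g$ in place of $x_g$, so $g$ drops from the support and the monomial is \emph{not} divisible by $e_Tx_S$. What actually happens (and what the paper does) is: if $U\cup V$ is still non-nested, reduce by that; otherwise the non-nestedness of $S\cup T\ni g$ forces the existence of an antichain $Y\subseteq U\cup V\cup\{g\}$ with $g\in Y$ and $y=\bigvee Y\in\mathcal G$, and one shows via submodularity that $b\ge \cd(y)-\cd\bigl(\bigvee (U\cup V)_{<y}\bigr)$, so the $S$-polynomial reduces modulo the type (ii) relation centered at $y$ (not at $g$), together with further type (i) reductions. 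You need this ``promotion'' of the center; your sketch does not provide it.

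For (ii)--(ii): your dichotomy on $p\wedge q$ is not how the argument runs, and it does not isolate the hard case. The paper splits instead on whether the centers coincide ($g=h$) and, when $g\neq h$, on whether the center of one relation appears in the squarefree $x$-part of the other (e.g.\ $g\in B$). The hardest subcase is $g\in B$: after an initial reduction one is left with $e_Ux_V\bigl(\sum_{k>g}x_k\bigr)^d c_h^f$, and for each $k>g$ with $k\not\ge h$ one must pass to a type (ii) relation centered at $h\vee k\in\mathcal G$ (using Proposition~\ref{prop:comb_nested_sets}(2) and submodularity), then finish with $e_Ux_Vc_h^{d+f}$. Your plan to ``reduce using the relation centered at $p\vee q$'' does not cover this, since the relevant joins are $h\vee k$ for varying $k$, not a single $p\vee q$.
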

\begin{proof}
We adapt the method used in \cite[Theorem 2]{FeichtnerYuzvinsky} and in  \cite[Theorem 5.3.1]{BDF}.
We are fixing a linear extension of the order on $\mG$ with $x_{g} \prec e_{h}$ for each $g,h$.
We consider the deg-lex monomial order on $\mathcal{R}(\mG)$ and
we explicitly compute $S$-polynomials.
\begin{description}
    \item [Case (i)-(i)] 
    Since relations type $(i)$ are monomials the $S$-polynomials is zero.
    \item [Case (i)-(ii)]  Now we consider $f_1=e_T x_S$ of type $(i)$ and $f_2=e_A x_B c_g^b$ of type $(ii)$.
    We assume that $\bigvee (A \cup B) < g$ (see Remark \ref{remark:secondarelazione}). Let $U= T \cup A$, $V=B \cup S \smallsetminus \{g\}$, therefore the $S$-polynomial is
        \[S(f_1,f_2)=e_U x_V x_g^b-e_U x_V c_g^b=e_U x_V (x_g^b-c_g^b).\]
    If $g \notin S$, we have that $S \subseteq V$ and therefore
        \[S(f_1,f_2)= \pm e_{A\setminus T} e_T x_S x_{V \setminus S} (x_g^b-c_g^b)\]
    is divisible by $e_T x_S$. 
    
    Then, assume $g \in S$, since $S \cup T$ is not nested we have that $U \cup V \cup \{g\}$ is not nested. If $U \cup V$ is not nested, then $S(f_1,f_2)$ would be divisible by $e_U x_V$.
    
    So assume $U \cup V$ is nested, (thus we have $g \notin U \cup V$) since $U \cup V \cup \{g\}$ is not nested the $S$-polynomial modulo $e_U x_V x_g$ became
        \[S(f_1,f_2) \equiv e_U x_V \big ( \sum_{f>g} x_f \big)^b.\]
    The set $U \cup V \cup \{g\}$ contains a non trivial antichain $Y$ whose join $  \bigvee Y=y$ is in $\mG$ and $Y$ must contain $g$ since $U \cup V$ is nested; let $y'=\bigvee (Y \smallsetminus \{g\})$. We have
        \begin{align*}
        b &=\cd(g)-\cd \Bigl( \bigvee_{\substack{l \in A \cup B\\l<g}} l \Bigr) \\
        &\geq \cd(g \vee y')- \cd \Bigl( \bigvee_{\substack{l \in A \cup B\\l<g}} l \vee y' \Bigr) \\
        & \geq \cd(y)- \cd \Bigl( \bigvee_{\substack{l \in U \cup V\\l<y}} l \Bigr)=b'
        \end{align*}
    and so $e_U x_V c_y^b$ is a relation of type $(ii)$.
    We claim that modulo relations of type $(i)$  
        \[S(f_1,f_2) \equiv e_U x_V c_y^b.\]
    To obtain this, we show that if $f \in \mG$ with $f > g$ and $f \ngeq y$, then $U \cup V \cup \{f\}$ is not nested. 
    Suppose that $U\cup V \cup  \{f\}$ is nested and consider the antichain $Y'= \max (Y \smallsetminus \{g\} \cup \{f\}) \subseteq U \cup V \cup \{f\}$.
    The set $Y'$ is nested and by Proposition \ref{prop:comb_nested_sets} the $\mG$-factors of $\bigvee (Y \smallsetminus \{g\} \cup \{f\})$ are exactly the elements of $Y'=\{y_1,\dots,y_k,f\}$. We have
        \[\bigvee (Y \smallsetminus \{g\} \cup \{f\})=y' \vee f \geq y' \vee g=y.\]
    By definition of $\mG$-factor we have two cases:
        \begin{itemize}
            \item $y\leq y_i$ for a certain $i$. But this is impossible since $y_i<y$;
            \item $y \leq f$ contrary to the assumption $f\not \geq y$.
        \end{itemize}
    Thus, $U \cup V \cup \{f\}$ is not nested and $S(f_1,f_2)$ reduces to zero.
    \item [Case (ii)-(ii)]  Let $f_1=e_T x_S c_g^d$ and $f_2=e_A x_B c_h^f$ be two relations of type $(ii)$.
    We may assume $\bigvee(S \cup T)<g$ and $\bigvee(A \cup B)<h$ (see Remark \ref{remark:secondarelazione}).
    We have the following cases:

    First $g=h$ and $d \leq f$, then the $S$-polynomial is
        \[S(f_1,f_2)=e_{T \cup A} x_{S \cup B} c_g^d (x_g^{f-d}-c_g^{f-d});\]
    which is divisible by $e_T x_S c_g^d$.
    
    Second $g \neq h$, $g \notin B$, $h \notin S$, we also assume that $h \succ g$. The $S$-polynomial is
        \[S(f_1,f_2)=e_{T \cup A} x_{S \cup B}(x_h^f c_g^d-x_g^dc_h^f).\]
    Let $y=e_{T \cup A} x_{S \cup B} c_g^d (c_h^f-x_h^f)$, which is divisible by $f_1=e_T x_S c_g^d$ and has a leading term smaller or equal to that of $S(f_1,f_2)$. 
    The remainder
        \[S(f_1,f_2)+y=e_{T \cup A} x_{S \cup B}(c_g^d-x_g^d)c_h^f,\]
    is divisible by $f_2=e_A x_B c_h^f$, and reduces to zero.
    
    Finally, assume $g \neq h$ and $g \in B$, by Remark \ref{remark:secondarelazione} we must have $g \prec h$ and $h \notin S$.
    Let $U = T\cup A$ and $V=S\cup B \smallsetminus \{g\}$, the $S$-polynomial is
        \[S(f_1,f_2)=e_U x_V (x_h^f c_g^d-x_g^d c_h^f).\]
    Let $y=e_U x_V c_g^d (c_h^f-x_h^f)$, which is divisible by $f_1=e_T x_S c_g^d$ and has a leading term smaller or equal to that of $S(f_1,f_2)$.
    It remains to verify that
        \[S(f_1,f_2)+y=e_U x_V (c_g^d-x_g^d)c_h^f\]
    reduces to zero.
    First, through division by $f_2=e_A x_B c_h^f$, since $g \in B$ we have 
        \begin{equation} \label{eq:to_be_reduced}
         S(f_1,f_2)+y \equiv e_U x_V \Big( \sum_{k>g}x_k \Big)^d c_h^f.
        \end{equation}
    We claim that for any $k>g$, $k \not \geq h$ we have
        \[e_U x_V x_k c_h^f \equiv e_U x_V x_k c_{h \vee k}^f \equiv 0\]
    modulo relations of type $(i)$ and $(ii)$. 
    For the first claim, if $p \geq h$ but $p \ngeq h \vee k$ then $\{p,k\}$ is not nested 
    by Proposition \ref{prop:comb_nested_sets} and we can divide by the relation $x_h x_p$ of type $(i)$.
    The last claim follows since 
    $h \vee k \in \mG$ by Proposition \ref{prop:comb_nested_sets} and
        \begin{align*}
        f &\geq \cd(h)-\cd(\bigvee (A \cup B))\\
        &\geq \cd(h \vee k)-\cd(\bigvee (U \cup V \cup \{k\})).
        \end{align*}
    Therefore, the element in eq.\ \eqref{eq:to_be_reduced} reduces to
    \[S(f_1,f_2)+y \equiv e_U x_V c_h^{d+f}.\]
    Since $d+f \geq \cd(h)-\cd(\bigvee (U \cup V \cup \{k\}))$
    we may divide by $e_U x_V c_h^{d+f}$ and reduce to zero.
\end{description}
This completes the proof.
\end{proof}

\begin{corollary} 
\label{cor:base_additiva_e_x}
The algebra $B(P, \mathcal{G})$ has an additive basis given by the monomials $e_T x_S^{b}$ such that $S\cup T \in n(P, \mathcal{G})$
and $0<b(s)< \cd(s) - \cd (\bigvee (S\cup T)_{<s})$ for all $s\in S$.
\end{corollary}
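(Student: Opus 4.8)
The statement follows from Theorem~\ref{thm:grobner_basis} together with the standard fact that, once a Gröbner basis of an ideal $I$ is known, the images of the monomials not divisible by any of its leading monomials form a vector‑space basis of the quotient (the usual Gröbner machinery applies verbatim to the free graded‑commutative algebra $\mathcal R(\mG)$, whose monomials $e_T x_S^b$ carry a term order and a division algorithm). So the plan is: (1) record the leading monomials of the type‑(i) and type‑(ii) generators; (2) describe the resulting \emph{standard monomials}; (3) check this description is the one in the statement.

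For (1): a type‑(i) generator $e_T x_S$ is itself a monomial, hence its own leading term. For a type‑(ii) generator $e_T x_S c_g^b$, every monomial of $c_g=\sum_{h\ge g}x_h$ has the same $x$‑degree, so on the support of $c_g$ the deg‑lex order is just lex; since $\succ$ extends the partial order of $\mG$, every $h>g$ satisfies $h\succ g$, hence $x_h\prec x_g$, so $x_g$ is the lex‑largest variable occurring in $c_g$ and the leading monomial of $e_T x_S c_g^b$ is $e_T x_S x_g^b$. By Remark~\ref{remark:secondarelazione} every type‑(ii) generator is a multiple of a reduced one, with $S\cap T=\emptyset$, $S\cup T$ an antichain, $g\notin S\cup T$, $\bigvee(S\cup T)<g$ and minimal exponent $b=\cd(g)-\cd(\bigvee(S\cup T))$; consequently the monomial ideal of leading terms of the Gröbner basis is generated by the $e_T x_S$ with $S\cup T\notin n(P,\mG)$ together with the $e_T x_S x_g^b$ attached to such reduced data.

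For (2)–(3): fix a monomial $e_U x_R^b$ of the standard basis of $\mathcal R(\mG)$, so $b(r)>0$ for all $r\in R$. It avoids every type‑(i) leading term exactly when no subset of $R\cup U$ fails to be nested, i.e.\ when $R\cup U\in n(P,\mG)$ (a subset of a nested set is nested). Assuming this, I would analyse when $e_U x_R^b$ is divisible by a reduced type‑(ii) leading monomial $e_{T'}x_{S'}x_g^{b'}$: this forces $g\in R$, $S'\subseteq R$, $T'\subseteq U$ and $b(g)\ge b'=\cd(g)-\cd(\bigvee(S'\cup T'))$; conversely, for a given $g\in R$, the choice $S'=\max((R\cup U)_{<g})\cap R$, $T'=\max((R\cup U)_{<g})\setminus R$ is an admissible reduced datum, and it maximises $\cd(\bigvee(S'\cup T'))$ among all admissible choices, hence minimises $b'$. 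Admissibility here uses Proposition~\ref{prop:comb_nested_sets}: $\max((R\cup U)_{<g})$ is an antichain in the nested set $R\cup U$, and its join is \emph{strictly} below $g$ — an antichain of size $\ge 2$ in a nested set cannot have its join in $\mG$, while an antichain of size $\le 1$ has join below $g$ or equal to $\hat{0}\ne g$ (this strictness also guarantees $b'\ge 1$, using that $\cd$ is strictly increasing on the lattice of flats). Therefore $e_U x_R^b$ avoids all type‑(ii) leading terms iff $b(g)<\cd(g)-\cd(\bigvee(R\cup U)_{<g})$ for every $g\in R$. Combining the two conditions, the standard monomials are precisely the $e_T x_S^b$ with $S\cup T\in n(P,\mG)$ and $0<b(s)<\cd(s)-\cd(\bigvee(S\cup T)_{<s})$ for all $s\in S$, which is the claim.

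The only genuinely delicate point is step~(2): one must check that the ``optimal'' truncation $\max((R\cup U)_{<g})$ is simultaneously a legitimate reduced relation and the one yielding the sharpest divisibility test, and that its exponent is positive. Both facts rest on the combinatorics of nested sets (Proposition~\ref{prop:comb_nested_sets} and Remark~\ref{remark:secondarelazione}); everything else is the routine translation of a Gröbner basis into its standard monomials.
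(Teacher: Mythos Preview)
Your argument is correct and follows exactly the paper's approach: extract the standard monomials from the Gr\"obner basis of Theorem~\ref{thm:grobner_basis}; the paper states this in one line while you have spelled out the details. One minor point: in verifying that $\bigvee\max((R\cup U)_{<g})$ is \emph{strictly} below $g$, your stated reason only shows this join is not in $\mG$ (hence $\neq g$); to obtain $\leq g$, apply Proposition~\ref{prop:comb_nested_sets}(3) to the nested subset $\max((R\cup U)_{<g})\cup\{g\}\subseteq R\cup U$, whose unique maximal element---and hence unique $\mG$-factor of its join---is $g$, forcing that join to equal $g$.
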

\begin{proof}
An additive basis for the algebra $B(P, \mathcal{G})$ is given by all the monomials which are not divisible by the initial monomials of the Gr\"obner basis provided by Theorem \ref{thm:grobner_basis}. The proof follows immediately. 
\end{proof}

We now provide a second presentation for the algebra $B(P,\mathcal{G})$ using a different set of generators ($\tau_g$ and $\sigma_g$ for $g \in \mathcal{G}$). 
This second presentation is inspired by the work of Yuzvinsky \cite{RatModel,SmallRatModel} and in the matroidal case coincides with the simplicial presentation of Backman, Eur, and Simpson \cite{BacEurCon}.

\begin{theorem} 
\label{thm:new_gen}
The morphism 
\[ \varphi \colon \Lambda [\tau_g \mid g \in \mG] \otimes \mathbb{Q}[\sigma_g \mid g \in \mG] \to B(P,\mG) \]
defined by $\varphi(\tau_g)= \sum_{h \geq g} e_h$ and by $\varphi(\sigma_g)= \sum_{h \geq g} x_h$, is surjective with kernel generated by:
\begin{enumerate}[label=(\roman*)]
    \item $\prod_{t \in T} (\tau_t - \tau_g) \prod_{s \in S} (\sigma_s - \sigma_g)$ for $S\cup T$ a non-trivial antichain and $g= \bigvee (S\cup T) \in \mG$,
    \item $\prod_{t \in T} (\tau_t- \tau_g) \prod_{s \in S} (\sigma_s - \sigma_g) \sigma_g^b$ for $g \in \mG$ and $b=\cd (g) - \cd (\bigvee(S\cup T)_{<g})$. \let\qedsymbol\openbox\qedhere
\end{enumerate}
\end{theorem}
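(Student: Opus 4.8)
The plan is to verify surjectivity first, then identify the kernel. For surjectivity, note that $\varphi(\sigma_g) = c_g$ in the notation of the previous definitions, and $\varphi(\tau_g) = \sum_{h \geq g} e_h$. I would show that the generators $e_g$ and $x_g$ of $B(P,\mG)$ lie in the image by a downward induction on $g \in \mG$ with respect to the order on $L$ (starting from maximal flats in $\mG$): for $g$ maximal, $\varphi(\tau_g) = e_g$ and $\varphi(\sigma_g) = x_g$; for general $g$, the difference $\varphi(\sigma_g) - \sum_{h > g} x_h = x_g$ expresses $x_g$ in terms of $\sigma_g$ and strictly larger generators already known to be in the image, and similarly for $e_g$. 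Hence $\varphi$ is surjective.

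Next I would check that the listed elements (i) and (ii) lie in $\ker \varphi$. Applying $\varphi$ to a type-(i) element $\prod_{t\in T}(\tau_t - \tau_g)\prod_{s\in S}(\sigma_s-\sigma_g)$ gives $\prod_{t\in T}(\sum_{h\geq t, h \not\geq g} e_h)\prod_{s\in S}(\sum_{h \geq s, h\not\geq g} x_h)$ up to sign, since $\varphi(\tau_t - \tau_g) = \sum_{h \geq t} e_h - \sum_{h\geq g} e_h$ and every $h \geq g$ satisfies $h \geq t$ (as $g = \bigvee(S\cup T) \geq t$), so this telescopes to $\sum_{h \geq t,\, h \not\geq g} e_h$. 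Each resulting monomial $e_{h_1}\cdots x_{h_1'}\cdots$ with $h_i \geq t_i$, $h_j' \geq s_j$, none $\geq g$, has support whose elements sit below or incomparable to $g = \bigvee(S\cup T)$; using Proposition \ref{prop:comb_nested_sets}(2) and the fact that $S\cup T$ is an antichain with join in $\mG$, one shows such a support set together with the implied relations is not $\mG$-nested, so the monomial is killed by a type-(i) relation of $I(\mG)$. Multiplying by $\varphi(\sigma_g)^b = c_g^b$ and comparing with the exponent bound $b = \cd(g) - \cd(\bigvee(S\cup T)_{<g})$ handles case (ii) via the type-(ii) relations of $I(\mG)$.

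Finally, to see that (i) and (ii) generate the whole kernel, I would compare Hilbert series: let $J$ be the ideal generated by (i) and (ii), so $\varphi$ factors through $\Lambda[\tau]\otimes\Q[\sigma]/J \twoheadrightarrow B(P,\mG)$, and it suffices to show this is injective, equivalently that $\dim_\Q \bigl(\Lambda[\tau]\otimes\Q[\sigma]/J\bigr)_{(p,q)} \leq \dim_\Q B(P,\mG)_{(p,q)}$ in each bidegree. For the latter I already have the explicit monomial basis $e_T x_S^b$ from Corollary \ref{cor:base_additiva_e_x}. For the former I would construct a spanning set of $\Lambda[\tau]\otimes\Q[\sigma]/J$ of the same cardinality in each bidegree: the natural candidates are $\prod_{t\in T}\tau_t \prod_{s\in S}\sigma_s^{b(s)}$ for $S\cup T \in n(P,\mG)$ with $0 < b(s) < \cd(s)-\cd(\bigvee(S\cup T)_{<s})$, i.e.\ "nested monomials" indexed exactly as in Corollary \ref{cor:base_additiva_e_x}. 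One shows these span modulo $J$: given any monomial in the $\tau,\sigma$, if its support is not nested, a type-(i) relation lets us rewrite it in terms of monomials with smaller or incomparable support (a straightening/normal-form argument, ordering by the poset structure on supports); if an exponent $b(s)$ is too large, a type-(ii) relation lowers it. The main obstacle is exactly this last straightening argument — proving that repeated application of relations (i) and (ii) terminates and yields the claimed nested spanning set — since unlike the Gröbner-basis computation for $I(\mG)$ we are now working in the $\tau,\sigma$ presentation where the relations are no longer monomial; I expect to handle it by transporting the deg-lex Gröbner basis of Theorem \ref{thm:grobner_basis} through a triangular change of variables $x_g \leftrightarrow \sigma_g$, $e_g \leftrightarrow \tau_g$ (triangular with respect to the order on $\mG$), which sends initial terms to initial terms and hence carries the monomial basis of Corollary \ref{cor:base_additiva_e_x} to a basis of $\Lambda[\tau]\otimes\Q[\sigma]/J$, giving the matching dimension count and completing the proof.
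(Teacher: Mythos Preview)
Your strategy matches the paper's: surjectivity via the unitriangular change of variables, kernel containment by expanding $\varphi$ of each relation and reducing in $B(P,\mG)$, and kernel equality by a dimension comparison through initial ideals. Two places deserve more care. First, your one-line treatment of type~(ii) hides the main technical step (the paper's Lemma~\ref{lemma_tecnico}): after expanding, a typical term is $y_A c_g^b$ with $a_i\ge s_i$ and $a_i\not\ge g$, but here $\bigvee(S\cup T)<g$, so $A$ may well be $\mG$-nested and $y_A\neq 0$; one must first show $h:=\bigvee A\vee g\in\mG$ and $y_A c_g^b = y_A c_h^b$ (killing all $x_{g'}$ with $g'\ge g$, $g'\not\ge h$ via non-nestedness), and only then does the type-(ii) relation of $I(\mG)$ apply with the correct exponent bound. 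Second, the ``transport the Gr\"obner basis'' idea is slightly off: the generators (i),(ii) in the $\tau,\sigma$ variables are \emph{not} the images of the old generators under the change of variables, so nothing is literally transported. What the paper does---and what your spanning-set heuristic really needs---is to compute the leading monomials of the \emph{new} generators directly (they are $\tau_T\sigma_S$ and $\tau_T\sigma_S\sigma_g^b$, since $t,s\prec g$ forces $\tau_t,\sigma_s\succ\tau_g,\sigma_g$), let $K$ be the monomial ideal they generate, and observe that the monomials outside $K$ are in bijection with those outside $\In(I(\mG))$; this gives $\dim\mathcal{C}/J\le\dim\mathcal{C}/K=\dim\mathcal{R}(\mG)/I(\mG)$ and finishes the argument.
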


We will identify the elements $\tau_g, \sigma_g$ with their images in $B(P,\mG)$. 
In the realizable case the element $\sigma_g$ is the fundamental class of $D_g$, the total transform of the flat $g$. 
Analogously, $\tau_g$ is the sum of irreducible components of the total transform of the flat $g$.
The elements $\sigma_g$ can be also seen in the following way: consider the inclusion $Y_{\mG} \hookrightarrow \prod_{h\in \mG} \mathbb{P}^{\cd(h)-1}$ of \cite{DeConciniProcesi}, $\sigma_g$ is the pullback of the hyperplane class of the factor $\mathbb{P}^{\cd(g)-1}$.

Before the proof of Theorem \ref{thm:new_gen} we need a couple of technical lemmas.

\begin{lemma}\label{lemma_tecnico}
Let $g \in \mG$ and $S=\{s_1, \dots s_n \} \subset \mG$ such that $\bigvee S \leq g$, set $b=\cd(g)-\cd(\bigvee S)$.
Consider a set $A=\{a_1, \dots, a_n \} \subset \mG$ such that $a_i\geq s_i$ and $a_i \not \geq g$ for all $i=1,\dots, n$.
Then 
\[ y_A c_g^b=0, \]
where $y_{a_i}$ is equal to $e_{a_i}$ or $x_{a_i}$ and $y_A=y_{a_1} \cdots y_{a_n}$.
\end{lemma}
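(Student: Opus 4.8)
The plan is to rewrite $y_A c_g^{b}$ in a form to which a defining relation of type (ii) applies directly, namely to replace $c_g$ by $c_{g^{*}}$ where $g^{*}:=g\vee\bigvee A$ (join in the lattice of flats; $g^{*}=g$ if $A=\emptyset$). First I would check that $g^{*}\in\mG$: building $g\vee a_1\vee\cdots\vee a_k$ up one element at a time, every partial join lies above $g$, hence above every $s_i$ (since $s_i\le\bigvee S\le g$), so its meet with the next $a_{k+1}\ge s_{k+1}$ is $\ge s_{k+1}>\hat 0$, and Proposition \ref{prop:comb_nested_sets}(2) keeps us inside $\mG$ at each stage.

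Second, I would show $y_A c_g^{b}=y_A c_{g^{*}}^{b}$ already in $B(P,\mG)$. Expanding $c_g=\sum_{h\ge g}x_h$, take a summand $y_A x_h$. If $h\ge a_i$ for all $i$ then $h\ge g^{*}$; otherwise $h\not\ge a_i$ for some $i$, in which case $a_i$ and $h$ are incomparable (as $a_i\ge h$ would force $a_i\ge g$) while $a_i\wedge h\ge a_i\wedge g\ge s_i>\hat 0$, so $a_i\vee h\in\mG$ by Proposition \ref{prop:comb_nested_sets}(2), i.e.\ $\{a_i,h\}$ is not $\mG$-nested; hence $y_{a_i}x_h$, and therefore $y_A x_h$, is a relation of type (i) and vanishes. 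Thus $y_A c_g=y_A c_{g^{*}}$, and, the $x$-variables being central, one bootstraps to $y_A c_g^{b}=y_A c_{g^{*}}^{b}$ by pairing $y_A$ successively with the remaining copies of $c_g$.

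Third, I would identify $y_A c_{g^{*}}^{b}$ with a relation of type (ii). Set $T=\{a_i\mid y_{a_i}=e_{a_i}\}$ and $S=\{a_i\mid y_{a_i}=x_{a_i}\}$; if two equal $a_i$ both carry exterior generators then $y_A=0$ and we are done, and otherwise $y_A c_{g^{*}}^{b}$ is, up to sign and an evident monomial factor accounting for repetitions, divisible by $e_T x_S c_{g^{*}}^{b}$. Since every $a_i$ satisfies $a_i<g^{*}$ (because $a_i\le\bigvee A\le g^{*}$, and $a_i=g^{*}$ is impossible as $a_i\not\ge g$), we get $\bigvee(S\cup T)_{<g^{*}}=\bigvee A$, so the exponent threshold in Definition \ref{def:B}(ii) is $\cd(g^{*})-\cd(\bigvee A)$. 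It therefore suffices to verify
\[
b=\cd(g)-\cd(\textstyle\bigvee S)\ \ge\ \cd(g^{*})-\cd(\textstyle\bigvee A),
\]
equivalently $\cd(g)+\cd(\bigvee A)\ge\cd(g\vee\bigvee A)+\cd(\bigvee S)$. This is submodularity (C3) for the flats $g$ and $\bigvee A$ (for which the join is the closure of the union and the meet the intersection, both leaving $\cd$ unchanged), combined with $\cd(g\wedge\bigvee A)\ge\cd(\bigvee S)$, which holds since $\bigvee S$ lies below both $g$ and $\bigvee A$. Putting the three steps together yields $y_A c_g^{b}=0$.

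The step needing the most care is the replacement of $c_g$ by $c_{g^{*}}$: one must be sure that $g^{*}$ genuinely lies in $\mG$ — so that $c_{g^{*}}$ is one of the classes actually defined and $e_T x_S c_{g^{*}}^{b}$ is a bona fide type-(ii) relation — and that passing to the $b$-th power loses no summand. Once this is in place the argument closes on the submodular inequality, with the prescribed exponent $b=\cd(g)-\cd(\bigvee S)$ being exactly what makes it go through.
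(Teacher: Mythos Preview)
Your proof is correct and follows essentially the same three-step strategy as the paper: set $g^{*}=g\vee\bigvee A$, show $g^{*}\in\mG$, replace $c_g$ by $c_{g^{*}}$ using type (i) relations, and finish with a type (ii) relation via submodularity. The only differences are local: for $g^{*}\in\mG$ the paper argues with the unique $\mG$-factor of $g^{*}$ above $g$ rather than your inductive application of Proposition~\ref{prop:comb_nested_sets}(2), and for the vanishing of $y_A x_h$ with $h\ge g$, $h\not\ge g^{*}$ the paper shows that all of $A\cup\{h\}$ fails to be nested (again via factors), whereas you isolate a single bad pair $\{a_i,h\}$---your route is slightly more direct here.
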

\begin{proof}
Define the element $h= \bigvee A \vee g$, we first prove the equality 
$y_A \sigma_g^b= y_A \sigma_h^b$ and then $y_A \sigma_h^b=0$.

We show that $h \in \mathcal{G}$.
Let $h' \in \mathcal{G}$ be the unique $\mathcal{G}$-factor of $h$ such that $h'\geq g$.
For each $a_i$ we have $h' \wedge a_i\geq s_i$ and so $a_i \vee h' \in \mathcal{G}$.
By maximality of $h'$ we have $a_i \leq h'$ for all $i$. Therefore $h=h' \in \mathcal{G}$.

Firstly, let $g'\in \mathcal{G}$ be any element such that $g'\geq g$ and $g' \not \geq h$.
Suppose that $A \cup \{g'\}$ is a $\mathcal{G}$-nested set.
Then the $\mathcal{G}$-factors of $h\vee g'$ are the maximal elements of $A \cup \{g'\}$ by Proposition \ref{prop:comb_nested_sets}.
So there exists an element in $A\cup \{g'\}$ bigger or equal to $h$, this is impossible since $g'\not \geq h$ and $a_i \not \geq g$.
It follows that $A \cup \{g'\}$ is not $\mathcal{G}$-nested and $y_Ax_{g'}=0$.

Finally, we show that $y_A \sigma_h^b=0$.
Indeed, $b\geq \cd(g)- \cd(g \wedge \bigvee A)$ which is  bigger than $\cd(h)- \cd(\bigvee A)$ by submodularity of $\cd$.
Applying the relations of type $(ii)$ in Definition \ref{def:B} we complete the proof.
\end{proof}

\begin{lemma}\label{lem:ker2}
The elements $\prod_{t \in T} (\tau_t - \tau_g) \prod_{s \in S} (\sigma_s - \sigma_g) \sigma_g^b$ for $g \in \mathcal{G}$,
and $b=\cd (g) - \cd (\bigvee(S\cup T)_{<g})$ belong to the kernel of $\varphi$.
\end{lemma}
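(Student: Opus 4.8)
The plan is to show directly that $\varphi$ sends the element $\prod_{t\in T}(\tau_t-\tau_g)\prod_{s\in S}(\sigma_s-\sigma_g)\,\sigma_g^b$ to $0$ in $B(P,\mG)$, with Lemma \ref{lemma_tecnico} doing essentially all the work. First I would dispose of the trivial case: if $g\in S\cup T$ then one of the factors $\tau_g-\tau_g$ or $\sigma_g-\sigma_g$ is zero, so the element already vanishes in the source algebra; hence I may assume $g\notin S\cup T$.

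Next I would reduce to the situation where every index lies strictly below $g$. Put $S_0=\{s\in S:s<g\}$ and $T_0=\{t\in T:t<g\}$; then $(S\cup T)_{<g}=S_0\cup T_0$, so the exponent $b=\cd(g)-\cd(\bigvee(S_0\cup T_0))$ is unchanged, and the given element factors as $\bigl(\prod_{t\in T_0}(\tau_t-\tau_g)\prod_{s\in S_0}(\sigma_s-\sigma_g)\,\sigma_g^b\bigr)$ times the remaining factors $(\tau_t-\tau_g)$, $(\sigma_s-\sigma_g)$ indexed by the elements of $S\cup T$ not below $g$, all of which lie in the source algebra $\Lambda[\tau_g\mid g\in\mG]\otimes\Q[\sigma_g\mid g\in\mG]$. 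Since $\varphi$ is a ring homomorphism, $\ker\varphi$ is an ideal, so it suffices to prove that the ``reduced'' factor, in which all indices are $<g$, lies in $\ker\varphi$.

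For the reduced factor I would use that $t\leq g$ implies $\{h\in\mG:h\geq g\}\subseteq\{h\in\mG:h\geq t\}$, whence $\varphi(\tau_t-\tau_g)=\sum_{h\geq t,\,h\not\geq g}e_h$, and likewise $\varphi(\sigma_s-\sigma_g)=\sum_{h\geq s,\,h\not\geq g}x_h$, while $\varphi(\sigma_g)=c_g$ by definition. Multiplying out these sums expresses $\varphi$ of the reduced factor as a sum of monomials $y_A\,c_g^b$, where $A$ is obtained by choosing, for each $z\in S_0\cup T_0$, some $a_z\in\mG$ with $a_z\geq z$ and $a_z\not\geq g$, and $y_{a_z}$ equals $e_{a_z}$ if $z\in T_0$ and $x_{a_z}$ if $z\in S_0$. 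As $\bigvee(S_0\cup T_0)\leq g$ and $b=\cd(g)-\cd(\bigvee(S_0\cup T_0))$, Lemma \ref{lemma_tecnico} applies to each such monomial and gives $y_A\,c_g^b=0$ in $B(P,\mG)$; summing over all choices, $\varphi$ of the reduced factor is zero, as desired.

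I do not expect a genuine obstacle: once Lemma \ref{lemma_tecnico} is in hand, the statement is a formal consequence. The two points needing a little care are (a) the reduction step — one must check that replacing $S,T$ by $S_0,T_0$ leaves $b$ untouched and that the discarded factors are honest elements of the source algebra, so that the ``$\ker\varphi$ is an ideal'' argument is legitimate — and (b) verifying that every monomial produced in the expansion satisfies the hypotheses of Lemma \ref{lemma_tecnico}, noting in particular that that lemma requires no antichain or disjointness condition on the indices, so repetitions among the $a_z$ (which would in any case kill the corresponding exterior monomial) cause no difficulty.
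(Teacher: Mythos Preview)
Your proposal is correct and follows essentially the same approach as the paper: reduce to the case where all indices lie below $g$, expand $\varphi$ of the product as a sum of monomials $y_A c_g^b$ with each $a_z\geq z$ and $a_z\not\geq g$, and kill every term with Lemma~\ref{lemma_tecnico}. The only cosmetic difference is that the paper invokes Remark~\ref{remark:secondarelazione} to further reduce to $S\cup T$ an antichain disjoint union, whereas you stop after the reduction to indices $<g$; since Lemma~\ref{lemma_tecnico} does not require the antichain condition, your lighter reduction is already sufficient.
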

\begin{proof}
From the argument of Remark \ref{remark:secondarelazione} we may assume that $S\cap T= \emptyset$, $S\sqcup T$ is an antichain, and $\bigvee (S\cup T) \leq g$.

We have
\[\varphi \Bigl( \prod_{t \in T} (\tau_t - \tau_g) \prod_{s \in S} (\sigma_s - \sigma_g) \sigma_g^b \Bigr) = \sum_{A,B} e_Ax_B \Bigl( \sum_{l\geq g} x_l \Bigr)^b,\]
where the sum is taken over the sets $A=(a_i)_i$ and $B=(b_j)_j$ such that $a_i \geq t_i$, $b_j \geq s_j$, $a_i \not \geq g$, and $b_j \not \geq g$.
Each term $e_Ax_B \Bigl( \sum_{l\geq g} x_l \Bigr)^b$ is zero by Lemma \ref{lemma_tecnico}.
\end{proof}

\begin{proof}[Proof of Theorem \ref{thm:new_gen}]
Let $\prec$ be a reverse linear extension of the order on $\mG$ with $x_{g} \prec e_{h}$ and $\sigma_{g} \prec \tau_{h}$ for each $g,h$.
Now we consider the basis formed respectively by the $\sigma_g$, $\tau_h$  and by the $x_g$,$e_h$ ordered with $\prec$; with respect of these two basis the matrix associated to the morphism $\varphi$ is upper unitriangular and therefore invertible.
It follows that the map $\varphi$ is surjective.

We want to prove that $\ker \varphi$ is generated by relations of type $(i)$ and $(ii)$ of Theorem \ref{thm:new_gen}.
From Lemma \ref{lem:ker2} we know that elements of the form $(ii)$ belong to $\ker \varphi$.
The relations $(i)$ are a particular case of relations $(ii)$ with $b=0$.
Let $J$ be the ideal generated by relations of type $(i)$ and $(ii)$, we denote also by $\In (J)$ the initial ideal of $J$.
It suffices to prove that
\[\dim \faktor{\mathcal{C}}{\In (J)} \leq  \dim \faktor{\mathcal{R}(\mG)}{\In(I(\mG))}\]
where $ \mathcal{C}=\Lambda [\tau_g \mid g \in \mG] \otimes \mathbb{Q}[\sigma_g \mid g \in \mG] $.
Let $K \subseteq \In(I)$ be the ideal generated by the leading monomial of relation of type $(i)$ and $(ii)$, since 
\[ \dim \faktor{\mathcal{C}}{K} \geq \dim \faktor{\mathcal{C}}{\In(J)}\]
it suffices to check that 
\begin{equation}
\label{eq:disuguaglianza}
    \dim \faktor{\mathcal{C}}{K} = \dim \faktor{\mathcal{R}(\mG)}{\In(I(\mG))}.
\end{equation}
The leading monomials of relation type $(i)$ are of the form $\tau_T \sigma_S$ where $S \cup T$ is not $\mG$-nested; the leading monomials of relation type $(ii)$ are of the form $\tau_T \sigma_S \sigma_g^{b}$ whenever $S, T \subseteq \mathcal{G}$, $g \in \mathcal{G}$ and $b=\cd (g) - \cd (\bigvee(S\cup T)_{<g})$.
The monomials in $\mathcal{C}$, which are not divisible by the these two type of leading monomials, are of the form $\tau_T \sigma_S^m$ with  $S\cup T \in n(P,\mG)$ and $0<m(s)< \cd(s) - \cd (\bigvee (S\cup T)_{<s})$ for all $s\in S$.
Hence eq.\ \eqref{eq:disuguaglianza} follows.
Since the map $\varphi$ is surjective it is also injective; and the initial ideal $\In(J)$ is equal to $K$.
Therefore, relations of type $(i)$ and $(ii)$ form a Gr\"obner basis for $\ker \varphi$.
\end{proof}

From the proof of Theorem \ref{thm:new_gen} we obtain also the following corollary:
\begin{corollary} \label{cor:base_additiva_sigma_tau}
The set of monomials $\tau_T \sigma_S^m$ with  $S\cup T \in n(P,\mG)$ and, for each $s\in S$, $0<m(s)< \cd(s) - \cd (\bigvee (S\cup T)_{<s})$ is an additive basis of $B(P,\mG)$.
\end{corollary}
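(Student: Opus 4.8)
The plan is to extract the basis directly from the Gröbner basis computation carried out in the proof of Theorem~\ref{thm:new_gen}, in exactly the way Corollary~\ref{cor:base_additiva_e_x} was extracted from Theorem~\ref{thm:grobner_basis}. Recall that in that proof we fixed a reverse linear extension $\prec$ of the order on $\mG$ with $\sigma_g \prec \tau_h$ for all $g,h$, and showed that the relations of types (i) and (ii) of Theorem~\ref{thm:new_gen} form a Gröbner basis of $J = \ker \varphi$ inside $\mathcal{C} = \Lambda[\tau_g \mid g \in \mG] \otimes \mathbb{Q}[\sigma_g \mid g \in \mG]$, and that its initial ideal equals the monomial ideal $K$.

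First I would invoke the standard fact that the images in $\mathcal{C}/J \cong B(P,\mG)$ of those monomials of $\mathcal{C}$ not lying in $\In(J) = K$ form a $\mathbb{Q}$-linear basis. It then remains only to describe these standard monomials explicitly. By the description of $K$ recorded in the proof of Theorem~\ref{thm:new_gen}, a monomial $\tau_T \sigma_S^m$ of $\mathcal{C}$ lies outside $K$ precisely when it is divisible neither by any leading monomial $\tau_{T'}\sigma_{S'}$ of a type-(i) relation nor by any leading monomial $\tau_{T'} \sigma_{S'} \sigma_g^{b}$ of a type-(ii) relation with $b=\cd(g) - \cd(\bigvee(S'\cup T')_{<g})$. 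Unwinding these divisibility conditions yields exactly the requirement that $S\cup T \in n(P,\mG)$ together with $0 < m(s) < \cd(s) - \cd(\bigvee(S\cup T)_{<s})$ for every $s \in S$, which is the set of monomials in the statement.

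The only point requiring any care — and the one already disposed of, implicitly, while establishing eq.~\eqref{eq:disuguaglianza} — is that the monomial ideal $K$, generated by the leading terms of the type-(i) and type-(ii) relations (whose indexing sets $S\cup T$ range only over minimal witnesses, i.e.\ non-trivial antichains with join in $\mG$), actually coincides with the ideal generated by all $\tau_T\sigma_S$ with $S\cup T \notin n(P,\mG)$ and all $\tau_T\sigma_S\sigma_g^b$ with $b$ as above; this is what legitimizes the clean combinatorial description of the standard monomials. Since that identification is precisely the content of the paragraph computing $\dim \faktor{\mathcal{C}}{K}$ in the proof of Theorem~\ref{thm:new_gen}, nothing further is needed. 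As a consistency check one may note that the resulting basis is indexed by the same data $(T,S,b)$ as the basis $e_T x_S^b$ of Corollary~\ref{cor:base_additiva_e_x}, reflecting the unitriangularity of $\varphi$.
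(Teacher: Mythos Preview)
Your proposal is correct and follows exactly the route the paper intends: the paper's own ``proof'' consists solely of the sentence ``From the proof of Theorem~\ref{thm:new_gen} we obtain also the following corollary,'' and what you have written is precisely the spelling-out of that sentence --- invoking that the type-(i) and type-(ii) relations form a Gr\"obner basis with initial ideal $K$, and then reading off the standard monomials as in the paragraph establishing eq.~\eqref{eq:disuguaglianza}. Your remark about the subtlety (that $K$, defined via leading terms of relations indexed by antichains, actually cuts out exactly the non-nested monomials) is apt and is handled in the paper in the same place you cite.
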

See Section \ref{sect:example} for an example of the application of Corollary \ref{cor:base_additiva_e_x} and Corollary \ref{cor:base_additiva_sigma_tau}. 

\section{Generalized Goresky-MacPherson formula} \label{sect:GMP_formula}

In this section we generalize the Goresky-MacPherson formula (see \cite{GoreskyMacPherson}) to the non-realizable case and to arbitrary building set.
The choice of the minimal building set yields a significantly smaller nested set complex and it can be useful in practical computations.
Other generalizations of this formula can be found in \cite{EquivGoreskyMacPherson,deshpande,MociPagaria}.

\subsection{Critical monomials}
\begin{definition}
A \textit{standard monomial} $e_Tx_S$ (resp.\ $\tau_T \sigma_S$) is a monomial that appears in the basis given by Corollary \ref{cor:base_additiva_e_x} (resp.\ by Corollary \ref{cor:base_additiva_sigma_tau}).
\end{definition}

For any standard monomial $\tau_T \sigma_S^b$ we extend the function $b$ by setting $b(g)=0$ for $g \not \in S$.

\begin{definition}
Let $\tau_T \sigma_S^b$ be a standard monomial. An element $g \in \mathcal{G}$ is called \textit{critical} with respect to the monomial $\tau_T \sigma_S^b$ if $g \in T$ and $b(g)=\cd(g)-\cd(\bigvee(S \cup T)_{<g})-1$.
If every element of $S\cup T$ is critical with respect to $\tau_T \sigma_S^b$ then the monomial $\tau_T \sigma_S^b$ is called \textit{critical}.
\end{definition}
Notice that if the monomial $\tau_T \sigma_S^b$ is critical, then $S \subseteq T$ and so the critical monomial is uniquely determined by $T$.

\begin{definition}
The \textit{critical monomial} associated with $T\in n(P,\mathcal{G})$ is 
\[ c\mu (T) = \tau_T \sigma_S^b,\]
where $S=\{t \in T \mid \cd(t)-\cd (\bigvee(S_{<t}))>1\}$ and $b(s)=\cd(s)-\cd (\bigvee(T_{<s}))-1$ for all $s \in S$.
\end{definition}
 In Theorem \ref{thm:main_CM} we will prove that the linear span of critical monomials form a subcomplex (indeed a subalgebra) of $B(P,\mathcal{G})$.
Moreover, we will show that this subalgebra is quasi-isomorphic to the Leray model.
This first lemma implies that the span of critical monomials is a sub-complex.

\begin{lemma} 
\label{lemma:diff_critical}
For every critical monomial $c\mu(T)$ we have
\[ \dd (c\mu(T)) = \sum_{t \in T \setminus \max(T)} (-1)^{\lvert T_{\prec t}\rvert} c\mu(T\setminus \{t\}). \let\qedsymbol\openbox\qedhere\]
\end{lemma}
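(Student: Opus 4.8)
The plan is to compute $\dd(c\mu(T))$ directly using the Leibniz rule and then identify which terms survive. Recall $c\mu(T) = \tau_T \sigma_S^b$ where $S$ and $b$ are determined by $T$ as in the definition. Writing $\tau_T = \tau_{t_1}\cdots \tau_{t_m}$ with $t_1 \prec \cdots \prec t_m$, the differential acts only on the exterior part, so
\[ \dd(c\mu(T)) = \sum_{i=1}^{m} (-1)^{\lvert T_{\prec t_i}\rvert}\, \tau_{T \setminus \{t_i\}}\, \dd(\tau_{t_i})\, \sigma_S^b, \]
and one must unwind $\dd(\tau_{t_i})$ through the identification $\tau_g = \sum_{h \geq g} e_h$. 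Since $\dd(e_h) = x_h$, we get $\dd(\tau_{t_i}) = \sum_{h \geq t_i} x_h = \sigma_{t_i}$ (using the second presentation of the model). So each summand becomes $(-1)^{\lvert T_{\prec t_i}\rvert}\, \tau_{T \setminus \{t_i\}}\, \sigma_{t_i}\, \sigma_S^b$, and the task reduces to showing: (a) if $t_i \in \max(T)$ this term vanishes in $B(P,\mG)$; (b) if $t_i \notin \max(T)$ this term equals $c\mu(T \setminus \{t_i\})$.

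For (a), suppose $t = t_i \in \max(T)$. Then $T \setminus \{t\}$ has the same maximal-join structure below each element, and $t$ is an isolated maximal node in the forest Hasse diagram of $T$ (Proposition \ref{prop:comb_nested_sets}(4)). I would analyze the exponent of $\sigma_t$ in $\tau_{T\setminus\{t\}}\sigma_{t}\sigma_S^b$: if $t \in S$, then the total power of $\sigma_t$ is $b(t) + 1 = \cd(t) - \cd(\bigvee T_{<t})$, which is exactly the threshold in a type-$(ii)$ relation (with $S \cup T$ there taken to be the factors of $t$ inside $T$), so the monomial is zero. If $t \notin S$, then $\cd(t) - \cd(\bigvee T_{<t}) = 1$, so again $b(t)+1 = 1$ hits the type-$(ii)$ relation with exponent $1$, forcing the monomial to vanish. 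Either way the term dies — this is the crux and where Lemma \ref{lemma_tecnico} / the defining relations of type $(ii)$ do the work.

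For (b), suppose $t = t_i \notin \max(T)$, so there is some $t' \in T$ with $t' \succ t$... more precisely $t' > t$ in $L$ (and one should check the relevant element is still in $T \setminus \{t\}$; since $t$ is not maximal in the forest, its parent lies in $T$). Now I claim $\tau_{T\setminus\{t\}}\,\sigma_t\,\sigma_S^b = c\mu(T \setminus \{t\})$. Set $T' = T \setminus \{t\}$ and let $S', b'$ be the data of $c\mu(T')$. The point is that removing a non-maximal $t$ does not change $\bigvee (T)_{<u}$ for $u \neq t$ having $t \not< u$... one must track carefully how $\bigvee(T_{<u})$ changes: for $u$ with $t \not\le u$ it is unchanged; for $u$ with $t < u$, the join $\bigvee(T_{<u})$ may drop, but since $t$'s parent (call it $p$) is still in $T'$ and $p \ge t$... actually since the Hasse diagram is a forest, every element of $T_{<u}$ that is $\le t$ is dominated by the parent $p \le u$, so $\bigvee(T'_{<u}) = \bigvee(T_{<u})$. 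Hence $S'$, $b'$ agree with $S$, $b$ on $T'$, and the extra factor $\sigma_t$ with $t \notin S'$ (if $t \notin S$) or the promotion is consistent (if $t \in S$, then $b(t) + 1 = \cd(t) - \cd(\bigvee T_{<t})$... but wait, in $c\mu(T')$ the element $t$ would still be in $S'$ with $b'(t) = \cd(t) - \cd(\bigvee T'_{<t}) - 1 = b(t)$, and we have one extra $\sigma_t$, total $b(t)+1$; this should be matched against the standard-monomial bound — one needs $b(t) + 1 < \cd(t) - \cd(\bigvee T'_{<t}) = b(t) + 1$, which fails!). So in fact when $t \in S$ and $t \notin \max(T)$ the term $\tau_{T'}\sigma_t \sigma_S^b$ is \emph{also} zero by a type-$(ii)$ relation, and for the formula to hold we need $c\mu(T \setminus \{t\}) = 0$ too — but $c\mu(T')$ is a basis element, nonzero. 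Therefore the true claim must be that when $t \notin \max(T)$, necessarily $t \notin S$, i.e.\ $\cd(t) - \cd(\bigvee T_{<t}) = 1$; I would verify this from submodularity together with the building-set condition. Granting that, $t\notin S = S'$, the exponent of $\sigma_t$ becomes exactly $1 = b'(t)+1$ with $b'(t) = 0$, consistent, and $\tau_{T'}\sigma_t\sigma_{S}^b = \tau_{T'}\sigma_{t}\sigma_{S'}^{b'} = c\mu(T')$. Collecting signs gives the stated formula.

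The main obstacle is precisely the bookkeeping in step (b): showing that a non-maximal element $t$ of a nested set $T$ automatically satisfies $\cd(t) = \cd(\bigvee T_{<t}) + 1$ (so $t \notin S$), and that removing it leaves all the exponents $b'$ unchanged on $T' = T \setminus \{t\}$. This uses the forest structure of the Hasse diagram (Proposition \ref{prop:comb_nested_sets}(4)), the definition of geometric building set, and submodularity; once these structural facts are in hand, the rest is a direct Leibniz-rule computation with sign tracking.
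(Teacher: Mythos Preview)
Your decomposition into cases (a) and (b) is the wrong one: neither claim holds. The paper's own example in Section~\ref{sect:example} already refutes both. Take $T=\{a,b,\hat 1\}$ with the minimal building set, so $c\mu(T)=\tau_a\tau_b\tau_{\hat 1}\sigma_a\sigma_b$ and $\max(T)=\{\hat 1\}$. For the non-maximal element $t=a$ your claim (b) would give
\[
\tau_b\tau_{\hat 1}\,\sigma_a^2\sigma_b \;=\; c\mu(\{b,\hat 1\})=\tau_b\tau_{\hat 1}\sigma_b\sigma_{\hat 1}^2,
\]
but the left side is $0$ (since $\sigma_a^2=0$) while the right side is a nonzero basis element. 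Your attempted rescue of (b), namely that every non-maximal $t$ satisfies $\cd(t)-\cd(\bigvee T_{<t})=1$, fails here too: $a$ is non-maximal and $\cd(a)-\cd(\hat 0)=2$. For the maximal element $t=\hat 1$ your claim (a) would force
\[
\tau_a\tau_b\,\sigma_a\sigma_b\sigma_{\hat 1}=0,
\]
but this element is \emph{not} zero: after applying the relation $(\tau_a-\tau_{\hat 1})(\tau_b-\tau_{\hat 1})\sigma_{\hat 1}=0$ and its companions one finds it equals $c\mu(\{b,\hat 1\})-c\mu(\{a,\hat 1\})$, which is exactly the entire right-hand side of the lemma. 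The underlying mistake in (a) is conflating ``the exponent of $\sigma_t$ reaches the type-(ii) bound'' with ``the monomial is zero''; reaching the bound only means the monomial is non-standard and must be rewritten, not that it vanishes.

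What actually happens is a reindexing, not a term-by-term match. The paper first discards the terms with $t\in\min(T)$ (there $\sigma_t^{b(t)+1}=\sigma_t^{\cd(t)}=0$ genuinely). For each remaining $t$ it uses the relation $\prod_{r\in R}(\tau_r-\tau_t)\,\sigma_t^{b(t)+1}=0$ with $R=\max(T_{<t})$, together with $\tau_t(\sigma_r-\sigma_t)\sigma_t^{b(t)+1}=0$, to rewrite the $t$-th Leibniz term as a sum over the \emph{children} $r\in R$ of $t$, each summand being $\pm\,c\mu(T\setminus\{r\})$. Finally the forest structure (Proposition~\ref{prop:comb_nested_sets}(4)) says every non-maximal $r$ has a unique parent $t$, so the double sum $\sum_{t\notin\min(T)}\sum_{r\in\max(T_{<t})}$ collapses to $\sum_{r\notin\max(T)}$, yielding the stated formula. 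The key idea you are missing is precisely this rewrite-then-reindex step; without it there is no way to pass from the Leibniz index $t$ to the index $r$ appearing in the statement.
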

\begin{proof}
Let $c\mu(T)=\tau_T\sigma_S^b$, we have 
\begin{align*}
    \dd (c\mu(T)) &= \sum_{t \in T}(-1)^{\lvert T_{ \prec t}\rvert}  \tau_{T\setminus \{t\}} \sigma_S^b \sigma_t \\
    &= \sum_{t \in T \setminus \min(T)}(-1)^{\lvert T_{\prec t}\rvert}  \tau_{T\setminus \{t\}} \sigma_S^b \sigma_t,
\end{align*}
because if $t\in \min(T)$ then $b(t)=\cd(t)-1$ and so $\sigma_t^{\cd(t)}=0$.

Fix $t\in T\setminus \min(T)$, the set $R=\max(T_{<t})$ is nonempty. 
By using relation (ii) of Theorem \ref{thm:new_gen} and the fact that $\tau_t^2=0$, we have
\begin{align*}
     \tau_R \sigma_{R}^b \sigma_t^{b(t)+1} &= \sum_{r \in R} (-1)^{\lvert R_{ \prec r} \rvert} \tau_{R\setminus \{r\}}\tau_t \sigma_{R}^b \sigma_t^{b(t)+1} \\
     &= \sum_{r \in R}  (-1)^{\lvert R_{\prec r} \rvert} \tau_{R\setminus \{r\}}\tau_t \sigma_{R \setminus \{r\}}^b \sigma_t^{b(t)+b(r)+1},
\end{align*}
where in the last equality we used 
\[0=\tau_t (\sigma_r-\sigma_t) \sigma_t^{b(t)+1} \prod_{l \neq r} (\tau_l-\tau_t) = \tau_t \tau_{R \setminus \{r\}} (\sigma_r-\sigma_t) \sigma_t^{b(t)+1}.\]
Notice that $T\in n(P,\mathcal{G})$ implies $\cd(\bigvee R)= \cd (\bigvee(R\setminus\{r\})) + \cd (r)$ and $\cd(\bigvee(R\setminus\{r\}) \vee \bigvee (T_{<r}))=\cd(\bigvee(R\setminus\{r\}))+\cd(\bigvee (T_{<r}))$; so $b_{T\setminus \{r\}}(t)=b_T(t)+b_T(r)+1$.
Therefore 
\[ \tau_R \sigma_R^b \sigma_t^{b(t)+1} = \sum_{r \in R}  (-1)^{\lvert R_{ \prec r} \rvert} c\mu((R\setminus \{r\}) \cup \{t\})\]
and finally:
\begin{align*}
    \dd (c\mu(T)) &= \sum_{t \in T \setminus \min(T)} \sum_{r \in \max(T_{<t})} (-1)^{\lvert T_{ \prec r}\rvert} \mu(T\setminus \{r\})\\
     &= \sum_{r \in T \setminus \max(T)} (-1)^{\lvert T_{ \prec r}\rvert} \mu(T\setminus \{r\}),
\end{align*}
because $T$ is a forest by Proposition \ref{prop:comb_nested_sets}.
This conclude the proof.
\end{proof}

We want to apply algebraic Morse theory to the complex $B(P,\mG)$.
We refer to \cite{JW} for basic definitions and properties of algebraic Morse theory. 

We define the following \textit{matching} $\mathcal{M}$:
for each non-critical monomial $\tau_T \sigma_S^b$ let $g \in S\cup T$  be the smallest (with respect to $\prec$) non-critical element.
If $g$ belongs to $T$, then the pair $(\tau_T \sigma_S^b, \tau_{T\smallsetminus \{g\}} \sigma_{S}^{b} \sigma_g) $ is in $\mathcal{M}$.

The algebraic Morse theory, together with Lemma \ref{lemma:matching} and Proposition \ref{prop:Morse_matching}, implies that the complex of critical monomials is quasi-isomorphic to the Leray model.

\begin{lemma} \label{lemma:matching}
The set $\mathcal{M}$ is a matching.
Moreover, a monomial is critical if and only if it is critical for the matching $\mathcal{M}$.
\end{lemma}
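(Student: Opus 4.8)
The plan is to make the matching $\mathcal{M}$ completely explicit and then to check, essentially by bookkeeping about which generators lie in the exterior ($\tau$) part versus the polynomial ($\sigma$) part, that $\mathcal{M}$ is a partial matching on the set of standard monomials whose unmatched elements are exactly the critical monomials. Concretely, $\mathcal{M}$ consists of the pairs $(m,m')$ with $m=\tau_T\sigma_S^b$ a non‑critical standard monomial, $g$ its smallest (for $\prec$) non‑critical element, $g\in T$, and $m'=\tau_{T'}\sigma_{S'}^{b'}$ where $T'=T\setminus\{g\}$, $S'=S\cup\{g\}$, $b'(g)=b(g)+1$, and $b'(x)=b(x)$ for $x\ne g$. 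The one observation driving the whole argument is that $S'\cup T'=S\cup T$ as subsets of $\mG$ — only the membership of $g$ has moved from the $\tau$‑part to the $\sigma$‑part — so that for every $x$ the set $(S'\cup T')_{<x}$ and the join $\bigvee(S\cup T)_{<x}$ appearing in the definitions of ``standard'' and of ``critical'' are literally unchanged in passing from $m$ to $m'$.

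I would first show that $m'$ is again a standard monomial, which amounts to $b(g)+1<\cd(g)-\cd(\bigvee(S\cup T)_{<g})$ since all other admissibility conditions are untouched. For this I would observe that $\cd(g)-\cd(\bigvee(S\cup T)_{<g})\ge 1$: by Proposition \ref{prop:comb_nested_sets} the Hasse diagram of $S\cup T$ is a forest and the $\mG$‑factors of $\bigvee(S\cup T)_{<g}$ are the children of $g$ in it, and since $g\in\mG$ while the join of an antichain of size $\ge 2$ in a nested set is never in $\mG$, that join is strictly below $g$ in $L_P$; strict monotonicity of $\cd$ along the lattice of flats (immediate from the definition of flat) then gives the inequality. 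Now ``$g$ non‑critical, $g\in T$'' means precisely $b(g)\ne\cd(g)-\cd(\bigvee(S\cup T)_{<g})-1$, while standardness of $m$ (and the inequality just proved, used when $g\notin S$) gives $b(g)\le\cd(g)-\cd(\bigvee(S\cup T)_{<g})-1$; hence $b(g)\le\cd(g)-\cd(\bigvee(S\cup T)_{<g})-2$, which is exactly what is needed.

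Next I would check that $m'$ is non‑critical with smallest non‑critical element again $g$, now lying in $S'\setminus T'$: for $x\ne g$ the criticality status of $x$ is the same for $m$ and $m'$ (same exponent, same $\cd(\bigvee(S\cup T)_{<x})$, and $x\in T'\iff x\in T$), so every $x\prec g$ stays critical, while $g$ is non‑critical for $m'$ simply because $g\notin T'$. Hence $m'$ is not a first coordinate of any pair, and the pair above is recovered from $m'$ by moving $g$ from the $\sigma$‑part back to the $\tau$‑part; dually, a first coordinate determines its pair by moving its smallest non‑critical element (which lies in $T$) into the $\sigma$‑part. So any two pairs of $\mathcal{M}$ sharing a vertex coincide, i.e.\ $\mathcal{M}$ is a partial matching. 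For the ``moreover'' part: a critical monomial has no non‑critical element, hence is neither a first coordinate nor — since a second coordinate $m'$ always has the non‑critical element $g\in S'\setminus T'$ — a second coordinate, so it is unmatched; conversely a non‑critical $m=\tau_T\sigma_S^b$ with smallest non‑critical element $g$ is a first coordinate if $g\in T$, and if $g\in S\setminus T$ then the standard monomial $m^{+}$ obtained by moving $g$ into $T$ and lowering the exponent of $\sigma_g$ by one (here the inequalities of the previous paragraph run the other way, using standardness of $m$ at $g\in S$, and still close up) has smallest non‑critical element $g\in T^{+}$, so $m$ is the second coordinate of the pair of $m^{+}$. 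Thus the unmatched standard monomials are exactly the critical ones.

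I expect the only non‑formal point to be the codimension inequality $\cd(g)-\cd(\bigvee(S\cup T)_{<g})\ge 1$ together with its interaction with the definition of ``non‑critical'' in the proof that $m'$ is standard; once that is in place, everything else is bookkeeping about the $\tau$/$\sigma$ bipartition of the generators and about the minimal non‑critical element being preserved under the move.
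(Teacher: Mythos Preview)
Your proof is correct and follows essentially the same route as the paper's: both hinge on the observation that passing from $m=\tau_T\sigma_S^b$ to $m'=\tau_{T\setminus\{g\}}\sigma_S^b\sigma_g$ leaves $S\cup T$ unchanged, so the criticality status of every element other than $g$ is preserved, and the minimal non‑critical element stays $g$ but now sits in $S'\setminus T'$. Your write‑up is more explicit than the paper's in that you actually verify $m'$ is a standard monomial (via the inequality $\cd(g)-\cd(\bigvee(S\cup T)_{<g})\ge 1$ and the non‑criticality of $g$), a point the paper leaves to the reader.
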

\begin{proof}
We check that each non-critical monomial appears exactly once in $\mathcal{M}$ and that all monomials in $\mathcal{M}$ are non-critical.

By definition if the monomial $\tau_T \sigma_S^b$ appears in the first position in $\mathcal{M}$, it is non-critical.
Moreover $\tau_{T\smallsetminus \{g\}}\sigma_{S}^{b} \sigma_g$ is non-critical because $S \cup \{g\} \not \subseteq T\smallsetminus \{g\}$.
So every monomial in the matching is non-critical.

Vice versa, if $\tau_T \sigma_S^b$ is a non-critical monomial, let $g$ be the minimal non-critical element in $S\cup T$. If $g\in T$ then $\tau_T \sigma_S^b$ appears in the matching (in the first position).
Otherwise, $g\in S \setminus T$ so the monomial $\tau_T \tau_g \frac{\sigma_S^b}{\sigma_g}$ is basic and non-critical.
Notice that an element $f \in \mathcal{G}$  is critical for $\tau_T \sigma_S^b$ if and only if is critical for $\tau_T \tau_g \frac{\sigma_S^b}{\sigma_g}$.
Therefore the pair $(\tau_T \tau_g \frac{\sigma_S^b}{\sigma_g}, \tau_T \sigma_S^b)$ is in $\mathcal{M}$.
\end{proof}

\begin{definition}
Given a standard monomial $\tau_T \sigma_S^b$ we define $m(T,S,b)$ as the multiset $\{g^{a(g)} \mid g \in \mathcal{G}\}$ where $a(g)$ is the sum of the exponents of $\tau_g$ and $\sigma_g$ in the monomial $\tau_T \sigma_S^b$.
Moreover, we order these multisets lexicographically using the reverse order on $\mathcal{G}$.
\end{definition}
As an example, if $h < g$ then $h \prec g$ and $\{ h^2 \} \succ \{h, g\}$.

\begin{definition}
Let $G$ be the directed graph whose vertices are the standard monomials with a directed edge from $\tau_T \sigma_S^b$ to $\tau_{T'} \sigma_{S'}^{b'}$ if 
the monomial $\tau_{T'} \sigma_{S'}^{b'}$ appears with a nonzero coefficient in $\dd(\tau_T \sigma_S^b)$.

Let $G_{\mathcal{M}}$ be the directed graph $G$ with all directed edges in $\mathcal{M}$ reversed.
\end{definition}

\begin{proposition}\label{prop:Morse_matching}
    The matching $\mathcal{M}$ is a Morse matching.
\end{proposition}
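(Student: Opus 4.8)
The plan is to check the two defining properties of a Morse matching in the sense of \cite{JW}: that $\mathcal{M}$ is a matching of $G$ whose edges carry invertible coefficients of $\dd$, and that $G_\mathcal{M}$ has no directed cycle (which, since $B(P,\mG)$ is finite dimensional by \Cref{cor:base_additiva_sigma_tau}, is everything acyclicity needs here). The combinatorial matching property is \Cref{lemma:matching}; what remains for the first point is that, for each matched pair, the target occurs in the differential of the source with coefficient $\pm 1$. For a pair $(\tau_T\sigma_S^b,\tau_{T\smallsetminus\{g\}}\sigma_S^b\sigma_g)$ with $g$ the smallest non-critical element of $\tau_T\sigma_S^b$ and $g\in T$, non-criticality of $g$ keeps the exponent of $\sigma_g$ strictly below the bound of \Cref{cor:base_additiva_sigma_tau}, so $\tau_{T\smallsetminus\{g\}}\sigma_S^b\sigma_g$ is already a standard monomial and appears in $\dd(\tau_T\sigma_S^b)$ with sign $(-1)^{\lvert T_{\prec g}\rvert}$; every other summand of $\dd(\tau_T\sigma_S^b)$ removes a different exterior generator, so once reduced it is either standard with a different exterior part, or (by the weight estimate below) of strictly smaller statistic than the target, and in neither case can it equal it. The pairs $(\tau_T\tau_g\frac{\sigma_S^b}{\sigma_g},\tau_T\sigma_S^b)$ are treated identically.

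For the acyclicity of $G_\mathcal{M}$ the plan is to use the multiset $m(T,S,b)$ as a principal weight, refined by a secondary weight on each of its level sets. The principal estimate is: along every edge of $G$ the weight $m$ does not increase, and along every matched edge it is preserved. Indeed, each summand $\pm\,\tau_{T\smallsetminus\{t\}}\sigma_S^b\sigma_t$ of $\dd(\tau_T\sigma_S^b)$ has the same multiset $m$ as $\tau_T\sigma_S^b$ (it trades the exterior generator $\tau_t$ for one more copy of $\sigma_t$), so $m$ can change only when this summand is rewritten in the standard basis. Each elementary reduction step applies a relation of type (i) or (ii) of \Cref{thm:new_gen}: a type (i) relation annihilates the monomial, while a type (ii) relation at a flat $h$ turns the monomial into a combination in which one copy of a generator indexed by an element strictly below $h$ is replaced by a copy of a generator indexed by $h$ (or, as in \Cref{lemma_tecnico}, by a larger join), which strictly lowers $m$ because multisets are ordered reverse-lexicographically. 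For a matched edge the target needs no reduction, so $m$ is exactly preserved.

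It remains to rule out cycles within a level set of $m$. The first observation is that along any $m$-preserving edge of $G_\mathcal{M}$ the smallest non-critical element $g$ of a monomial does not change: deleting an exterior generator $\tau_t$ with $t\succ g$ alters neither the support nor the exponents of the generators indexed by elements $\preceq g$, hence preserves their criticality, and reversing a matched edge only re-inserts $\tau_g$, which is non-critical there. On a connected component of the $m$-level subgraph of $G_\mathcal{M}$ the element $g$ is thus fixed, and we may set $w(\tau_T\sigma_S^b)=\lvert\{h\in T \mid h\succ g\}\rvert$. A non-matched edge of $G$ preserving $m$ has the form $\tau_T\sigma_S^b\to\tau_{T\smallsetminus\{t\}}\sigma_S^b\sigma_t$ with $t$ non-critical and $t\neq g$ --- the critical choices of $t$ force a genuine reduction and hence a strict drop of $m$ --- so $t\succ g$ and $w$ decreases by one, whereas a reversed matched edge re-inserts $\tau_g$ and leaves $w$ unchanged. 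Any directed cycle in $G_\mathcal{M}$ keeps $m$ constant (it returns to its start and $m$ is non-increasing), hence lies in one such component, hence keeps $w$ constant, hence uses no non-matched edge, hence consists only of reversed matched edges; but edges of a matching cannot be concatenated. So $G_\mathcal{M}$ is acyclic and $\mathcal{M}$ is a Morse matching.

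The delicate step is the principal weight estimate: tracking how the Gröbner reduction to the standard basis moves the multiset $m$, i.e. showing that each application of a type (ii) relation --- iterated, since the join produced by \Cref{lemma_tecnico} may itself need reducing --- trades a generator for a strictly larger one in the reverse-lexicographic order and so never raises $m$. The second subtle point is the local constancy of the smallest non-critical element along $m$-preserving edges, without which the secondary weight $w$ is neither well defined nor monotone.
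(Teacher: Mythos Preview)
Your argument is correct and follows essentially the same route as the paper: both use the multiset statistic $m$ to show any directed cycle in $G_{\mathcal{M}}$ lies in a single $m$-level set, and both then exploit that an $m$-preserving non-matched edge removes some $\tau_f$ with $f\succ g$ while a reversed matched edge only toggles $\tau_g$. The paper phrases the secondary invariant as a strict increase of the $\tau$-support $T$ (after reducing to alternating cycles), whereas you use the numerical weight $w=\lvert\{h\in T:h\succ g\}\rvert$; your formulation is slightly cleaner since it avoids the alternating reduction, but the content is the same.
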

\begin{proof}
We show that the graph $G_{\mathcal{M}}$ is acyclic.

Although $m$ is not a term order (because $m(\tau_g)=m(\sigma_g)$) it has the property that for any relation of Theorem \ref{thm:new_gen} 
\begin{equation}
\label{eq:inutile}
    \prod_{t \in T} (\tau_t- \tau_g) \prod_{s \in S} (\sigma_s - \sigma_g) \sigma_g^b
\end{equation}
with $\bigvee(S \cup T) \leq g$ the monomial $\tau_T\sigma_S \sigma_g^b$ has $m(T,S,b)$ strictly bigger than any other monomial in the expansion of eq.\ \eqref{eq:inutile}.
Moreover $m$ is multiplicative.

First notice that:
\begin{align*}
    d(\tau_T \sigma_S^b) &= \sum_{g \in T} (-1)^{\lvert T_{\prec g} \rvert} \tau_{T\setminus{g}} \sigma_S^b  \sigma_g \\
    &= \sum_{\substack{g \in T \\ g\textnormal{ non-critical}}} (-1)^{\lvert T_{ \prec g} \rvert} \tau_{T\setminus{g}} \sigma_S^b \sigma_g + \sum_{\substack{g \in T \\ g\textnormal{ critical}}} (-1)^{\lvert T_{ \prec g} \rvert} \tau_{T\setminus{g}} \sigma_S^b \sigma_g \\
    &= \sum_{\substack{g \in T \\ g\textnormal{ non-critical}}} (-1)^{\lvert T_{ \prec g} \rvert} \tau_{T\setminus{g}} \sigma_S^b \sigma_g + \sum_{\mathclap{\substack{ \textnormal{some } T', S', b'\\ m(T',S',b') \prec m(T,S,b)}}} \alpha_{T',S',b'} \tau_{T'} \sigma_{S'}^{b'},
\end{align*}
where $\alpha_{T',S',b'}$ are some coefficients.
In the last equality we used the relations of Theorem \ref{thm:new_gen} in order to write the non-standard monomials $\tau_{T\setminus{g}} \sigma_S^b \sigma_g$ as linear combination of standard ones.
Notice also that if the pair $(\tau_T \sigma_S^b, \tau_{T\setminus{g}} \sigma_{S\cup {g}}^{b'})$ is in $\mathcal{M}$ then $m(T,S,b)=m(T\setminus{g},S \cup \{g\}, b')$.
Hence the function $m$ is weakly decreasing on every direct path in $G_{\mathcal{M}}$, so it is constant on every directed cycle.

It is enough to prove that there are no alternating directed cycles, i.e.\ cycles such that for every pair of consecutive edges exactly one is in $\mathcal{M}$.
Suppose that there exists a directed cycle and consider two consecutive edges.
We can assume that the first one is in $\mathcal{M}$ and the second one is not.
The first edge is $(\tau_T \sigma_S^b, \tau_T \tau_g \frac{\sigma_S^b}{\sigma_g})$ for some non-critical monomial $\tau_T \sigma_S^b$ with $g$ the smaller non-critical element and $g\in S \setminus T$.
The second edge is $(\tau_T \tau_g \frac{\sigma_S^b}{\sigma_g}, \tau_{T'} \sigma_{S'}^{b'})$ for some standard monomial $\tau_{T'} \sigma_{S'}^{b'}$.
Since the value of $m$ is constant on the cycle we have that $\tau_{T'} \sigma_{S'}^{b'} = \tau_{T \setminus \{f\}} \tau_g \frac{\sigma_S^b}{\sigma_g}\sigma_f$ for some $f\in T$ non-critical for the monomial $\tau_T \tau_g \frac{\sigma_S^b}{\sigma_g}$.
These two edges are shown below.
\begin{center}
\begin{tikzpicture}[scale=.8]
  \node (a) at (0,0) {$\tau_T \sigma_S^b$};
  \node (b) at (2,2) {$\tau_T \tau_g \frac{\sigma_S^b}{\sigma_g}$};
  \node (c) at (4,0) {$\tau_{T \setminus \{f\}} \tau_g \frac{\sigma_S^b}{\sigma_g}\sigma_f$};
  \draw[->] (a) -- (b);
  \draw[->] (b) -- (c);
\end{tikzpicture}
\end{center}
The sets of critical elements for $\tau_T \sigma_S^b$ and for $\tau_T \tau_g \frac{\sigma_S^b}{\sigma_g}$ coincide, so both $g$ and $f$ are non-critical for $\tau_T \tau_g \frac{\sigma_S^b}{\sigma_g}$.
By minimality of $g$ we have $g \prec f$ and $T \prec (T \setminus \{f\}) \cup \{g\} =T'$.

We have proved that in every alternating path after two steps the set indexing the variable $\tau$ strictly increases.
Therefore there are no alternating cycles.  
\end{proof}

\subsection{Multiplicative structure}

We want to describe the product of two critical monomials in $B(P,\mathcal{G})$.

Let $(g_1, g_2, \dots, g_k)$ be a list of elements in $\mG$ and recall that $\prec$ is a linear extension of the order on $\mathcal{G}$.
Define
\[\tilde{\lambda}(g_1, g_2, \dots, g_k) = (f_1, f_2, \dots, f_k) \]
where $f_i$ is the unique $\mG$-factor of $g_1 \vee g_2 \vee \dots \vee g_i$ bigger than $g_i$ guaranteed by Proposition \ref{prop:comb_nested_sets}(1).
Define $\lambda(g_1, g_2, \dots, g_k)=\tilde{\lambda}(g_1, g_2, \dots, g_k)$ if $(f_1, f_2, \dots, f_k)$ form a $\mG$-nested set and $f_i \prec f_{i+1}$ for $i=1, \dots, k-1$. 
Set $\lambda(g_1, g_2, \dots, g_k)= 0$ otherwise.
We will use the convention that $c\mu(0)=0$ and $c\mu(\emptyset)=1$.
Let $\pi \in \mathfrak{S}_k$ be a permutation, we write $\pi(g_1, g_2, \dots, g_k)$ for the list $(g_{\pi(1)}, g_{\pi(2)}, \dots, g_{\pi(k)})$ and we denote the concatenation of two lists $T_1$ and $T_2$ by $T_1 \cup T_2$.

\begin{remark}
If $\tilde{\lambda}(g_1, g_2, \dots, g_k)=(f_1, f_2, \dots, f_k)$, then $g_1 \vee g_2 \vee \dots \vee g_j= f_1 \vee f_2 \vee \dots \vee f_j$.
Moreover, $\lambda(g_1, g_2, \dots, g_k)=0$ if there exist $i<j$ such that $g_j \leq g_i$.
Indeed, $f_j \leq f_1 \vee f_2 \vee \dots \vee f_{j-1}$ and $\{f_1, \dots, f_j\}$ is $\mG$-nested, hence $f_j = f_c$ for some $c<j$ contradicting $f_c \prec f_j$.

Let $T_1$ and $T_2$ be two lists of elements in $\mG$ and $\pi \in \mathfrak{S}_{\lvert T_1 \cup T_2 \rvert}$ be a permutation.
If $\lambda(\pi(T_1 \cup T_2))\neq 0$ then the last element of $\pi(T_1 \cup T_2)$ belongs to $\max(T_1 \cup T_2)$.

In the particular case when $\mG$ is the maximal building set and $T_1, T_2$ are chains in $\mG$, $\lambda(\pi(T_1 \cup T_2))$ is zero if $\pi$ is not a $(\lvert T_1 \rvert, \lvert T_2 \rvert)$-shuffles.
\end{remark}

The following proposition describes the multiplication of critical monomials using shuffles.
\begin{proposition}\label{thm:mult_struct}
Let $T_1$ and $T_2$ be $\mG$-nested sets. If $\cd(\bigvee (T_1 \cup T_2))< \cd(\bigvee T_1)+\cd(\bigvee T_2)$, then $c\mu(T_1)c\mu(T_2)=0$.
Otherwise
\[ c\mu(T_1)c\mu(T_2)= \sum_{\pi \in \mathfrak{S}_{\lvert T_1 \cup T_2 \rvert}} \sgn (\pi) c\mu(\lambda \pi (T_1 \cup T_2)). \let\qedsymbol\openbox\qedhere\]
\end{proposition}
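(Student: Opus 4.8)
The plan is to compute $c\mu(T_1)c\mu(T_2)$ directly in the $\tau,\sigma$-presentation of Theorem~\ref{thm:new_gen} and then rewrite the result as a combination of standard (indeed critical) monomials using the relations of type $(i)$ and $(ii)$. Write $c\mu(T_i) = \tau_{T_i}\sigma_{S_i}^{b_i}$. The product $\tau_{T_1}\tau_{T_2}$ is a signed sum of $\tau_U$ over orderings, or zero when $T_1$ and $T_2$ share an element or, more precisely, whenever $T_1\cup T_2$ fails to be $\mG$-nested; so the first reduction is to understand when $\tau_{T_1}\tau_{T_2}\sigma_{S_1}^{b_1}\sigma_{S_2}^{b_2}$ is forced to vanish. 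Here the degree count enters: $c\mu(T_i)$ has $\sigma$-degree (in the cohomological grading) essentially $\cd(\bigvee T_i)-|T_i|$ plus $\tau$-part, and the whole critical monomial $c\mu(T)$ for $T$ nested lives in top degree $\cd(\bigvee T)-1$ of the ``sub-Chow-ring'' attached to the interval $[\hat0,\bigvee T]$. So if $\cd(\bigvee(T_1\cup T_2))<\cd(\bigvee T_1)+\cd(\bigvee T_2)$ the product lands in a degree that is too high for the flat $\bigvee(T_1\cup T_2)$, and one shows it is killed by a type $(ii)$ relation for $g=\bigvee(T_1\cup T_2)$ (or one of its $\mG$-factors); this is the first bullet of the statement. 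I would isolate this as a short lemma: for nested $T$, $c\mu(T)\cdot\sigma_g^{b} = 0$ whenever $b \geq \cd(g)-\cd(\bigvee T)$ and $g\geq \bigvee T$, which is essentially Lemma~\ref{lemma_tecnico} applied to the individual monomials of $c\mu(T)$.

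For the nonvanishing case, the main computation is to expand $\tau_{T_1}\tau_{T_2} = \sum_{\pi}\sgn(\pi)\,\tau_{\pi(T_1\cup T_2)}$ where $\pi$ ranges over the shuffles interleaving the two ordered tuples (consistent with $\prec$), so that
\[
c\mu(T_1)c\mu(T_2) = \sum_{\pi}\sgn(\pi)\,\tau_{\pi(T_1\cup T_2)}\,\sigma_{S_1}^{b_1}\sigma_{S_2}^{b_2}.
\]
Then for each shuffle $\pi$ I would argue that the monomial $\tau_{\pi(T_1\cup T_2)}\sigma_{S_1}^{b_1}\sigma_{S_2}^{b_2}$, reduced modulo the Gröbner basis of Theorem~\ref{thm:new_gen}, equals $\pm\, c\mu(\lambda\pi(T_1\cup T_2))$, with sign absorbed into $\sgn(\pi)$ by the labelling convention. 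This is where $\lambda$ does its work: applying $\tilde\lambda$ to a list $(g_1,\dots,g_k)$ replaces each $g_i$ by the $\mG$-factor $f_i$ of $g_1\vee\dots\vee g_i$ above $g_i$, and the relations of type $(i)$ let us rewrite $\tau_{g_i}$ against the already-placed $\tau_{g_1},\dots,\tau_{g_{i-1}}$ exactly when $\{f_1,\dots,f_i\}$ is nested — when it is not, a type $(i)$ relation kills the monomial, which is the definition $\lambda=0$; and the type $(ii)$ relations fix the correct $\sigma$-exponents on the $f_i$ so that the result is precisely the critical monomial indexed by $\lambda\pi(T_1\cup T_2)$. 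The bookkeeping is essentially the same ``straightening'' that appears in the proof of Lemma~\ref{lemma:diff_critical}, iterated; I would do it by induction on $k=|T_1\cup T_2|$, peeling off the last element of $\pi(T_1\cup T_2)$ (which by the Remark lies in $\max(T_1\cup T_2)$ when $\lambda\neq0$) and using Proposition~\ref{prop:comb_nested_sets}(1),(3) to identify its $\mG$-factor and the relevant subinterval.

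The main obstacle I anticipate is the straightening step: showing that an arbitrary monomial $\tau_{(g_1,\dots,g_k)}\sigma_{S_1}^{b_1}\sigma_{S_2}^{b_2}$ reduces to exactly $\pm c\mu(\lambda(g_1,\dots,g_k))$ and not to a longer tail of lower-order terms. The point is that all the ``correction'' terms produced when applying a type $(ii)$ relation $\prod(\tau_t-\tau_g)\prod(\sigma_s-\sigma_g)\sigma_g^b$ either reproduce the same combinatorial pattern at a strictly smaller flat — and hence vanish for degree reasons, by the lemma above — or involve $\sigma_g$-powers that can be folded back in. Concretely, when I rewrite $\tau_{g_i}$ past the prefix, the relation that applies has leading term matching and the sub-leading terms carry extra factors $\sigma_{f}$ with $f\geq$ some earlier $f_j$; pairing the $\sigma$-exponents of $c\mu(T_1)$ and $c\mu(T_2)$ along the interval decomposition $[\hat0,\bigvee(T_1\cup T_2)]\cong\prod[\hat0,f]$ from the building-set axiom shows these extra factors always push the exponent over the threshold $\cd(f)-\cd(\bigvee(\text{prefix})_{<f})$, so they die. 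Making this exponent accounting precise, using submodularity and the additivity $\cd(\bigvee T)=\sum_{f\in\max(T)}\cd(f)$ for nested $T$ from Proposition~\ref{prop:comb_nested_sets}(3), is the technical heart of the argument and the step I would write out most carefully.
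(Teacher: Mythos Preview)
Your treatment of the vanishing case is essentially the paper's argument: the product can only involve standard monomials supported below $f=\bigvee(T_1\cup T_2)$, and those have total degree bounded by $2\cd(f)$, so the product dies when $\cd(f)<\cd(\bigvee T_1)+\cd(\bigvee T_2)$.

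The second part has a genuine gap. Your plan rests on an expansion
\[
\tau_{T_1}\tau_{T_2}=\sum_{\pi}\sgn(\pi)\,\tau_{\pi(T_1\cup T_2)}
\]
over shuffles, followed by the claim that each individual term straightens to the single critical monomial $\pm c\mu(\lambda\pi(T_1\cup T_2))$. But the $\tau$-variables are exterior: $\tau_{T_1}\tau_{T_2}$ is already a \emph{single} monomial $\pm\tau_{T_1\cup T_2}$ (or zero), not a sum. The sum over $\mathfrak{S}_{|T_1\cup T_2|}$ in the statement does not arise from reorderings of anticommuting variables; it is generated by the straightening relations themselves. Concretely, a type~(i) relation such as $\prod_i(\tau_{g_i}-\tau_f)=0$ with $f=\bigvee_i g_i\in\mG$ replaces the single monomial $\tau_{g_1}\cdots\tau_{g_k}$ by a genuine sum $\sum_i(-1)^{k-i}\tau_{\{g_1,\dots,g_k\}\setminus\{g_i\}\cup\{f\}}$, and iterating this branching is what produces the many terms indexed by $\pi$. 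So your expectation that the ``tail of lower-order terms'' vanishes for degree reasons is exactly backwards: those tail terms \emph{are} the other critical monomials in the answer.

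The paper isolates this branching in Lemma~\ref{lemma:ind_step_mult}, which computes $c\mu(T)\,\tau_g\sigma_g^b$ for a single $g$ by induction on $|T|$; the sum over $\mathfrak{S}_{|T|+1}$ appears there, via the expansion of $\tau_Y$ just described applied recursively to each summand. The proposition is then proved by induction on $|T_2|$: peel off $g=\max T_2$, apply the inductive hypothesis to $T_1$ and $T_2\setminus\{g\}$, and finish with Lemma~\ref{lemma:ind_step_mult} (the case $\bigvee T_2\notin\mG$ being handled separately via Lemma~\ref{lemma:disjoint_nested}). If you want to salvage a direct approach, the correct target is not ``one shuffle $\mapsto$ one critical monomial'' but ``one monomial $c\mu(T_1)c\mu(T_2)\mapsto$ the full signed sum'', and the induction that keeps the bookkeeping under control is precisely the one-element-at-a-time scheme the paper adopts.
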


Before the proof of Proposition \ref{thm:mult_struct} we need two technical lemmas.

\begin{lemma} \label{lemma:disjoint_nested}
Let $T_1$ and $T_2$ be nested sets such that $T_1 \cup T_2$ is $\mG$-nested and $\cd(\bigvee T_1)+\cd(\bigvee T_2)= \cd( \bigvee (T_1 \cup T_2))$.
Then
\[ c\mu(T_1)c\mu(T_2)=(-1)^s c\mu(T_1 \cup T_2),\]
where $s$ is the length of the permutation that reorder $T_1$ and $T_2$.
Moreover:
\[ c\mu(T_1)c\mu(T_2)= \sum_{\pi \in \mathfrak{S}_{\lvert T_1 \cup T_2 \rvert}} \sgn (\pi) c\mu(\lambda \pi (T_1 \cup T_2)). \let\qedsymbol\openbox\qedhere\]
\end{lemma}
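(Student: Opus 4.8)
The plan is to first establish the ``rigid'' identity $c\mu(T_1)c\mu(T_2)=(-1)^s c\mu(T_1\cup T_2)$ by a direct computation in the $\tau,\sigma$-presentation, and then to deduce the shuffle-sum formula by checking that all the other terms in the sum over $\mathfrak{S}_{\lvert T_1\cup T_2\rvert}$ vanish. For the first part, write $c\mu(T_i)=\tau_{T_i}\sigma_{S_i}^{b_i}$ with $S_i$ and $b_i$ as in the definition of the critical monomial. The product is then $\pm\,\tau_{T_1\cup T_2}\,\sigma_{S_1}^{b_1}\sigma_{S_2}^{b_2}$ up to the sign $(-1)^s$ coming from reordering the exterior variables $\tau$; since $T_1\cup T_2$ is $\mG$-nested, this is a standard-type monomial in the $e,x$-sense, and I must check it is actually the \emph{critical} monomial $c\mu(T_1\cup T_2)$. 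The key point is the hypothesis $\cd(\bigvee T_1)+\cd(\bigvee T_2)=\cd(\bigvee(T_1\cup T_2))$: combined with $T_1\cup T_2$ being nested, it forces, for every $t\in T_1$ (say), that $\bigvee((T_1\cup T_2)_{<t})=\bigvee((T_1)_{<t})$ — because any element of $T_2$ below $t$ would, via the forest structure of Proposition \ref{prop:comb_nested_sets}(4) and the codimension-additivity, contradict the disjointness of the two ``contributions''. Hence the exponent function $b$ of $c\mu(T_1)$ agrees with that of $c\mu(T_1\cup T_2)$ on $T_1$, and symmetrically on $T_2$; likewise the support set $S$ of $c\mu(T_1\cup T_2)$ is exactly $S_1\sqcup S_2$. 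This gives the first displayed equality.

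For the second (shuffle) identity, I will show that $c\mu(\lambda\pi(T_1\cup T_2))=(-1)^s c\mu(T_1\cup T_2)$ when $\pi$ is the ``sorting'' permutation (or more precisely when $\lambda\pi(T_1\cup T_2)$ reproduces the nested set $T_1\cup T_2$ with its $\prec$-ordering), and that $c\mu(\lambda\pi(T_1\cup T_2))=0$ for every other $\pi$. The vanishing is the heart of the matter: by the Remark preceding the proposition, $\lambda$ of a list is $0$ as soon as the list contains $g_j\le g_i$ with $i<j$; and when $T_1\cup T_2$ is itself $\mG$-nested, $\tilde\lambda\pi(T_1\cup T_2)$ simply returns the elements of $T_1\cup T_2$ (each $g_i$ is its own $\mG$-factor in $\bigvee(\text{prefix})$ by Proposition \ref{prop:comb_nested_sets}(1), since the prefix is a nested set with $\mG$-factors its maximal elements), so $\lambda\pi$ is nonzero precisely for those $\pi$ that list $T_1\cup T_2$ in $\prec$-increasing order — i.e. the unique such $\pi$ up to the stabilizer, which is trivial on a set. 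Tracking the sign of that $\pi$ against the reordering sign $s$ closes the argument.

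The main obstacle I anticipate is the bookkeeping in the first step: verifying carefully that the exponent vectors $b_i$ of the two factors, defined via $\bigvee(S_{<t})$ \emph{within each $T_i$}, coincide with the exponent vector of $c\mu(T_1\cup T_2)$ defined via $\bigvee(T_{<t})$ in the larger set. This is exactly where the codimension hypothesis $\cd(\bigvee T_1)+\cd(\bigvee T_2)=\cd(\bigvee(T_1\cup T_2))$ must be used in full strength, together with the fact (Proposition \ref{prop:comb_nested_sets}(3)–(4)) that a nested set is a forest whose $\bigvee$ decomposes along connected components with additive codimension. Once that combinatorial lemma about $\bigvee$ on nested forests is isolated and proved, both displayed equalities follow; and the second equality is then essentially a restatement that the shuffle sum has a single surviving term. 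I would state and prove that auxiliary fact ($\bigvee((T_1\cup T_2)_{<t})=\bigvee((T_i)_{<t})$ for $t\in T_i$) as a short sublemma at the start of the proof.
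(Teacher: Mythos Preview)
Your proposal is correct and follows essentially the same route as the paper. For the first identity the paper simply notes that the hypotheses give the product decomposition $[\hat{0},\bigvee(T_1\cup T_2)]=[\hat{0},\bigvee T_1]\times[\hat{0},\bigvee T_2]$ with additive codimension and concludes immediately; your planned sublemma $\bigvee((T_1\cup T_2)_{<t})=\bigvee((T_i)_{<t})$ is exactly the content of that decomposition unpacked (and follows in one line from submodularity, which forces $\bigvee T_1\wedge\bigvee T_2=\hat{0}$). For the second identity your argument is verbatim the paper's: since every subset of the nested set $T_1\cup T_2$ is nested, Proposition~\ref{prop:comb_nested_sets}(3) gives $\tilde\lambda\pi(T_1\cup T_2)=\pi(T_1\cup T_2)$, so only the unique $\prec$-sorting permutation survives.
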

\begin{proof}
Notice that $[\hat{0}, \bigvee (T_1 \cup T_2)]= [\hat{0}, \bigvee T_1 ] \times [\hat{0}, \bigvee T_2]$ with the same codimension, therefore
\[ c\mu(T_1)c\mu(T_2)=(-1)^s c\mu(T_1 \cup T_2).\]
Since each subset of $T_1 \cup T_2$ is $\mG$-nested, for each $\pi \in \mathfrak{S}_{\lvert T_1 \cup T_2 \rvert}$ we have $\tilde{\lambda}\pi (T_1\cup T_2)= \pi (T_1\cup T_2)$ by (3) of Proposition \ref{prop:comb_nested_sets}.
Hence $\lambda \pi (T_1 \cup T_2)$ is zero for all permutations $\pi$ except for the unique permutation that reorders $T_1$ and $T_2$.
\end{proof}

\begin{lemma}\label{lemma:ind_step_mult}
Suppose that $T$ is a $\mG$-nested set and $g\in \mG$ such that
$\cd(g \vee \bigvee T)-\cd(\bigvee T)=\cd(g)-\cd(\bigvee T_{<g})$.
Set $b= \cd(g)-\cd(\bigvee T_{<g})-1$, then
\begin{equation} \label{eq:base_step}
    c\mu(T)\tau_g \sigma_g^b= \sum_{\pi \in \mathfrak{S}_{\lvert T \rvert +1}} \sgn (\pi) c\mu(\lambda \pi (T \cup \{g\})),
\end{equation}
where the sum is taken over all permutations of $T\cup \{g\}$. 
\end{lemma}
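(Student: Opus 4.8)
The plan is to prove \eqref{eq:base_step} by induction on $\lvert T\rvert$: writing $c\mu(T)=\tau_T\sigma_S^{b_T}$ as in the definition, the left‑hand side is, up to sign, the monomial $\tau_T\tau_g\,\sigma_S^{b_T}\sigma_g^{b}$ of $\mathcal{R}(\mathcal{G})$, and I would straighten it into the standard basis of Corollary~\ref{cor:base_additiva_sigma_tau} via relations (i)--(ii) of Theorem~\ref{thm:new_gen}, then match the outcome with the shuffle sum. Two reductions come first. Since $T$ is $\mathcal{G}$-nested we have $\bigvee T_{<g}<g$ strictly — otherwise Proposition~\ref{prop:comb_nested_sets}(1),(3) would force $g$ to be a $\mathcal{G}$-factor of $\bigvee T_{<g}$, hence $g\in T_{<g}$ — so $b\ge 0$. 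Combined with the hypothesis this rules out any $s\in T$ with $s\ge g$, so $T=T_{<g}\sqcup T_{\parallel g}$, where $T_{\parallel g}$ is the set of elements incomparable to $g$.

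The base case, and more generally the case $T_{\parallel g}=\emptyset$, is the one where $T=T_{<g}$, so $T\cup\{g\}$ is $\mathcal{G}$-nested with $g$ as its top element. Here I would show the right‑hand side collapses: for a permutation $\pi$ not placing $g$ last, two of the entries of $\tilde\lambda\pi(T\cup\{g\})$ coincide with $g$, so $\lambda\pi(T\cup\{g\})=0$; and when $g$ is last, $\lambda\pi(T\cup\{g\})=\lambda\rho(T)\cup\{g\}$ for the induced $\rho$, which by the Remark before Proposition~\ref{thm:mult_struct} vanishes unless $\rho(T)$ is $\prec$-sorted. Thus the sum reduces to $c\mu(T\cup\{g\})$, and comparing exponents (using $\bigvee(T\cup\{g\})_{<s}=\bigvee T_{<s}$ for $s\in T$) gives $c\mu(T\cup\{g\})=c\mu(T)\tau_g\sigma_g^{b}$, which is \eqref{eq:base_step}.

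For the inductive step I would take $t=\max_\prec(T_{\parallel g})$; since $\prec$ refines $\le$ and $t$ is incomparable to $g$, $t$ is $\le$-maximal in $T$, so $T_{<s}=(T\setminus\{t\})_{<s}$ for $s\in T\setminus\{t\}$ and one may factor $c\mu(T)=\varepsilon\,c\mu(T\setminus\{t\})\,\tau_t\sigma_t^{\beta'}$ with $\varepsilon=\pm1$ and $\beta'=\cd(t)-\cd(\bigvee T_{<t})-1$. Because $T_{<g}\subseteq T\setminus\{t\}$ and the map $x\mapsto\cd(g\vee x)-\cd(x)$ is non‑increasing by submodularity, the hypothesis — and the value of $b$ — is inherited by $(T\setminus\{t\},g)$, so the inductive hypothesis writes $c\mu(T\setminus\{t\})\tau_g\sigma_g^{b}=\sum_\rho\sgn(\rho)\,c\mu(F_\rho)$ with $F_\rho=\lambda\rho\big((T\setminus\{t\})\cup\{g\}\big)$ a $\mathcal{G}$-nested set of size $\lvert T\rvert-1$. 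I would then multiply by $\tau_t\sigma_t^{\beta'}$ and apply the inductive hypothesis a second time to each pair $(F_\rho,t)$, obtaining the left‑hand side as an iterated signed sum of critical monomials.

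The main obstacle is showing this iterated sum equals $\sum_{\pi\in\mathfrak{S}_{\lvert T\rvert+1}}\sgn(\pi)\,c\mu(\lambda\pi(T\cup\{g\}))$. Two points have to be handled. First, that $(F_\rho,t)$ really satisfies the lemma's hypothesis with exponent $\beta'$; since $\lambda$ preserves partial joins ($\bigvee F_\rho=g\vee\bigvee(T\setminus\{t\})$), this reduces, again via submodularity, to the codimension identities $\cd(\bigvee(F_\rho)_{<t})=\cd(\bigvee T_{<t})$ and $\cd(t\vee\bigvee F_\rho)-\cd(\bigvee F_\rho)=\cd(t)-\cd(\bigvee(F_\rho)_{<t})$ — this is the delicate part. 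Second, an "associativity" of $\lambda$: relabelling partial joins by $\mathcal{G}$-factors first for $(T\setminus\{t\})\cup\{g\}$ and then for $F_\rho\cup\{t\}$ yields the same signed $\mathcal{G}$-nested lists as doing it in one pass on $T\cup\{g\}$; I expect this to follow from uniqueness of $\mathcal{G}$-factors and the description of $F(P,\mathcal{G},\bigvee S)$ for $S$ nested (Proposition~\ref{prop:comb_nested_sets}(1),(3)). The purely algebraic straightening — rewriting $\tau_T\tau_g\sigma_S^{b_T}\sigma_g^b$ in the basis and discarding the excess terms generated by $\sigma_g^b=(\sum_{h\ge g}x_h)^b$ — is exactly Lemma~\ref{lemma_tecnico} together with relation (ii) of Theorem~\ref{thm:new_gen}, and should be routine once the combinatorial bookkeeping is in place.
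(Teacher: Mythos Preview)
Your induction scheme has a genuine gap at the second application of the hypothesis. You apply it first to $(T\setminus\{t\},g)$ and then propose to apply it again to each pair $(F_\rho,t)$ with $F_\rho=\lambda\rho\bigl((T\setminus\{t\})\cup\{g\}\bigr)$. But $\lvert F_\rho\rvert=\lvert T\rvert$, not $\lvert T\rvert-1$ as you write: since you have shown $g\notin T$, the list $(T\setminus\{t\})\cup\{g\}$ has $\lvert T\rvert$ entries, and $\lambda$ preserves length. So the second call is to a nested set of the same size as $T$, and the induction on $\lvert T\rvert$ does not close. Already for $\lvert T\rvert=1$, say $T=\{t\}$ with $t\parallel g$, the first step yields $F_\rho=\{g\}$ and you are left to prove the lemma for the pair $(\{g\},t)$---the original problem with the roles of $g$ and $t$ exchanged. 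The ``associativity of $\lambda$'' you flag as delicate is thus moot: even granting it, the recursion never terminates.

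The paper avoids this by organising the induction around $f=g\vee\bigvee T$ and its unique $\mG$-factor $h\ge g$, splitting $T$ as $T_{\le h}\sqcup T''$ with $T''=T_{\not\le h}$. When $T''\ne\emptyset$ one factors $c\mu(T)=\pm\,c\mu(T'')\,c\mu(T_{\le h})$ via Lemma~\ref{lemma:disjoint_nested}, applies the inductive hypothesis \emph{once} to $(T_{\le h},g)$ with $\lvert T_{\le h}\rvert<\lvert T\rvert$, and reassembles with $c\mu(T'')$ again via Lemma~\ref{lemma:disjoint_nested}; no second call to the present lemma occurs. When $T''=\emptyset$, so $f\in\mG$ lies strictly above every element of $T\cup\{g\}$, the relation $\prod_{g_i\in\max(T\cup\{g\})}(\tau_{g_i}-\tau_f)=0$ together with its $\sigma$-analogues expresses $c\mu(T)\tau_g\sigma_g^b$ as a sum over $g_i\in\max(T\cup\{g\})$ of terms $\pm\,c\mu(T\setminus\{g_i\})\,\tau_g\sigma_g^b\cdot\tau_f\sigma_f^{b(g_i)}$. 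One applies the inductive hypothesis to $(T\setminus\{g_i\},g)$; the remaining multiplication by $\tau_f\sigma_f^{b(g_i)}$ falls under the already-established base case $f\ge\bigvee(\,\cdot\,)$, so it simply appends $f$ at the top and requires no further inductive call. The crucial difference from your scheme is that the element peeled off last is the \emph{top} element $f$, not an arbitrary maximal $t\parallel g$; this is what makes the final multiplication free and breaks the circularity.
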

\begin{proof}
We prove the statement by induction on $\lvert T \rvert$.

If $g \geq \bigvee T$, it follows directly from the definitions $c\mu(T)\tau_g \sigma_g^b= c\mu(T \cup \{g\})= \sum_{\pi \in \mathfrak{S}_{\lvert T \rvert +1}} \sgn (\pi) c\mu(\lambda \pi (T \cup \{g\}))$.

Let $f=g \vee \bigvee T$, $h$ be the unique $\mathcal{G}$-factor of $f$ bigger than $g$ and set $T'=T_{\leq h} \cup \{g\}$, $T''= T_{\not \leq h}$.
Notice that $T''= \emptyset$ if and only if $f \in \mG$.
Define $t'$ and $t''$ the cardinality of $T'$ and $T''$ respectively.

If $T''\neq \emptyset$, by using the inductive hypothesis and Lemma \ref{lemma:disjoint_nested}, we have
\begin{align*}
    c\mu(T)\tau_g \sigma_g^b &= (-1)^s c\mu(T'')c\mu(T' \setminus \{g\}) \tau_g \sigma_g^b \\
    &= (-1)^s c\mu(T'') \sum_{\alpha \in \mathfrak{S}_{t''}} \sgn (\alpha) c\mu(\lambda \alpha (T')) \\
    &= \sum_{\alpha \in \mathfrak{S}_{t'}} (-1)^{s+s_\alpha} \sgn (\alpha) c\mu(T'' \sqcup \lambda \alpha (T')) \\
    &= \sum_{\pi \in \mathfrak{S}_{t'+t''}} \sgn (\pi) c\mu(\lambda \pi (T \cup \{g\})),
\end{align*}
where $s$ corresponds to the permutation that reorders $T''$ and $T'\setminus \{g\}$, $s_\alpha$ to the permutation that reorders $T''$ and $\alpha(T')$.
The set $T'' \cup \lambda \alpha(T')$ is $\mG$-nested for all $\alpha \in \mathfrak{S}_{t'}$ such that $\lambda \alpha(T') \neq 0$ and in this case $\cd(\bigvee T'')+\cd(\bigvee \lambda \alpha(T'))= \cd (\bigvee (T'' \cup \lambda \alpha(T')))$.

Otherwise $T''= \emptyset$ and $f \in \mG$.
Let $Y=\{g_1, g_2, \dots, g_k\} = \max ( T \cup \{g\})$ numbered such that $g=g_k$.
We assume $g \succ t$ for all $t \in T$, the general case differs only by a sign.
We have $\prod_{i=1}^k (\tau_{g_i}-\tau_f)=0$ and so
\[ \tau_Y = \sum_{i=1}^k (-1)^{k-i} \tau_{Y \setminus \{g_i\} \cup \{f\}}. \]
Set $b(g_i)=\cd(g_i)- \cd(\bigvee T_{<g_i})-1$, for all $i\leq k$ we have $ (\sigma_{g_i} - \sigma_{f}) \prod_{j \neq i} (\tau_{g_j} -\tau_f)=0$ and
\[ 0= (\sigma_{g_i}^{b(g_i)} - \sigma_{f}^{b(g_i)}) \tau_f \prod_{j \neq i} (\tau_{g_j} -\tau_f)= (\sigma_{g_i}^{b(g_i)} - \sigma_{f}^{b(g_i)}) \tau_f \prod_{j \neq i} \tau_{g_j}, \]
so $\tau_{Y \setminus \{g_i\} \cup \{f\}} \sigma_{g_i}^{b(g_i)} = \tau_{Y \setminus \{g_i\} \cup \{f\}} \sigma_{f}^{b(g_i)}$.
Therefore we have
\begin{align*}
    c\mu(T)\tau_g \sigma_g^b &= (-1)^{s} c\mu(T\setminus Y) \prod_{i=1}^k \tau_{g_i} \sigma_{g_i}^{b(g_i)} \\
    &= (-1)^{s} c\mu(T\setminus Y) \sum_{i=1}^k (-1)^{k-i} \prod_{j \neq i} \tau_{g_j} \sigma_{g_j}^{b(g_j)} \tau_f \sigma_f^{b(g_i)} \\
    &= \sum_{i=1}^{k-1} (-1)^{t_i+1} c\mu(T\setminus \{g_i\}) \tau_g \sigma_g^b \tau_f \sigma_f^{b(g_i)} + c\mu(T\cup \{f\}),
\end{align*}
where $s$ (and $t_i$) is the length of the permutation that reorder $T\setminus Y$ and $Y\setminus \{g\}$ (respectively $T\setminus \{g_i\}$ and $\{g_i\}$).
The last summand corresponds to the identity permutation.
Apply the inductive hypothesis on the terms $c\mu(T\setminus \{g_i\}) \tau_g \sigma_g^b$ so that 
\[ (-1)^{t_i+1} c\mu(T\setminus \{g_i\}) \tau_g \sigma_g^b \tau_f \sigma_f^{b(g_i)} = \sum_{\pi} \sgn (\pi) c\mu(\lambda \pi (T \cup \{g\}))\]
where the sum is taken over all permutations $\pi$ in $\mathfrak{S}_{\lvert T \rvert +1}$ that sends the element $g_i$ in the last position.
Since every $\pi$ such that $\lambda \pi (T\cup \{g\})\neq 0$ has in the last position an element of $\max(T \cup \{g\})$, the result follows.
\end{proof}

\begin{proof}[Proof of Proposition \ref{thm:mult_struct}]
For the first part notice that $c\mu(T_i)$ is in bidegree $(2(\cd(\bigvee T_i)-\lvert T_i \lvert),\lvert T_i \rvert)$ for $i=1,2$.
Let $f=\bigvee (T_1 \cup T_2)$, the product $c\mu(T_1)c\mu(T_2)$ can be rewritten as sum of standard monomials using only relations of type
\[ \prod_{s \in S} (\tau_s- \tau_g) \prod_{t \in T} (\sigma_t - \sigma_g) \sigma_g^b \]
for $\bigvee (S\cup T)\leq g \leq f$.
The standard monomials $\tau_S \sigma_T^b$ with $\bigvee (S\cup T) \leq f$ have bidegree at most $(2(\cd(f)-\lvert S \rvert), \lvert S \rvert )$.
Therefore, if $\cd(f)< \cd(\bigvee T_1)+\cd(\bigvee T_2)$ then $c\mu(T_1)c\mu(T_2)=0$ by degree argument.

We prove the second statement by induction on $\lvert T_2 \rvert$.
The base case $T_2 = \emptyset$ is trivial.
If $\bigvee T_2 \notin \mG$ then there exist $T_3$ and $T_4$ nonempty $\mG$-nested sets such that $T_2=T_3 \sqcup T_4$ and $[\hat{0},\bigvee T_2] = [\hat{0}, \bigvee T_3]\times [\hat{0}, \bigvee T_4]$.
Applying Lemma \ref{lemma:disjoint_nested} and the inductive step we have
\begin{align*}
    c\mu(T_1) &c\mu(T_2) = (-1)^s c\mu(T_1)c\mu(T_3)  c\mu(T_4)\\
    &= (-1)^s \sum_{\alpha \in \mathfrak{S}_{t_1+t_3}} \sgn (\alpha) c\mu(\lambda \alpha (T_1 \cup T_3)) c\mu(T_4) \\
    &= (-1)^s \sum_{\alpha \in \mathfrak{S}_{t_1+t_3}} \sgn (\alpha) \sum_{\beta \in \mathfrak{S}_{t_1+t_2}} \sgn (\beta) c\mu(\lambda \beta (\alpha (T_1 \cup T_3), T_4)) \\
    &= \sum_{\pi \in \mathfrak{S}_{t_1+t_2}} \sgn (\pi) c\mu(\lambda \pi (T_1 \cup T_2)),
\end{align*}
where $t_i=\lvert T_i \rvert$.

Now we deal with the case $\bigvee T_2 \in \mG$.
Let $g=\max T_2 \in \mG$, $T'_2= T_2 \setminus \{g\}$, and $m= \cd(g)- \cd(T'_2)-1$.
We have
\begin{align*}
    c\mu(T_1) & c\mu(T_2)= c\mu(T_1)c\mu(T'_2) \tau_g \sigma_g^m  \\
    &= \sum_{\alpha \in \mathfrak{S}_{t_1+t_2-1}} \sgn (\alpha) c\mu(\lambda \alpha (T_1 \cup T'_2)) \tau_g \sigma_g^m \\
    &= \sum_{\alpha \in \mathfrak{S}_{t_1+t_2-1}} \sgn (\alpha) \sum_{\beta \in \mathfrak{S}_{t_1+t_2}} \sgn (\beta) c\mu(\lambda \beta (\alpha (T_1 \cup T'_2) \cup \{g\})) \\
    &=\sum_{\pi \in \mathfrak{S}_{t_1+t_2}} \sgn (\pi) c\mu(\lambda \pi (T_1 \cup T_2)),
\end{align*}
where we used the inductive hypothesis on $T_1$ and $T'_2$ and Lemma \ref{lemma:ind_step_mult} on $\lambda \alpha(T_1 \cup T'_2)$ and $\{g\}$.
\end{proof}

We define the algebra of critical monomials abstractly, by generators and relations.

\begin{definition} \label{def:CM}
Let $\CM(P,\mG)$ be the $\Q$-vector space generated by all the $\mG$-nested sets $T \in n(P, \mG)$ with bidegree $(2(\cd(\bigvee T)-\lvert T \rvert), \lvert T \rvert)$.
The differential is defined on the base by 
\[ \dd(T) = \sum_{t \in T \setminus \max(T)} (-1)^{\lvert T_{\prec t} \rvert} (T\setminus \{t\}) \]
and the product by $T \cdot S = 0 $ if $\cd(\bigvee (T \cup S)) < \cd(\bigvee T)+ \cd (\bigvee S)$ and 
\[ T \cdot S = \sum_{\pi \in \mathfrak{S}_{\lvert T \rvert + \lvert S \rvert}} \sgn(\pi) \lambda(\pi(T \cup S))\]
otherwise.
This structure makes $\CM(P,\mG)$ a differential bigraded algebra.
\end{definition}

We summarize all the previous results of this section in the following theorem.
\begin{theorem} \label{thm:main_CM}
The morphism $\xi \colon \CM(P,\mG) \to B(P,\mG)$ defined by $\xi(T)= c\mu(T)$ is an inclusion of differential algebras and a quasi-isomorphism.
\end{theorem}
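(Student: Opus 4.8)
The plan is to assemble the theorem from three facts that are already in place: the differential formula of Lemma~\ref{lemma:diff_critical}, the multiplicative formula of Proposition~\ref{thm:mult_struct}, and the Morse matching of Proposition~\ref{prop:Morse_matching}. First I would verify that $\xi$ is a morphism of bigraded differential algebras. The structure imposed on $\CM(P,\mG)$ in Definition~\ref{def:CM} was tailored precisely so that this is automatic: one has $\xi(\emptyset)=c\mu(\emptyset)=1$; a direct computation from the definition of $c\mu(T)$, using that $T$ is nested, shows that $\xi$ preserves the bidegree $(2(\cd(\bigvee T)-\lvert T\rvert),\lvert T\rvert)$ prescribed in Definition~\ref{def:CM}; the relation $\xi\circ\dd=\dd\circ\xi$ is exactly Lemma~\ref{lemma:diff_critical}; and $\xi(T\cdot S)=c\mu(T)\,c\mu(S)$ is exactly Proposition~\ref{thm:mult_struct}. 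Injectivity is then immediate: $c\mu(T)$ is a standard monomial in the sense of Corollary~\ref{cor:base_additiva_sigma_tau} whose exterior part is $\tau_T$, so distinct nested sets are carried to distinct elements of the additive basis of $B(P,\mG)$. Hence $\xi$ is an inclusion of differential algebras, and it identifies $\CM(P,\mG)$ with the subspace $W\subseteq B(P,\mG)$ spanned by the critical monomials, which is a subcomplex and subalgebra by Lemma~\ref{lemma:diff_critical} and Proposition~\ref{thm:mult_struct}.

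It remains to see that $\xi$ is a quasi-isomorphism, and here I would invoke algebraic Morse theory \cite{JW} applied to $B(P,\mG)$ equipped with the basis of standard monomials. By Lemma~\ref{lemma:matching}, $\mathcal{M}$ is a matching whose unmatched cells are exactly the critical monomials, and by Proposition~\ref{prop:Morse_matching} it is acyclic; hence $H^{\bigcdot}(B(P,\mG))$ is computed by the associated Morse complex, supported on the critical monomials, i.e.\ on $W$. The point I would emphasise is that this Morse complex is literally $(W,\dd|_W)$, with no higher corrections: a zig-zag path in $G_{\mathcal{M}}$ issuing from a critical monomial $c\mu(T)$ must first traverse a monomial occurring in $\dd(c\mu(T))$, but by Lemma~\ref{lemma:diff_critical} every such monomial is again critical, hence unmatched, so the path has length one. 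Thus the Morse differential is the restriction of $\dd$ to $W$, the Morse complex equals $\xi(\CM(P,\mG))$, and the comparison map furnished by algebraic Morse theory is (homotopic to) the inclusion $\xi$. Equivalently, and perhaps more cleanly, one can run the same bookkeeping on the quotient complex $B(P,\mG)/W$: the matching $\mathcal{M}$ restricts to an acyclic matching on it with no unmatched cells, so $B(P,\mG)/W$ is acyclic, and the long exact sequence of $0\to W\to B(P,\mG)\to B(P,\mG)/W\to 0$ gives the isomorphism $H^{\bigcdot}(W)\xrightarrow{\sim}H^{\bigcdot}(B(P,\mG))$ induced by inclusion, that is, by $\xi$.

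I expect the main obstacle to be precisely this last step: making rigorous that the Morse complex attached to $\mathcal{M}$ is genuinely $(W,\dd|_W)$ rather than a deformation of it carrying lower-order terms, and — on the long-exact-sequence route — that acyclicity of $G_{\mathcal{M}}$ on $B(P,\mG)$ descends to acyclicity of the restricted matching on the quotient $B(P,\mG)/W$. Both facts rest on the subcomplex property of Lemma~\ref{lemma:diff_critical}. The only purely mechanical nuisance is matching the sign conventions coming out of algebraic Morse theory against the explicit signs $(-1)^{\lvert T_{\prec t}\rvert}$ built into Lemma~\ref{lemma:diff_critical} and Definition~\ref{def:CM}; this is routine but should be carried out with care.
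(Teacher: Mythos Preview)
Your proposal is correct and follows essentially the same approach as the paper: injectivity from Corollary~\ref{cor:base_additiva_sigma_tau}, compatibility with differential and product from Lemma~\ref{lemma:diff_critical} and Proposition~\ref{thm:mult_struct}, and the quasi-isomorphism from the Morse matching of Lemma~\ref{lemma:matching} and Proposition~\ref{prop:Morse_matching}. The paper packages the last step slightly differently---it invokes the subcomplex $N_{\mathcal{M}}$ produced by algebraic Morse theory, observes that $B(P,\mG)/N_{\mathcal{M}}$ is freely generated by critical monomials, and notes that $\xi$ followed by the projection is an isomorphism of chain complexes---but this is equivalent to your argument that the Morse differential has no higher corrections because $\dd(c\mu(T))$ lands entirely in $W$.
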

\begin{proof}
The map $\xi$ is well defined as a morphism of $\Q$-vector spaces.
It is an inclusion since the monomials $c\mu(T)$ for $T\in n(P,\mG)$ are standard monomials and are linearly independent by Corollary \ref{cor:base_additiva_sigma_tau}.
The equality $\dd \xi = \xi \dd$ follows from Lemma \ref{lemma:diff_critical} and the equality $\xi(S \cdot T)=\xi(S)\xi(T)$ from Proposition \ref{thm:mult_struct}.
This also proves that $\CM(P,\mG)$ is a differential bigraded algebra.

Finally, the algebraic Morse theory applied to $B(P,\mG)$ and the matching $\mathcal{M}$ ensures that there exists a subcomplex $N_{\mathcal{M}}$ such that the projection 
\[ B(P,\mG) \twoheadrightarrow \faktor{B(P,\mG)}{N_{\mathcal{M}}}\]
is a quasi-isomorphism and the quotient is freely generated by critical monomials.
The composition of $\xi$ with the projection gives an isomorphism of chain complexes.
Therefore $\xi$ is a quasi-isomorphism.
\end{proof}
See Section \ref{sect:example} for an explicit example of the construction of the algebra of critical monomials.

Let $n((\hat{0},g), \mG)$ be the full subcomplex of $n(P,\mG)$ on the set of vertices $\{h \in \mG \mid h<g \}$.

All the homology groups are taken with rational coefficients.
We use the standard convention for the reduced homology that
$\tilde{H}_{-1}(\emptyset)=\Q$.

This final theorem provides an explicit description of the cohomology of the Leray model in term of cohomology of very small simplicial complexes.

\begin{theorem}\label{thm:main_Yuzvinsky}
Let $P$ be a polymatroid and $\mG$ be a building set. Then
\[ H^{\bigcdot}(B(P,\mG),\dd) \cong H^{\bigcdot}(\CM(P,\mG),\dd) \cong \bigoplus_{f \in L} \bigotimes_{g \in F} \tilde{H}_{2\cd(g)-2- \bigcdot} \Bigl(  n((\hat{0},g), \mG) \Bigr),\]
where $F=F(P,\mG,f)$ is the set of $\mG$-factors of $f$.

In particular the summand $\tilde{H}_i (n((\hat{0},g), \mG) )$ contributes in bidegree $(2 (\cd(g)-2-i), 2 +i)$.
\end{theorem}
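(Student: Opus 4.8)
The first isomorphism is already essentially available: Theorem~\ref{thm:main_CM} tells us that $\xi\colon\CM(P,\mG)\to B(P,\mG)$ is a quasi-isomorphism, so $H^{\bigcdot}(B(P,\mG),\dd)\cong H^{\bigcdot}(\CM(P,\mG),\dd)$. Thus the work is to identify the cohomology of the combinatorially-defined complex $\CM(P,\mG)$ with the stated direct sum of tensor products of reduced homologies of the subcomplexes $n((\hat 0,g),\mG)$. The plan is to split $\CM(P,\mG)$ as a direct sum of subcomplexes indexed by $f\in L$, and then to identify each summand with a tensor product over the $\mG$-factors of $f$.

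\textbf{Step 1: Decompose $\CM$ by the join.} For a $\mG$-nested set $T$, the element $\bigvee T\in L$ is well defined, and by Lemma~\ref{lemma:diff_critical} the differential $\dd(T)=\sum_{t\in T\setminus\max(T)}(-1)^{\lvert T_{\prec t}\rvert}(T\setminus\{t\})$ only removes non-maximal elements, so $\bigvee(T\setminus\{t\})=\bigvee T$ for every term appearing. Hence $\dd$ preserves the value of $\bigvee T$, and $\CM(P,\mG)=\bigoplus_{f\in L}\CM_f$ as a complex, where $\CM_f$ is spanned by the $\mG$-nested sets $T$ with $\bigvee T=f$. (The product does not respect this grading, but for the cohomology computation only the complex structure matters.)

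\textbf{Step 2: Factor each summand over the $\mG$-factors.} Fix $f\in L$ with $\mG$-factors $F=F(P,\mG,f)=\{g_1,\dots,g_k\}$. By Proposition~\ref{prop:comb_nested_sets}(3), a $\mG$-nested set $T$ with $\bigvee T=f$ has $\max(T)=F$, and since the Hasse diagram of $T$ is a forest (Proposition~\ref{prop:comb_nested_sets}(4)) with roots exactly $F$, each element of $T$ lies below a unique $g_i$; writing $T_i=T_{\le g_i}$ we get $T=T_1\sqcup\cdots\sqcup T_k$ with each $T_i$ a $\mG$-nested set satisfying $\bigvee T_i=g_i$, i.e. $g_i\in T_i$ and $T_i\setminus\{g_i\}$ a nested set of vertices of $n((\hat 0,g_i),\mG)$. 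Conversely any such tuple assembles to a valid $T$. The differential removes a non-maximal $t$ from whichever block $T_i$ it lies in and leaves the others fixed; so, after fixing signs via the chosen linear extension $\prec$, the complex $\CM_f$ is isomorphic (up to a degree shift) to the tensor product $\bigotimes_{i=1}^k C_i$, where $C_i$ is the complex spanned by $\mG$-nested sets $T_i\ni g_i$ with $\bigvee T_i=g_i$ and differential removing non-maximal elements of $T_i$.

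\textbf{Step 3: Identify each factor with a reduced simplicial chain complex.} The complex $C_i$ has basis the $\mG$-nested sets $T_i$ containing $g_i$ with $\bigvee T_i=g_i$; removing the forced top element $g_i$ identifies this basis with the faces of $n((\hat 0,g_i),\mG)$ (including the empty face, which corresponds to $T_i=\{g_i\}$). Under Lemma~\ref{lemma:diff_critical} the differential on $C_i$ removes non-maximal elements of $T_i$, i.e. acts on $T_i\setminus\{g_i\}$ by the alternating-sum-of-facets boundary of the simplicial complex $n((\hat 0,g_i),\mG)$ (the term dropping $g_i$ itself is absent because $g_i\in\max(T_i)$). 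Hence, with the homological grading bookkeeping recorded in the statement — a face of dimension $i-1$, i.e. $\lvert T_i\rvert = i+1$, sits in the bidegree forced by Definition~\ref{def:CM}, namely cohomological degree matching $\tilde H_{2\cd(g)-2-\bigcdot}$ — the cohomology of $C_i$ is exactly the reduced simplicial homology $\tilde H_{\ast}(n((\hat 0,g_i),\mG);\Q)$, with the convention $\tilde H_{-1}(\emptyset)=\Q$ accounting for $g_i$ with no proper $\mG$-elements below it. Applying the Künneth theorem over the field $\Q$ to the tensor decomposition of Step~2 and summing over $f\in L$ yields the claimed formula; the bidegree statement follows by tracking the shift in Definition~\ref{def:CM} through the Künneth isomorphism.

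\textbf{Main obstacle.} The conceptual content is light; the real care is in the bookkeeping of Step~2 — checking that the signs coming from the fixed linear extension $\prec$ in Lemma~\ref{lemma:diff_critical} are compatible with the standard Koszul signs of a tensor product of chain complexes, so that the isomorphism $\CM_f\cong\bigotimes_i C_i$ is genuinely a chain isomorphism and not merely a graded-vector-space isomorphism. Equivalently, one must fix, for each $f$, a total order on $F$ refining $\prec$ and verify that reindexing $T_{\prec t}$ within the blocks $T_i$ produces exactly the sign $(-1)^{\sum_{j<i}\lvert T_j\rvert}$ relating the block-wise differential to the global one. This is routine but must be done explicitly; once it is in place, Steps~1 and~3 and the Künneth formula are immediate.
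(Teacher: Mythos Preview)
Your proposal is correct and follows essentially the same route as the paper: decompose $\CM(P,\mG)$ into subcomplexes $\CM_f$ indexed by $f\in L$ (the paper indexes by $\max(T)=F(P,\mG,f)$, which is equivalent to your $\bigvee T=f$ via Proposition~\ref{prop:comb_nested_sets}(3)), factor each $\CM_f$ as a tensor of single-factor complexes $\CM(g)$, and identify each $\CM(g)$ with the reduced simplicial chain complex of $n((\hat 0,g),\mG)$ via $T\mapsto T\setminus\{g\}$. Your explicit flagging of the Koszul-sign compatibility and the appeal to K\"unneth are points the paper leaves implicit, so if anything your outline is more careful than the published argument.
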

\begin{proof}
Theorem \ref{thm:main_CM} implies
\[H(B(P,\mG),\dd) \cong H(\CM(P,\mG),\dd). \]
For each flat $f$ let $\CM_f$ be the subcomplex of $\CM(P,\mG)$ generated by all nested sets $T$ such that $\max(T)=F(P,\mG,f)$.
Moreover for each $g\in \mG$ set $\CM(g)$ to be the subcomplex of $\CM(P,\mG)$ generated by all nested sets $T$ such that $\{g\}=\max (T)$.
We have
\[ \CM(P,\mG) = \bigoplus_{f \in L} \CM_f \]
and
\[ \CM_f = \bigotimes_{g \in F(P,\mG,f)} \CM(g)\]
as complexes.
It is enough to prove that 
\[H^\bigcdot (\CM(g),\dd)=\tilde{H}_{2\cd(g)-2- \bigcdot} \Bigl(  n((\hat{0},g), \mG) \Bigr).\]
Indeed $\CM(g)$ coincides with the reduced simplicial chain complex for $n((\hat{0},g), \mG)$, under the correspondence $T \mapsto T \setminus \{g\}$.
Notice that the bidegree of $T\in \CM(g)$ is $(2(\cd(g)-\lvert T \rvert), \lvert T \rvert)$ and the degree of $T\setminus \{g\}$ in the reduced chain complex is $\lvert T \setminus \{g\} \rvert -1 = \lvert T \rvert -2$.
\end{proof}

Definition \ref{def:CM} has a straightforward generalization to integer coefficients; we left open the following question.
\begin{conjecture}
Does Theorem \ref{thm:main_Yuzvinsky} generalizes to integer coefficients?
\end{conjecture}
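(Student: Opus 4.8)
The plan is to re-run the whole chain of \S\ref{sect:model}--\S\ref{sect:GMP_formula} with $\Z$ in place of $\Q$ and to isolate the single place where the base ring actually matters. First, all the algebraic inputs survive verbatim over $\Z$. In Theorem~\ref{thm:grobner_basis} every generator of type (i) and (ii) has leading coefficient $1$ and every $S$-polynomial reduction performed there subtracts only $\pm1$ times a monomial multiple of a generator, so the division algorithm works over $\Z$ and $B(P,\mG;\Z):=\mathcal{R}(\mG;\Z)/I(\mG;\Z)$ is a \emph{free} $\Z$-module on the standard monomials of Corollary~\ref{cor:base_additiva_e_x}. Likewise in Theorem~\ref{thm:new_gen} the change-of-basis matrix between $\{x_g,e_h\}$ and $\{\sigma_g,\tau_h\}$ is unitriangular over $\Z$, the relations have leading coefficient $1$, and a degreewise surjection of free $\Z$-modules of equal finite rank is an isomorphism; hence Corollary~\ref{cor:base_additiva_sigma_tau} holds over $\Z$, $\CM(P,\mG)$ of Definition~\ref{def:CM} is defined over $\Z$ (Lemma~\ref{lemma:diff_critical} and Proposition~\ref{thm:mult_struct} are identities with $\pm1$ coefficients), and $\xi\colon\CM(P,\mG;\Z)\hookrightarrow B(P,\mG;\Z)$ is an inclusion of differential $\Z$-algebras.

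Second, the quasi-isomorphism of Theorem~\ref{thm:main_CM} also generalizes, because the matching $\mathcal M$ is compatible with $\Z$. Writing a matched pair as $(\tau_T\sigma_S^b,\ \tau_{T\setminus\{g\}}\sigma_S^b\sigma_g)$ with $g$ the smallest non-critical element lying in $T$, the second monomial is again standard after the single (if any) reduction to standard form: if $g\notin S$ then non-criticality of $g$ forces $\cd(g)-\cd(\bigvee(S\cup T)_{<g})\ge 2$, so $\tau_{T\setminus\{g\}}\sigma_{S\cup\{g\}}^{b'}$ with $b'(g)=1$ is standard; if $g\in S$ then $b(g)+1\le\cd(g)-\cd(\bigvee(S\cup T)_{<g})-1$, so again no further reduction occurs. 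Hence the target occurs in $\dd$ of the source with coefficient $\pm1$. Algebraic Morse theory is valid over an arbitrary ring once every matched edge has unit coefficient (the acyclicity of $G_{\mathcal M}$ in Proposition~\ref{prop:Morse_matching} is coefficient-independent), so the Morse quotient $B(P,\mG;\Z)/N_{\mathcal M}$ is free on critical monomials, the projection is a $\Z$-quasi-isomorphism, and composing with $\xi$ gives an isomorphism of $\Z$-complexes; thus $H^{\bigcdot}(B(P,\mG);\Z)\cong H^{\bigcdot}(\CM(P,\mG);\Z)$. The combinatorial splittings $\CM(P,\mG)=\bigoplus_{f\in L}\CM_f$, $\CM_f=\bigotimes_{g\in F}\CM(g)$, and the identification of $\CM(g)$ with the shifted reduced simplicial chain complex of $n((\hat 0,g),\mG)$ are isomorphisms of complexes over $\Z$ and use nothing about the coefficients.

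The one place where $\Q$ was essential is the final Künneth step, and that is the main obstacle. Over $\Z$ the iterated Künneth theorem for a finite tensor product $\bigotimes_{g\in F}\CM(g)$ of complexes of free $\Z$-modules gives a non-canonically split exact sequence whose ``main'' term is the expected $\bigotimes_{g\in F}\widetilde H_{2\cd(g)-2-\bigcdot}(n((\hat 0,g),\mG);\Z)$ and whose remaining term is an iterated $\operatorname{Tor}$ of these groups, shifted by one in the cohomological grading. Thus
\[
H^{\bigcdot}(B(P,\mG);\Z)\ \cong\ \Bigl(\bigoplus_{f\in L}\ \bigotimes_{g\in F(P,\mG,f)}\widetilde H_{2\cd(g)-2-\bigcdot}\bigl(n((\hat 0,g),\mG);\Z\bigr)\Bigr)\ \oplus\ (\operatorname{Tor}\text{-terms}),
\]
and the literal statement of Theorem~\ref{thm:main_Yuzvinsky} over $\Z$ holds exactly when all these $\operatorname{Tor}$-terms vanish; a clean sufficient condition is that $\widetilde H_{\bigcdot}(n((\hat 0,g),\mG);\Z)$ be $\Z$-free for every building set and every $g\in\mG$. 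By the Feichtner--Kozlov identification of the homotopy type of $n((\hat 0,g),\mG)$ with that of the open interval $(\hat 0,g)$ of $L$ \cite{FeichtnerKozlov}, this is precisely the question whether every such interval of a polymatroid lattice of flats has torsion-free reduced homology.

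Finally, I would settle this residual question in the two relevant regimes. For matroids the intervals $(\hat 0,g)$ are geometric lattices, hence (CL-)shellable, so $n((\hat 0,g),\mG)$ is a wedge of spheres and $\widetilde H_{\bigcdot}(\,\cdot\,;\Z)$ is free; therefore the matroid case of the integral Theorem~\ref{thm:main_Yuzvinsky} holds verbatim. For polymatroids the lattice of flats need not even be graded, so I expect the verbatim statement to fail: it should suffice to exhibit one polymatroid for which some interval $(\hat 0,g)$ has torsion in the reduced homology of its order complex and two such intervals appear among the $\mG$-factors of a common flat $f\in L$ (e.g.\ by taking a direct sum of two copies of a small polymatroid already exhibiting torsion), which then forces a genuinely nonzero $\operatorname{Tor}$ summand in $H^{\bigcdot}(B(P,\mG);\Z)$. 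The upshot of the proposal is therefore a definite theorem---the $\operatorname{Tor}$-corrected formula above, proved by running \S\ref{sect:model}--\S\ref{sect:GMP_formula} over $\Z$ with the $\pm1$ bookkeeping of the second paragraph as the only new ingredient---together with the conclusion that the conjecture as literally worded is true for matroids but false in general; the hard parts are (a) the edge-by-edge check that $\mathcal M$ has $\pm1$ coefficients after reduction to standard monomials and (b) producing the explicit polymatroid that realizes a nonzero $\operatorname{Tor}$ term.
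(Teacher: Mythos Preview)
This statement is a \emph{conjecture} in the paper, not a theorem: the authors write ``we left open the following question'' immediately before it and only remark that the realizable, maximal-building-set case is handled in \cite{DGMP,dLS}. There is therefore no proof in the paper to compare your attempt against.

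Your proposal goes beyond the paper by trying to \emph{resolve} the question. The positive part of your plan is sound and is genuinely new relative to what the paper proves: the Gr\"obner and change-of-basis arguments of \S\ref{sect:model} have unit leading coefficients and go through over $\Z$; your check that each matched edge of $\mathcal M$ lands on a standard monomial with coefficient $\pm 1$ is correct (the key point being that $S\cup T$ is unchanged when $g$ moves from $T$ to $S$, so all bounds for other elements are preserved); algebraic Morse theory over $\Z$ then yields $H^{\bigcdot}(B(P,\mG);\Z)\cong H^{\bigcdot}(\CM(P,\mG);\Z)$; and the splittings $\CM=\bigoplus_f\CM_f$, $\CM_f=\bigotimes_g\CM(g)$ are isomorphisms of $\Z$-complexes. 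The K\"unneth correction you state, and the matroid case via shellability of geometric-lattice intervals, are correct.

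The gap is exactly your item~(b): you do not exhibit a polymatroid whose lattice of flats has an interval $(\hat 0,g)$ with torsion in the reduced homology of its order complex, nor two such factors under a common flat to force a nonzero $\operatorname{Tor}$. This is not a routine construction: lattices of flats of polymatroids are constrained (they carry a strictly increasing submodular $\cd$), and it is not clear a priori that arbitrary torsion phenomena in order complexes can be realized inside them. Your ``take a direct sum of two copies'' step presupposes you already have one such polymatroid. Until that example is produced, your proposal reduces the conjecture to a concrete combinatorial question but does not settle it; the claim ``false in general'' remains unproven.
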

The analogous statement in the realizable case with maximal building set was proven in \cite{DGMP,dLS}.

\section{K\"ahler package} \label{sect:Kahler}

Let $\DP^\bigcdot (P,\mG)$ be the graded algebra $B^{2 \bigcdot,0}(P,\mG)$.
This algebra, in the realizable case, is the Chow ring of the De Concini Procesi wonderful model for the subspace arrangement.
A presentation of $\DP(P,\mG)$ is given by the generators $x_g$ for $g \in \mG$ with relations 
\[ x_S c_g^b \]
where $S\subseteq \mG$, $g \in \mG$ and $b \geq \cd(g)- \cd (\bigvee S_{<g})$.
The algebra $\DP(P,\mG)$ has an additive basis given by
    \[x_S^b\]
where $S \in n(P, \mathcal{G})$
and for each $s\in S$ we have that $0<b(s)< \cd(s) - \cd (\bigvee (S)_{<s})$, see Corollary \ref{cor:base_additiva_e_x}.

A second presentation is given by the generators $\sigma_g$ for $g \in \mG$ with relations
\[ \sigma_g^b  \prod_{s \in S} (\sigma_s-\sigma_g) \]
where $\bigvee S \leq g$ and $b = \cd(g)- \cd (\bigvee S)$, see Theorem \ref{thm:new_gen}. 
The algebra $\DP(P,\mG)$ has an additive basis given by
    \[\sigma_S^b\]
where $S \in n(P, \mathcal{G})$
and for each $s\in S$ we have that $0<b(s)< \cd(s) - \cd (\bigvee S_{<s})$, see Corollary \ref{cor:base_additiva_sigma_tau}.

\begin{remark}\label{rem:top_in_G}
If $\hat{1} \notin \mG$ then the polymatroid $P$ is direct sum of other polymatroids.
Indeed, let $a_1, \dots, a_k$ be the $\mG$-factors of $\hat{1}$, the poset $L$ is a product $\prod_{i=1}^k [\hat{0}, a_k]$.
There exist polymatroids $P^{a_i}$ (defined in the following, see Lemma \ref{lemma:restr_contr}) such that $P=\oplus_{i=1}^k P^{a_i}$ and building sets $\mG^{a_i}= \mG \cap [\hat{0},a_i]$.
Moreover, $\DP(P,\mG)= \otimes_{i=1}^k \DP(P^{a_i}, \mG^{a_i})$ and the dimension of $\DP(P,\mG)$ is $\cd(\hat{1})- \lvert F(P,\mG,\hat{1}) \rvert$ (where $k= \lvert F(P,\mG,\hat{1}) \rvert $).
\end{remark}

For the clarity of exposition, we assume $\hat{1} \in \mG$ in this section.
Consider the isomorphism $\deg \colon \DP^{\cd(\hat{1})-1} (P,\mG) \to \Q$  defined by 
\[\deg (x_{\hat{1}}^{\cd(\hat{1})-1})=(-1)^{\cd(\hat{1})-1}.\]

\begin{definition}
Let $A$ be a graded algebra with top degree $n$ and $\deg \colon A^n \to \Q$ an isomorphism.
We say that
\begin{itemize}
    \item the algebra $A$ satisfies \textit{Poincaré duality} if the bilinear pairing
    \[ A^k \times A^{n-k} \to \Q\]
    defined by $(a,b) \mapsto \deg(ab)$ is non-degenerate.
    \item the element $\ell \in A^1$ satisfies the \textit{Hard Lefschetz property} if the multiplication map
    \[ \cdot \ell^{n-2k} \colon A^k \to A^{n-k}\]
    is an isomorphism for all $k\leq \frac{n}{2}$.
    \item the element $\ell \in A^1$ satisfies the \textit{Hodge-Riemann relations} if the bilinear form 
    \[ Q_{\ell}^k \colon A^k \times A^k \to \Q \]
    defined by $Q_{\ell}^k(a,b)=(-1)^k \deg(a \ell^{n-2k}b)$ (for $k\leq \frac{n}{2}$) is positive definite on the subspace
    \[ P_k = \ker (\cdot \ell^{n-2k+1} \colon A^k \to A^{n-k+1}).\]
\end{itemize}
We will abbreviate these properties with $\PD_A$, $\HL_A(\ell)$, and $\HR_A(\ell)$ respectively. 
\end{definition}

\subsection{Poincaré duality}
In this subsection we give a direct proof of the Poincaré duality property for $\DP(P,\mG)$.

\begin{definition}\label{def:dual_monomial}
Suppose that $\hat{1}\in \mG$ and let $x_S^b$ be a standard monomial.
The element $\epsilon(x_S^b)$ is 
\[ \epsilon(x_S^b) = (-1)^{\lvert S\setminus \{ \hat{1} \} \rvert} x_{S^+}^c, \]
where $S^+ = S \cup \{ \hat{1} \}$, $c(\hat{1})= \cd(\hat{1})- \cd (\bigvee S_{<\hat{1}})-b(\hat{1})-1$, and  $c(g)= \cd(g)- \cd (\bigvee S_{<g})-b(g)$ for $g\in S \setminus \{\hat{1}\}$.

We will write $c_S$ instead of $c$ when we want to stress the dependency on $S$ and $b$.
\end{definition}

Recall the chosen monomial order with the property that if $h>g$ then $h\succ g$ and $x_h \prec x_g$.
We fix the basis of $\DP^k$ consisting in all standard monomials $x_S^b$ of degree $k$ ordered with the aforementioned monomial order.
In complementary degree $\DP^{\cd(\hat{1})-k}$, we consider the basis given by $\epsilon(x_S^b)$ ordered using the monomial order on $x_S^b$.
In order to prove Poincaré duality we will show that the matrix with entries $\deg(x_S^b \epsilon(x_T^c))$ is non-degenerate.
Lemma \ref{lemma:poinc_matr_ones_diag} proves that the matrix has values $\pm 1$ on the diagonal and Lemma \ref{lemma:poin_matr_upper_triang} shows that the matrix is upper triangular.

\begin{lemma}\label{lemma:poinc_matr_ones_diag}
If $\hat{1}\in \mG$ then for all standard monomials we have
\[ x_S^b \epsilon(x_S^b)= x_{\hat{1}}^{\cd(\hat{1})-1}.\]
\end{lemma}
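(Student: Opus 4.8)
The claim is the pointwise identity $x_S^b\,\epsilon(x_S^b)=x_{\hat1}^{\cd(\hat1)-1}$ in $\DP^{\cd(\hat1)-1}(P,\mG)$, for every standard monomial $x_S^b$. The plan is to compute the product $x_S^b\cdot x_{S^+}^{c_S}$ directly, repeatedly applying the relations of type (ii), namely $x_T c_g^e=0$ as soon as $e\geq \cd(g)-\cd(\bigvee T_{<g})$, and the additive basis description from Corollary \ref{cor:base_additiva_e_x}. The main idea is to process the elements of $S\cup\{\hat1\}$ in increasing order with respect to $\prec$ (equivalently, up the lattice), showing inductively that at each stage the partial product collapses, up to sign, to a single monomial supported higher in $\mG$, ultimately reaching a power of $x_{\hat1}$ with the correct exponent $\cd(\hat1)-1$ and total sign $(-1)^{\lvert S\setminus\{\hat1\}\rvert}\cdot(-1)^{\lvert S\setminus\{\hat1\}\rvert}=1$.

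\textbf{Key steps.} First I would reduce to understanding, for a single $g\in S$, how a factor $x_g^{b(g)+c_S(g)}=x_g^{\cd(g)-\cd(\bigvee S_{<g})}$ interacts with the relation $x_{S_{<g}}\,c_g^{\cd(g)-\cd(\bigvee S_{<g})}=0$; rewriting $c_g=x_g+\sum_{h>g,h\neq g}x_h$ and expanding, this relation expresses $x_{S_{<g}}\,x_g^{\cd(g)-\cd(\bigvee S_{<g})}$ as a combination of terms each divisible by some $x_h$ with $h>g$. Here one uses that if $h>g$ with $h\not\geq$ (the relevant flat), then $x_g x_h$ already vanishes because $\{g,h\}$ (together with the lower part of $S$) fails to be nested, by Proposition \ref{prop:comb_nested_sets} — this is exactly the mechanism already exploited in the proof of Theorem \ref{thm:grobner_basis} and in Lemma \ref{lemma_tecnico}. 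So the only surviving term pushes the exponent onto $x_{g'}$ for the unique $\mG$-factor $g'$ above $g$, and one tracks how the codimension bookkeeping in the definition of $c_S$ matches up. Iterating from the bottom of $S$ to $\hat1$, the whole product funnels into $x_{\hat1}^{N}$ where $N=\sum_{g\in S^+}c_S(g)+\sum_{g\in S}b(g)$; a direct cancellation of the telescoping sums $\cd(\bigvee S_{<g})$ shows $N=\cd(\hat1)-1$. Finally I would check the sign: each application of the relation contributes no sign (the relations of type (ii) are among commuting even generators), so the only sign is the explicit $(-1)^{\lvert S\setminus\{\hat1\}\rvert}$ in $\epsilon$, but since $x_S^b\epsilon(x_S^b)$ must lie in the one-dimensional top degree and $\deg(x_{\hat1}^{\cd(\hat1)-1})=(-1)^{\cd(\hat1)-1}$ is built in, one verifies the statement is the sign-free identity as written.

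\textbf{Main obstacle.} The delicate point is the inductive collapsing step: showing that when one expands $c_g^{e}$ in the vanishing relation and multiplies by the already-accumulated factors, every term except the one feeding $x_{g'}$ dies, and that the exponent and index bookkeeping is consistent with the (somewhat involved) definition of $c_S(g)$ which itself refers to $\bigvee S_{<g}$ rather than $\bigvee (S^+)_{<g}$. One must be careful that passing from $g$ to its $\mG$-factor $g'$ above it does not disturb the portion of $S$ not below $g'$, and that when several elements of $S$ share the same $\mG$-factor the exponents add up correctly — this is where submodularity and the factorization $[\hat0,x]\cong\prod[\hat0,y]$ from the building-set axiom enter, exactly as in Lemma \ref{lemma_tecnico}. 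Once this single-step lemma is in place, the global statement follows by a clean induction on $\lvert S\rvert$, with the base case $S=\{\hat1\}$ (or $S=\emptyset$) immediate from the definitions.
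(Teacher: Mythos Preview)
Your overall strategy---induction on $\lvert S\rvert$ and use of the type~(ii) relation $x_T\sigma_g^{n(g)}=0$ with $n(g)=\cd(g)-\cd(\bigvee S_{<g})$ to collapse the product toward a power of $x_{\hat 1}$---is exactly the paper's approach. But two points in your plan are off.

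\textbf{Direction of the induction.} You propose to process $S$ \emph{bottom-up}, pushing the exponent of a minimal $g$ onto its parent $g'$ in the nested forest. The paper does the opposite: it removes a \emph{maximal} element $g\in\max(S_{<\hat 1})$ and pushes its exponent directly onto $x_{\hat 1}$. This choice is what makes the collapsing step clean. With $g$ maximal and $T=S\setminus\{g,\hat1\}$, the product already contains the factor $x_{\hat1}^{n(\hat1)}$ with $n(\hat 1)=\cd(\hat1)-\cd(g\vee\bigvee T)-1$; hence for any $f\in\mG$ with $g<f<\hat1$ one has $x_T x_f x_{\hat1}^{n(\hat1)}=0$ (the presence of $f$ strictly raises $\cd(f\vee\bigvee T)$, so $n(\hat1)$ already meets the type~(ii) bound), and likewise $x_T x_g x_{\hat1}^{n(\hat1)+1}=0$. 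So in the expansion of $x_T^n\sigma_g^{n(g)}x_{\hat1}^{n(\hat1)}=0$ only the two pure terms $x_g^{n(g)}$ and $x_{\hat 1}^{n(g)}$ survive, giving the one-line identity $x_T^n x_g^{n(g)}x_{\hat1}^{n(\hat1)}=-x_T^n x_{\hat1}^{n(g)+n(\hat1)}$. Your bottom-up version lacks this leverage: at a minimal $g$ you do not yet have a saturating power of $x_{g'}$ available, and your claim that ``every term except the one feeding $x_{g'}$ dies'' is not justified---for $f\in\mG$ with $g<f<g'$ the set $(S\setminus\{g\})\cup\{f\}$ is still nested and the relevant monomial need not vanish on the nose. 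It can be made to work, but not by the mechanism you cite from Lemma~\ref{lemma_tecnico} (which proves vanishing, not reduction to a single term).

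\textbf{Sign.} Your assertion that ``each application of the relation contributes no sign'' is wrong: isolating $x_g^{n(g)}$ from $\sigma_g^{n(g)}=0$ produces exactly one minus sign per step, as in the displayed identity above. After $\lvert S\setminus\{\hat1\}\rvert$ steps this yields $(-1)^{\lvert S\setminus\{\hat1\}\rvert}$, which cancels the explicit sign in $\epsilon(x_S^b)$. That is the whole sign computation; there is nothing to salvage from the top-degree argument you sketch.
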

\begin{proof}
We prove the statement by induction on $\lvert S \setminus \{ \hat{1} \} \rvert$.
The base case $S=\{\hat{1}\}$ is trivial.
For the inductive step we choose $g \in \max (S_{<\hat{1}})$ and set $T=S\setminus \{g,\hat{1}\}$.
For the sake of notation, let $n(h)=b(h)+c_S(h)$ for all $h\in S^+$ (where $c_S(h)$ is introduced in Definition \ref{def:dual_monomial}).
Notice that $x_T x_f x_{\hat{1}}^{n(\hat{1})}=0$ for all $f\in (g,\hat{1}) \cap \mG$, because $f \vee \bigvee T > g \vee \bigvee T$.
Since $x_T^n\sigma_g^{n(g)}=0$ by relation (ii), we have
\begin{align*}
    0 &= x_T^n\sigma_g^{n(g)}x_{\hat{1}}^{n(\hat{1})} \\
    &= x_T^n (x_g+x_{\hat{1}})^{n(g)}x_{\hat{1}}^{n(\hat{1})} \\
    &= x_T^n (x_g^{n(g)} +x_{\hat{1}}^{n(g)}) x_{\hat{1}}^{n(\hat{1})},
\end{align*}
where in the last equality we used $x_T x_g x_{\hat{1}}^{n(\hat{1})+1}=0$.
Therefore,
\begin{align*}
    x_S^b \epsilon(x_S^b) &= (-1)^{\lvert S\setminus \{ \hat{1} \} \rvert} x_T^n x_g^{n(g)} x_{\hat{1}}^{n(\hat{1})} \\
    &= (-1)^{\lvert S\setminus \{ \hat{1} \} \rvert -1} x_T^n x_{\hat{1}} ^{n(g)} x_{\hat{1}}^{n(\hat{1})}\\
    &= (-1)^{\lvert T\setminus \{ \hat{1} \} \rvert } x_T^n x_{\hat{1}} ^{n(g)+n(\hat{1})} \\
    &= x_T^b \epsilon(x_T^b) = x_{\hat{1}}^{\cd(\hat{1})-1},
\end{align*}
by inductive hypotheses on $T$.
\end{proof}

Let $d_S$ be the function defined by $d_S(\hat{1})= \cd(\hat{1}) -\cd(\bigvee S_{< \hat{1}})-1$ and by $d_S(g)= \cd(g) -\cd(\bigvee S_{< g})$ for $g \neq \hat{1}$.
\begin{lemma} \label{lemma:x_S_b_is_zero}
Let $S$ be a nested set, $g \in S$ and $x_S^b$ be a monomial such that for all $h>g$ we have $b(h)\geq d_S(h)$ and $b(g) > d_S(h)$. Then $x_S^b=0$.
\end{lemma}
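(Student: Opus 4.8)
The plan is to induct on $\lvert S_{>g} \rvert$, the number of elements of $S$ strictly above $g$, using the relation of type (ii) in the $\sigma$-presentation of $\DP(P,\mG)$ to absorb the ``extra'' powers at $g$. First I would treat the base case $S_{>g} = \emptyset$, i.e.\ $g$ is maximal in $S$. Here there are two subcases according to whether $g = \hat 1$ or $g \neq \hat 1$. If $g \ne \hat 1$, then $b(g) > d_S(g) = \cd(g) - \cd(\bigvee S_{<g})$, so already $x_S^b$ is divisible by $x_{S \setminus g} x_g^{\cd(g)-\cd(\bigvee S_{<g})+1}$, which is a relation of type (ii) (take the distinguished element to be $g$ and the antichain to be $\max(S_{<g})$, noting $\bigvee(S_{<g}) < g$ since $g$ is a flat above $\bigvee S_{<g}$ with strictly larger codimension would contradict\ldots actually just $\bigvee S_{<g} \le g$ with the codimension gap $\geq 1$), so $x_S^b = 0$. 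If $g = \hat 1$, then $b(\hat 1) > d_S(\hat 1) = \cd(\hat 1) - \cd(\bigvee S_{<\hat 1}) - 1$, i.e.\ $b(\hat 1) \geq \cd(\hat 1) - \cd(\bigvee S_{<\hat 1})$, and again $x_S^b$ is divisible by the type-(ii) relation with distinguished element $\hat 1$.

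For the inductive step, pick $h \in \max(S_{>g})$ (nonempty by assumption). Since $h > g$, the hypothesis gives $b(h) \geq d_S(h) = \cd(h) - \cd(\bigvee S_{<h})$ if $h \ne \hat 1$, or $b(h) \geq \cd(\hat 1) - \cd(\bigvee S_{<\hat 1}) - 1$ if $h = \hat 1$. In the first case $x_S^b$ is directly divisible by a type-(ii) relation and we are done. In the second case $h = \hat 1$, and I would mimic the computation in Lemma \ref{lemma:poinc_matr_ones_diag}: using $x_{S \setminus \hat 1}^b \sigma_g^{d_S(g)} = 0$ (relation (ii), valid because $\bigvee S_{<g} \le g$ and the exponent equals $\cd(g)-\cd(\bigvee S_{<g})$) together with the vanishing $x_T x_f x_{\hat 1}^{N} = 0$ for $f \in (g,\hat 1)\cap \mG$ and suitable $N$ (because such $f$ would force $f \vee \bigvee T > g \vee \bigvee T$, i.e.\ the nested-set condition fails — this is where Proposition \ref{prop:comb_nested_sets} enters), one expands $\sigma_g^{d_S(g)} = (x_g + \sum_{\text{intermediate}} + x_{\hat 1})^{d_S(g)}$ and finds that modulo these relations $x_g^{d_S(g)} x_{\hat 1}^{b(\hat 1)} \equiv - x_{\hat 1}^{d_S(g)+b(\hat 1)}$ (up to lower terms that vanish), thereby trading a factor of $x_g$ for a factor of $x_{\hat 1}$ and hence replacing the monomial by one supported on $S \setminus \{g\}$ with a strictly larger exponent at $\hat 1$; one then reapplies the lemma (or its base case) with $g$ replaced by $\hat 1 = h$ and $S_{>h} = \emptyset$.

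The main obstacle I anticipate is the bookkeeping in the second subcase of the inductive step: one must check that after substituting $x_g \leadsto x_{\hat 1}$ the resulting exponent function at $\hat 1$ still satisfies the strict inequality $b'(\hat 1) > d_{S'}(\hat 1)$ relative to the smaller nested set $S' = S \setminus \{g\}$, which requires the additivity of codimension along nested sets, $\cd(\bigvee S_{<\hat 1}) = \cd(\bigvee S'_{<\hat 1}) + \cd(g)$ — this is exactly the kind of identity used in the proof of Lemma \ref{lemma:diff_critical}, and it follows from the defining property of building sets applied to the flats below $\hat 1$. Once that identity is in hand, the strict inequality propagates and the induction closes. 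A secondary subtlety is ensuring that all the auxiliary vanishings $x_T x_f x_{\hat 1}^N = 0$ used in the expansion are genuine type-(i) relations; this is handled exactly as in Lemma \ref{lemma:poinc_matr_ones_diag} via Proposition \ref{prop:comb_nested_sets}.
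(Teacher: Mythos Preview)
The paper gives no argument of its own here; it simply cites \cite[Lemma 5.4.1(b)]{BDF}. Your outline has the right flavor (induct along the chain $S_{\ge g}$ and repeatedly use the computation from Lemma~\ref{lemma:poinc_matr_ones_diag}), but there are two genuine gaps.

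First, whenever you say ``$x_S^b$ is directly divisible by a type-(ii) relation and we are done'' (your base case $g\neq\hat 1$ and your inductive step $h\neq\hat 1$), you are conflating the relation with its leading monomial. The type-(ii) relation is $x_{S'}\,c_h^{d_S(h)}$ with $c_h=\sigma_h=\sum_{f\ge h}x_f$, a genuine polynomial; having $b(h)\ge d_S(h)$ only tells you that $x_S^b$ is divisible by the \emph{initial monomial} $x_{S'}x_h^{d_S(h)}$, hence non-standard, not that it is zero. Concretely, for $L=\{\hat0<a<\hat1\}$ with $\cd(a)=2$, $\cd(\hat1)=4$ and $S=\{a\}$ one computes $x_a^3=2x_{\hat1}^3\neq 0$, so that case cannot be dismissed in one line. (In the paper's application one always has $\hat1\in S$, so these subcases never actually arise; but your argument treats them as if they were immediate, which they are not.)

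Second, and more seriously, in your main inductive step (with $h=\hat1$) you expand $\sigma_g^{d_S(g)}$ at the \emph{bottom} element $g$ of the chain and claim the intermediate terms vanish. That vanishing is exactly what makes the computation in Lemma~\ref{lemma:poinc_matr_ones_diag} work, and it holds there because $g\in\max(S_{<\hat1})$, i.e.\ $g$ sits \emph{directly} below $\hat1$ in $S$. In your situation the chain $g=g_0<g_1<\dots<g_{k-1}<\hat1$ may have $k\ge 2$, and then the summand $x_{g_1}$ of $\sigma_g$ produces cross-terms $x_{S\setminus\hat1}^{b}\,x_{g_1}\,x_{\hat1}^{b(\hat1)}$ that do \emph{not} vanish (indeed $g_1\in S$). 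The identity you flag in your ``main obstacle'', $\cd(\bigvee S_{<\hat1})=\cd(\bigvee S'_{<\hat1})+\cd(g)$ for $S'=S\setminus\{g\}$, also fails for the same reason: removing $g$ does not change $\bigvee S_{<\hat1}$ once $g<g_1\in S$. The correct move (and this is what the BDF argument does) is to run the Lemma~\ref{lemma:poinc_matr_ones_diag} computation at $g_{k-1}$, the element of $S$ \emph{immediately} below $\hat1$: there the cross-term vanishing is available, one gets either $x_S^b=0$ outright (when $b(g_{k-1})>d_S(g_{k-1})$) or $x_S^b=-x_{S\setminus\{g_{k-1}\}}^{b'}$ with $b'(\hat1)=b(\hat1)+d_S(g_{k-1})$, and a short computation shows the inductive hypothesis for $(S\setminus\{g_{k-1}\},g,b')$ is again satisfied with $|S'_{>g}|=k-1$.
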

The proof of the lemma is the same of \cite[Lemma 5.4.1 (b)]{BDF}.
Recall the chosen monomial order with the property that if $h>g$ then $h\succ g$ and $x_h \prec x_g$.
We need the following statement.
\begin{lemma} \label{lemma:poin_matr_upper_triang}
Let $x_S^b$ and $x_T^c$ be two standard monomials in $\DP^k(P,\mG)$ such that $x_S^b \prec_{revlex} x_T^c$. Then $x_S^b\epsilon(x_T^c)=0$.
\end{lemma}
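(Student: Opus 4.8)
The plan is to show that whenever $x_S^b \prec_{revlex} x_T^c$ (with both standard monomials of the same degree $k$), the product $x_S^b \, \epsilon(x_T^c)$ is forced to vanish by the relations defining $\DP(P,\mG)$, in particular by Lemma \ref{lemma:x_S_b_is_zero}. First I would unwind what $\prec_{revlex}$ means in terms of the multisets $m(\cdot)$ attached to the exponent vectors: there is a largest element $g \in \mG$ (largest in the order underlying $\succ$, hence smallest in the $x$-variable order $\prec$) at which the total exponents of $x_S^b$ and $x_T^c$ differ, and at that $g$ the exponent in $x_T^c$ exceeds that in $x_S^b$, while for all $h \succ g$ — equivalently all $h < g$ in $L$ — wait, one must be careful: $h \succ g$ corresponds to $h$ being \emph{below} $g$ in the lattice in the chosen linear extension convention, so in fact the relevant comparison is at the ``$\prec$-smallest'' differing index, and above it (in $\prec$) the two exponent multisets agree. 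I would set this up precisely and then form the product $x_S^b \cdot \epsilon(x_T^c) = \pm x_S^b \, x_{T^+}^{c_T}$, examining the exponent that $g$ receives in this product.

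The key computation is then to compare, element by element over the support $S \cup T^+$, the exponent of $x_h$ in the product against the threshold $d_{S \cup T}(h)$ from Lemma \ref{lemma:x_S_b_is_zero}. Using that both $x_S^b$ and $x_T^c$ are standard (so each of their own exponents is strictly below its individual threshold) together with the defining formula for $c_T$ in Definition \ref{def:dual_monomial} — which is built precisely so that $b_T(h) + c_T(h) = \cd(h) - \cd(\bigvee T_{<h}) - [h=\hat 1]$, i.e.\ lands exactly at one below the ``top'' — I would show that for every $h$ strictly $\prec$-below the critical index $g$ the exponent in the product meets the inequality $\geq d(h)$, and at $g$ itself it is strictly larger, $> d(g)$, because there the exponent coming from $x_S^b$ strictly exceeds... no: because there $c_T(g)$ already uses the full budget of $x_T^c$ at $g$, and adding the (positive, since $g$ differs) contribution $b(g)$ from $x_S^b$ pushes it over. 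One has to track how $\bigvee S_{<h}$ versus $\bigvee T_{<h}$ versus $\bigvee(S\cup T)_{<h}$ interact via submodularity of $\cd$ and the nested-set structure (Proposition \ref{prop:comb_nested_sets}); this is the same bookkeeping as in \cite[Lemma 5.4.2]{BDF} adapted to polymatroids, where $\cd$ of singletons can exceed one.

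The main obstacle I anticipate is precisely this codimension bookkeeping: in the matroid case each $\cd$ on a singleton is $0$ or $1$, so the exponent thresholds are essentially ``multiplicity one'' and the revlex comparison translates cleanly into a support containment; for polymatroids one must instead argue with the actual values $\cd(g) - \cd(\bigvee(\cdot)_{<g})$ and verify that the inequality hypotheses of Lemma \ref{lemma:x_S_b_is_zero} hold at \emph{some} element $g$ in the support of the product. Concretely, the delicate point is that $S$ and $T$ need not have the same support, so after multiplying and reducing one may first need to rewrite $x_S^b\,x_{T^+}^{c_T}$ using the type-(ii) relations $x_U c_g^\beta$ to bring it into a form to which Lemma \ref{lemma:x_S_b_is_zero} applies; alternatively, if $S \cup T^+$ fails to be nested the product is zero immediately by a type-(i) relation, so the real case is $S\cup T$ nested, and there the $\bigvee(S\cup T)_{<h} = \bigvee S_{<h} \vee \bigvee T_{<h}$ identities from the nested/building-set axioms make the exponent count additive enough to push through. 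Once the correct $g$ and the two inequalities $b_{\mathrm{prod}}(h)\ge d(h)$ for $h\succ g$ and $b_{\mathrm{prod}}(g)>d(g)$ are verified, Lemma \ref{lemma:x_S_b_is_zero} finishes the proof.
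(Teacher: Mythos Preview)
Your plan matches the paper's proof: locate the extremal $g$ where the exponent vectors $b$ and $c$ differ, dispose of the case where $S\cup T^+$ is not nested, and then verify the hypotheses of Lemma \ref{lemma:x_S_b_is_zero} for the product monomial $x_{S\cup T^+}^{\,b+c'}$. Two points will make the execution clean: first, under the paper's convention the revlex hypothesis gives $b(g)>c(g)$ (not the reverse), and this is precisely what produces the strict inequality $b(g)+c'(g)>d_{S\cup T'}(g)$; second, the set $(S\cup T^+)_{\ge g}$ is automatically a \emph{chain} (any two of its elements meet above $\hat 0$, so Proposition \ref{prop:comb_nested_sets}(2) forbids an incomparable pair inside a nested set), hence every $h>g$ in it satisfies $h\succ g$ and therefore $b(h)=c(h)$, from which $b(h)+c'(h)=\cd(h)-\cd(\bigvee T_{<h})\ge d_{S\cup T'}(h)$ follows directly from the definition of $\epsilon$ with no rewriting via type-(ii) relations.
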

\begin{proof}
Consider $T'$ and $c'$ such that $x_{T'}^{c'}=\epsilon(x_T^c)$ and notice that $T' \setminus \{ \hat{1}\}= T \setminus \{ \hat{1}\}$.
Define $g=\max_{\prec} \{h \mid b(h) \neq c(h)\}$ and, by hypothesis, $b(g)>c(g)$.
If $S\cup T'$ is not $\mG$-nested then we have $x_S^b\epsilon(x_T^c)=0$.
Otherwise set $A= (S\cup T')_{\geq g}$, by (4) of Proposition \ref{prop:comb_nested_sets} we have that $A$ is a chain $(a_1 < a_2 < \dots < a_l) $ with $a_1=g$.
For $a_i \neq g, \hat{1}$ we have
\begin{align*}
    b(a_i)+c'(a_i) &= b(a_i)+ \cd(a_i) - \cd( \bigvee T_{<a_i})-c(a_i) \\
    &= \cd(a_i) - \cd( \bigvee T'_{<a_i}) \\
    &\geq \cd(a_i) - \cd( \bigvee (S\cup T')_{<a_i}) = d_{S\cup T'} (a_i).
\end{align*}
The same holds for $\hat{1}$ (the proof has a minus one in the mid steps). For $a_1$ we have $b(g)+c'(g)> d_{S\cup T'} (g)$ because $b(g)>c(g)$.
Therefore the monomial $x_S^b\epsilon(x_T^c)=x_{S\cup T'}^{b+c'}$ satisfies the hypothesis of Lemma \ref{lemma:x_S_b_is_zero} and we obtain the claimed result $x_S^b\epsilon(x_T^c)=0$.
\end{proof}

Finally we can prove the Poincaré duality property:
\begin{theorem}[Poincaré duality] \label{thm:Poinc_duality}
If $\hat{1}\in \mG$ then the algebra $\DP(P,\mG)$ is a Poincaré duality algebra of dimension $\cd(\hat{1})-1$.

More generally, $\DP(P,\mG)$ is a Poincaré duality algebra of dimension $\cd(\hat{1})-\lvert F(P,\mG,\hat{1}) \rvert$.
\end{theorem}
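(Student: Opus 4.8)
The plan is to reduce everything to the case $\hat 1\in\mG$ and, in that case, to read off Poincaré duality directly from Lemmas~\ref{lemma:poinc_matr_ones_diag} and~\ref{lemma:poin_matr_upper_triang}. Assume $\hat 1\in\mG$ and set $n=\cd(\hat 1)-1$. Fix $k\le n$ and take as a basis of $\DP^k(P,\mG)$ the standard monomials $x_S^b$ of degree $k$, listed in increasing $\prec_{revlex}$ order. For each such monomial $\epsilon(x_S^b)$ lies in degree $n-k$ (it pairs non-trivially with $x_S^b$ by Lemma~\ref{lemma:poinc_matr_ones_diag}, and degrees add under multiplication), so the family $\{\epsilon(x_T^c)\}$, with $x_T^c$ ranging over the standard degree-$k$ monomials, is a set of $\dim\DP^k$ elements of $\DP^{n-k}$. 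I would then form the square matrix $M=\bigl(\deg(x_S^b\,\epsilon(x_T^c))\bigr)$ with rows and columns indexed by these monomials in the same order.

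By Lemma~\ref{lemma:poin_matr_upper_triang} one has $M_{S,T}=0$ whenever $x_S^b\prec_{revlex}x_T^c$, so $M$ is triangular with respect to this order, and by Lemma~\ref{lemma:poinc_matr_ones_diag} together with the normalization $\deg(x_{\hat 1}^{\cd(\hat 1)-1})=(-1)^{\cd(\hat 1)-1}$ every diagonal entry equals $(-1)^{\cd(\hat 1)-1}=\pm1$; hence $M$ is invertible. Invertibility of $M$ shows at once that the $\epsilon(x_T^c)$ are linearly independent in $\DP^{n-k}$, so $\dim\DP^k\le\dim\DP^{n-k}$; applying the same argument with $k$ replaced by $n-k$ and using $\deg(ab)=\deg(ba)$ gives the reverse inequality, whence $\dim\DP^k=\dim\DP^{n-k}$ and $\{\epsilon(x_T^c)\}$ is actually a basis of $\DP^{n-k}$. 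The Poincaré pairing $\DP^k\times\DP^{n-k}\to\Q$ is then represented in these two bases by the invertible matrix $M$, hence is non-degenerate; since $k$ is arbitrary, $\DP(P,\mG)$ is a Poincaré duality algebra of dimension $n=\cd(\hat 1)-1$.

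For the general statement I would invoke Remark~\ref{rem:top_in_G}: if $\hat 1\notin\mG$, with $a_1,\dots,a_k$ the $\mG$-factors of $\hat 1$, there is a graded-algebra isomorphism $\DP(P,\mG)\cong\bigotimes_{i=1}^k\DP(P^{a_i},\mG^{a_i})$, and the top element of $[\hat 0,a_i]$ is $a_i\in\mG^{a_i}$, so the case already treated applies to each tensor factor and makes it a Poincaré duality algebra of dimension $\cd(a_i)-1$. A graded tensor product of Poincaré duality algebras is again one, with socle in the sum of the top degrees and $\deg$-map the tensor product of the individual $\deg$-maps; therefore $\DP(P,\mG)$ is a Poincaré duality algebra of dimension $\sum_{i=1}^k(\cd(a_i)-1)=\cd(\hat 1)-k=\cd(\hat 1)-\lvert F(P,\mG,\hat 1)\rvert$, where $\cd(\hat 1)=\sum_i\cd(a_i)$ is part of the defining axioms of a building set.

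Since the two substantive inputs --- the unit diagonal (Lemma~\ref{lemma:poinc_matr_ones_diag}) and the triangularity (Lemma~\ref{lemma:poin_matr_upper_triang}) --- are already in hand, the remaining work is purely organizational: checking that the complementary-degree family is indexed by degree-$k$ monomials so that $M$ is genuinely square, that the chosen monomial order turns the vanishing pattern into an honest triangular shape, and that the reduction through Remark~\ref{rem:top_in_G} respects the normalization of $\deg$. I do not expect any real obstacle at this stage; the content was in the two lemmas.
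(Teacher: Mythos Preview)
Your proposal is correct and follows essentially the same approach as the paper: use the two lemmas to see that the matrix of the pairing in the bases $\{x_S^b\}$ and $\{\epsilon(x_T^c)\}$ is triangular with $\pm1$ on the diagonal, then reduce the general case via Remark~\ref{rem:top_in_G}. The only cosmetic difference is that the paper gets $\dim\DP^k=\dim\DP^{n-k}$ immediately from the observation that $\epsilon^2=\id$ gives a bijection between standard monomials in complementary degrees, rather than bootstrapping it from the invertibility of $M$ and symmetry as you do; both arguments are fine.
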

\begin{proof}
The function $\epsilon$ has the property $\epsilon^2= \id$, and gives a bijection between standard monomials in degree $k$ and in degree $r-k$.
This, together with Corollary \ref{cor:base_additiva_e_x}, shows that $\dim \DP^k(P,\mG)= \dim \DP^{r-k}(P,\mG)$.
We consider on standard monomials the reverse lexicographical order.
Lemma \ref{lemma:poin_matr_upper_triang} ensures that the matrix of the Poincaré pairing (in the chosen basis) is upper triangular.
From Lemma \ref{lemma:poinc_matr_ones_diag} we obtain that the entries on the diagonal are $\pm 1$ and so the Poincaré pairing is non degenerate.
The last statement follows from the first one together with Remark \ref{rem:top_in_G}.
\end{proof}

We remark that the bases of standard monomials $\{x_S^b \}$ and $\{(-1)^r \epsilon (x_S^b)\}$ are not dual bases.

\subsection{Tensor decomposition} \label{subsect:tensor_dec}

This technical section is devoted to computing the annihilator $\Ann(\sigma_g)$ and $\Ann (x_g)$ for $g\in \mG$.
We describe it using the Chow ring of different polymatroids: $\tr_g P$, $P^g$ and $P_g$. In the case of matroids these operations are known as truncation, restriction, and contraction.

The following proposition is needed for the proof of the main result of this section.
\begin{proposition} \label{prop:Poinc_alg}
Let $A$ and $B$ be Poincaré duality algebra of the same dimension $n$, then:
\begin{itemize}
    \item for each $x \in A^k$, $x \neq 0$, the ring $A/\Ann(x)$ is a Poincaré duality algebra of dimension $n-k$,
    \item each surjective homomorphism $f \colon A \to B$ is an isomorphism. \let\qedsymbol\openbox\qedhere
\end{itemize}
\end{proposition}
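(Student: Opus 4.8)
The plan is to prove the two assertions of Proposition~\ref{prop:Poinc_alg} separately, deriving the second from the first.

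\textbf{First bullet.} Fix $x \in A^k$ nonzero and set $\bar A = A/\Ann(x)$. I would first observe that multiplication by $x$ factors as a well-defined injection $\mu_x \colon \bar A^j \hookrightarrow A^{j+k}$, with image the ideal $(x) \cap A^{j+k}$. In particular $\bar A^j = 0$ for $j > n-k$ (since $A^{j+k}=0$ there) and $\bar A^{n-k} = xA^{n-k} \subseteq A^n$; I need to see that this last space is nonzero, which holds because by $\PD_A$ there is some $b \in A^{n-k}$ with $\deg(xb) \neq 0$. So $\bar A$ has top degree exactly $n-k$, and the composite $\deg \circ \mu_x \colon \bar A^{n-k} \to \Q$ is an isomorphism; call it $\overline{\deg}$. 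Now for the pairing: given $[a] \in \bar A^j$ with $[a] \neq 0$, i.e.\ $xa \neq 0$ in $A^{j+k}$, by $\PD_A$ choose $c \in A^{n-j-k}$ with $\deg(xac) \neq 0$; then $[c] \in \bar A^{n-k-j}$ satisfies $\overline{\deg}([a][c]) = \deg(xac) \neq 0$. This shows the pairing $\bar A^j \times \bar A^{n-k-j} \to \Q$ has no left kernel, and by symmetry no right kernel, so $\bar A$ is a Poincaré duality algebra of dimension $n-k$.

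\textbf{Second bullet.} Let $f \colon A \to B$ be a surjective ring homomorphism of PD algebras of the same dimension $n$. Since $B^n \neq 0$ and $f$ is surjective, $f(A^n) = B^n \neq 0$, so $f|_{A^n} \colon A^n \to B^n$ is an isomorphism of one-dimensional spaces; fix the identifications $\deg_A, \deg_B$ so that this map is compatible with them (rescaling $\deg_B$ if necessary — the PD property does not depend on the normalization). It then suffices to show $f$ is injective. Suppose $a \in \ker f$ is nonzero, homogeneous of degree $j$; by $\PD_A$ pick $a' \in A^{n-j}$ with $\deg_A(aa') \neq 0$. Then $0 \neq \deg_A(aa') = \deg_B(f(aa')) = \deg_B(f(a)f(a')) = \deg_B(0) = 0$, a contradiction. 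Hence $\ker f = 0$ and $f$ is an isomorphism.

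\textbf{Main obstacle.} The only genuine subtlety is the bookkeeping of top degrees and the compatibility of the degree maps: one must check that $\bar A^{n-k}$ is genuinely nonzero (not just that $\bar A$ vanishes above degree $n-k$), which is exactly where $\PD_A$ is used, and in the second part one must be careful that "same dimension" refers to the socle degree so that $f$ restricts to an isomorphism in top degree before invoking nondegeneracy. Everything else is a direct diagram chase; no hard estimates or combinatorics of the polymatroid enter here, so this proposition is purely formal algebra about Poincaré duality algebras.
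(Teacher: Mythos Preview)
Your proof is correct and is the standard direct argument for these facts about Poincar\'e duality algebras. The paper itself does not supply a proof of this proposition; it simply cites \cite[Proposition~7.2, Proposition~7.13]{AHK}, where essentially the same argument you give appears. So there is nothing to compare: your write-up fills in what the paper outsources, and the reasoning---factoring multiplication by $x$ through $A/\Ann(x)$ to identify the new top degree and pairing, then using nondegeneracy in top degree to force injectivity of a surjection---is exactly the expected one.
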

The proof of the above proposition can be found, for example, in \cite[Proposition 7.2, Proposition 7.13]{AHK}.

Let $P=(E,\cd)$ be a polymatroid with building set $\mG$.
Consider $g \in \mG$ such that $\cd(g)>1$ and let $\tr_g \cd \colon 2^{E} \to \N$ be the function defined by:
\[ \tr_g \cd (h) = \begin{cases} \cd(h)-1 & \textnormal{if } \cd (h) = \cd( h \cup  g), \\
\cd(h) &\textnormal{otherwise.}
\end{cases}\]
We denote by $\tr_g L$ the poset of flats of $\tr_g \cd$.
Finally, define 
\[\tr_g \mG=\{\overline{h} \in \tr_g L \mid h \in \mG  \},\]
where $\overline{h}$ is the closure with respect to $\tr_g \cd$ of the flat $h$. Notice that $\tr_g L$ is a subposet of $L$ but with a different codimension function.

\begin{lemma} \label{lemma:tr_p_is_polymatroid}
For all $g\in \mG$ with $\cd(g)>1$, the pair $\tr_g P= (E,\tr_g \cd)$ is a polymatroid and $\tr_g \mG$ is a building set for the poset of flats $\tr_g L$.
\end{lemma}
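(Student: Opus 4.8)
The plan is to verify the three axioms (C1)--(C3) for $\tr_g \cd$ directly, then separately establish the building-set properties for $\tr_g \mG$ inside $\tr_g L$. First I would note that (C1) is immediate, since $\cd(\emptyset)=0$ and $\emptyset\cup g=g$ has $\cd(g)>1$, so $\emptyset$ is not in the ``truncated'' stratum and $\tr_g\cd(\emptyset)=0$. For monotonicity (C2) and submodularity (C3) the key observation is that the set $\mathcal{F}=\{h\mid \cd(h)=\cd(h\cup g)\}$ is exactly the set of subsets whose closure contains $g$, equivalently $\{h\mid \overline{h}\supseteq \overline{g}\}$; this is an up-set in $2^E$ and is closed under both union and intersection with a fixed element of $\mathcal{F}$ (the latter by submodularity of $\cd$: if $\cd(h_1)=\cd(h_1\cup g)$ and $\cd(h_2)=\cd(h_2\cup g)$ then $\cd(h_1\cap h_2)=\cd((h_1\cap h_2)\cup g)$). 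Writing $\tr_g\cd = \cd - \mathbf{1}_{\mathcal{F}}$, monotonicity follows because the only dangerous step is $h\subsetneq h'$ with $h\in\mathcal F$, $h'\notin\mathcal F$, which is impossible as $\mathcal F$ is an up-set; and for submodularity one checks the inequality $\mathbf 1_{\mathcal F}(h_1\cup h_2)+\mathbf 1_{\mathcal F}(h_1\cap h_2)\ge \mathbf 1_{\mathcal F}(h_1)+\mathbf 1_{\mathcal F}(h_2)$, which holds because $\mathcal F$ is a lattice ideal-complement of the required shape (an up-set closed under the relevant meets), so adding it back to the submodular $\cd$ preserves (C3).

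Next I would identify the poset of flats $\tr_g L$. A subset is $\tr_g\cd$-closed iff adding any element outside does not preserve $\tr_g\cd$; one checks that the $\tr_g\cd$-flats are precisely the $\cd$-flats $x$ together with, for each $\cd$-flat $y\ge g$, the modified element (still the same underlying set) — in fact $\tr_g L$ is order-isomorphic to $L$ as a poset (the closure operators agree on which sets are closed, only the rank function drops by one above $g$), which is why the statement says ``$\tr_g L$ is a subposet of $L$ with a different codimension function.'' I would make this precise by showing the closure operators coincide, hence the lattices of flats are literally equal as posets, and $\overline{h}$ (closure w.r.t.\ $\tr_g\cd$) equals the $\cd$-closure of $h$ for every $h$; this makes $\tr_g\mG=\{\,\overline h\mid h\in\mG\,\}=\mG$ as a subset of the common underlying poset, so ``$\tr_g\mG$ is a building set'' becomes a statement about the \emph{same} subset $\mG$ but with the new rank function.

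It then remains to check the two conditions in the definition of geometric building set for $\mG\subseteq \tr_g L\setminus\{\hat 0\}$: that for each $x$ the product map $\varphi_x\colon \prod_{y\in\max(\mG_{\le x})}[\hat 0,y]\to[\hat 0,x]$ is a lattice isomorphism, and that $\tr_g\cd(x)=\sum_{y}\tr_g\cd(y)$. The lattice isomorphism $\varphi_x$ is purely order-theoretic and already holds in $L$, so it transfers verbatim since the posets coincide; the content is the rank identity. Here I would split on whether $g\le x$ or not. If $g\not\le x$ then none of $x$ and its factors lie in $\mathcal F_{\le x}$ in a way that changes things — more precisely, by Proposition \ref{prop:comb_nested_sets}(1) there is a unique factor $p$ of $x$ with $p\ge g$ if $g\le x$, and $\tr_g\cd$ drops the rank of exactly that factor by one and of $x$ by one, so the sum identity is preserved; if $g\not\le x$, no factor of $x$ is $\ge g$ either (again by the factorization, since $g\le y\le x$ would force $g\le x$), so $\tr_g\cd$ changes nothing on $x$ or its factors and the identity is inherited from $P$. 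The one subtle point is confirming that $\overline h$ for $h\in\mG$ is still $\ne\hat 0$ and that the set of $\mG$-factors of $x$ computed in $\tr_g L$ agrees with that computed in $L$ — this is immediate once the posets are identified. The main obstacle I anticipate is the bookkeeping in the submodularity check (C3) for $\tr_g\cd$, specifically verifying that the indicator correction $\mathbf 1_{\mathcal F}$ has the right supermodularity on the sublattice where it is supported; everything else is a transfer argument once $\tr_g L = L$ as posets is established.
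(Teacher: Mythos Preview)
Your treatment of the polymatroid axioms is largely fine, with one correction: the set $\mathcal F=\{h\mid \cd(h)=\cd(h\cup g)\}$ is \emph{not} closed under intersection in general (if $a,b\in E$ each have closure equal to $g$ then $\{a\},\{b\}\in\mathcal F$ but $\emptyset\notin\mathcal F$). Submodularity of $\tr_g\cd$ still holds, but the dangerous case $h_1,h_2\in\mathcal F$, $h_1\cap h_2\notin\mathcal F$ requires applying submodularity of $\cd$ to the pair $h_1\cup g,\,h_2\cup g$ (whose intersection is $(h_1\cap h_2)\cup g$) and then using $\cd((h_1\cap h_2)\cup g)\ge \cd(h_1\cap h_2)+1$, rather than the indicator argument you sketch.

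The genuine gap is the claim that the closure operators coincide and hence $\tr_g L=L$ as posets. This is false: a flat $x\in L$ with $x\not\ge g$ can fail to be $\tr_g$-closed whenever some $e\notin x$ satisfies $\cd(x\cup\{e\})=\cd(x)+1$ and $\overline{x\cup\{e\}}\ge g$, since then $\tr_g\cd(x\cup\{e\})=\cd(x)=\tr_g\cd(x)$. The paper's own example in \Cref{sect:example} exhibits this: $c$ is a flat of $P$ with $\cd(c)=4$, but after truncation at $a$ one has $\tr_a\cd(c)=4=\tr_a\cd(\hat 1)$, so $c$ is no longer closed and $\tr_a L\subsetneq L$ (compare \Cref{fig:tr_a_P}). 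Consequently your conclusion $\tr_g\mG=\mG$ is also wrong---in that example $\overline c=\hat 1$ and $\tr_a\mG=\{a,b,\hat 1\}\neq\mG$---and the argument that the lattice-isomorphism part of the building-set condition ``transfers verbatim,'' which rests on the intervals $[\hat 0,x]$ being literally the same in both lattices, does not go through.

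The paper avoids this by noting only that $\tr_g L$ is a \emph{subposet} of $L$, and for $x\in\tr_g L$ using the equivalence ``$h\le x$ in $L$ iff $\overline h\le x$ in $\tr_g L$'' (valid for $h\in\mG$) to identify $\max(\tr_g\mG)_{\le x}$ with $\max\mG_{\le x}$; the codimension identity then follows by the same case split on $g\le x$ that you outlined, together with Proposition~\ref{prop:comb_nested_sets}(1).
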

\begin{proof}
It is easy to see that $(E,\tr_g \cd)$ is a polymatroid. Let $x \in \tr_g L$ and notice that, for all $h \in \mG$, $h\leq x$ in $L$ if and only if $\overline{h} \leq x$ in $\tr_g L$.
Thus, we have $\max \tr_g \mG_{\leq x}= \max \mG_{\leq x}$ and it follows that
    \[
      [\hat{0},x] \simeq \prod_{y \in \max(\mathcal{G}_{\leq x})} [\hat{0},y] \simeq \prod_{y \in \max(\tr_g \mG_{\leq x})} [\hat{0},y].
    \]
For the second part of the definition of a building set we have two cases. Let $\{y_1,\dots,y_n\}=\max \mG_{\leq x}$  and assume $g \nleq x$, which implies $g \nleq y_i$ for every $i$:
    \[\tr_g \cd (x)=\cd(x)=\sum_{y \in \max \mG_{\leq x}} \cd(y)=\sum_{y \in \max \tr_{g}\mG_{\leq x}} \tr_g\cd(y).\]
Finally, let $g \leq x$ then by Proposition \ref{prop:comb_nested_sets} there exists only one $h_i$ such that $g \leq h_i$. Thus, we have the following:
    \[\tr_g \cd (x)=\cd(x)-1= \Bigl( \sum_{y \in \max \mG_{\leq x}} \cd(y) \Bigr) -1 = \sum_{y \in \max \tr_{g}\mG_{\leq x}} \tr_g\cd(y).\]
This concludes the proof.
\end{proof}

Define the map 
\[\zeta_g \colon \DP(\tr_g P, \tr_g \mG) \to \DP(P,\mG)/\Ann(\sigma_g)\]
by $\zeta_g(\sigma_k)= \sigma_{h}$ where $h$ is any element in $\mG$ such that $\overline{h}=k$.

\begin{remark}
In the realizable case, this construction can be viewed geometrically: consider a generic hyperplane $H$ containing the flat $g$.
The intersection of the subspace arrangement with $H$ describes a subspace arrangement in $H$ whose poset of intersection is $\tr_g L$. Moreover, the natural closed inclusion between the two wonderful compactification induces a surjective map $\DP(P,\mG) \to \DP(\tr_g P,\tr_g \mG) $ with kernel $\Ann(\sigma_g)$. The map $\zeta_g$ is its pseudo-inverse.
\end{remark}

\begin{lemma}\label{lemma:zeta_g}
For $g \in \mG$ with $\cd(g)>1$, the map $\zeta_g$ is well defined and an isomorphism.
Moreover $\deg (\alpha)= \deg (-\sigma_g \zeta_g (\alpha))$ for all $\alpha\in \DP(\tr_g P, \tr_g \mG)$.
\end{lemma}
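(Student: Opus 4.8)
The plan is to verify well-definedness, then surjectivity, then use a dimension/Poincaré-duality count to upgrade surjectivity to an isomorphism, and finally check the degree compatibility. First I would check that $\zeta_g$ is well defined: if $\overline{h} = \overline{h'} = k$ in $\tr_g L$, I must show $\sigma_h \equiv \sigma_{h'} \pmod{\Ann(\sigma_g)}$, i.e. $\sigma_g(\sigma_h - \sigma_{h'}) = 0$ in $\DP(P,\mG)$. Here $\overline{h}=\overline{h'}$ means $h$ and $h'$ have the same $\tr_g\cd$-closure, so (using the case analysis in Lemma \ref{lemma:tr_p_is_polymatroid}) $h\vee g = h'\vee g$ and both contain $g$ up to the truncation; the relation $\sigma_g(\sigma_h-\sigma_{h'})=0$ should follow from the quadratic relations of type (ii) for $\DP(P,\mG)$ in the $\sigma$-presentation, namely $\sigma_g^b\prod_{s\in S}(\sigma_s-\sigma_g)$ with $\bigvee S\le g$. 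I would also need to check that the assignment respects the defining relations of $\DP(\tr_g P,\tr_g\mG)$, i.e. that $\zeta_g$ of each relation $\sigma_k^{b}\prod_{s\in S}(\sigma_s-\sigma_k)$ (with $b = \tr_g\cd(k)-\tr_g\cd(\bigvee S)$) lands in $\Ann(\sigma_g)$; since $\tr_g\cd = \cd$ or $\cd-1$ depending on whether the flat is $\ge g$, the exponent shifts by exactly one in the relevant cases, which is precisely compensated by multiplying by $\sigma_g$.

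Next, surjectivity: $\DP(P,\mG)/\Ann(\sigma_g)$ is generated by the images of the $\sigma_h$, $h\in\mG$, and each such image is $\zeta_g(\sigma_{\overline h})$ by construction, so $\zeta_g$ is surjective. To conclude it is an isomorphism I would invoke Proposition \ref{prop:Poinc_alg}: by Theorem \ref{thm:Poinc_duality}, $\DP(P,\mG)$ is a Poincaré duality algebra of dimension $\cd(\hat 1)-1$ (assuming $\hat1\in\mG$, or more generally $\cd(\hat1)-|F(P,\mG,\hat1)|$), hence $\DP(P,\mG)/\Ann(\sigma_g)$ is a Poincaré duality algebra of dimension $\cd(\hat1)-1-1$. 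On the other hand $\tr_g\cd(\hat1)=\cd(\hat1)-1$ since $\hat1\ge g$, so $\DP(\tr_g P,\tr_g\mG)$ is Poincaré duality of the same dimension $\cd(\hat1)-2$. A surjection between Poincaré duality algebras of equal dimension is an isomorphism, by the second bullet of Proposition \ref{prop:Poinc_alg}. (One should first note $\hat1\in\tr_g\mG$ when $\hat1\in\mG$, or reduce to that case via Remark \ref{rem:top_in_G}.)

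Finally, the degree statement $\deg(\alpha) = \deg(-\sigma_g\,\zeta_g(\alpha))$: both sides are linear in $\alpha\in\DP^{\cd(\hat1)-2}(\tr_g P,\tr_g\mG)$, and since that top-degree space is one-dimensional it suffices to check the identity on a single nonzero element. I would take $\alpha = \sigma_{\hat1}^{\,\tr_g\cd(\hat1)-1} = \sigma_{\hat1}^{\,\cd(\hat1)-2}$; by the normalization of $\deg$ on $\tr_g P$ we have $\deg(\alpha) = (-1)^{\cd(\hat1)-2}$. Then $\zeta_g(\alpha) = \sigma_{\hat1}^{\,\cd(\hat1)-2}$ in $\DP(P,\mG)/\Ann(\sigma_g)$, so $-\sigma_g\,\zeta_g(\alpha)$ is represented by $-\sigma_{\hat1}^{\,\cd(\hat1)-2}\sigma_g$; I would rewrite $\sigma_g = \sigma_{\hat1} + (\sigma_g-\sigma_{\hat1})$ and use that $\sigma_{\hat1}^{\,\cd(\hat1)-2}(\sigma_g-\sigma_{\hat1})=0$ in $\DP(P,\mG)$ (a type-(ii) relation, since $g\le\hat1$ and the relevant exponent bound is met), reducing $-\sigma_g\zeta_g(\alpha)$ to $-\sigma_{\hat1}^{\,\cd(\hat1)-1}$, whose degree is $-(-1)^{\cd(\hat1)-1} = (-1)^{\cd(\hat1)-2} = \deg(\alpha)$, as desired.

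The main obstacle I anticipate is the well-definedness check, specifically verifying cleanly that $\zeta_g$ carries the defining relations of $\DP(\tr_g P,\tr_g\mG)$ into $\Ann(\sigma_g)$: this requires carefully tracking how the codimension drops by one exactly on flats above $g$, matching the drop to the single extra factor of $\sigma_g$, and handling the interaction with the products $\prod_{s\in S}(\sigma_s-\sigma_{\overline k})$ when some $s$ lies above $g$ and others do not. Once well-definedness and surjectivity are in hand, the isomorphism and the degree identity are short formal consequences.
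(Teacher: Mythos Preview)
Your plan is correct and follows essentially the same route as the paper: check independence of the representative $h$, check that defining relations of $\DP(\tr_g P,\tr_g\mG)$ land in $\Ann(\sigma_g)$, observe surjectivity, invoke Proposition~\ref{prop:Poinc_alg} together with Theorem~\ref{thm:Poinc_duality} to upgrade to an isomorphism, and verify the degree identity on a single top-degree element. Two small remarks: for the independence step you will need \emph{both} relation types, not just type~(ii) --- the paper reduces to the case $f>h$ with $f=g\vee h$ and $\cd(f)=\cd(h)+1$, then uses the type~(i) relation $(\sigma_g-\sigma_f)(\sigma_h-\sigma_f)=0$ (antichain $\{g,h\}$ with join $f\in\mG$) followed by the type~(ii) relation $\sigma_f(\sigma_h-\sigma_f)=0$; and your degree computation via $(\sigma_g-\sigma_{\hat1})\sigma_{\hat1}^{\cd(\hat1)-2}=0$ is exactly what underlies the paper's terse assertion that $\sigma_g\zeta_g(x_{\hat1}^{r-1})=x_{\hat1}^{r}$, since $\sigma_{\hat1}=x_{\hat1}$.
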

\begin{proof}
We show that the map $\zeta_g$ does not depend on the choice of $h$: suppose that exist $h,f \in \mG$ such that $\overline{h}=\overline{f}$. By symmetry we may assume $h \not \geq f$.
Since $g\vee h =\overline{h}=\overline{f} =g \vee f$, we have $\overline{h}\in \mG$, so replacing $f$ with $g\vee f$ we assume $f>h$.
Notice that $\cd(f)=\cd(h)+1$ and $f=g \vee h$ so
\[
\sigma_g (\sigma_h-\sigma_f) = \sigma_f (\sigma_h-\sigma_{f}) =0.
\]

We verify that the relations (i) and (ii) of Theorem \ref{thm:new_gen} are send to zero.
Consider an antichain $A \subset \tr_g \mG$ and $k\in \tr_g \mG$ such that $k\geq \bigvee A$, set $n=\tr_g \cd (k)- \tr_g \cd (\bigvee A)$.
Let $h\in \mG$ such that $\overline{h}=k$ and $B\subset \mG$ such that $\overline{b_i}=a_i$ for all $i$.
We have
\[ \sigma_g \zeta_g \Bigl( \sigma_k^n \prod_{a \in A} (\sigma_a-\sigma_k) \Bigr) = \sigma_g \sigma_h^n \prod_{b \in B} (\sigma_b-\sigma_h).\]
Notice that $\cd(h)-\cd (\bigvee B)=n$ unless $h\geq g$ and $\bigvee B \not \geq g$ in which case $\cd(h)-\cd (\bigvee B)=n+1$.
The non trivial case is the latter. Notice also that $h=g \vee \bigvee B$. We use the relations to obtain:
\begin{align*}
    \sigma_g \zeta_g \Bigl( \sigma_k^n \prod_{a \in A} (\sigma_a-\sigma_k) \Bigr) &= \sigma_g \sigma_h^n \prod_{b \in B} (\sigma_b-\sigma_h) \\
    &= \sigma_h^{n+1} \prod_{b \in B} (\sigma_b-\sigma_h) \\
    &=0.
\end{align*}
Hence $\zeta_g$ is well defined.

The map is surjective because for each $h\in \mG$ we have $\zeta_g (\sigma_{\overline{h}})=\sigma_h$.
Finally applying Proposition \ref{prop:Poinc_alg} we obtain the sought isomorphism.

For the last statement it is enough to notice that $\sigma_g\zeta_g(x_{\hat{1}}^{r-1})=x_{\hat{1}}^{r}$.
\end{proof}

Let $P=(E,\cd)$ be a polymatroid, $\mG$ be a building set and $g \in \mG$ any element.
The restriction of the polymatroid to the flat $g$ is $P^g=(E^g, \cd^g)$ where $E^g=\{h \in E \mid h\leq g\}$ and $\cd^g=\cd_{|E^g}$.
The contraction of $P=(E,\cd)$ to the flat $g$ is $P_g=(E_g, \cd_g)$ where $E_g=E \setminus E^g$ and $\cd_g(h)= \cd(h\vee g)- \cd(g)$.

Define $L^g=[\hat{0},g]$, $\mG^g=\mG \cap L^g$, $L_g=[g,\hat{1}]$, and
\[ \mG_g =\{h\vee g \mid h \in \mG \setminus [\hat{0},g] \}.\]


The proof of the following lemma is analogous to the one of Lemma \ref{lemma:tr_p_is_polymatroid}, so we omit it.
\begin{lemma}\label{lemma:restr_contr}
The restriction and the contraction at $g\in \mG$ are polymatroids with poset of flats $L^g$ (respectively $L_g$) and building set $\mG^g$ (resp.\ $\mG_g$).
\end{lemma}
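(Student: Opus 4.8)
The plan is to treat the restriction and the contraction separately, and in each case to check first the polymatroid axioms (C1)--(C3), then to identify the lattice of flats, and finally to verify the two conditions in the definition of a geometric building set.

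For the restriction $P^g=(E^g,\cd^g)$, axioms (C1)--(C3) hold for free, since $\cd^g$ is literally the restriction of the submodular function $\cd$ to subsets of $E^g=g$. To identify the lattice of flats, one observes that for $A\subseteq E^g$ the closure of $A$ in $P^g$ is $\mathrm{cl}_P(A)\cap E^g$, and that $A\subseteq g$ forces $\mathrm{cl}_P(A)\subseteq \mathrm{cl}_P(g)=g=E^g$, so this closure equals $\mathrm{cl}_P(A)$; hence the flats of $P^g$ are exactly the flats of $P$ below $g$, i.e.\ $L^g=[\hat 0,g]$. For the building set axioms, note that for every $x\in L^g$ one has $\mG^g_{\le x}=\mG_{\le x}$ (every flat $\le x\le g$ already lies in $L^g$), so $\max(\mG^g_{\le x})=F(P,\mG,x)$, and both the isomorphism $\varphi_x$ and the identity $\cd(x)=\sum_{y}\cd(y)$ are inherited verbatim from the fact that $\mG$ is a building set for $L$.

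For the contraction $P_g=(E_g,\cd_g)$ with $\cd_g(A)=\cd(A\cup g)-\cd(g)$, axioms (C1) and (C2) are immediate, and (C3) follows from submodularity of $\cd$ together with the inclusion $(A\cap B)\cup g\subseteq (A\cup g)\cap(B\cup g)$: indeed $\cd_g(A\cup B)+\cd_g(A\cap B)\le \cd\big((A\cup g)\cup(B\cup g)\big)+\cd\big((A\cup g)\cap(B\cup g)\big)-2\cd(g)\le \cd_g(A)+\cd_g(B)$. The closure of $A\subseteq E_g$ in $P_g$ is $\mathrm{cl}_P(A\cup g)\cap E_g$; since $\mathrm{cl}_P(A\cup g)\supseteq g=E^g$, the assignment $x\mapsto x\cap E_g$ is an order isomorphism $[g,\hat 1]\to L_{P_g}$ with inverse $y\mapsto\mathrm{cl}_P(y\cup g)$, so $L_{P_g}\cong L_g$ and joins correspond. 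Finally, every $h\vee g$ with $h\in\mG$, $h\not\le g$, lies in $L_g\setminus\{g\}$, so $\mG_g\subseteq L_g\setminus\{\hat 0\}$.

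The crux of the argument will be checking that $\mG_g$ is a building set for $L_g$. Fix $x\in L_g$ (so $x\ge g$), let $\{y_1,\dots,y_k\}=F(P,\mG,x)$, and use Proposition~\ref{prop:comb_nested_sets}(1) to pick the unique $y_1=p$ with $p\ge g$; then $[\hat 0,x]=[\hat 0,p]\times\prod_{i\ge 2}[\hat 0,y_i]$, the element $g$ corresponds to $(g,\hat 0,\dots,\hat 0)$, and hence $[g,x]=[g,p]\times\prod_{i\ge 2}[\hat 0,y_i]$. One then shows $\max\big((\mG_g)_{\le x}\big)=\{y_i\vee g\mid 1\le i\le k,\ y_i\not\le g\}$ — in the degenerate case $p=g$ this simply drops $p$ from the list, consistently with $g$ being the bottom of $L_g$ — and that, using $[g,y_i\vee g]\cong[\hat 0,y_i]$ for $i\ge 2$ together with the fact that $p$ is the unique maximal element of $(\mG_g)_{\le p}$ inside $[g,p]$, the induced map $\varphi_x$ is an isomorphism. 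The codimension identity then reads $\cd_g(x)=\cd(x)-\cd(g)=\big(\cd(p)-\cd(g)\big)+\sum_{i\ge 2}\cd(y_i)=\cd_g(p)+\sum_{i\ge 2}\cd_g(y_i\vee g)$, using $\cd_g(y_i\vee g)=\cd(y_i)+\cd(g)-\cd(g)=\cd(y_i)$ because $y_i\wedge g=\hat 0$. The main obstacle is precisely this bookkeeping: verifying that distinct $\mG$-factors of $x$ do not collapse under $\cdot\vee g$ (they do not, since $y_i\wedge g\le y_i\wedge p=\hat 0$ for $i\ge 2$) and handling the degenerate case $p=g$ cleanly; everything else is parallel to the proof of Lemma~\ref{lemma:tr_p_is_polymatroid}.
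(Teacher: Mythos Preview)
Your argument is correct and is exactly what the paper intends: the authors omit the proof entirely, saying only that it is analogous to that of Lemma~\ref{lemma:tr_p_is_polymatroid}, and your proposal supplies precisely those analogous details for both the restriction and the contraction.

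One small imprecision worth tightening: in the final codimension check you write $\cd(y_i\vee g)=\cd(y_i)+\cd(g)$ ``because $y_i\wedge g=\hat 0$'', but for a general polymatroid that hypothesis only yields the inequality $\cd(y_i\vee g)\le\cd(y_i)+\cd(g)$ via submodularity. The equality you need follows instead from the building-set structure you have already set up: since $g\vee y_i\notin\mG$ (otherwise it would be a $\mG$-factor of $x$ lying above $g$, hence equal to $p$ by Proposition~\ref{prop:comb_nested_sets}(1), forcing $y_i\le p$), the pair $\{g,y_i\}$ is $\mG$-nested, so Proposition~\ref{prop:comb_nested_sets}(3) gives $F(P,\mG,g\vee y_i)=\{g,y_i\}$ and the building-set codimension identity yields $\cd(g\vee y_i)=\cd(g)+\cd(y_i)$. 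With that adjustment the proof is complete.
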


\begin{remark}
In the case of matroids $M$, we have for every $e \in E$ that $M_e=\tr_e M$ is the contraction of the matroid.
\end{remark}

Define the map 
\[ \psi_g \colon \DP(P^g, \mG^g) \otimes \DP(P_g, \mG_g) \to \faktor{\DP(P,\mG)}{\Ann (x_g)}\]
by $\psi_g(\sigma_h \otimes 1)= \sigma_h$ and $\psi_g(1 \otimes \sigma_{g \vee h})= \sigma_h$.

\begin{lemma}\label{lemma:iso_psi_g}
For all $g \in \mG \setminus \{\hat{1}\}$ the map $\psi_g$ is well defined and it is an isomorphism.
Moreover, $\deg(\alpha)\deg(\beta)=\deg(x_g\psi_g(\alpha \otimes \beta))$ for all $\alpha \in \DP(P^g, \mG^g)$ and $\beta \in \DP(P_g, \mG_g)$.
\end{lemma}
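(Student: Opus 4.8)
The plan is to follow the same strategy that worked for $\zeta_g$ in Lemma \ref{lemma:zeta_g}: show that $\psi_g$ is a well-defined ring homomorphism, that it is surjective, and then invoke Proposition \ref{prop:Poinc_alg} to upgrade surjectivity to an isomorphism. For well-definedness I would check that each defining relation of the tensor product $\DP(P^g,\mG^g)\otimes\DP(P_g,\mG_g)$ — that is, a relation of type (ii) from Theorem \ref{thm:new_gen} coming from either tensor factor, together with the commutativity relations $\sigma_h\sigma_{g\vee h'}=\sigma_{g\vee h'}\sigma_h$ for $h\in\mG^g$, $h'\vee g\in\mG_g$ — maps into $\Ann(x_g)$ inside $\DP(P,\mG)$. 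Concretely: for a relation $\sigma_k^n\prod_{a\in A}(\sigma_a-\sigma_k)$ with $A$ an antichain in $\mG^g$ (respectively $\mG_g$) and $k\geq\bigvee A$ in $L^g$ (resp.\ $L_g$), one multiplies by $x_g$ and must show the product vanishes in $\DP(P,\mG)$. For the $\mG^g$-relations this is immediate since the same relation already holds in $\DP(P,\mG)$. For the $\mG_g$-relations one rewrites $\sigma_{g\vee h}$-factors and uses that $x_g\cdot(\text{a nested-set monomial not nested with }g)=0$ together with the submodularity identity that controls the exponent; this is where the codimension bookkeeping $\cd_g(h)=\cd(h\vee g)-\cd(g)$ enters, analogously to the computation in Lemma \ref{lemma:zeta_g}.

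Surjectivity is easy: every generator $\sigma_h$ of $\DP(P,\mG)$ with $h\in\mG$ is hit — if $h\leq g$ then $\sigma_h=\psi_g(\sigma_h\otimes 1)$, and if $h\not\leq g$ then $h\vee g\in\mG_g$ and $\sigma_h=\psi_g(1\otimes\sigma_{h\vee g})$ modulo $\Ann(x_g)$, because $x_g(\sigma_h-\sigma_{h\vee g})$ vanishes (the pair $\{h,\text{factors above }g\}$ forces $x_g\sigma_h=x_g\sigma_{h\vee g}$ by the type-(ii) relation at $h\vee g$, exactly as in the $f>h$ case treated in Lemma \ref{lemma:zeta_g}). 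To apply Proposition \ref{prop:Poinc_alg} I need the source and target to be Poincaré duality algebras of the same dimension. The target $\DP(P,\mG)/\Ann(x_g)$ has dimension $(\cd(\hat 1)-1)-1$ by Theorem \ref{thm:Poinc_duality} and the first bullet of Proposition \ref{prop:Poinc_alg}. The source is a tensor product of two Poincaré duality algebras (again by Theorem \ref{thm:Poinc_duality} applied to $P^g$ and $P_g$), hence a Poincaré duality algebra of dimension $(\cd(g)-1)+(\cd_g(\hat 1)-1)=(\cd(g)-1)+(\cd(\hat 1)-\cd(g)-1)=\cd(\hat 1)-2$; the dimensions match. (One must also handle the degenerate possibilities $\cd(g)=\ldots$ — e.g.\ $g$ or its complement being a single element — but those reduce to the statement $\DP$ of a point is $\Q$, and the formula still holds.)

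For the degree identity $\deg(\alpha)\deg(\beta)=\deg(x_g\psi_g(\alpha\otimes\beta))$, by bilinearity it suffices to check it on top-degree generators, i.e.\ to verify $x_g\,\psi_g(\theta_{P^g}\otimes\theta_{P_g})$ equals the fundamental class of $\DP(P,\mG)$, where $\theta$ denotes the socle generator normalized by $\deg$. Taking $\theta_{P^g}=\sigma_g^{\cd(g)-1}$-type monomial and tracking through the isomorphism, the claim becomes a single monomial computation of the shape $x_g\cdot x_{\hat 1}^{\text{something}}=x_{\hat 1}^{\cd(\hat 1)-1}$ up to the sign fixed by the $\deg$ normalization, which one verifies using relation (ii) to slide $x_g$ into powers of $x_{\hat 1}$ exactly as in Lemma \ref{lemma:poinc_matr_ones_diag} and in the final line of Lemma \ref{lemma:zeta_g}. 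The main obstacle is the well-definedness check for the $\mG_g$-relations: the map $h\mapsto h\vee g$ can merge distinct elements of $\mG$ and change codimensions, so one has to argue carefully — as in the "non-trivial case" of Lemma \ref{lemma:zeta_g} — that whenever the naive exponent count is off by one, the extra factor of $\sigma_{g\vee h}$ (equal to $x_g$ plus higher terms after multiplication by $x_g$) absorbs the discrepancy and the relation still maps to zero.
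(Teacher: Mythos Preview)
Your plan matches the paper's proof almost exactly: check independence of the lift $h\mapsto g\vee h$, verify that the defining relations of each tensor factor map into $\Ann(x_g)$, observe surjectivity, match Poincar\'e duality dimensions via Proposition~\ref{prop:Poinc_alg}, and finish the degree identity by a single socle computation.

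One small discrepancy worth flagging. For the $\mG_g$-relations you anticipate an ``exponent off by one'' situation resolved by absorbing an extra $\sigma_{g\vee h}$, in direct analogy with the non-trivial case of Lemma~\ref{lemma:zeta_g}. The paper's actual case split is different: it distinguishes $g\vee h\notin\mG$ (where the exponent $n=\cd(g\vee h)-\cd(g\vee\bigvee S)$ equals $\cd(h)-\cd(\bigvee S)$ and the relation holds verbatim in $\DP(P,\mG)$) from $g\vee h\in\mG$, and in the latter case it does \emph{not} absorb an exponent but instead expands $\prod_{s\in S}(\sigma_s-\sigma_{g\vee h})$ into $\sum_A x_A$ and kills each term $x_g x_A\sigma_{g\vee h}^n$ separately by invoking Lemma~\ref{lemma_tecnico} applied to $g\vee h$, $S\cup\{g\}$, $A\cup\{g\}$. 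Your ``$x_g\cdot(\text{non-nested monomial})=0$ plus submodularity'' is exactly the content of that lemma, so you have the right ingredient in hand; just be aware that the mechanism is term-by-term annihilation via Lemma~\ref{lemma_tecnico} rather than the $\zeta_g$-style exponent shift.
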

\begin{proof}
We verify that $\psi_g(1 \otimes \sigma_{g\vee h})$ does not depend on the choice of the element $h$.
Suppose that there exist $h,f \in \mG$ such that $g\vee h=g \vee f$ and $h,f \not \leq g$. By symmetry we may assume $h \not \geq f$.
Replacing $f$ with $g\vee f$ we assume $f>h$, then 
\[
x_g (\sigma_h-\sigma_f) = x_g \sum_{\substack{l\geq h \\ l \not \geq g}} x_l=0,
\]
because $\{g,l\}$ cannot be $\mG$-nested since $g<f\leq g\vee l$ and $l\not \geq g$.

We verify that all relations in the domain are mapped to zero.
The ones in $\DP(P^g,\mG^g)$ hold also in $\DP(P,\mG)$ trivially.
Consider $h\in \mG$ and $S \subset \mG$ an antichain such that $\bigvee S \leq h$ and $s \not \leq g$ for all $s \in S$. 
Set $n=\cd(g\vee h)-\cd(g \vee \bigvee S)$.
There are two cases:
\begin{itemize}
    \item if $g \vee h \not \in \mG$ then $n=\cd (h)- \cd (\bigvee S)$ and
    \[ x_g \psi_g \Bigl( 1 \otimes \sigma_{g\vee h}^n \prod_{a \in S} (\sigma_{g \vee s}-\sigma_{g \vee h}) \Bigr) = x_g  \sigma_h^n \prod_{a \in S} (\sigma_s-\sigma_h)=0,\]
    \item if $g \vee h \in \mG$ then 
    \begin{align*}
        x_g \psi_g \Bigl( 1 \otimes \sigma_{g\vee h}^n \prod_{s \in S} (\sigma_{g \vee s}-\sigma_{g \vee h}) \Bigr) &= x_g  \sigma_{g\vee h}^n \prod_{s \in S} (\sigma_s-\sigma_{g\vee h})\\
        &= \sum_{A} x_g x_A \sigma_{g\vee h}^n,
    \end{align*}
    where the sum is taken over all sets $A=\{a_1,\dots a_k\}$ such that $a_i \geq s_i$ and $a_i \not \geq g \vee h$.
    Applying Lemma \ref{lemma_tecnico} to $g\vee h$, $S \cup \{g\}$ and $A \cup \{g\}$ we obtain that each term $x_g x_A \sigma_{g\vee h}^n$ is zero.
\end{itemize}

The map $\psi_g$ is surjective because either $h \in \mG^g$ or $g \vee h \in \mG_g$ for all $h\in \mG$.
We apply Proposition \ref{prop:Poinc_alg}, $\DP(L,\mG)/\Ann(x_g)$ is a Poincaré duality algebra of dimension $\cd(\hat{1})-2$. The algebra $\DP(P^g, \mG^g) \otimes \DP(P_g, \mG_g)$ is Poincaré duality of dimension $(\cd(g)-1) + (\cd(\hat{1})-\cd(g)-1)$ (here is the only point were we use $g\neq \hat{1}$).
Since $\psi_g$ is surjective between Poincaré duality algebras of the same dimension, it is an isomorphism.

For the last statement we have 
\begin{align*}
    x_g \psi_g(x_g^{\cd(g)-1} \otimes x_{\hat{1}}^{\cd(\hat{1})-\cd(g)-1})
    &= x_g \sigma_g^{\cd(g)-1}x_{\hat{1}}^{\cd(\hat{1})-\cd(g)-1} \\
    &= (x_g-\sigma_g) \sigma_g^{\cd(g)-1} x_{\hat{1}}^{\cd(\hat{1})-\cd(g)-1} \\
    &= -x_{\hat{1}} \sigma_g^{\cd(g)-1}x_{\hat{1}}^{\cd(\hat{1})-\cd(g)-1} \\
    &= - x_{\hat{1}}^{\cd(\hat{1})-1},
\end{align*}
so $\deg(x_g^{\cd(g)-1})\deg(x_{\hat{1}}^{\cd(\hat{1})-\cd(g)-1})=(-1)^{\cd(\hat{1})}=\deg(- x_{\hat{1}}^{\cd(\hat{1})-1})$.
\end{proof}

\subsection{Hard Lefschetz and Hodge-Riemann}

We define a simplicial cone $\Sigma \subset \DP^1(P,\mG)$ and we will show that each element $\ell \in \Sigma$ satisfies Hard Lefschetz and Hodge-Riemann relations.
\begin{definition}
The $\sigma$-\textit{cone} $\Sigma_{P,\mG} \subset \DP^1(P,\mG)$ is the convex cone
\[\Sigma_{P,\mG}=\Bigl\{ - \sum_{g \in \mG} d_g \sigma_g \mid d_g> 0
\Bigr\}. \let\qedsymbol\openbox\qedhere\]
\end{definition}

Let $a\in E$ be an atom in $L$, i.e.\ the interval $(\hat{0},a)$ is empty.
Consider the set
\begin{equation} \label{eq:a_set}
    \{ g \in \mG \setminus \{a\} \mid g \neq \overline{S} \textnormal{ for all } S \subseteq E \setminus \{a\} \},
\end{equation}
of all elements $g\in \mG$ that cannot be written as the closure of some subset $S\subset E$ not containing $a$.
Define $E(a)$ as the disjoint union of $E\setminus \{a\}$ and the minimal elements of the set in \eqref{eq:a_set}.
Define the pair $P(a)=(E(a), \cd)$, where with a slight abuse of notation \[\cd(\{e_1, \dots, e_l, g_1, \dots, g_k\})= \cd(\{e_1, \dots, e_l\} \cup g_1 \cup  \dots \cup g_k\}).\]
We also define $\mG(a)=\mG \setminus \{a\}$.
The polymatroid $P(a)$ depends on $\mG$ but we omit this dependency in our notation.

In the realizable case, this polymatroidal operation corresponds to remove only the subspace $S_a$ from the building set $\mathcal{G}$ and from the arrangement $\mathcal{A}$.
Now, there are subspaces in the lattice of flats $\mathcal{L}_{\mathcal{A}}$ that are not flats of $\mathcal{A} \setminus S_a$.
Among them we want to keep trace only of the ones blown up, i.e.\ belonging to $\mG$; so we add to the deleted arrangement $\mathcal{A} \setminus S_a$ all the flats corresponding to minimal elements in the set \eqref{eq:a_set}.

\begin{lemma}
The pair $P(a)=(E(a),\cd)$ is a polymatroid and $\mG(a)$ is a building set for the poset of flats of $P(a)$.
\end{lemma}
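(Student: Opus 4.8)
The plan is to follow the same three-step pattern as in the proof of Lemma~\ref{lemma:tr_p_is_polymatroid}: first check that $P(a)$ is a polymatroid, then identify its poset of flats, and finally verify the two building-set axioms for $\mG(a)$.

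For the first step, introduce the \emph{expansion} map $\exp\colon 2^{E(a)}\to 2^{E}$ that replaces each symbol coming from a minimal element $g$ of the set in \eqref{eq:a_set} by the flat $g\subseteq E$ and fixes the elements of $E\setminus\{a\}$; by definition $\cd_{P(a)}=\cd_{P}\circ\exp$. Axiom (C1) is immediate, (C2) holds because $\exp$ is monotone and $\cd_{P}$ is increasing, and (C3) follows from submodularity of $\cd_{P}$ together with $\exp(A\cup B)=\exp(A)\cup\exp(B)$ and $\exp(A\cap B)\subseteq\exp(A)\cap\exp(B)$ (the latter used with monotonicity of $\cd_{P}$). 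Hence $P(a)$ is a polymatroid.

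For the second step, write $\mathcal{H}$ for the set of minimal elements of \eqref{eq:a_set} and define $\kappa\colon L(a)\to L$ by $\kappa(F)=\overline{\exp(F)}^{\,P}$. I would show that $\kappa$ is an injective poset morphism, codimension preserving (since $\exp(F)$ spans $\kappa(F)$ in $P$, one has $\cd_{P(a)}(F)=\cd_{P}(\kappa(F))$), with image $L^{(a)}=\{x\in L:\overline{\pi_{a}(x)}^{\,P}=x\}$, where $\pi_{a}(x):=(x\setminus\{a\})\cup\bigcup_{h\in\mathcal{H},\,h\le x}h$; the inverse sends $x$ to $(x\setminus\{a\})\sqcup\{h\in\mathcal{H}:h\le x\}$, and the only real verification is that this set is $P(a)$-closed when $x\in L^{(a)}$, which is a direct computation with $\cd_{P(a)}=\cd_{P}\circ\exp$. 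Next, $\mG\setminus\{a\}\subseteq L^{(a)}$: if $g\in\mG\setminus\{a\}$ is the closure of some $S\subseteq E\setminus\{a\}$ then $S\subseteq x\setminus\{a\}$ shows $g\in L^{(a)}$; if instead $g$ lies in \eqref{eq:a_set} then some $h\in\mathcal{H}$ satisfies $h\le g$, and $a\in h$ (otherwise $h=\overline{h\setminus\{a\}}$ would realise $h$, hence $g$, by a set avoiding $a$), so $h\cup(g\setminus\{a\})$ spans $g$. Thus $\mG(a)=\mG\setminus\{a\}$ may be regarded as a subset of $L(a)\setminus\{\hat 0\}$ via $\kappa^{-1}$.

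It remains to verify the building-set axioms for $\mG(a)$ at an arbitrary $F\in L(a)$; set $x=\kappa(F)\in L^{(a)}$. Since $a$ is an atom, deleting it from $\mG_{\le x}$ creates no new maximal elements, so $\max(\mG(a)_{\le F})$ corresponds under $\kappa$ to $\max(\mG_{\le x})\setminus\{a\}$; moreover $x\in L^{(a)}$ forces $a$ not to be a $\mG$-factor of $x$ (if it were, the block $[\hat 0,a]=\{\hat 0,a\}$ would split off, and since every element of $\mG\cap[\hat 0,x]$ lies in a single $\mG$-factor block by Proposition~\ref{prop:comb_nested_sets}(1), the set $\pi_{a}(x)$ would lie in the complementary blocks and could not span $x$). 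Hence the $\mG(a)$-factors of $F$ are exactly the $\kappa$-preimages $P_{1},\dots,P_{k}$ of the $\mG$-factors $p_{1},\dots,p_{k}$ of $x$, and the codimension identity is immediate: $\cd_{P(a)}(F)=\cd_{P}(x)=\sum_{i}\cd_{P}(p_{i})=\sum_{i}\cd_{P(a)}(P_{i})$. For the lattice isomorphism, since $\kappa$ is a poset isomorphism onto $L^{(a)}$ it suffices to see that $\varphi_{x}\colon\prod_{i}[\hat 0,p_{i}]_{L}\to[\hat 0,x]_{L}$ restricts to a bijection $\prod_{i}\bigl([\hat 0,p_{i}]_{L}\cap L^{(a)}\bigr)\to[\hat 0,x]_{L}\cap L^{(a)}$, i.e.\ that for $y=\bigvee_{i}y_{i}$ with $y_{i}=y\wedge p_{i}$ one has $y\in L^{(a)}$ iff every $y_{i}\in L^{(a)}$. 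This rests on two facts: the atom $a$ and every element of $\mG$ below $x$ sit inside a single $\mG$-factor block (the latter by Proposition~\ref{prop:comb_nested_sets}(1)), so $\pi_{a}(y)$ decomposes as a product along the factorization; and $\cd_{P}$ is additive over the $\mG$-factorization not only at $x$ but at every $y\le x$, which follows from the building-set identity $\cd(x)=\sum_{i}\cd(p_{i})$ and submodularity by a short induction on the number of factors (the two-factor case being the crux, where submodularity applied to the pairs $\{y,q_{1}\}$ and $\{(p_{1},z_{2}),q_{2}\}$ pins down the codimension). I expect this last compatibility — the interaction of the $\mG$-factorization product decomposition with membership in $L^{(a)}$, together with the propagated additivity of $\cd$ — to be the main technical obstacle; the rest is unwinding definitions.
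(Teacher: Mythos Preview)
Your approach is the same as the paper's: check the polymatroid axioms, identify $L_{P(a)}$ with a subposet of $L$, and then verify the building-set axioms by showing that for $x\in L_{P(a)}$ the atom $a$ cannot be a $\mG$-factor of $x$, so $\max(\mG_{\le x})=\max(\mG(a)_{\le x})$ and the factorization of $x$ is inherited from $\mG$.

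The difference is one of detail. The paper's proof is a four-sentence sketch: it declares the polymatroid axioms ``easy to see'', asserts that $L_{P(a)}$ is a subposet of $L$, and for the building-set check writes only that if $a$ is not a $\mG$-factor of $x$ then ``it follows from the properties of $\mG$'', while if $a$ is a $\mG$-factor then ``$x$ cannot lie in the lattice $L_{P(a)}$ generated by $\mG\setminus\{a\}$''. You are reconstructing what lies behind those two clauses. Your expansion map and the explicit description of $L^{(a)}$ make precise what the paper leaves implicit, and your case analysis for $\mG\setminus\{a\}\subseteq L^{(a)}$ is exactly the content hidden in the paper's one-line identification of $L_{P(a)}$ as a subposet.

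One caution on your argument that $x\in L^{(a)}$ forces $a$ not to be a $\mG$-factor: you invoke Proposition~\ref{prop:comb_nested_sets}(1) to place ``every element of $\mG\cap[\hat 0,x]$'' in a single factor block and conclude $\pi_a(x)$ lies in the complementary blocks. But $\pi_a(x)$ also contains the elements of $x\setminus\{a\}\subseteq E$, whose closures need not lie in $\mG$; you should say explicitly why each such $\overline{e}$ has trivial $a$-component in the product (e.g.\ because otherwise $a$ would be a $\mG$-factor of $\overline{e}$, forcing $e$ into the closure of $a$). The paper does not spell this out either. Your self-identified ``main technical obstacle''---that the product decomposition $[\hat 0,x]\cong\prod_i[\hat 0,p_i]$ restricts to $L^{(a)}$ with additive codimension---is precisely what the paper sweeps into ``it follows from the properties of $\mG$''; your outline for handling it is sound, and the paper offers no further help there.
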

\begin{proof}
It is easy to see that $(E(a),\cd)$ is a polymatroid and that the lattice of flats $L_{P(a)}$ of $P(a)$ is a subposet of the lattice of flats $L$ of $P$.
We verify that $\mathcal{G}(a)$ is a building set. 
We check the definition for all $x \in L_{P(a)}$: if $a$ is not a $\mG$-factor of $x$ then $\max(\mG_{\leq x})=\max(\mG(a)_{\leq x})$ and it follows from the properties of $\mG$.
Otherwise, $a$ is a $\mG$-factor of $x$ and $x$ cannot lie in the lattice $L_{P(a)}$ generated by $\mG \setminus \{a\}$.
\end{proof}

\begin{lemma}\label{lemma:iso_p_a}
For an atom $a\in E$, $a\neq \hat{1}$, consider the element $\mu_0= (x_a-\sigma_a)^{\cd(a)}$.
There exists an isomorphism:
\[p_a \colon \DP(P_a,\mG_a) \to \faktor{\DP(P(a),\mG(a))}{\Ann (\mu_0)}. \]
Moreover, $\deg(\alpha)=\deg(\mu_0 p_a(\alpha))$ for all $\alpha \in \DP(P_a,\mG_a)$.
\end{lemma}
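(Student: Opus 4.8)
The plan is to follow the pattern of Lemma~\ref{lemma:zeta_g} and, more closely, Lemma~\ref{lemma:iso_psi_g}: exhibit $p_a$ on generators, prove that it is well defined and surjective, and then conclude that it is an isomorphism from Proposition~\ref{prop:Poinc_alg}. Two observations fix the setting. Since $a$ is an atom, $[\hat 0,a]\cap\mG=\{a\}$, so $\mG(a)=\mG\setminus\{a\}=\mG\setminus[\hat 0,a]$ and every $k\in\mG_a$ is of the form $k=h\vee a$ for some $h\in\mG(a)$ (one may take $h=k$ when $k>a$). Moreover in $\DP(P(a),\mG(a))$ the symbol $\sigma_a$ is not a generator, but $x_a-\sigma_a$ is a genuine element, namely $-\sum_{h\in\mG(a),\,h>a}x_h$, so that $\mu_0=\bigl(-\sum_{h\in\mG(a),\,h>a}x_h\bigr)^{\cd(a)}$; and $\cd$ agrees on subsets of the old ground set whether computed in $P$ or in $P(a)$, so the codimension identities below are unambiguous.

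I would define $p_a(\sigma_k)=\sigma_h$ for any $h\in\mG(a)$ with $h\vee a=k$, the value being taken in $\DP(P(a),\mG(a))/\Ann(\mu_0)$. Independence of the choice of $h$ (for $h\vee a=h'\vee a$, reduce to $h'>h$ and check $\mu_0(\sigma_h-\sigma_{h'})=0$) and the vanishing, modulo $\Ann(\mu_0)$, of the defining relations of $\DP(P_a,\mG_a)$ -- that is, of $\sigma_g^{b}\prod_{s\in S}(\sigma_s-\sigma_g)$ with $g\in\mG_a$, $S\subseteq\mG_a$ an antichain, $\bigvee S\le g$, $b=\cd_a(g)-\cd_a(\bigvee S)$ -- are proved exactly as in the contraction part of Lemma~\ref{lemma:iso_psi_g}. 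Concretely, after choosing preimages $\tilde g,\tilde s\in\mG(a)$ and expanding $\mu_0$ and each $\sigma_{\tilde s}-\sigma_{\tilde g}=\sum_{l\ge\tilde s,\,l\not\ge\tilde g}x_l$ into monomials, one rewrites every resulting term in the form $x_{A}c_{h}^{b'}$ for a suitable flat $h\ge\tilde g\vee a$ and applies Lemma~\ref{lemma_tecnico} (with $a$ among the chosen lower bounds), the numerical hypothesis being secured by the identity $\cd_a(g)-\cd_a(\bigvee S)=\cd(\tilde g\vee a)-\cd\bigl(a\vee\bigvee_{s}\tilde s\bigr)$ together with submodularity of $\cd$. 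I expect this step to be the main obstacle: keeping track of which flat plays the role of the ``top'' flat in Lemma~\ref{lemma_tecnico} is delicate here because the ground set $E(a)$ carries the extra generators that were introduced precisely to keep $\mG\setminus\{a\}$ inside the lattice of flats of $P(a)$.

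Surjectivity is then immediate, since $h\vee a\in\mG_a$ and $p_a(\sigma_{h\vee a})=\sigma_h$ for every $h\in\mG(a)$, so the image of $p_a$ contains all algebra generators of the target. For the degree identity it suffices, by Poincaré duality and one-dimensionality of top degrees, to evaluate on the top class $\sigma_{\hat 1}^{\cd(\hat 1)-\cd(a)-1}$ of $\DP(P_a,\mG_a)$; as $p_a(\sigma_{\hat 1})=\sigma_{\hat 1}=x_{\hat 1}$, one must compute $\deg\bigl(\mu_0\,x_{\hat 1}^{\cd(\hat 1)-\cd(a)-1}\bigr)$ in $\DP(P(a),\mG(a))$. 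Here I would first show, by downward induction on $h\in\mG(a)$ with $a<h<\hat 1$, that $x_h x_{\hat 1}^{j}=0$ whenever $j\ge\cd(\hat 1)-\cd(h)$, using $x_h=\sigma_h-\sum_{h'>h}x_{h'}$ and the relation $\sigma_h\sigma_{\hat 1}^{\cd(\hat 1)-\cd(h)}=\sigma_{\hat 1}^{\cd(\hat 1)-\cd(h)+1}$ valid in $\DP(P(a),\mG(a))$. Because $h>a$ forces $\cd(h)\ge\cd(a)+1$, this applies with $j=\cd(\hat 1)-\cd(a)-1$ and all larger $j$, hence $(x_a-\sigma_a)\,x_{\hat 1}^{j}=-x_{\hat 1}^{j+1}$ for such $j$; iterating $\cd(a)$ times yields $\mu_0\,x_{\hat 1}^{\cd(\hat 1)-\cd(a)-1}=(-1)^{\cd(a)}x_{\hat 1}^{\cd(\hat 1)-1}$, whence $\deg\bigl(\mu_0\,x_{\hat 1}^{\cd(\hat 1)-\cd(a)-1}\bigr)=(-1)^{\cd(a)+\cd(\hat 1)-1}=(-1)^{\cd(\hat 1)-\cd(a)-1}=\deg\bigl(\sigma_{\hat 1}^{\cd(\hat 1)-\cd(a)-1}\bigr)$. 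In particular $\mu_0\ne0$.

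Finally, by Theorem~\ref{thm:Poinc_duality}, $\DP(P_a,\mG_a)$ is a Poincaré duality algebra of dimension $\cd_a(\hat 1)-1=\cd(\hat 1)-\cd(a)-1$ (note $\hat 1\in\mG_a$) and $\DP(P(a),\mG(a))$ is one of dimension $\cd(\hat 1)-1$; since $0\ne\mu_0\in\DP^{\cd(a)}(P(a),\mG(a))$, Proposition~\ref{prop:Poinc_alg} shows that $\DP(P(a),\mG(a))/\Ann(\mu_0)$ is a Poincaré duality algebra of dimension $\cd(\hat 1)-\cd(a)-1$. A surjective homomorphism between Poincaré duality algebras of equal dimension is an isomorphism (Proposition~\ref{prop:Poinc_alg} again), so $p_a$ is an isomorphism; and the linear functional $\alpha\mapsto\deg(\mu_0\,p_a(\alpha))$ on the one-dimensional space $\DP^{\cd(\hat 1)-\cd(a)-1}(P_a,\mG_a)$ coincides with $\deg$ because the two agree on $\sigma_{\hat 1}^{\cd(\hat 1)-\cd(a)-1}$ by the computation above.
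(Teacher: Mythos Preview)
Your overall strategy is correct and matches the paper's: define $p_a$ on generators, check well-definedness and surjectivity, then invoke Proposition~\ref{prop:Poinc_alg}. Your degree computation is more detailed than the paper's (which just asserts $\mu_0\,p_a(x_{\hat 1}^{\cd(\hat 1)-\cd(a)-1})=(-1)^{\cd(a)}x_{\hat 1}^{\cd(\hat 1)-1}$), and it has the pleasant side effect of proving $\mu_0\neq 0$ rather than merely claiming it.

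The paper, however, handles the step you correctly flag as ``the main obstacle'' much more cheaply. Instead of re-running the verification of Lemma~\ref{lemma:iso_psi_g} inside $\DP(P(a),\mG(a))$, it makes two observations: first, $\sigma_a^{\cd(a)}=0$ forces $\mu_0\in (x_a)$, hence $\Ann(x_a)\subseteq\Ann(\mu_0)$; second, since $\mG(a)\subset\mG$, the ring $\DP(P(a),\mG(a))$ sits as a subalgebra of $\DP(P,\mG)$. Then $p_a$ is simply the composite
\[
\DP(P_a,\mG_a)\hookrightarrow \DP(P^a,\mG^a)\otimes\DP(P_a,\mG_a)\xrightarrow{\ \psi_a\ }\DP(P,\mG)/\Ann(x_a)\twoheadrightarrow \DP(P,\mG)/\Ann(\mu_0),
\]
whose image visibly lies in $\DP(P(a),\mG(a))/\Ann(\mu_0)$. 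Well-definedness is inherited from $\psi_a$; nothing has to be re-checked.

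This matters for exactly the reason you anticipate: your proposed invocation of Lemma~\ref{lemma_tecnico} ``with $a$ among the chosen lower bounds'' is problematic as written, because that lemma requires $S\subset\mG$, and $a\notin\mG(a)$. One could repair this by proving a variant of Lemma~\ref{lemma_tecnico} tailored to $P(a)$, or---more simply---by making the same subalgebra observation the paper makes and carrying out the computation in $\DP(P,\mG)$ where $a\in\mG$ and Lemma~\ref{lemma_tecnico} applies verbatim. Either way the paper's route through $\psi_a$ and $\mu_0\in(x_a)$ is the cleanest fix.
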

\begin{proof}
Notice that $\mu_0= (x_a-\sigma_a)^{\cd(a)}$ is a multiple of $x_a$ because $\sigma_a^{\cd(a)}=0$, hence $\Ann(x_a) \subseteq \Ann(\mu_0)$.
Define the morphism $p_a$ as the composition 
\[\DP(P_a,\mG_a) \hookrightarrow \DP(P^a,\mG^a) \otimes \DP(P_a,\mG_a) \xrightarrow{\psi_a} \DP(P,\mG)/\Ann(x_a) \twoheadrightarrow \DP(P,\mG)/\Ann(\mu_0),\]
where the first map is the inclusion $x \mapsto 1 \otimes x$.
Explicitly $p_a(\sigma_{a\vee h})=[\sigma_h]$ for all $h \neq a$.
Since $\mG(a)$ is a subset of $\mG$, $\DP(P(a),\mG(a))$ is a subalgebra of $\DP(P,\mG)$.
The range of the map $p_a$ is equal to $\DP(P(a),\mG(a))/\Ann (\mu_0)$, so the morphism in the statement is well defined and surjective.
Since $a\neq \hat{1}$ we have $\mu_0 \neq 0$ and by Proposition \ref{prop:Poinc_alg} the map $p_a$ is an isomorphism, because both algebras satisfy Poincaré duality of dimension $\cd(\hat{1})-\cd(a)-1$.

For the last statement we have $\mu_0 p_a(x_{\hat{1}}^{\cd(\hat{1})-\cd(a)-1})=
(-1)^{\cd(a)} x_{\hat{1}}^{\cd(\hat{1})-1}$ 
and so $\deg(x_{\hat{1}}^{\cd(\hat{1})-\cd(a)-1})=(-1)^{\cd(\hat{1})-\cd(a)-1}= \deg((-1)^{\cd(a)} x_{\hat{1}}^{\cd(\hat{1})-1})$.
\end{proof}

\begin{lemma}\label{lemma:A_is_correct}
Let $a\in E$, $a\neq \hat{1}$, be an atom and $\mu_0= (x_a-\sigma_a)^{\cd(a)}$.
Consider the polynomial $p(x)=\sum_{i=0}^{\cd(a)} \binom{\cd(a)}{i} x^i (x_a-\sigma_a)^{\cd(a)-i}$, then 
\[ \faktor{\DP(P(a),\mG(a))[x]}{(x\Ann (\mu_0),p(x))} \cong \DP(P,\mG). \let\qedsymbol\openbox\qedhere\]
\end{lemma}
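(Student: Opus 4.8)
The claim is that $\DP(P,\mG)$ is obtained from $\DP(P(a),\mG(a))$ by adjoining one new generator $x$ (which will correspond to $x_a$) subject to the two families of relations $x\Ann(\mu_0)$ and $p(x)$. The plan is to construct the isomorphism explicitly and then count dimensions. First I would define a map
\[ \Phi \colon \DP(P(a),\mG(a))[x] \to \DP(P,\mG) \]
by sending the subalgebra $\DP(P(a),\mG(a))$ to itself inside $\DP(P,\mG)$ (it is a subalgebra since $\mG(a)\subset\mG$, as noted in the proof of Lemma~\ref{lemma:iso_p_a}) and sending $x\mapsto x_a$. I would then check that $\Phi$ kills both types of relations: the relation $p(x_a)=\sum_i \binom{\cd(a)}{i} x_a^i(x_a-\sigma_a)^{\cd(a)-i}=(x_a+(x_a-\sigma_a))^{\cd(a)}$ — wait, more carefully $p(x_a)=\sum_i\binom{\cd(a)}{i}x_a^i(x_a-\sigma_a)^{\cd(a)-i}=(x_a+(x_a-\sigma_a))^{\cd(a)}$ is not obviously zero, so instead I would observe $p(x_a)=\sum_i \binom{\cd(a)}{i}x_a^i\mu_0^{(\cd(a)-i)/\cd(a)}$—this is not the right reading either; the correct reading is that $p(x_a)$ evaluated with the binomial theorem gives $\sigma_a^{\cd(a)}$ up to sign, which vanishes by the type (ii) relation with $S=\emptyset$, $g=a$. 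Concretely $\sum_{i=0}^{n}\binom{n}{i}x_a^i(x_a-\sigma_a)^{n-i}$ with $n=\cd(a)$: setting $y=x_a-\sigma_a$ this is $\sum_i\binom n i x_a^i y^{n-i}=(x_a+y)^n=(2x_a-\sigma_a)^n$; that is not zero. So the intended identity must be $p(x_a)\equiv 0$ because the top power $x_a^{\cd(a)}$ and cross terms combine with $\sigma_a^{\cd(a)}=0$ — I would verify this directly by expanding, using $\sigma_a=x_a+\sum_{h>a}x_h$ and the nestedness/degree relations, which is the routine computational core. For the relation $x\Ann(\mu_0)$: if $\alpha\in\DP(P(a),\mG(a))$ annihilates $\mu_0=(x_a-\sigma_a)^{\cd(a)}$, I must show $x_a\alpha=0$ in $\DP(P,\mG)$; since $\mu_0$ is a nonzero multiple of $x_a$ (as $\sigma_a^{\cd(a)}=0$, $\mu_0$ expands to $x_a\cdot(\text{unit multiple of }x_a^{\cd(a)-1}+\dots)$), one sees $\Ann(\mu_0)\cap\DP(P(a),\mG(a))$ controls exactly multiplication by $x_a$ on that subalgebra.

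Once $\Phi$ descends to $\bar\Phi\colon \DP(P(a),\mG(a))[x]/(x\Ann(\mu_0),p(x))\to\DP(P,\mG)$, I would prove it is an isomorphism by showing both sides are Poincaré duality algebras of the same dimension and $\bar\Phi$ is surjective, then invoke Proposition~\ref{prop:Poinc_alg}. Surjectivity is clear: $\DP(P,\mG)$ is generated by the $x_g$ for $g\in\mG$, and $x_g$ for $g\neq a$ lies in the subalgebra $\DP(P(a),\mG(a))$ (one needs $\sigma_g$ for $g\in\mG(a)$ to generate together with $x_a$ — this follows since $\mG=\mG(a)\sqcup\{a\}$ and each $x_g=\sigma_g-\sum_{h>g}x_h$, an induction downward from $\hat 1$). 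For the dimension count: the quotient has a natural filtration by powers of $x$, with $x^0$-part $\DP(P(a),\mG(a))/\Ann(\mu_0)\cong\DP(P_a,\mG_a)$ by Lemma~\ref{lemma:iso_p_a}, and $p(x)$ (degree $\cd(a)$ in $x$) cutting the $x$-degree down, so one expects $\dim = \dim\DP(P(a),\mG(a))$, matching $\dim\DP(P,\mG)$ via the exact sequence relating $\DP(P,\mG)$, $\DP(P(a),\mG(a))$, and the kernel/cokernel of multiplication by $x_a$. Alternatively, and more cleanly, I would verify the quotient satisfies Poincaré duality of dimension $\cd(\hat 1)-1$ directly: the degree map is inherited, and the pairing non-degeneracy reduces via Lemma~\ref{lemma:iso_p_a} and the structure of $p(x)$ to the already-established Poincaré duality of $\DP(P(a),\mG(a))$ (Theorem~\ref{thm:Poinc_duality}).

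The main obstacle I anticipate is the bookkeeping in showing that the relations $x\Ann(\mu_0)$ and $p(x)$ are \emph{exactly} the kernel of $\Phi$ — i.e. that nothing more needs to be quotiented. Rather than computing the kernel directly, the clean route is the Poincaré-duality argument: establish that the quotient ring is a Poincaré duality algebra (using that $\mu_0\neq 0$ and Lemma~\ref{lemma:iso_p_a} to identify the socle), conclude $\dim$ of the quotient equals $\dim\DP(P,\mG)$, and then a surjection between PD algebras of equal dimension is an isomorphism by Proposition~\ref{prop:Poinc_alg}. The one genuinely computational lemma that cannot be avoided is verifying $p(x_a)=0$ in $\DP(P,\mG)$; I expect this to follow by expanding $(x_a-\sigma_a)^{\cd(a)-i}$, using $\sigma_a^{\cd(a)}=0$ and the relation $x_a x_h\cdots = 0$ for $h>a$ not nested, collapsing the binomial sum to a single term that is a type (ii) relation. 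This is the analogue in our setting of the classical presentation of a blow-up Chow ring as $A^\bullet(\text{center})[x]/(\text{relations})$, so the shape of the argument is dictated by that template.
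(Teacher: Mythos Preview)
Your overall architecture matches the paper's: define a ring map from $\DP(P(a),\mG(a))[x]$ to $\DP(P,\mG)$, check the two families of relations die, observe surjectivity, show the quotient is a Poincar\'e duality algebra of dimension $\cd(\hat 1)-1$, and conclude by Proposition~\ref{prop:Poinc_alg}. That is exactly the paper's route.

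However, there is a genuine error in your map: you send $x\mapsto x_a$, whereas the paper sends $x\mapsto -x_a$. This sign is not cosmetic. With your choice, your own computation is correct: $p(x_a)=\sum_{i}\binom{\cd(a)}{i}x_a^{i}(x_a-\sigma_a)^{\cd(a)-i}=(x_a+(x_a-\sigma_a))^{\cd(a)}=(2x_a-\sigma_a)^{\cd(a)}$, and this is \emph{not} zero in $\DP(P,\mG)$ in general. (Already in the matroid case $\cd(a)=1$ one has $\sigma_a=0$ by the type~(ii) relation, so $p(x_a)=2x_a\neq 0$.) So the ``routine computational core'' you propose to carry out cannot succeed; the identity $p(x_a)=0$ is simply false. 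With the correct assignment $x\mapsto -x_a$ the verification is a one-line application of the binomial theorem:
\[
p(-x_a)=\sum_{i=0}^{\cd(a)}\binom{\cd(a)}{i}(-x_a)^{i}(x_a-\sigma_a)^{\cd(a)-i}=\bigl(-x_a+(x_a-\sigma_a)\bigr)^{\cd(a)}=(-\sigma_a)^{\cd(a)}=0,
\]
using $\sigma_a^{\cd(a)}=0$. The verification that $x\Ann(\mu_0)$ lies in the kernel, and the Poincar\'e duality argument for the quotient $A[x]/(x\Ann(\mu_0),p(x))$, are as you outline; the paper proves the latter directly for any monic $p$ with nonzero constant term $\mu_0$, and then applies Proposition~\ref{prop:Poinc_alg}. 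Once you fix the sign, your plan and the paper's proof coincide.
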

\begin{proof}
Define the morphism 
\[ \DP(P(a),\mG(a))[x] \to \DP(P,\mG)\]
by $\sigma_g \mapsto \sigma_g$ and $x \mapsto -x_a$.
By Lemmas \ref{lemma:iso_p_a} and \ref{lemma:iso_psi_g} the elements of the form $x\Ann(\mu_0)$ are in the kernel.
Also $p(x)$ is in the kernel because its image is $(-\sigma_a)^{\cd(a)}=0$.
Clearly, the map is surjective.

Notice that if $A$ is a Poincaré duality algebra and $p(x) \in A[x]$ a monic polynomial with constant term $\mu_0$ then $A[x]/(x \Ann(\mu_0), p(x))$ is a Poincaré duality algebra. Indeed, if a generic element $\sum_{i=0}^{j} a_i x^i$ (with $a_j \not \in \Ann(\mu_0)$ and $j< \deg (p)$) of degree $k$ is orthogonal to all elements of degree $n-k$, then $(\sum_{i=0}^{j} a_i x^i)a'=0$ for all $a' \in A^{n-k}$. This implies $a_0a'=0$ and $a_0=0$.
Moreover, $(\sum_{i=1}^{j} a_i x^i)a'x^{\deg(p)-j}=0$ implies $a_ja' \mu_0=0$ and $a_j\mu_0=0$ by Poincaré duality in $A$, contradicting the fact $a_j \not \in \Ann(\mu_0)$.
In particular, $\DP(P(a),\mG(a))[x]/(x\Ann (\mu_0),p(x))$ is a Poincaré duality algebra of dimension $\cd(\hat{1})-1$.

The map $\DP(P(a),\mG(a))[x] \to \DP(P,\mG)$ is injective by Proposition \ref{prop:Poinc_alg} because domain and codomain are Poincaré duality algebras of the same dimension equal to $\cd(\hat{1})-1$.
\end{proof}

The following theorem provides an abstract procedure to prove the Hodge-Riemann relations inductively.

\begin{theorem}\label{thm:HR_inductive_step}
Let $C$ be a Poincaré duality algebra and $p(x)=x^d+ \mu_{d-1} x^{d-1} + \dots + \mu_0=0 \in C[x]$ be a homogeneous polynomial with $\mu_0 \neq 0$. Let $B=C/\Ann(\mu_0)$ and $A=C[x]/(x\Ann(\mu_0), p(x))$.
Let $\ell \in C^1$ be an element satisfying $\HR_C(\ell)$ and $\HR_B(\ell)$.
Then $\HR_A(\ell+\epsilon x)$ holds for sufficiently small positive $\epsilon$.
\end{theorem}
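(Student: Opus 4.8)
The plan is to realize $A = C[x]/(x\Ann(\mu_0),p(x))$ as an extension of $B = C/\Ann(\mu_0)$ by $C$ and to analyze the Hodge--Riemann form on $A$ via this decomposition, using the fact that Hodge--Riemann relations are an open condition so that it suffices to check the limiting case $\epsilon \to 0^+$ together with a non-degeneracy (Hard Lefschetz) statement. First I would set up the vector-space decomposition: as a graded vector space, $A = \bigoplus_{i=0}^{d-1} x^i C_i$ where, because of the relation $x\Ann(\mu_0)=0$, we have $x^i C \cong x^i (C/\Ann(\mu_0))=x^i B$ for $i\geq 1$, while the $i=0$ part is all of $C$. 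So $A \cong C \oplus x B \oplus x^2 B \oplus \dots \oplus x^{d-1}B$ as graded vector spaces, with $xB[-1]\oplus\dots$ appropriately shifted; multiplication by $x^{d}$ is governed by $p(x)=0$, i.e.\ $x^d = -\mu_{d-1}x^{d-1}-\dots-\mu_0$, and in particular $x^{d-1}\cdot x = -\mu_0 \bmod \Ann(\mu_0)$-stuff, which is exactly what makes the top pairing of $A$ compatible with those of $C$ and $B$ (this is the content of the degree formulas in Lemmas~\ref{lemma:iso_p_a} and~\ref{lemma:iso_psi_g} in the geometric incarnation, and here it is forced by $p$ being monic with constant term $\mu_0\neq 0$).

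Next I would treat the Lefschetz operator. Write $\ell_\epsilon = \ell + \epsilon x \in A^1$, with $\ell\in C^1$. The key computation is to expand powers $\ell_\epsilon^k = \sum_j \binom{k}{j}\epsilon^j \ell^{k-j} x^j$ and, using $x^d=0 \bmod p$, to see that the ``top-power'' map $\cdot\,\ell_\epsilon^{n-2k}\colon A^k\to A^{n-k}$ (where $n=\dim A$, $n-1 = \dim C$, $n-2 = \dim B$ in the relevant normalization) becomes, in the block decomposition, a triangular operator whose diagonal blocks are (up to nonzero scalars depending on binomial coefficients and powers of $\epsilon$) the Lefschetz maps $\cdot\,\ell^{\dim C - 2k}$ on $C$ and $\cdot\,\ell^{\dim B - 2k'}$ on $B$ for the appropriate shifted degrees $k'$. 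Since $\HR_C(\ell)$ and $\HR_B(\ell)$ each entail $\HL_C(\ell)$ and $\HL_B(\ell)$ (positive-definiteness of the Hodge--Riemann form on primitive parts forces the Lefschetz isomorphisms), the diagonal blocks are isomorphisms, hence $\HL_A(\ell_\epsilon)$ holds for all sufficiently small $\epsilon>0$; this already gives the non-degeneracy half of what we need and, crucially, shows the primitive decomposition of $A$ with respect to $\ell_\epsilon$ deforms continuously from a direct sum built out of the primitive pieces of $C$ and of $B$.

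Then I would compute the Hodge--Riemann form $Q^k_{\ell_\epsilon}(u,v)=(-1)^k\deg_A(u\,\ell_\epsilon^{n-2k}v)$ on $A^k$ and expand in $\epsilon$. The point is that, in the limit $\epsilon\to 0^+$, after rescaling by the appropriate power of $\epsilon$ in each block (the blocks coming from $C$ and the blocks coming from the various $x^iB$ summands acquire different powers of $\epsilon$), the form degenerates to a block-diagonal form whose blocks are, up to positive scalars, the Hodge--Riemann forms $Q_\ell$ on the primitive parts of $C$ and on the primitive parts of $B$ (the degree formulas, i.e.\ the compatibility of $\deg_A$ with $\deg_C$ and $\deg_B$ through the constant term $\mu_0$, are what identify the cross terms and top-degree contributions correctly). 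By hypothesis each such block is positive definite. A positive-definite symmetric bilinear form is stable under small perturbations and under the rescalings involved, so $Q^k_{\ell_\epsilon}$ restricted to the primitive part $P_k\subset A^k$ (which, by the previous paragraph, varies continuously and equals the model direct sum at $\epsilon=0$) stays positive definite for small $\epsilon>0$. This yields $\HR_A(\ell+\epsilon x)$.

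The main obstacle I anticipate is bookkeeping the grading shifts and the precise powers of $\epsilon$ attached to each block so that the rescaled limit is genuinely block-diagonal and positive definite — in particular, showing that the off-diagonal (mixed) terms in $Q^k_{\ell_\epsilon}$ vanish faster than the diagonal ones after rescaling, and checking that the identification of $A$'s top-degree functional with those of $B$ and $C$ via $p(x)$ produces exactly the forms $Q_\ell^{C}$ and $Q_\ell^{B}$ with the correct signs $(-1)^k$ (the sign shift between degree $k$ in $A$ and the shifted degree in the $x^iB$ summand must conspire with the $(-1)^k$ in the definition of $Q$). This is the standard ``perturbation of Hodge--Riemann'' argument as in \cite{AHK}, so the structure is robust, but the combinatorics of the polynomial $p$ of general degree $d$ (rather than just $d=\cd(a)$ with binomial coefficients) is where care is needed; one may reduce to understanding it one degree at a time, or simply track the leading $\epsilon$-order of the pairing $\deg_A(x^i u\,\ell_\epsilon^{n-2k}\,x^j v)$ directly using $x^d=0$.
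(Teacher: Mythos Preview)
Your proposal is correct and follows the same route as the paper, which simply refers to \cite[Proposition~8.2]{AHK} for the proof and omits the details. One small correction to your bookkeeping: since $A^{\mathrm{top}}=C^{\mathrm{top}}$ (as the paper notes just after the statement), one has $\dim A=\dim C=n$ and $\dim B=n-d$, not $n-1$ and $n-2$; with this adjustment your block-triangular analysis of $\cdot\,\ell_\epsilon^{n-2k}$ on $A\cong C\oplus xB\oplus\cdots\oplus x^{d-1}B$ and the $\epsilon$-rescaling of the Hodge--Riemann form go through exactly as you describe.
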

In the above theorem the degree function on $B$ is induced by $\mu_0$, i.e.\ $\deg_B(\alpha)=\deg_C(\alpha \mu_0)$.
Since the top degrees coincide $A^{top}=C^{top}$, we also implicitly assume that $\deg_A=\deg_C$.

The proof of Theorem \ref{thm:HR_inductive_step} is the same of the proof of \cite[Proposition 8.2]{AHK}, so we omit it.

The following easy lemma shows that the maps introduced in Section \ref{subsect:tensor_dec} preserve the $\Sigma$-cone.

\begin{lemma} \label{lemma:cone} \leavevmode
The following holds:
\begin{enumerate} 
\item For any $g \in \mG$, $g \neq \hat{1}$ the natural map
\[ \DP^1(P,\mG) \to \DP^1(P^g,\mG^g) \oplus \DP^1(P_g,\mG_g) \]
induced by the quotient by $\Ann(x_g)$ composed with $\psi_g^{-1}$, maps $\Sigma_{P,\mG}$ into $\Sigma_{P^g,\mG^g} \times \Sigma_{P_g,\mG_g}$. 
\item \label{lemma:zeta_g_cone} For any $g \in \mG$ the morphism 
\[ \DP^1(P,\mG) \to \DP^1(\tr_{g}P,\tr_{g}\mG)\]
induced by the quotient by $\Ann(\sigma_{g})$ composed with $\zeta_{g}^{-1}$, maps $\Sigma_{P,\mG}$ into $\Sigma_{\tr_{g}P,\tr_{g}\mG}$. 
\item  \label{lemma:p_a_cone} For any atom $a \in E, a \neq \hat{1}$ the natural map 
\[\DP^{1}(P(a), \mG(a)) \to \DP^{1}(P_a,\mG_a)\]
induced by the quotient by $\Ann (\mu_0)$ composed with $p_a^{-1}$, maps $\Sigma_{P(a),\mG(a)}$ into $\Sigma_{P_a,\mG_a}$. \let\qedsymbol\openbox\qedhere
\end{enumerate}
\end{lemma}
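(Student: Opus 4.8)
The strategy is to unwind each of the three maps on explicit generators and check that the images of the generating rays $-\sigma_g$ (with positive coefficients) again land in the open cone generated by the relevant $-\sigma$'s. All three maps are induced by quotienting by an annihilator and composing with one of the isomorphisms $\psi_g^{-1}$, $\zeta_g^{-1}$, $p_a^{-1}$ from \Cref{subsect:tensor_dec}, so in each case it suffices to track where the classes $\sigma_h$ go.

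For part (1), the map $\psi_g^{-1}$ sends $[\sigma_h] \mapsto \sigma_h \otimes 1$ when $h \leq g$ and $[\sigma_h] \mapsto 1 \otimes \sigma_{g\vee h}$ when $h \not\leq g$ (this is exactly the description of $\psi_g$ in \Cref{lemma:iso_psi_g}, read backwards). Hence a generic element $-\sum_{h\in\mG} d_h \sigma_h$ with all $d_h>0$ maps to
\[
\Bigl(-\sum_{h\leq g} d_h \sigma_h\Bigr) \oplus \Bigl(-\sum_{h\not\leq g} d_h \sigma_{g\vee h}\Bigr).
\]
The first component is manifestly in $\Sigma_{P^g,\mG^g}$ since $\mG^g=\mG\cap L^g$. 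For the second, note every element of $\mG_g$ has the form $g\vee h$ for some $h\in\mG\setminus[\hat 0,g]$, so the coefficient of each generator $\sigma_{g\vee h'}$ of $\DP^1(P_g,\mG_g)$ is $-\sum_{h:\,g\vee h=g\vee h'} d_h$, a sum of strictly negative terms, hence strictly negative; this is precisely membership in $\Sigma_{P_g,\mG_g}$.

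Part (2) is similar but shorter: by \Cref{lemma:zeta_g}, $\zeta_g^{-1}$ sends $[\sigma_h]\mapsto \sigma_{\overline h}$ (the closure of $h$ in $\tr_g L$), and $\tr_g\mG=\{\overline h : h\in\mG\}$. So $-\sum_h d_h\sigma_h$ maps to $-\sum_{k\in\tr_g\mG}\bigl(\sum_{h:\,\overline h=k}d_h\bigr)\sigma_k$, again a strictly negative combination of all the generators of $\DP^1(\tr_gP,\tr_g\mG)$, hence in $\Sigma_{\tr_gP,\tr_g\mG}$. For part (3), recall from \Cref{lemma:iso_p_a} that $p_a^{-1}$ is (the relevant component of) $\psi_a^{-1}$ restricted: explicitly $p_a(\sigma_{a\vee h})=[\sigma_h]$, so $p_a^{-1}[\sigma_h]=\sigma_{a\vee h}$ for $h\neq a$, $h\not\leq a$ (and the generators $\sigma_h$ with $h\leq a$, $h\neq a$ do not occur in $\DP(P(a),\mG(a))$ by construction of $E(a)$ and $\mG(a)=\mG\setminus\{a\}$, since $\cd(a)=1$ would make $P$ a matroid case—more precisely one uses that $a$ is an atom). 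Thus a generic ray $-\sum_{h\in\mG(a)}d_h\sigma_h$ maps to $-\sum d_h\sigma_{a\vee h}$, and as in part (1) each coefficient in the $\Sigma_{P_a,\mG_a}$-coordinates is a strictly negative sum, giving membership.

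The only genuine subtlety — and the step I would write most carefully — is bookkeeping of which generators actually appear in the target algebras and with what multiplicity after the identifications (e.g.\ that several $h$ can collapse to the same $g\vee h$ or the same $\overline h$, and that none of the target's generators is missed). Once one checks that the map on generators is "onto the generating set with strictly positive total weight," positivity of every coordinate is immediate and the three inclusions follow; there is no inequality to optimize, so no $\epsilon$-type argument is needed here.
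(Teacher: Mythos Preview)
Your proposal is correct and follows essentially the same approach as the paper: in each case you track where the generators $\sigma_h$ go under the inverse isomorphism, observe that several $h$ may collapse to the same target generator but that the resulting coefficient is a sum of strictly negative terms, and conclude membership in the target $\sigma$-cone. The paper's proof is nearly identical, though slightly terser about the surjectivity onto the target generating set that you flag explicitly. Your parenthetical in part (3) about ``$h\leq a$, $h\neq a$'' is a bit muddled---the clean statement is simply that since $a$ is an atom, every $h\in\mG(a)=\mG\setminus\{a\}$ satisfies $h\not\leq a$, so $a\vee h\in\mG_a$---but this does not affect correctness.
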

\begin{proof} \leavevmode
\begin{enumerate}
    \item Let $l=- \sum_{h \in \mG} d_h \sigma_h$ be an element of the $\sigma$-cone, we have
        \[\psi_g^{-1}([l])=-\sum_{h \leq g} d_h \sigma_h \otimes 1-\sum_{h \nleq g} d_h \otimes \sigma_{g \vee h}.\]
    It may occur that there are two different $h, h' \in \mG$ such that $g \vee h=g \vee h'$ but, also in this case, the coefficient of $1 \otimes \sigma_{g \vee h}$ is still negative. It follows that $\psi_g^{-1}([l]) \in \Sigma_{P^g,\mG^g} \times \Sigma_{P_g,\mG_g}$.
    \item Let $l=- \sum_{h \in \mG} d_h \sigma_h$ be an element of the $\sigma$-cone, we have 
        \[\zeta_g^{-1}([l])=-\sum_{h \in \mG} d_h \sigma_{\Bar{h}}.\]
    It may occur that there are two different $h, h' \in \mG$ such that $\Bar{h}=\Bar{h'}$ but, also in this case, the coefficient of $\sigma_{\Bar{h}}$ is still negative. Thus, $\zeta_g^{-1}([l]) \in \Sigma_{\tr_{g}P,\tr_{g}\mG}$.
    \item Let $l=- \sum_{h \in \mG} d_h \sigma_h$ be an element of the $\sigma$-cone, we have
        \[p_a^{-1}([l])=-\sum_{h \in \mG} d_h \sigma_{a \vee h}.\]
    It may occur that there are two different $h, h' \in \mG$ such that $a \vee h=a \vee h'$ but, also in this case, the coefficient of $\sigma_{a \vee h}$ is still negative. It follows that $p_a^{-1}([l]) \in \Sigma_{P_a,\mG_a}$.
\end{enumerate}
\end{proof}

Now we are ready to prove the main theorem.

\begin{theorem} \label{thm:HL_HR}
For every element $\ell$ in the $\sigma$-cone $\Sigma_{P,\mG}$ the conditions $\HL_{\DP(P,\mG)}(\ell)$ and $\HR_{\DP(P,\mG)}(\ell)$ hold.
\end{theorem}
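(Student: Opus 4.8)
The plan is to argue by induction on $|\mG|$. We may assume $\hat 1\in\mG$: otherwise Remark \ref{rem:top_in_G} writes $\DP(P,\mG)$ as a tensor product $\bigotimes_i\DP(P^{a_i},\mG^{a_i})$ over strictly smaller building sets, each containing its own top element, $\Sigma_{P,\mG}$ is the product of the corresponding $\sigma$-cones, and $\HL$ and $\HR$ are inherited by tensor products of Poincaré duality algebras, so the statement follows by induction. In the base case $|\mG|=1$ we have $\mG=\{\hat 1\}$ and $\DP(P,\mG)\cong\Q[\sigma_{\hat 1}]/(\sigma_{\hat 1}^{\cd(\hat 1)})$, the cohomology ring of $\mathbb{P}^{\cd(\hat 1)-1}$, for which $\HL(-d\sigma_{\hat 1})$ and $\HR(-d\sigma_{\hat 1})$ with $d>0$ are immediate.

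For the inductive step with $|\mG|\ge 2$, the element $\hat 1$ is not an atom (else $L$ has height one and $|\mG|=1$), so there is an atom $a\in E$ with $a\ne\hat 1$, and $a\in\mG$. Set $C=\DP(P(a),\mG(a))$, $\mu_0=(x_a-\sigma_a)^{\cd(a)}\in C$ (nonzero since $a\ne\hat 1$), $B=C/\Ann(\mu_0)$, and $p(x)=\sum_{i=0}^{\cd(a)}\binom{\cd(a)}{i}x^i(x_a-\sigma_a)^{\cd(a)-i}$. By Lemma \ref{lemma:A_is_correct} there is a graded isomorphism $\DP(P,\mG)\cong A:=C[x]/(x\Ann(\mu_0),p(x))$ carrying $x$ to $-x_a$, and by Lemma \ref{lemma:iso_p_a} the algebra $B$ is isomorphic to $\DP(P_a,\mG_a)$ compatibly with the degree maps (with $\deg_B$ induced by $\mu_0$, as in Theorem \ref{thm:HR_inductive_step}). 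Since $|\mG(a)|=|\mG|-1$ and $|\mG_a|\le|\mG|-1$, and $\mG(a)$ and $\mG_a$ contain their top elements, the inductive hypothesis gives $\HL$ and $\HR$ on $\Sigma_{P(a),\mG(a)}$ and on $\Sigma_{P_a,\mG_a}$.

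Now fix $\ell'\in\Sigma_{P(a),\mG(a)}$. The inductive hypothesis gives $\HR_C(\ell')$; by Lemma \ref{lemma:cone}(\ref{lemma:p_a_cone}) the image of $\ell'$ under $C\to B$ followed by $p_a^{-1}$ lies in $\Sigma_{P_a,\mG_a}$, so $\HR$ holds there, that is, $\HR_B(\ell')$ holds. Theorem \ref{thm:HR_inductive_step} then yields $\HR_A(\ell'+\epsilon x)$ for all sufficiently small $\epsilon>0$. Transporting this through $\DP(P,\mG)\cong A$ and using the M\"obius inversion $x_a=\sigma_a+\sum_{k>a}\mu(a,k)\sigma_k$ of the relations $\sigma_g=\sum_{h\ge g}x_h$ in the poset $\mG$, the class $\ell'+\epsilon x$ with $\ell'=-\sum_{g\in\mG(a)}d_g\sigma_g$ corresponds to $-\epsilon\sigma_a-\sum_{g\in\mG(a)}\big(d_g+\epsilon\,\mu(a,g)\big)\sigma_g$, where $\mu(a,g)=0$ unless $g>a$; for $\epsilon$ small enough all of these $\sigma$-coefficients are negative, so the class lies in $\Sigma_{P,\mG}$. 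Hence $\HR_{\DP(P,\mG)}$, and therefore $\HL_{\DP(P,\mG)}$ since $\HR$ implies $\HL$ for Poincaré duality algebras, holds at some class of $\Sigma_{P,\mG}$.

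It remains to propagate $\HL$ and $\HR$ from this one class to the whole of $\Sigma_{P,\mG}$, which is convex, hence connected. Following the pattern of \cite{AHK}, one checks that the set of $\ell\in\Sigma_{P,\mG}$ with $\HL_{\DP(P,\mG)}(\ell)$ and $\HR_{\DP(P,\mG)}(\ell)$ is open and closed in $\Sigma_{P,\mG}$: it is open because $\HR$ is an open condition implying $\HL$, and where $\HL$ holds the primitive subspaces $P_k(\ell)$ have locally constant dimension and $Q_\ell^k$ is nondegenerate on them, so the signature of $Q_\ell^k|_{P_k(\ell)}$ cannot jump; closedness needs in addition a limiting argument at the boundary classes. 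Being non-empty, this set is all of $\Sigma_{P,\mG}$, and the induction is complete. The hardest parts are the Hodge-Riemann inductive step through Theorem \ref{thm:HR_inductive_step} — including verifying its hypotheses for $B$ via $B\cong\DP(P_a,\mG_a)$ and the sign bookkeeping that keeps the classes $\ell'+\epsilon x$ inside $\Sigma_{P,\mG}$, which works because the $O(\epsilon)$ corrections are dominated by the fixed positive $d_g$ — together with the closedness in the propagation step.
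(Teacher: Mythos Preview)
Your setup of the inductive step via $C=\DP(P(a),\mG(a))$, $B\cong\DP(P_a,\mG_a)$, $A\cong\DP(P,\mG)$ and Theorem~\ref{thm:HR_inductive_step} is correct and matches the paper exactly, including the M\"obius-inversion check that $\ell'-\epsilon x_a\in\Sigma_{P,\mG}$ for small $\epsilon$. So you do obtain $\HR_{\DP(P,\mG)}$ at \emph{one} point of $\Sigma_{P,\mG}$.

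The genuine gap is the propagation step. You claim the locus in $\Sigma_{P,\mG}$ where $\HR$ (hence $\HL$) holds is open \emph{and closed}. Openness is fine, but closedness is not: if $\ell_n\to\ell$ with $\HR(\ell_n)$, there is nothing preventing the Lefschetz operator $\ell^{\,n-2k}$ from becoming singular at the limit, so $Q_\ell^k$ may degenerate and the signature argument collapses. You acknowledge this (``closedness needs in addition a limiting argument at the boundary classes'') but no such argument is available in general; this is precisely why \cite[Proposition~7.16]{AHK} requires $\HL$ to hold \emph{throughout} the connected set before concluding that $\HR$ propagates.

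The paper supplies this missing input by a separate argument you have omitted: for every $g\in\mG$ the isomorphism $\zeta_g\colon \DP(\tr_gP,\tr_g\mG)\to\DP(P,\mG)/\Ann(\sigma_g)$ of Lemma~\ref{lemma:zeta_g}, together with Lemma~\ref{lemma:cone}(\ref{lemma:zeta_g_cone}), shows that the image of any $\ell\in\Sigma_{P,\mG}$ in each quotient $\DP(P,\mG)/\Ann(\sigma_g)$ satisfies $\HR$ by induction. Then \cite[Proposition~7.15]{AHK} gives $\HL_{\DP(P,\mG)}(\ell)$ for \emph{every} $\ell\in\Sigma_{P,\mG}$, after which \cite[Proposition~7.16]{AHK} upgrades your single $\HR$ point to all of $\Sigma_{P,\mG}$. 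Note that $\tr_gP$ has $\cd(\hat 1)$ strictly smaller but $|\tr_g\mG|$ possibly equal to $|\mG|$, so the induction must be on the pair $(|\mG|,\cd(\hat 1))$, not on $|\mG|$ alone.
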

\begin{proof}
We prove the statement by induction on $\lvert \mG \rvert$ and $\cd(\hat{1})$.
The base case is $\lvert \mG \rvert =1$, so $\DP(P,\mG)=\Q[x_{\hat{1}}]/(x_{\hat{1}}^{\cd(\hat{1})})$.
In this case, it is known that $-\lambda x_{\hat{1}}$ satisfies Hard Lefschetz and Hodge-Riemann for all positive $\lambda$.

For the inductive step consider a polymatroid $P$, a building set $\mG$, and an element $\ell \in \Sigma_{P,\mG}$.
Under the morphisms of Lemma \ref{lemma:cone} Item \ref{lemma:zeta_g_cone} $\ell$ is mapped in $\Sigma_{\tr_g P, \tr_g \mG}$ for all $g \in \mG$.
Therefore by the inductive hypothesis the image of $\ell$ in $\DP(P,\mG)/\Ann(\sigma_g)$ satisfies Hodge-Riemann relations for all $g\in \mG$.
Notice also that $\ell$ is a sum of $- \sigma_g$ with positive coefficients.
By \cite[Proposition 7.15]{AHK}, $\HL_{\DP(P,\mG)}(\ell)$ holds.

We want to prove that the Hodge-Riemann relations hold for all $\ell \in \Sigma_{P,\mG}$. By \cite[Proposition 7.16]{AHK} it is enough to prove $\HR_{\DP(P,\mG)}(\ell)$ for some $\ell \in \Sigma_{P,\mG}$.
We apply Theorem \ref{thm:HR_inductive_step}: consider any atom $a\in E$, since $\lvert \mG \rvert >1$ then $a \neq \hat{1}$.
Set $C=\DP(P(a),\mG(a))$ and $p(x)=\sum_{i=0}^{\cd(a)} \binom{\cd(a)}{i} x^i (x_a-\sigma_a)^{\cd(a)-i}$; Lemma \ref{lemma:iso_p_a} ensures that $B=\DP(P_a,\mG_a)$ and Lemma \ref{lemma:A_is_correct} that $A=\DP(P,\mG)$.
Let $\ell \in \Sigma_{P(a),\mG(a)}$, then under the morphism $C\to B$ (Lemma \ref{lemma:cone} Item \ref{lemma:p_a_cone}) the class $\ell$ is mapped in $\Sigma_{P_a,\mG_a}$.
By the inductive hypothesis we have $\HR_{\DP(P(a),\mG(a))}(\ell)$ and $\HR_{\DP(P_a,\mG_a)}(\ell)$, hence by Theorem \ref{thm:HR_inductive_step} $\HR_{\DP(P,\mG)}(\ell-\epsilon x_a)$ holds for sufficiently small $\epsilon>0$.

Moreover if $\epsilon$ is small enough then $\ell-\epsilon x_a$ belongs to $\Sigma_{P,\mG}$.
Indeed using the M\"obius inversion formula we have 
\[x_a = \sum_{g \geq a} \mu_{\mG}(a,g) \sigma_g \]
(where we consider $\mG$ as a sub-poset of $L$).
Let $\ell= -\sum_{g \in \mG} d_g \sigma_g$, taking $\epsilon$ smaller than \[\min_{g\geq a} \left\{ \Big\lvert \frac{d_g}{\mu_{\mG}(a,g)} \Big\rvert \right\},\]
then $\ell-\epsilon x_a \in \Sigma_{P,\mG}$.
This concludes the proof.
\end{proof}

\begin{remark}
The ample cone depends on the geometric realization, however our $\sigma$-cone is contained in the ample cone of every realization. Indeed, consider $3$ distinct lines in $\C^3$ and let $P$ be the polymatroid realized by this subspace arrangement.
The projective wonderful model is the blowup of $\mathbb{P}^2$ in $3$ distinct points; there are two cases.
If the three points are collinear the ample cone coincides with the $\sigma$-cone.
Otherwise the three points are in general position and the ample cone is
\[\{-d_{\hat{1}}x_{\hat{1}} -d_ax_a -d_bx_b-d_cx_c \mid d_{\hat{1}}>d_a+d_b, d_{\hat{1}}>d_a+d_c, d_{\hat{1}}>d_b+d_c  \} \]
which strictly contains the $\sigma$-cone.
\end{remark}

\begin{remark}
If we restrict to the case of matroids with arbitrary building sets, the generator $x_{\hat{1}}$ can be eliminated using the relation $x_{\hat{1}}=- \sum_{g\geq e, \, g \neq \hat{1}} x_g$ for any $e \in E$.
Thus the Hard Lefschetz theorem (and so the Hodge-Riemann relations) can be proven for the entire ample cone using as generators $\{ x_g \}_{g \neq \hat{1}}$ instead of $\{\sigma_g\}_{g \in \mG}$ and Lemma \ref{lemma:iso_psi_g} instead of Lemma \ref{lemma:zeta_g}.
\end{remark}

\section{The relative Lefschetz decomposition} \label{sect:rel_Lef}
In this section we provide a decomposition of $\DP(P,\mG)$ as $\DP(P\setminus a, \mG \setminus a)$-module.
This is analogous to the semi-small decomposition of \cite{semismall}, but in this more general setting the corresponding map is not always semi-small.

Indeed, consider an arrangement of hyperplanes $\mathcal{A}$ and the deleted arrangement $\mathcal{A}'=\mathcal{A} \setminus \{H\}$ for some hyperplane $H \in \mathcal{A}$.
There is a projection map between the wonderful models $Y_{\mathcal{A}} \to Y_{\mathcal{A}'}$ (constructed using the maximal building sets).
This map is semi-small and induces the semi-small decomposition of the Chow ring.

In the case of subspace arrangements, the projection between the wonderful models exists but is not semi-small, because the dimension of the fiber of the blow up is too big.
Therefore, the proof of the K\"ahler package done in \cite{semismall} for matroids cannot be adapted to polymatroids.

Recall that for a polymatroid $P=(E,\cd)$ an \textit{atom} $a\in E$ is an element such that the interval $(\hat{0}, \overline{a}) \subset L$ is empty (where $\overline{a}$ is the closure of $a$).
\begin{definition}

For an atom $a$ define the polymatroid $P\setminus a$ on the ground set $E\setminus \{a\}$ with the restricted codimension function $\cd$.
The building set $\mG \setminus a$ is the intersection of $\mG$ with the poset of flats of $P \setminus a$.
\end{definition}

Define a map 
\[\theta_a \colon \DP(P\setminus a, \mG \setminus a) \to \DP(P,\mG)\] 
by $\theta_a(\sigma_h)=\sigma_{\overline{h}}$ where $\overline{h}$ is the closure of $h$ in $P$.
Define the subalgebra $\DPi= \im (\theta_a)$.
\begin{lemma}
The map $\theta_a$ is injective.
\end{lemma}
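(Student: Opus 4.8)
The plan is to derive injectivity of $\theta_a$ from the Poincaré duality property of the source $A:=\DP(P\setminus a,\mG\setminus a)$ (Theorem \ref{thm:Poinc_duality}) together with the fact that $\theta_a$ is a homomorphism of graded $\Q$-algebras. Well-definedness is checked as in Lemma \ref{lemma:zeta_g}: if $S\subseteq\mG\setminus a$ is an antichain with $\bigvee S\le g$ in the lattice of $P\setminus a$, then the elements of $S\cup\{g\}$ are already flats of $P$, the join $\bigvee S$ taken in $L$ can only get larger, and monotonicity of $\cd$ gives $\cd(g)-\cd(\bigvee^{P\setminus a}S)\ge\cd(g)-\cd(\bigvee^{P}S)$, so $\theta_a$ sends the relation $\sigma_g^{b}\prod_{s\in S}(\sigma_s-\sigma_g)$ into the ideal defining $\DP(P,\mG)$. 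The reduction step is the general fact that a homomorphism $f\colon A\to B$ of graded algebras, with $A$ a Poincaré duality algebra of top degree $n$, is injective as soon as $f(A^{n})\neq 0$: if $\ker f\neq 0$, pick a nonzero homogeneous $z\in\ker f$ of maximal degree $k$; if $k<n$, Poincaré duality of $A$ furnishes $w\in A^{n-k}$ with $zw\neq 0$ in $A^{n}$, and then $zw\in\ker f$ has degree $n>k$, a contradiction; hence $k=n$, and since $\dim_{\Q}A^{n}=1$ we get $\ker f\supseteq A^{n}$, i.e.\ $f(A^{n})=0$.

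It thus remains to show that $\theta_a$ is nonzero on the one-dimensional top piece $A^{m}$. Let $a_1,\dots,a_k$ be the $\mG\setminus a$-factors of $\hat 1_{P\setminus a}$; applying Lemma \ref{lemma:poinc_matr_ones_diag} to $P\setminus a$ (together with Remark \ref{rem:top_in_G} when $\hat 1_{P\setminus a}\notin\mG\setminus a$), one sees that $A^{m}$ is spanned by $\omega:=\prod_{i=1}^{k}x_{a_i}^{\cd(a_i)-1}=\prod_{i=1}^{k}\sigma_{a_i}^{\cd(a_i)-1}$, and since each $a_i$ is already a flat of $P$ lying in $\mG$ we get $\theta_a(\omega)=\prod_{i=1}^{k}\sigma_{a_i}^{\cd(a_i)-1}$ computed in $\DP(P,\mG)$. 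So everything comes down to proving $\prod_{i}\sigma_{a_i}^{\cd(a_i)-1}\neq 0$ in $\DP(P,\mG)$. When $k=1$, i.e.\ $\hat 1':=\hat 1_{P\setminus a}\in\mG\setminus a$, one necessarily has $\hat 1'\neq\hat 1$ (because $a\notin\hat 1'$), and one concludes via the isomorphism $\psi_{\hat 1'}$ of Lemma \ref{lemma:iso_psi_g}: with $\beta$ a socle generator of $\DP(P_{\hat 1'},\mG_{\hat 1'})$ one has $\theta_a(\omega)=\psi_{\hat 1'}\bigl(x_{\hat 1'}^{\cd(\hat 1')-1}\otimes 1\bigr)$, and the degree identity of Lemma \ref{lemma:iso_psi_g} gives $\deg\bigl(x_{\hat 1'}\,\psi_{\hat 1'}(x_{\hat 1'}^{\cd(\hat 1')-1}\otimes\beta)\bigr)=\deg(x_{\hat 1'}^{\cd(\hat 1')-1})\deg(\beta)=\pm1\neq 0$, whence $\theta_a(\omega)\neq 0$. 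In the general case one runs the same argument along the $a_i$, iterating the isomorphisms $\psi_g$ and $\zeta_g$ of Section \ref{subsect:tensor_dec} and using their compatibility with the degree maps.

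I expect the main obstacle to be precisely this last nonvanishing, because of the interaction between the two closure operators. Since $a$ is an atom, $\hat 1_{P\setminus a}=E\setminus\{a\}$ is a flat of $P$ exactly when $\cd(E)>\cd(E\setminus\{a\})$, and in general a join computed in the lattice of $P\setminus a$ is only $\le$ the corresponding join in $L$; consequently the factors $a_i$, viewed inside $L$, need no longer form a $\mG$-nested antichain whose codimensions add up, and one has to verify by hand that no relation of type (ii) of $\DP(P,\mG)$ annihilates the monomials occurring in the iteration. An alternative route that sidesteps Poincaré duality would be to construct an explicit retraction $s\colon\DP(P,\mG)\to\DP(P\setminus a,\mG\setminus a)$ with $s\circ\theta_a=\id$ (sending $\sigma_g\mapsto\sigma_g$ when $a\notin g$ and collapsing the remaining generators appropriately); the difficulty there is transferred to proving that $s$ is well defined, which again rests on the same nested-set and submodularity bookkeeping around $a$.
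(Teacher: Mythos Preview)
Your route via Poincaré duality is genuinely different from the paper's, and also considerably more involved. The paper argues directly on the standard monomial basis of Corollary~\ref{cor:base_additiva_sigma_tau}: since every $h\in\mG\setminus a$ is by definition already a flat of $P$ lying in $\mG$, one has $\overline{h}=h$, the closure map $L_{P\setminus a}\hookrightarrow L$ is an order embedding preserving joins, and hence a $(\mG\setminus a)$-nested set $S$ remains $\mG$-nested in $L$ with the same codimension data. Thus each standard monomial $\sigma_S^b$ of $\DP(P\setminus a,\mG\setminus a)$ is sent to the \emph{same} standard monomial $\sigma_S^b$ of $\DP(P,\mG)$, and injectivity is immediate because distinct basis elements go to distinct basis elements.

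Your argument is sound up to the reduction ``injective $\Leftrightarrow$ nonzero on the socle'', but the nonvanishing of $\theta_a(\omega)=\prod_i\sigma_{a_i}^{\cd(a_i)-1}$ in $\DP(P,\mG)$ is, as you yourself note, left open for $k>1$; the sketch ``iterate $\psi_g$ and $\zeta_g$'' is not a proof, and the worry you raise about joins and factors across the two lattices is exactly the point that needs work. The irony is that the cleanest way to close this gap is precisely the paper's observation: $\{a_1,\dots,a_k\}$ is $(\mG\setminus a)$-nested, hence $\mG$-nested in $L$, and the exponent bounds are unchanged, so $\prod_i\sigma_{a_i}^{\cd(a_i)-1}$ is a standard monomial in $\DP(P,\mG)$ and therefore nonzero. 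Once you have that, you may as well apply it to \emph{every} standard monomial and drop the Poincaré duality detour entirely.
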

\begin{proof}
Consider a standard monomial $\sigma_S^b \in \DP(P\setminus a, \mG \setminus a)$ and let $\overline{S}=\{ \overline{h} \mid h \in S\}$.
We have $\theta_a(\sigma_S^b)= \sigma_{\overline{S}}^b$ and it is enough to prove that $\sigma_{\overline{S}}^b$ is a standard monomial.
Notice that $\overline{h}\vee \overline{g}=\overline{h \vee g}$ and the map between the two poset of flats is an inclusion.
Therefore $\overline{S}$ is $\mG$-nested. 
Since $\cd(h)=\cd(\overline{h})$, then $\sigma_{\overline{S}}^b$ is a standard monomial.
\end{proof}

Let $S_a = \{g \in \mG \mid a \in g \textnormal{ and } g \setminus \{a\} \in L \}$ be the set of all flats such that $a$ is a coloop for that flat.

\begin{remark}
Notice that $\theta_a(x_g)=x_g+x_{g\cup \{a\}}$, where we use the convention that $x_h=0$ if $h$ is not a flat of $P$.
Moreover $\DPi$ is generated as an algebra by $\sigma_g$ with $g \notin S_a$ and as vector space by the monomials $\sigma_S^b$ with $S \cap S_a= \emptyset$.
\end{remark}

For $f \in S_a$ define $\DP_{f}$ as the $\DPi$-submodule of $\DP(P,\mG)$ generated by $x_f, x_f^2, \dots x_f^{n_f}$, where 
\[ n_f=  \cd(f)- \cd(f \setminus \{a\})-1 + \lvert F(P, \mG,f\setminus \{a\})\rvert .\]

For a graded module $M= \oplus_i M^i$ we define $M[k]$ to be the graded module such that $(M[k])^i= M^{i+k}$.

\begin{theorem}\label{thm:Lef_dec}
Let $a$ be an atom, then:
\begin{align}
    & x_f^k \DPi [-k] \cong \DP((P\setminus a)^{f\setminus a}, (\mG \setminus a)^{f\setminus a}) \otimes \DP(P_f, \mG_f), \label{eq:struttura_x_f}\\
    & \DP_f= \bigoplus_{k=1}^{n_f} x_f^k \DPi, \label{eq:struttura_DP_f} \\
    &\DP(P,\mG) = \DPi \oplus \bigoplus_{f \in S_a} \DP_f \label{eq:orthogonal decomposition} .
\end{align}
as $\DPi$-modules.
Moreover, the last decomposition is orthogonal with respect to the Poincaré pairing, with the exception of the summand $\DPi$ and $\DP_{\hat{1}}$ (if $a$ is a coloop).
\end{theorem}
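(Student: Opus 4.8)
The plan is to establish the module isomorphism \eqref{eq:struttura_x_f} first, to read the value of $n_f$ off it, then to obtain \eqref{eq:struttura_DP_f} and \eqref{eq:orthogonal decomposition} from it together with a dimension count, and finally to deduce the orthogonality statement from a degree analysis. The combinatorial backbone is that for any $\mG$-nested set $S$ the intersection $S\cap S_a$ is a chain: every element of $S_a$ dominates $\overline a>\hat 0$, so two incomparable elements of $S_a$ would have nonzero meet, whence their join would lie in $\mG$ by Proposition \ref{prop:comb_nested_sets}(2), contradicting nestedness. Thus every standard monomial whose support meets $S_a$ has a well-defined maximal $S_a$-element $f$, which determines the summand $\DP_f$ it is assigned to. (As usual one may assume $\hat1\in\mG$, the general case following from Remark \ref{rem:top_in_G}.)

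For \eqref{eq:struttura_x_f}, fix $f\in S_a$ with $n_f\ge1$ and $1\le k\le n_f$. Since $x_f^k\DPi$ is the cyclic $\DPi$-module generated by $x_f^k$, injectivity of $\theta_a$ identifies it with $\DP(P\setminus a,\mG\setminus a)/\theta_a^{-1}\bigl(\Ann_{\DP(P,\mG)}(x_f^k)\bigr)$. I would compute this annihilator by passing to the quotient of Lemma \ref{lemma:iso_psi_g}: composing the projection modulo $\Ann(x_f)$ with $\psi_f^{-1}$ gives $\DP(P,\mG)/\Ann(x_f)\cong\DP(P^f,\mG^f)\otimes\DP(P_f,\mG_f)$, under which $\DPi$ maps onto $\theta_a\bigl(\DP((P\setminus a)^{f\setminus a},(\mG\setminus a)^{f\setminus a})\bigr)\otimes\DP(P_f,\mG_f)$. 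Checking this uses the compatibility $(P^f)\setminus a=(P\setminus a)^{f\setminus a}$ of the polymatroid operations and the matching identity for building sets (valid since $a\in f$ and $a$ is an atom), together with the formula $\theta_a(x_g)=x_g+x_{g\cup\{a\}}$. The extra factor $x_f^{k-1}$ acts on the first tensor factor through the image of $-\sigma_f$, as in the computations of Section \ref{subsect:tensor_dec}, and its iterated quotients can be analysed via the truncation isomorphisms $\zeta$ of Lemma \ref{lemma:zeta_g}; comparing bidegrees then gives \eqref{eq:struttura_x_f}, and the same computation identifies the largest $k$ with $x_f^k\DPi\ne0$ as $\cd(f)-\cd(f\setminus a)-1+\lvert F(P,\mG,f\setminus a)\rvert=n_f$.

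For \eqref{eq:struttura_DP_f} and \eqref{eq:orthogonal decomposition} I would argue jointly. First, spanning: given a standard monomial $\sigma_S^b$ with $S\cap S_a\ne\emptyset$ and $f=\max(S\cap S_a)$, repeated application of the relations of Theorem \ref{thm:new_gen} rewrites it as a $\DPi$-linear combination of $x_f,\dots,x_f^{n_f}$, while the monomials with $S\cap S_a=\emptyset$ span $\DPi$; hence $\DP(P,\mG)=\DPi+\sum_{f\in S_a}\DP_f$. Second, the purely combinatorial identity
\[\dim\DP(P,\mG)=\dim\DP(P\setminus a,\mG\setminus a)+\sum_{f\in S_a}n_f\,\dim\DP\bigl((P\setminus a)^{f\setminus a},(\mG\setminus a)^{f\setminus a}\bigr)\,\dim\DP(P_f,\mG_f)\]
is verified by counting standard monomials sorted according to their maximal $S_a$-element. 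Since \eqref{eq:struttura_x_f} makes $\dim(x_f^k\DPi)$ independent of $k$, the chain of inequalities $\dim\DP(P,\mG)\le\dim\DPi+\sum_f\dim\DP_f\le\dim\DPi+\sum_f n_f\dim(x_f\DPi)$ is forced to consist of equalities, which is precisely the assertion that the sums in \eqref{eq:struttura_DP_f} and \eqref{eq:orthogonal decomposition} are direct.

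Finally, orthogonality. A degree computation based on \eqref{eq:struttura_x_f} shows that every summand $\DP_f$ with $f\ne\hat1$ has top degree $\cd(\hat1)-\lvert F(P,\mG,\hat1)\rvert-\lvert F(P_f,\mG_f,\hat1)\rvert$, which is strictly below the socle degree $\cd(\hat1)-\lvert F(P,\mG,\hat1)\rvert$ of $\DP(P,\mG)$ since $P_f$ is then nontrivial and hence $\lvert F(P_f,\mG_f,\hat1)\rvert\ge1$; so $\DPi\cdot\DP_f\subseteq\DP_f$ pairs trivially with everything. For $f\ne f'$ in $S_a$ the product $\DP_f\cdot\DP_{f'}$ vanishes when $f,f'$ are incomparable (their meet dominates $\overline a$, so $x_fx_{f'}=0$), and for comparable indices a rewriting argument using Lemma \ref{lemma_tecnico} shows the product has no socle component; hence all of these pairs are orthogonal. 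The exception is $\DPi$ versus $\DP_{\hat1}$: this occurs exactly when $\hat1\in S_a$, i.e.\ when $a$ is a coloop, and there $\DPi\cdot\DP_{\hat1}\subseteq\DP_{\hat1}$ does reach the socle. The main obstacle is the bookkeeping behind \eqref{eq:struttura_x_f} --- verifying that deletion, restriction and contraction, with the building sets they induce, combine exactly as claimed, tracking the image of $\DPi$ and the action of $x_f$ under $\psi_f^{-1}$, and extracting the precise exponent $n_f$ --- together with the control of cross-terms for comparable $S_a$-indices in the orthogonality step; once \eqref{eq:struttura_x_f} is in hand, the remaining steps reduce to dimension counts and degree estimates.
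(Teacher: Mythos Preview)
Your overall strategy is sound and reaches the same destination, but the route differs from the paper's in two places worth noting.

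First, for the internal structure of $\DP_f$ and the isomorphism \eqref{eq:struttura_x_f}, the paper does \emph{not} work directly with the powers $x_f^k$. It first establishes the analogue of \eqref{eq:struttura_x_f} for the elements $x_f\sigma_f^{k-1}$ (Lemma~\ref{lemma:x_f_sigma_f}), where the direct sum over $k$ is proved by a short ``$p_l$ cannot lie in the ideal $(\sigma_f)$'' argument inside $\DP(\tr_f^{l-1}P^f,\tr_f^{l-1}\mG^f)$. Only afterwards (Lemma~\ref{lemma:DP_f_modulo_libero}) is the change of basis from $\{x_f\sigma_f^{k-1}\}$ to $\{x_f^k\}$ shown to be upper unitriangular, which transports both \eqref{eq:struttura_x_f} and \eqref{eq:struttura_DP_f} to the $x_f^k$ generators. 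Your direct computation of $\theta_a^{-1}(\Ann(x_f^k))$ via iterated truncations is viable, but the paper's detour through $x_f\sigma_f^{k-1}$ avoids having to track how successive multiplications by $x_f$ interact with $\DPi$.

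Second, and more substantively, the paper does \emph{not} use a dimension count to obtain the directness in \eqref{eq:orthogonal decomposition}. Instead it first proves orthogonality and then invokes Poincar\'e duality (Theorem~\ref{thm:Poinc_duality}) together with generation (Lemma~\ref{lemma:generates}) to force the sum to be direct. Your combinatorial identity sorting standard monomials by their maximal $S_a$-element may well hold, but it is not immediate: the range of admissible exponents $b(f)$ for a standard monomial depends on $\bigvee S_{<f}$, not just on $f$, so the count is not obviously $n_f$ copies of a fixed product. The paper sidesteps this bookkeeping entirely.

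For the orthogonality of comparable indices your reference to Lemma~\ref{lemma_tecnico} is vaguer than what the paper actually does: for $g>f$ in $S_a$ one has $x_fx_{g\setminus\{a\}}=0$ (since $f\vee(g\setminus\{a\})=g\in\mG$), hence $x_fx_g^c=x_f(x_g+x_{g\setminus\{a\}})^c$ with $x_g+x_{g\setminus\{a\}}=\theta_a(x_{g\setminus a})\in\DPi$, so the product lands in $\DP_f$, whose socle vanishes. This one-line trick (Lemma~\ref{lemma:DP_f_modulo_libero}, first assertion) is the key to the cross-term analysis.
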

For an example of the application of Theorem \ref{thm:Lef_dec} see Section \ref{sect:example}.
Before the proof of the above theorem we need some lemmas.

\begin{lemma} \label{lemma:x_f_sigma_f}
For all $f\in S_a$ and $k\leq n_f$ we have
\begin{equation}
\label{eq:ann_x_sigma}
    x_f \sigma_f^{k-1}\DPi [-k] \cong \DP((P\setminus a)^{f\setminus a}, (\mG \setminus a)^{f\setminus a}) \otimes \DP(P_f, \mG_f),
\end{equation}
and these modules are in direct sum in $\DP(P,\mG)$.
\end{lemma}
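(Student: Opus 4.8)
The plan is to adapt the argument of Lemma~\ref{lemma:iso_psi_g} to the $\DPi$-submodule $x_f\sigma_f^{k-1}\DPi$ of $\DP(P,\mG)$: construct an explicit surjection onto it from the tensor product of Chow rings on the right, check that the defining relations are killed by invoking Lemma~\ref{lemma_tecnico}, and finally promote surjectivity to an isomorphism by a dimension count carried out in the standard monomial bases. The direct-sum statement will drop out of the same count.

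First I would observe that multiplication by $x_f\sigma_f^{k-1}$ is a surjective degree-$k$ homomorphism of graded $\DPi$-modules $\DPi\twoheadrightarrow x_f\sigma_f^{k-1}\DPi$, so that $x_f\sigma_f^{k-1}\DPi[-k]\cong\DPi/\Ann_{\DPi}(x_f\sigma_f^{k-1})$ as graded $\DPi$-modules and it suffices to identify this quotient. The hypothesis $k\le n_f$ enters precisely to guarantee $x_f\sigma_f^{k-1}\ne 0$, so that the module is nonzero of bottom degree exactly $k$; the definition of $n_f$ will reappear in the dimension count below.

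Second, in analogy with the map $\psi_f$ of Lemma~\ref{lemma:iso_psi_g}, I would define
\[
\Psi\colon \DP((P\setminus a)^{f\setminus a},(\mG\setminus a)^{f\setminus a})\otimes\DP(P_f,\mG_f)\longrightarrow x_f\sigma_f^{k-1}\DPi[-k]
\]
by $\sigma_h\otimes 1\mapsto x_f\sigma_f^{k-1}\theta_a(\sigma_h)$ for a building-set element $h\le f\setminus a$, and $1\otimes\sigma_{f\vee h}\mapsto x_f\sigma_f^{k-1}\theta_a(\sigma_h)$ for $h\in\mG$ with $h\not\le f$. Independence of the choice of representative $h$ with $f\vee h$ fixed is checked as in Lemma~\ref{lemma:zeta_g}. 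The relations of the first factor hold already inside $\DPi\subseteq\DP(P,\mG)$, hence are killed automatically; the relations of the contraction $\DP(P_f,\mG_f)$, after multiplication by $x_f\sigma_f^{k-1}$ and expansion, become sums of monomials $x_f\sigma_f^{k-1}x_A$ with $a_i\ge s_i$ and $a_i\not\ge f\vee h$, each of which vanishes by Lemma~\ref{lemma_tecnico} applied to $f\vee h$, $S\cup\{f\}$ and $A\cup\{f\}$ --- exactly the bookkeeping of the corresponding step in the proof of Lemma~\ref{lemma:iso_psi_g}. Surjectivity of $\Psi$ is immediate, since every generator $\sigma_g$ of $\DPi$ (i.e.\ $g\in\mG$, $g\notin S_a$) satisfies $g\le f\setminus a$ or $f\vee g\in\mG_f$, so its image in $x_f\sigma_f^{k-1}\DPi$ lies in $\im\Psi$.

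Third, to conclude that $\Psi$ is a bijection I would compare dimensions: the source is a Poincaré duality algebra by Theorem~\ref{thm:Poinc_duality} (a tensor product of Chow rings), whose dimensions are governed by $\cd(f\setminus a)$, $\cd(\hat 1)$ and the relevant $\mG$-factor sets, while the target's dimensions are read off from the standard monomial basis of Corollary~\ref{cor:base_additiva_e_x} by expanding $x_f\sigma_f^{k-1}\sigma_S^b$; matching the two reduces to the numerical identity supplied by $n_f=\cd(f)-\cd(f\setminus\{a\})-1+\lvert F(P,\mG,f\setminus\{a\})\rvert$ together with $a$ being a coloop of $f$. Then $\Psi$ is injective. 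For the direct-sum assertion I would expand an arbitrary element of $x_f\sigma_f^{k-1}\DPi$ in the basis $x_T^c$ and track the exponent of $x_f$, which detects $k$, so that the submodules $x_f\sigma_f^{k-1}\DPi$ for $k=1,\dots,n_f$ (and $\DPi$ itself when needed) meet pairwise in $0$; equivalently one identifies their sum with $\DP_f=\bigoplus_k x_f^k\DPi$ from \eqref{eq:struttura_DP_f}. I expect the two hard points to be the well-definedness of $\Psi$ --- verifying that the contraction relations genuinely die after multiplication by $x_f\sigma_f^{k-1}$, which is where Lemma~\ref{lemma_tecnico} is unavoidable --- and the dimension bookkeeping, where one must see that the ``reach'' of $x_f\sigma_f^{k-1}$ inside $\DPi$ is independent of $k$ throughout the range $1\le k\le n_f$.
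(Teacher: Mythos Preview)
Your strategy diverges from the paper's, and the divergence is exactly where the gap appears.

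The paper does not build a fresh map and then attempt a dimension count. Instead it \emph{composes} the isomorphisms already established: first $\psi_f$ (Lemma~\ref{lemma:iso_psi_g}) gives $x_f\DP(P,\mG)[-1]\cong\DP(P^f,\mG^f)\otimes\DP(P_f,\mG_f)$, and then $k-1$ applications of $\zeta_f$ (Lemma~\ref{lemma:zeta_g}) on the first tensor factor give
\[
x_f\sigma_f^{k-1}\DP(P,\mG)[-k]\;\cong\;\DP\bigl(\tr_f^{k-1}(P^f),\tr_f^{k-1}(\mG^f)\bigr)\otimes\DP(P_f,\mG_f).
\]
Injectivity is therefore free --- it is the restriction of a known isomorphism. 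The remaining work is the combinatorial identity
\[
\DP\bigl((\tr_f^{k-1}(P^f))\setminus a,\;(\tr_f^{k-1}(\mG^f))\setminus a\bigr)\;=\;\DP\bigl((P\setminus a)^{f\setminus a},\;(\mG\setminus a)^{f\setminus a}\bigr)\qquad(k\le n_f),
\]
and the check that the big isomorphism carries the subalgebra $x_f\sigma_f^{k-1}\DPi$ onto the right-hand side of~\eqref{eq:ann_x_sigma}. This is where $n_f$ enters, and it is an identity of polymatroids, not a numerical dimension match.

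Your dimension count, by contrast, is not completed and does not obviously go through. The target $x_f\sigma_f^{k-1}\DPi$ has no a priori Poincar\'e duality: $x_f\sigma_f^{k-1}\notin\DPi$, so Proposition~\ref{prop:Poinc_alg} does not apply to $\DPi/\Ann_{\DPi}(x_f\sigma_f^{k-1})$. Reading off the dimension from ``expanding $x_f\sigma_f^{k-1}\sigma_S^b$'' yields only a spanning set, not a basis, and showing those elements are independent in $\DP(P,\mG)$ is essentially the content of the lemma. The paper's route sidesteps this entirely by working inside the larger $x_f\sigma_f^{k-1}\DP(P,\mG)$, whose structure is already known.

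For the direct sum your argument is circular: you invoke \eqref{eq:struttura_DP_f}, which is proved \emph{from} this lemma in Theorem~\ref{thm:Lef_dec}. The paper's argument is self-contained: assuming $\sum_{k\ge l}\sigma_f^{k-1}p_k=0$ with $p_l\neq 0$ and $p_k\in\DP((P\setminus a)^{f\setminus a},\dots)$, pass via $\zeta_f^{l-1}$ to $\DP(\tr_f^{l-1}P^f,\dots)$ to obtain $\sum_{k\ge l}\sigma_f^{k-l}p_k=0$, whence $p_l$ lies in the ideal $(\sigma_f)$; but that ideal is spanned by standard monomials $\sigma_T^b$ with $f\in T$, whereas $p_l$ lives in the subalgebra that does not contain the generator $\sigma_f$. ``Tracking the exponent of $x_f$'' does not substitute for this, since $x_f\sigma_f^{k-1}$ expands into several $x$-monomials with different $x_f$-exponents.
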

\begin{proof}
Notice that for $k\leq n_f$
\[ \DP((\tr_f^{k-1}(P^f))\setminus a, (\tr_f^{k-1}(\mG^f))\setminus a) = \DP((P\setminus a)^{f\setminus a}, (\mG \setminus a)^{f\setminus a}).\]
Using Lemmas \ref{lemma:iso_psi_g} and \ref{lemma:zeta_g} we obtain the isomorphism
\[ x_f \sigma_f^{k-1} \DP(P,\mG) [-k] \simeq \DP(\tr_f^{k-1}(P^f), \tr_f^{k-1}(\mG^f)) \otimes \DP(P_f, \mG_f).\]
It is easy to check that the above isomorphism restricts to the one in eq.\ \eqref{eq:ann_x_sigma}.
For the second claim suppose that there exists a linear combination
\[ \sum_{k=l}^{n_f} \sigma_f^{k-1}p_k =0, \]
for some $p_k \in \DP((P\setminus a)^{f\setminus a}, (\mG \setminus a)^{f\setminus a})$ with $p_l \neq 0$.
The above equality implies 
\[ \sum_{k=l}^{n_f} \sigma_f^{k-l}p_k =0\]
in $\DP(\tr_f^{l-1} P^f, \tr_f^{l-1} \mG^f)$. 
Therefore $p_l$ belongs to the ideal generated by $\sigma_f$ (where $f$ is the top element in the poset of flats of $\tr_f^{l-1} P^f$).
The ideal $(\sigma_f)$ is linearly generated by all monomials $\sigma_T^b$ with $f \in T$.
This yields a contradiction since $p_l$ lies in $\DP((P\setminus a)^{f\setminus a}, (\mG \setminus a)^{f\setminus a})$, which does not contain the generator $\sigma_f$.
\end{proof}

\begin{lemma}\label{lemma:DP_f_modulo_libero}
For all elements $f,g \in S_a$ such that $f \not \geq g$ we have $x_f\sigma_g=x_f \sigma_{g\setminus\{a\}}$.
Moreover, we have 
\[ \DP_f = \bigoplus_{k=1}^{n_f} x_f \sigma_f^{k-1} \DPi. \let\qedsymbol\openbox\qedhere \]
\end{lemma}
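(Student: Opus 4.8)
The plan is to establish the two displayed assertions in order, using the first one — and Lemma~\ref{lemma:x_f_sigma_f} — to obtain the second.

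\emph{The identity $x_f\sigma_g=x_f\sigma_{g\setminus\{a\}}$.} Since $g\in S_a$, the subset $g\setminus\{a\}$ is a flat and $g=(g\setminus\{a\})\vee\overline{a}$; as $g\setminus\{a\}\le g$ we may write
\[\sigma_{g\setminus\{a\}}-\sigma_g=\sum_{h\in\mG,\ h\ge g\setminus\{a\},\ h\not\ge g} x_h.\]
Every such $h$ satisfies $a\notin h$, for otherwise $h\ge(g\setminus\{a\})\vee\overline{a}=g$. Hence it suffices to prove $x_fx_h=0$ for every $h\in\mG$ with $a\notin h$ and $h\ge g\setminus\{a\}$. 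Such an $h$ is incomparable with $f$: we have $h\not\ge f$ since $a\in f$ but $a\notin h$, and $f\ge h$ would give $f\ge g\setminus\{a\}$ and $f\ge\overline{a}$, hence $f\ge g$, contrary to hypothesis. It remains to prove $f\vee h\in\mG$, for then $\{f,h\}$ is not $\mG$-nested and $x_fx_h=0$. Let $p$ and $q$ be the $\mG$-factors of $f\vee h$ lying above $f$ and above $h$ respectively (Proposition~\ref{prop:comb_nested_sets}(1)). If $p=q$ then $f\vee h=p\in\mG$. If $p\ne q$ then $p\wedge q=\hat 0$ and $f\vee h=p\vee q$; since $g\in\mG$ and $g\le p\vee q$, Proposition~\ref{prop:comb_nested_sets}(1) forces $g\le p$ or $g\le q$. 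In the first case $g\setminus\{a\}\le p\wedge q=\hat 0$, so $g=\overline{a}\le f$, contradicting $f\not\ge g$; in the second $\overline{a}\le g\le q$ while $\overline{a}\le f\le p$, so $\overline{a}\le p\wedge q=\hat 0$, which is absurd. Thus $f\vee h\in\mG$.

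\emph{The decomposition.} Put $t_f=\sum_{h\in\mG,\ h>f}x_h$, so $\sigma_f=x_f+t_f$, and recall $\DP_{f}=\sum_{k=1}^{n_f}x_f^k\DPi$ by definition. Everything rests on the claim $(\star)$: $x_ft_f=x_f\rho$ for some $\rho\in\DPi$. Granting $(\star)$, commutativity and induction on $j$ give $x_f^m t_f^j\DPi\subseteq x_f^m\DPi$ for all $m\ge1$ and $x_f\sigma_f^i t_f^j\DPi\subseteq x_f\sigma_f^i\DPi$ for all $i\ge0$ (at each step $x_f^{m}t_f^{j+1}p=(x_ft_f)\,x_f^{m-1}t_f^jp=x_f^mt_f^j(\rho p)$ with $\rho p\in\DPi$). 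Expanding $\sigma_f^{k-1}=(x_f+t_f)^{k-1}$ yields $x_f\sigma_f^{k-1}\DPi\subseteq\sum_{i=1}^{k}x_f^i\DPi$, which is contained in $\DP_{f}$ for $k\le n_f$; expanding $x_f^{k}=x_f(\sigma_f-t_f)^{k-1}$ yields $x_f^{k}\DPi\subseteq\sum_{i=0}^{k-1}x_f\sigma_f^i\DPi$. Hence $\DP_{f}=\sum_{k=1}^{n_f}x_f\sigma_f^{k-1}\DPi$, and since these $n_f$ summands are in direct sum by Lemma~\ref{lemma:x_f_sigma_f}, we conclude $\DP_{f}=\bigoplus_{k=1}^{n_f}x_f\sigma_f^{k-1}\DPi$.

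\emph{Proof of $(\star)$, and the main difficulty.} By M\"obius inversion over $\mG\subseteq L$ one has $t_f=-\sum_{h\in\mG,\ h>f}\mu_{\mG}(f,h)\sigma_h$, so it suffices to show $x_f\sigma_h\in x_f\DPi$ for every $h\in\mG$ with $h>f$. If $h\notin S_a$ then $\sigma_h$ is a standard monomial supported off $S_a$, hence $\sigma_h\in\DPi$. If $h\in S_a$, then $f<h$, so the first part applies with $g=h$ and gives $x_f\sigma_h=x_f\sigma_{h\setminus\{a\}}$; it then remains to produce $\rho_h\in\DPi$ with $x_f\sigma_{h\setminus\{a\}}=x_f\rho_h$. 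This is the delicate step: $\sigma_h$ itself is not in $\DPi$, and iterating the first part is circular, so one has to genuinely identify $x_f\sigma_{h\setminus\{a\}}$ with $x_f$ times the class pulled back along $\theta_a$ from the flat $h\setminus\{a\}$ of $P\setminus a$ --- using that all $\mG$-factors of $h\setminus\{a\}$ avoid $a$ (so lie in $\mG\setminus a$) together with the vanishing $x_fx_k=0$ for $k\in\mG$, $a\notin k$, $k\ge h\setminus\{a\}$, obtained in the first part in the spirit of Lemma~\ref{lemma_tecnico}. I expect this identification to be the main obstacle; everything else is the bookkeeping above.
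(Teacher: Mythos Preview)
Your route is the paper's route. For the identity $x_f\sigma_g=x_f\sigma_{g\setminus\{a\}}$, both arguments reduce to showing $x_fx_h=0$ for every $h\in\mG$ with $h\ge g\setminus\{a\}$ and $h\not\ge g$; you do this by analysing the $\mG$-factors of $f\vee h$, while the paper uses two applications of Proposition~\ref{prop:comb_nested_sets}(2): first $f\vee g\in\mG$ since $f\wedge g\ge\overline a>\hat0$, then $(f\vee g)\vee h\in\mG$ since $(f\vee g)\wedge h\ge g\setminus\{a\}>\hat0$, and finally $f\vee h=f\vee g\vee h$ because $a\le f$ and $g\setminus\{a\}\le h$. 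Both arguments are correct.

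For the decomposition you and the paper carry out exactly the same reduction: write $\sigma_f=x_f+\sum_{g>f}b_g\sigma_g$, split into $g\notin S_a$ (where $\sigma_g\in\DPi$ by the remark preceding the lemma) and $g\in S_a$ (where the first part gives $x_f\sigma_g=x_f\sigma_{g\setminus\{a\}}$), and then induct on the exponent. At the point you call ``the delicate step'' the paper simply asserts that ``all the summands (except $x_f^2$) belong to $x_f\DPi$'' --- it treats $\sigma_{g\setminus\{a\}}$ as an element of $\DPi$ without further comment. So you have located precisely the place where the paper is terse; there is no additional hidden idea in the paper's proof beyond this assertion, and your $(\star)$ plus Lemma~\ref{lemma:x_f_sigma_f} is exactly how the paper concludes.

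That said, your proposed resolution via Lemma~\ref{lemma_tecnico} and an explicit identification along $\theta_a$ is more elaborate than what the paper intends. The intended reading is simply that $g\setminus\{a\}$ is a flat with $a\notin g\setminus\{a\}$, so $g\setminus\{a\}\notin S_a$, and hence $\sigma_{g\setminus\{a\}}\in\DPi$ by the remark --- no recursion, no circularity. If in your setting $g\setminus\{a\}$ can fail to lie in $\mG$ (so that $\sigma_{g\setminus\{a\}}$ is not one of the named generators), then this one-line justification is indeed insufficient, and neither your outline nor the paper's assertion fills the gap; but that is a caveat about the paper's argument, not a divergence between your approach and theirs.
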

\begin{proof}
Consider $h \in \mG$ such that $h\geq g \setminus \{a\}$ and $h\not \geq g$, we need to prove that $x_fx_h=0$.
Notice that $\{ f,h\}$ is an antichain, $f\vee g \in \mG$ and so $(f \vee g) \vee h \in \mG$ because $g\setminus \{a\} \neq \hat{0}$.
Therefore 
\[f\vee h= (f \vee a) \vee ((g\setminus \{a\}) \vee h)=f \vee g  \vee h \in \mG,\]
$\{f,h \}$ is not $\mG$-nested, and $x_fx_h=0$.

For the second statement it is sufficient to prove that $x_f\sigma_f^{k-1} = x_f^k + z$ with some $z \in \sum_{j=1}^{k-1} x_f^j \DPi$.
Write $\sigma_f=x_f + \sum_{g>f} b_g \sigma_g$ for some coefficients $b_g \in \Z$, then
\[ x_f\sigma_f = x_f^2 + x_f\sum_{\substack{g>f \\ g \notin S_a}} b_g \sigma_g + x_f\sum_{\substack{g>f \\ g \in S_a}} b_g \sigma_{g\setminus \{a\}},\]
and all the summands (except $x_f^2$) belong to $x_f\DPi$.
An inductive argument on the exponent $k$ concludes the proof.
\end{proof}

\begin{lemma} \label{lemma:generates}
The submodules $\DPi$ and $\DP_f$ for all $f \in S_a$ generate $\DP(P,\mG)$.
\end{lemma}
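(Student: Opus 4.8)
The plan is to prove the equivalent statement $V=\DP(P,\mG)$, where $V:=\DPi+\sum_{f\in S_a}\DP_f$. Since $\DPi$ is a subalgebra and each $\DP_f$ is by construction a $\DPi$-submodule, $V$ is a $\DPi$-submodule of $\DP(P,\mG)$ that contains $1$ and is spanned, over $\DPi$, by $1$ together with the monomials $x_f^k$ for $f\in S_a$ and $1\le k\le n_f$. As $\DP(P,\mG)$ is generated as a $\Q$-algebra by $\{x_g\mid g\in\mG\}$, it is enough to verify that $x_gV\subseteq V$ for every $g$; given $1\in V$ this forces $V=\DP(P,\mG)$. Because $V$ is a $\DPi$-module, the inclusion $x_gV\subseteq V$ reduces to the two assertions $x_g\in V$ and $x_gx_f^k\in V$ for all $g\in\mG$, $f\in S_a$, $1\le k\le n_f$.

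For the first assertion I would use a downward induction on $g$ in $\mG$, distinguishing whether the current element lies in $S_a$: if $g\in S_a$ then $x_g=x_g^1\in\DP_g\subseteq V$ trivially, and if $g\notin S_a$ then $x_g=\sigma_g-\sum_{h>g}x_h$ together with the fact (from the remark following the definition of $\theta_a$, which gives $\theta_a(x_g)=x_g+x_{g\cup\{a\}}$) that $\sigma_g\in\DPi$ whenever $g\notin S_a$ shows $x_g\in\DPi+\sum_{h\in S_a}\Q\,x_h\subseteq V$. Hence every $x_g$ lies in $V$, and moreover every generator reduces, modulo $\DPi$, to a $\Q$-linear combination of the $x_f$ with $f\in S_a$.

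For the products $x_gx_f^k$ I would first recall, as in the proof of Lemma \ref{lemma:DP_f_modulo_libero}, that two distinct elements $f,g\in S_a$ both contain $\overline a$, so $f\wedge g>\hat 0$, whence by Proposition \ref{prop:comb_nested_sets}(2) either $f,g$ are comparable or $f\vee g\in\mG$ and $x_fx_g=0$. After replacing $x_g$ by its expression from the previous step, this reduces $x_gx_f^k$ to: products involving an incomparable $S_a$-element, which vanish; products $x_f^k\cdot(\text{element of }\DPi)$, which lie in $\DP_f$; chains of comparable $S_a$-elements, which by the substitution $\sigma_f=x_f+\sum_{h>f}\mu_{\mG}(f,h)\sigma_h$ combined with the identity $x_f\sigma_h=x_f\sigma_{h\setminus\{a\}}$ of Lemma \ref{lemma:DP_f_modulo_libero} collapse onto powers of the largest element present; and finally powers $x_f^{e}$, which must be shown to lie in $\DPi+\DP_f$ for every $e\ge1$. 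The latter is the crux: one needs $x_f^{\,n_f+1}\DPi=0$, which I would deduce from the relation of type (ii) at $f$ and the isomorphism of Lemma \ref{lemma:x_f_sigma_f}, since that lemma identifies $x_f\sigma_f^{k-1}\DPi$ with $\DP((P\setminus a)^{f\setminus a},(\mG\setminus a)^{f\setminus a})\otimes\DP(P_f,\mG_f)$ precisely in the range $1\le k\le n_f=\cd(f)-\cd(f\setminus\{a\})-1+\lvert F(P,\mG,f\setminus a)\rvert$ and so pins down the length of the $x_f$-tower. The main obstacle is exactly this bookkeeping: after expanding an arbitrary product of generators and repeatedly applying the relations of Theorem \ref{thm:new_gen}, one must check that no power $x_f^{e}$ with $e>n_f$ and no "mixed" monomial in two incomparable elements of $S_a$ survives, so that the expansion genuinely closes up inside $\DPi+\sum_{f\in S_a}\DP_f$.
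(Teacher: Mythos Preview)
Your overall strategy---proving that $V=\DPi+\sum_{f\in S_a}\DP_f$ is stable under multiplication by every $x_g$---is a legitimate route, different from the paper's, which instead runs a direct induction over the standard $\sigma$-monomials $\sigma_S^b$, ordered first by $f=\min(S\cap S_a)$ and then by the exponent $b(f)$.

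The genuine gap is the statement you isolate as the crux: ``one needs $x_f^{\,n_f+1}\DPi=0$''. This is simply false, and neither the type-(ii) relation at $f$ nor Lemma~\ref{lemma:x_f_sigma_f} yields it. In the paper's own example (\Cref{sect:example}) with the minimal building set one has $S_a=\{a\}$ and $n_a=1$; from $\sigma_a^2=(x_a+x_{\hat 1})^2=0$ one computes $x_a^{2}=-2x_a x_{\hat 1}-x_{\hat 1}^{2}$, which is a nonzero element of $\DP_a+\DPi$. So $x_f^{\,n_f+1}$ does not vanish---it only lies in $V$. Your proposed deduction cannot succeed because Lemma~\ref{lemma:x_f_sigma_f} controls $x_f\sigma_f^{k-1}\DPi$ only for $k\le n_f$ and says nothing about annihilation beyond that range. (A smaller slip: for comparable $f<g$ in $S_a$, Lemma~\ref{lemma:DP_f_modulo_libero} gives $x_f\sigma_g=x_f\sigma_{g\setminus\{a\}}\in x_f\DPi$, so such products collapse into $\DP_f$, the module of the \emph{smallest} element present, not the largest.)

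What you actually need is $x_f^{\,n_f+1}\in V$, and for this the decisive input---missing from your outline---is the relation
\[
\sigma_f^{\cd(f)-\cd(f\setminus\{a\})}\prod_{g_i\in F(P,\mG,f\setminus\{a\})}(\sigma_{g_i}-\sigma_f)=0,
\]
whose top term is $\pm\sigma_f^{\,n_f+1}$ and whose remaining terms involve only lower powers of $\sigma_f$ multiplied by $\sigma_{g_i}$ with $g_i\le f\setminus\{a\}$, hence $g_i\notin S_a$ and $\sigma_{g_i}\in\DPi$. This is exactly how the paper reduces the exponent when $b(f)>n_f$; combined with the splitting $\sigma_f=x_f+(\sigma_f-x_f)$ for $b(f)\le n_f$ it closes the induction. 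If you insert this relation in place of the vanishing claim, your ideal-stability argument can be made to work, but at that point it is essentially a reorganisation of the paper's proof.
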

\begin{proof}
We prove that each monomial $\sigma_S^b$ belongs to the submodule $M := \DPi + \sum_{F \in S_a} \DP_F$ by complete induction on $f=\min (S \cap S_a)$ and on $b(f)$.

The base case is $S \cap S_a= \emptyset$ and so $\sigma_S^b \in \DPi$.
For the inductive step notice that $S \cap S_a$ is $\mG$-nested, so it is a chain.
Call $f=\min (S \cap S_a)$ and suppose that all monomials $\sigma_S^{b'}$ with $b'(f)< b(f)$ and all monomials $\sigma_{S'}^{b'}$ with $\min(S'\cap S_a)>f$ belong to $M$.

Let $\{g_1, \dots, g_l\}=F(P,\mG,f\setminus \{a\})$ be the set of $\mG$-factors of $f \setminus \{a\}$.
The relation
\[ \sigma_f^{\cd(f)-\cd(f\setminus\{a\})} \prod_{i=1}^k( \sigma_{g_i} - \sigma_{f})=0,\]
holds and in the case $b(f)>n_f$ we can rewrite $\sigma_f^{b(f)}\sigma_{T\setminus \{f\}}$ as sum of monomials with $b'(f)< b(f)$ using the above relation and the fact that $g_i \not \in S_a$. 

In the case $b(f)\leq n_f$ we have
\[\sigma_S^b = x_f \sigma_f^{b(f)-1} \sigma_{S\setminus \{f\}}^b + (\sigma_f - x_f) \sigma_f^{b(f)-1} \sigma_{S\setminus \{f\}}^b. \]

Using the first assertion of Lemma \ref{lemma:DP_f_modulo_libero}, it follows that the element $x_f \sigma_f^{b(f)-1} \sigma_{S\setminus \{f\}}^b$ belongs to $x_f \sigma_f^{b(f)-1} \DPi \subset M$.
The second summand $(\sigma_f - x_f) \sigma_f^{b(f)-1} \sigma_{S\setminus \{f\}}^b$ is a linear combination of monomials $\sigma_h\sigma_f^{b(f)-1} \sigma_{S\setminus \{f\}}^b$ with $h>f$ and so belongs to $M$ by the inductive hypothesis.
\end{proof}

\begin{proof}[Proof of Theorem \ref{thm:Lef_dec}]
As in Remark \ref{rem:top_in_G}, we may assume that $\hat{1}\in \mG$.
By Lemmas \ref{lemma:DP_f_modulo_libero} and \ref{lemma:x_f_sigma_f}, $\DP_f$ is a free $\DP((P\setminus a)^{f\setminus a}, (\mG \setminus a)^{f\setminus a}) \otimes \DP(P_f, \mG_f)$-module with basis $x_f\sigma_f^{k-1}$ for $k=1, \dots, n_f$.
The elements $\{x_f^{k}\}_k$ written in the basis  $\{x_f\sigma_f^{k-1}\}_k$ form an upper triangular matrix with ones on the diagonal (the inverse of the one given in Lemma \ref{lemma:DP_f_modulo_libero}). 
Eq.\ \eqref{eq:struttura_x_f} and eq.\ \eqref{eq:struttura_DP_f} follow.

In order to prove eq.\ \eqref{eq:orthogonal decomposition} we first prove the orthogonality.
Let $f\neq \hat{1}$; the elements $\DP_f$ and $\DPi$ are orthogonal because the product is contained in $\DP_f$ which is  zero in degree $\cd(\hat{1})-1$.
Indeed from eq.\ \eqref{eq:struttura_x_f} and eq.\ \eqref{eq:struttura_DP_f}, it follows that the top degree of $\DP_f$ is $\cd(\hat{1})-2$.

Consider generic elements $x_f^by \in \DP_f$ and $x_g^c z \in \DP_g$ in complementary degrees (with $y,z \in \DPi$).
The product is zero if $f$ and $g$ are incomparable.
Otherwise, by symmetry we may assume $g>f$, hence
\[ x_fx_g^c = x_f(x_g+x_{g\setminus \{a\}})^c.\]
Since $x_g+x_{g\setminus \{a\}} \in \DPi$, we obtain that the product lie in $\DP_f$.
Again the top degree is zero since $f \neq \hat{1}$.

We prove that if $a$ is a coloop then $\DPi \cap \DP_{\hat{1}}=0$.
In that case $\DP_{\hat{1}}$ is the ideal generated by $\sigma_{\hat{1}}$. 
This ideal is linearly generated by all standard monomials $\sigma_S^b$ with $\hat{1} \in S$.
Since $\hat{1}\in S_a$ then $\DPi \cap \DP_{\hat{1}}=0$.
The direct sum of eq.\ \eqref{eq:orthogonal decomposition} follows from the orthogonality of all other summands together with Lemma \ref{lemma:generates} and Theorem \ref{thm:Poinc_duality}.
\end{proof}

\section{Characteristic polynomial} \label{sect:char_poly}
In this section we study the coefficients of the (reduced) characteristic polynomial of a polymatroid. 

We consider only maximal building sets, so we omit the building set from the notations.
Moreover we suppose that the polymatroid is without loops, i.e.\ $\cd(\{e\})>0$ for all $e \in E$.

Let $\alpha= \alpha_P = -x_{\hat{1}}$ and $\beta=\beta_P= \sum_{g \in \mG_{\max}} x_g$ be two elements in $\DP^1(P)$.
We denote by $\mu_L(a,b)$ the M\"obius function of $L$.

\begin{lemma} \label{lemma:rec_relation}
For any polymatroid $P$ with $\cd(E)>0$ and $r=\cd(E)-1$ we have
\[ \deg (\beta_P^r)= (-1)^r + \sum_{g \in L \setminus \{\hat{0},\hat{1}\}} (-1)^{\cd(g)-1} \deg(\beta_{P_g}^{r-\cd(g)}). \let\qedsymbol\openbox\qedhere\]
\end{lemma}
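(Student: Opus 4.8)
The plan is to argue by induction on $\cd(\hat{1})$; the base case $\cd(\hat{1})=1$ (so $r=0$, $\DP(P)=\Q$, and the sum on the right is empty) is immediate. By \Cref{rem:top_in_G} we may assume $\hat{1}\in\mG$, so that $\DP^{r}(P)$ is one-dimensional and spanned by $x_{\hat{1}}^{r}$ (\Cref{thm:Poinc_duality}). Two preliminary moves set things up: Möbius inversion of the relations $\sigma_g=\sum_{h\ge g}x_h$ rewrites $\beta_P=-\sum_{h\in L\setminus\{\hat{0}\}}\mu_L(\hat{0},h)\,\sigma_h$; and, since $\DP^{r}(P)$ is one-dimensional, $\deg(\beta_P^{r})$ can be computed by splitting $\beta_P=x_{\hat{1}}+\sum_{\hat{0}<g<\hat{1}}x_g$ and evaluating $\deg$ on each summand of $\beta_P\cdot\beta_P^{r-1}$.

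For a summand $x_g\beta_P^{r-1}$ with $g<\hat{1}$: from the relation $x_g(\sigma_h-\sigma_{h\vee g})=0$, valid whenever $h\not\le g$ (a direct consequence of the nestedness relations), the class of $\beta_P$ in $\DP(P)/\Ann(x_g)$ collapses to $-\sum_{\hat{0}<h\le g}\mu_L(\hat{0},h)\sigma_h$, because the coefficient of each $\sigma_k$ with $k>g$ is $\sum_{h:\,h\vee g=k}\mu_L(\hat{0},h)$, which vanishes by the Weisner--Rota crosscut identity applied in the interval $[\hat{0},k]$. Under the isomorphism $\psi_g\colon\DP(P^g)\otimes\DP(P_g)\to\DP(P)/\Ann(x_g)$ of \Cref{lemma:iso_psi_g} this class is exactly $\psi_g(\beta_{P^g}\otimes 1)$, hence $\beta_P^{r-1}\equiv\psi_g(\beta_{P^g}^{r-1}\otimes 1)$. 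Since $\DP(P^g)$ has top degree $\cd(g)-1\le r-1$, the term is zero unless $\cd(g)=r$, in which case $\beta_{P^g}^{r-1}$ and $1\in\DP(P_g)$ are both of top degree, $\deg_{P_g}(1)=1$, and \Cref{lemma:iso_psi_g} yields $\deg_P(x_g\beta_P^{r-1})=\deg_{P^g}(\beta_{P^g}^{r-1})$.

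For the remaining summand $x_{\hat{1}}\beta_P^{r-1}=\sigma_{\hat{1}}\beta_P^{r-1}$: the truncation isomorphism $\zeta_{\hat{1}}\colon\DP(\tr_{\hat{1}}P)\to\DP(P)/\Ann(\sigma_{\hat{1}})$ of \Cref{lemma:zeta_g} (legitimate since the base case handles $\cd(\hat1)=1$, and $\tr_{\hat1}P$ is again a polymatroid with building set by \Cref{lemma:tr_p_is_polymatroid}) satisfies $\deg_P(\sigma_{\hat{1}}\zeta_{\hat{1}}(\alpha))=-\deg_{\tr_{\hat{1}}P}(\alpha)$; combined with a Möbius computation identifying $\beta_P\equiv\zeta_{\hat{1}}(\beta_{\tr_{\hat{1}}P})$ modulo $\Ann(\sigma_{\hat{1}})$ — where one uses that in $\tr_{\hat{1}}P$ all flats of codimension $\ge r$ are identified, together with $\sum_{\cd(h)\ge r}\mu_L(\hat{0},h)=-\sum_{\cd(h)\le r-1}\mu_L(\hat{0},h)$ — this gives $\deg_P(x_{\hat{1}}\beta_P^{r-1})=-\deg_{\tr_{\hat{1}}P}(\beta_{\tr_{\hat{1}}P}^{r-1})$.

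Finally one assembles $\deg(\beta_P^r)=\deg_P(x_{\hat{1}}\beta_P^{r-1})+\sum_{g<\hat1,\,\cd(g)=r}\deg_{P^g}(\beta_{P^g}^{r-1})$: apply the inductive hypothesis to $\tr_{\hat{1}}P$ and to each $P^g$ with $\cd(g)=r$ (all of strictly smaller $\cd(\hat{1})$), then expand the degrees thus produced once more by the analogous splitting and truncation identities applied to each contraction $P_g$ (again valid since $\cd_{P_g}(\hat{1})<\cd(\hat{1})$), and use the compatibilities $(\tr_{\hat{1}}P)_g=\tr_{\hat{1}}(P_g)$ and $(P^{g})_{g'}=(P_{g'})^{g}$. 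A term-by-term comparison then shows the total equals $(-1)^r+\sum_{\hat{0}<g<\hat{1}}(-1)^{\cd(g)-1}\deg(\beta_{P_g}^{r-\cd(g)})$: the codimension-$r$ flats supply the constants ($\deg_{P_g}(\beta_{P_g}^{0})=1$), the truncation step supplies the $(-1)^r$ and the $x_{\hat1}$-tails of each $P_g$, and the restriction steps supply the double-sum tails over pairs $g'<g''$. The main obstacle is precisely this last bookkeeping — organizing the several auxiliary polymatroids $P^g$, $P_g$, $\tr_{\hat{1}}P$, $\tr_{\hat{1}}(P_g)$, $(P^g)_{g'}$ and checking that the two resulting double sums coincide — while the Weisner cancellation and the Möbius identity in the truncation step are the other points needing care.
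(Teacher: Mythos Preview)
Your argument is correct but takes a genuinely different route from the paper. The paper expands $\beta_P^r=(\sum_g x_g)^r$ directly as a sum over flags with repetition (using that $x_fx_g=0$ for incomparable $f,g$ in the maximal building set), groups by the minimum element $F$ of each flag, pushes each group through $\psi_F$, and closes with the binomial identity $\sum_{k=\cd(g)}^{r}(-1)^k\binom{r}{k}\binom{k-1}{\cd(g)-1}=(-1)^{\cd(g)}$. No induction and no M\"obius/Weisner input are needed.

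Your route instead peels off a single factor of $\beta_P$ and proves a ``splitting identity'' $\deg(\beta_P^r)=-\deg_{\tr_{\hat1}P}(\beta^{r-1})+\sum_{\cd(g)=r}\deg_{P^g}(\beta^{r-1})$, using Weisner's theorem to see that $\beta_P\equiv\psi_g(\beta_{P^g}\otimes 1)\bmod\Ann(x_g)$ and a M\"obius computation for $\beta_P\equiv\zeta_{\hat1}(\beta_{\tr_{\hat1}P})\bmod\Ann(\sigma_{\hat1})$. You then combine this identity with the inductive hypothesis and with the same splitting identity applied to each $P_g$. The compatibilities $(\tr_{\hat1}P)_{g}=\tr_{\hat1}(P_g)$ and $(P^h)_{g}=(P_g)^{h}$ do make the two resulting double sums match term by term (I checked: the $(-1)^r$ comes from $-(-1)^{r-1}$, the flats with $\cd(g)=r$ contribute $(-1)^{r-1}$ on both sides since $\deg_{P_g}(1)=1$, and swapping the order of summation in the pair $(g',h)$ with $g'<h$, $\cd(h)=r$ aligns the remaining terms). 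What your approach buys is a transparent degree-one reduction and a nice link to Weisner's theorem; what it costs is the auxiliary splitting identity and a bookkeeping step that you have only sketched---that final ``term-by-term comparison'' deserves to be written out, since it is where the argument is most easily lost. The paper's flag expansion avoids all of this by working globally and trading the induction for a single binomial identity.
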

\begin{proof}
A flag with repetition is $\mathcal{F}=(F_1^{a_1} \subsetneq F_2^{a_2} \subsetneq \dots \subsetneq F_l^{a_l})$ where $a_i >0$ are the multiplicity of the flats $F_i \in L$.
We also require that $\sum_{i=1}^l a_i=r$.
Define $x_{\mathcal{F}}=\prod_{i=1}^{\lvert \mathcal{F} \rvert} x_{F_i}^{a_i}$, we will prove that $x_{\mathcal{F}}=0$ if $\cd(F_1)>a_1$.
More generally we have $x_{\mathcal{F}}=0$ if $\cd(F_i)> \sum_{j=1}^i a_j$ for some $i$, but we prove and use the implication only for $i=1$.
From the isomorphism $\psi_g$ of Lemma \ref{lemma:iso_psi_g} we obtain
\[ x_{\mathcal{F}}= x_{F_1}\psi_{F_1}((x_{F_1} \otimes 1 - 1 \otimes \beta_{P_g})^{a_1-1}(1 \otimes x_{\mathcal{F}'})), \]
where $\mathcal{F}'=(F_2^{a_2} \subsetneq \dots \subsetneq F_l^{a_l})$.
Notice that the degree of $x_{\mathcal{F}'}$ is $r-a_1$, which is greater than $r-\cd(F_1)$, the top degree of $\DP(P_g)$.  

Let $\binom{r}{a}$ be the multinomial coefficient where $a=(a_1,\dots, a_l)$ and $\sum_{i=1}^l a_i=r$.
Since $x_fx_g=0$ if $f$ and $g$ are incomparable, we have
\begin{align*}
    \beta_P^r &= \sum_{\mathcal{F} \textnormal{ flag of } P} \binom{r}{a} x_{\mathcal{F}} \\
    &= \sum_{F \in L \setminus \{\hat{0}\}}  \sum_{\substack{\mathcal{F} \textnormal{ flag of } P \\ F_1=F}} \binom{r}{a} x_{\mathcal{F}} \\
    &= \sum_{F \in L \setminus \{\hat{0}\}}  \sum_{k =\cd(F)}^{r} \binom{r}{k} x_F^k \sum_{\mathcal{F}' \textnormal{ flag of } P_F} \binom{r-k}{a'} x_{\mathcal{F}'} \\
    &= \sum_{F \in L \setminus \{\hat{0}\}}  \sum_{k =\cd(F)}^{r} \binom{r}{k} x_F^k \beta_{P_F}^{r-k}.
\end{align*}
The summand relative to $F=\hat{1}$ is exactly $x_{\hat{1}}^r$ and contributes $(-1)^r$ to $\deg(\beta_P^{r})$.
It is enough to prove
\[ \deg \Bigl( \sum_{k =\cd(g)}^{r} \binom{r}{k} x_g^k \beta_{P_g}^{r-k} \Bigr) = \deg (\beta_{P_g}^{r-\cd(g)}) \]
for every $g \in L \setminus \{\hat{0},\hat{1}\}$.
We use Lemma \ref{lemma:iso_psi_g} to obtain:
\begin{align*}
    \deg &\Bigl( \sum_{k =\cd(g)}^{r} \binom{r}{k} x_g^k \beta_{P_g}^{r-k} \Bigr)= \\
    &= \sum_{k =\cd(g)}^{r} \binom{r}{k} \deg ((x_g \otimes 1 - 1 \otimes \beta_{P_g})^{k-1}(1 \otimes \beta_{P_g}^{r-k})) \\
    &= \sum_{k =\cd(g)}^{r} (-1)^{k-\cd(g)} \binom{r}{k} \binom{k-1}{\cd(g)-1} \deg (x_g^{\cd(g)-1} \otimes \beta_{P_g}^{r-\cd(g)}) \\
    &= \sum_{k =\cd(g)}^{r} (-1)^{k-1} \binom{r}{k} \binom{k-1}{\cd(g)-1} \deg (\beta_{P_g}^{r-\cd(g)}) \\
    &= (-1)^{\cd(g)-1}\deg (\beta_{P_g}^{r-\cd(g)}),
\end{align*}
where in the last equality we used the identity
\[\sum_{k =\cd(g)}^{r} (-1)^{k} \binom{r}{k} \binom{k-1}{\cd(g)-1} = (-1)^{\cd(g)} \]
which follows from \cite[eq.\ 5.24]{CM} with $l=r$, $m=0$, $n=\cd(g)-1$, and $s=-1$.
\end{proof}

\begin{lemma} \label{lemma:deg_beta}
For every polymatroid $P$ with poset of flats $L$ and $r=\cd(E)-1$ with $\cd(E)>0$ we have 
\[ \deg (\beta_P^{r}) = (-1)^{\cd(E)} \mu_L (\hat{0},\hat{1}). \let\qedsymbol\openbox\qedhere \]
\end{lemma}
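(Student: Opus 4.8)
The plan is to prove the identity $\deg(\beta_P^r) = (-1)^{\cd(E)}\mu_L(\hat 0,\hat 1)$ by induction on $\cd(E)$, using Lemma \ref{lemma:rec_relation} to reduce to smaller polymatroids and the defining recursion of the Möbius function to close the loop. The base case is $\cd(E)=1$, i.e.\ $r=0$: then $\DP(P)=\Q$, $\beta_P^0=1$, $\deg(\beta_P^0)=1$, while $\mu_L(\hat 0,\hat 1) = -1$ because $L=\{\hat 0,\hat 1\}$ is a two-element chain, so $(-1)^{\cd(E)}\mu_L(\hat 0,\hat 1) = (-1)^1(-1)=1$. (One should also note that $L$ may have $\hat 0=\hat 1$ only when $\cd(E)=0$, which is excluded, and that "without loops" guarantees every atom of $E$ contributes, though what really matters is that $P_g$ for $g\in L\setminus\{\hat 0,\hat 1\}$ still has $\cd(E_g)=\cd(\hat 1)-\cd(g)>0$.)

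\textbf{Inductive step.} Assume the formula holds for all polymatroids of smaller top codimension; in particular it holds for each contraction $P_g$ with $g\in L\setminus\{\hat 0,\hat 1\}$, whose poset of flats is the interval $L_g=[g,\hat 1]$ and whose top codimension is $\cd(\hat 1)-\cd(g) = r-\cd(g)+1 > 0$. Thus by the inductive hypothesis
\[ \deg(\beta_{P_g}^{\,r-\cd(g)}) = (-1)^{\cd(\hat 1)-\cd(g)}\,\mu_{L_g}(g,\hat 1) = (-1)^{\,r-\cd(g)+1}\,\mu_L(g,\hat 1), \]
since $\mu_{L_g}(g,\hat 1)=\mu_L(g,\hat 1)$ as $L_g$ is an interval of $L$. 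Substituting into Lemma \ref{lemma:rec_relation} gives
\begin{align*}
\deg(\beta_P^r) &= (-1)^r + \sum_{g\in L\setminus\{\hat 0,\hat 1\}} (-1)^{\cd(g)-1}\,(-1)^{\,r-\cd(g)+1}\,\mu_L(g,\hat 1) \\
&= (-1)^r + (-1)^r\sum_{g\in L\setminus\{\hat 0,\hat 1\}} \mu_L(g,\hat 1) \\
&= (-1)^r\Bigl(1 + \sum_{\hat 0 < g < \hat 1}\mu_L(g,\hat 1)\Bigr).
\end{align*}

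\textbf{Closing the recursion.} The defining property of the Möbius function is $\sum_{g\ge \hat 0}\mu_L(g,\hat 1) = 0$ (since $\hat 0 \ne \hat 1$), i.e.\ $\mu_L(\hat 0,\hat 1) + \sum_{\hat 0 < g < \hat 1}\mu_L(g,\hat 1) + \mu_L(\hat 1,\hat 1) = 0$, so $1 + \sum_{\hat 0 < g < \hat 1}\mu_L(g,\hat 1) = -\mu_L(\hat 0,\hat 1)$. Hence
\[ \deg(\beta_P^r) = (-1)^r\bigl(-\mu_L(\hat 0,\hat 1)\bigr) = (-1)^{r+1}\mu_L(\hat 0,\hat 1) = (-1)^{\cd(E)}\mu_L(\hat 0,\hat 1), \]
using $r = \cd(E)-1$. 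This completes the induction.

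\textbf{The main obstacle} I anticipate is purely bookkeeping: one must be careful that Lemma \ref{lemma:rec_relation} is applicable to $P$ (it requires $\cd(E)>0$, which holds), that each $P_g$ appearing in the sum genuinely has $\cd(E_g)>0$ so the inductive hypothesis applies (true exactly because $g<\hat 1$), and that the identification $\mu_{L_g}=\mu_L|_{[g,\hat 1]}$ is legitimate — all of which are standard. No serious difficulty remains; the substance has been done in Lemma \ref{lemma:rec_relation}, and this lemma is just its combination with the Möbius recursion. A one-line alternative phrasing: Lemma \ref{lemma:rec_relation} says the function $P\mapsto (-1)^{\cd(E)-1}\deg(\beta_P^{\cd(E)-1})$ satisfies the same recursion (over the interval $[\hat 0,\hat 1]$ of $L$, grouped by the bottom flat of a flag) as $g\mapsto \mu_L(\hat 0,g)$ evaluated at $g=\hat 1$, with matching initial condition, so the two agree.
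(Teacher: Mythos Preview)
Your proof is correct and follows essentially the same approach as the paper: both combine the recursion of Lemma~\ref{lemma:rec_relation} with the (dual) defining recursion of the M\"obius function to run an induction on $\cd(E)$. The only cosmetic difference is that the paper phrases the argument by passing to the opposite lattice $L^{\op}$ (using $\mu_L(\hat 0,\hat 1)=\mu_{L^{\op}}(\hat 0,\hat 1)$ via the order complex) so that the recursion for $\mu_{L^{\op}}$ matches Lemma~\ref{lemma:rec_relation} on the nose, whereas you invoke the identity $\sum_{g\geq \hat 0}\mu_L(g,\hat 1)=0$ directly; these are the same thing.
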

\begin{proof}
It is known that $\mu_L (\hat{0},\hat{1}) = \tilde{\chi}(\Delta(\hat{0},\hat{1}))$, i.e.\ the M\"obius function coincides with the reduced Euler characteristic of the order complex of the poset $L \setminus \{\hat{0},\hat{1}\}$ (e.g.\ see \cite{Rota}).
Let $L^{\op}$ be the opposite (dual) lattice of $L$ which is defined on the same set of $L$ but with reversed order, i.e., $x \leq y$ in $L^{\op}$ if and only if $y \leq x$ in $L$.
Since the order complexes of $L$ and $L^{\op}$ are the same simplicial complex, we have
\[ \mu_L (\hat{0},\hat{1})= \mu_{L^{\op}} (\hat{0},\hat{1}).\]
Define $\deg(\beta_P^0)=1$ for rank zero polymatroids $P$.
Therefore, the functions $(-1)^{\cd(E)}\deg (\beta_P^{r})$ and $\mu_{L^{\op}} (\hat{0},\hat{1})$ satisfy the same recurrence relation.
One is given by the definition of $\mu_{L^{\op}} (\hat{0},\hat{1})$ and the other by Lemma \ref{lemma:rec_relation}.
This concludes the proof.
\end{proof}

Notice that if $L$ is a geometric lattice (i.e.\ the poset of flats of a matroid), then the M\"obius function has alternating sign, hence in this case $\deg(\beta^r)\in \N_0$.

\begin{definition}
The \textit{characteristic polynomial} of a polymatroid $P$ is
\[ \chi_P(\lambda)= \sum_{g \in L} \mu_L(\hat{0},g) \lambda^{\dim(g)},\]
where $\dim (g)= \cd(\hat{1})- \cd(g)$.
Since $\chi_P(1)=0$ by the definition of M\"obius function, we define the \textit{reduced characteristic polynomial} as 
\[ \overline{\chi}_P(\lambda)= \frac{\chi_P(\lambda)}{\lambda-1}. \let\qedsymbol\openbox\qedhere\]
\end{definition}

This definition of reduced characteristic polynomial coincides with the one in the introduction eq.\ \eqref{eq:def_chi_intro}, as stated in \cite{Whittle}.

\begin{theorem} \label{lemma:chi_deg}
For every polymatroid $P$, we have
\[ \overline{\chi}_P(\lambda) = \sum_{i=0}^{r} (-1)^{i} \deg_P (\alpha_P^i \beta_P^{r-i}) \lambda^i\]
where $r=\cd(E)-1$.
\end{theorem}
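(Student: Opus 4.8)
The plan is to prove the identity by induction on $\cd(E)$, exhibiting a common recursion for the two sides. Since $\alpha_P=-x_{\hat 1}$, one has $(-1)^i\deg_P(\alpha_P^i\beta_P^{r-i})=\deg_P(x_{\hat 1}^i\beta_P^{r-i})$, so the right-hand side equals $\Phi_P(\lambda):=\sum_{i=0}^r\deg_P(x_{\hat 1}^i\beta_P^{r-i})\lambda^i$; the goal is to match $\Phi_P$ with $\overline\chi_P$ (up to the sign fixed by the normalization of $\deg$). The base case is $\cd(\hat 1)=1$: then $r=0$, $\DP(P)=\Q$, $\Phi_P(\lambda)=\deg_P(1)=1$, while $\chi_P(\lambda)=\lambda-1$ gives $\overline\chi_P(\lambda)=1$.

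For the inductive step ($\cd(\hat 1)>1$) I would compare the behaviour of both sides under the truncation $\tr_{\hat 1}P$ of Lemma \ref{lemma:tr_p_is_polymatroid}. Truncating at $\hat 1$ leaves the poset of flats and the maximal building set unchanged and only lowers $\cd(\hat 1)$ by one; hence $\mu_{\tr_{\hat 1}L}=\mu_L$, $\dim_{\tr_{\hat 1}P}(\hat 1)=0$ and $\dim_{\tr_{\hat 1}P}(g)=\dim_P(g)-1$ for $g\ne\hat 1$, which yields the combinatorial recursion
\[\chi_P(\lambda)=\lambda\,\chi_{\tr_{\hat 1}P}(\lambda)-(\lambda-1)\,\mu_L(\hat 0,\hat 1),\qquad\text{i.e.}\qquad\overline\chi_P(\lambda)=\lambda\,\overline\chi_{\tr_{\hat 1}P}(\lambda)-\mu_L(\hat 0,\hat 1).\]
On the Chow-ring side, Lemma \ref{lemma:zeta_g} provides an isomorphism $\zeta_{\hat 1}\colon\DP(\tr_{\hat 1}P)\to\DP(P)/\Ann(\sigma_{\hat 1})$ that fixes every $\sigma_g$ — hence fixes $\beta$ and $x_{\hat 1}=\sigma_{\hat 1}$ — and satisfies $\deg_{\tr_{\hat 1}P}(\gamma)=-\deg_P(x_{\hat 1}\,\zeta_{\hat 1}(\gamma))$. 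Taking $\gamma=x_{\hat 1}^{\,i-1}\beta_{\tr_{\hat 1}P}^{\,r-i}$ (which sits in the top degree $r-1$ of $\DP(\tr_{\hat 1}P)$) gives $\deg_P(x_{\hat 1}^i\beta_P^{r-i})=-\deg_{\tr_{\hat 1}P}(x_{\hat 1}^{\,i-1}\beta_{\tr_{\hat 1}P}^{\,r-i})$ for $i\ge 1$, i.e.\ $[\lambda^i]\Phi_P=-[\lambda^{i-1}]\Phi_{\tr_{\hat 1}P}$; combined with $[\lambda^0]\Phi_P=\deg_P(\beta_P^r)=(-1)^{\cd(E)}\mu_L(\hat 0,\hat 1)$ from Lemma \ref{lemma:deg_beta}, this is the recursion
\[\Phi_P(\lambda)=(-1)^{\cd(E)}\mu_L(\hat 0,\hat 1)-\lambda\,\Phi_{\tr_{\hat 1}P}(\lambda).\]
Since $\tr_{\hat 1}P$ has $\cd(E)$ one smaller, feeding the inductive hypothesis for it into these two recursions forces $\Phi_P$ and $\overline\chi_P$ to coincide (the parity factor between $\Phi_{\tr_{\hat 1}P}$ and $\overline\chi_{\tr_{\hat 1}P}$ and the sign $(-1)^{\cd(E)}$ versus $-1$ in front of the Möbius term cancel), which closes the induction. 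Here one also checks that $\tr_{\hat 1}P$ is again loopless with maximal building set whenever $\cd(\hat 1)>1$, so the induction stays inside the setting of this section.

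An alternative route, closer to the one prepared by Lemma \ref{lemma:rec_relation}, is to expand $\beta_P^{r-i}$ over flags of $L$ with multiplicities, evaluate $\deg_P(x_{\hat 1}^i x_F^k\beta_{P_F}^{\,\cdot})$ through the isomorphisms $\psi_F$ of Lemma \ref{lemma:iso_psi_g} (exactly as in the proof of Lemma \ref{lemma:rec_relation}), and collect by the smallest flat $F$ occurring; this rewrites $\Phi_P(\lambda)$ as the $F=\hat 1$ contribution $(-1)^r\lambda^r$ plus a sum over flats $g\ne\hat 0,\hat 1$ of terms $(-1)^{\cd(g)-1}\lambda^{\dim g}\Phi_{P_g}(\lambda)$, whose $\overline\chi_P$ counterpart is the Möbius recursion built from $\sum_{g\le x}\mu_L(\hat 0,g)=\delta_{\hat 0,x}$ together with the identity $\dim_{P_g}=\dim_P$ on $[g,\hat 1]$. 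In either approach I expect the main obstacle to be precisely this bookkeeping: controlling the signs and the binomial/multinomial identities (such as the one cited from \cite[eq.~5.24]{CM} in Lemma \ref{lemma:rec_relation}) so that the coefficient in front of each $\Phi_{P_g}(\lambda)$, resp.\ of $\lambda\,\Phi_{\tr_{\hat 1}P}(\lambda)$, comes out exactly as prescribed by the Möbius recursion.
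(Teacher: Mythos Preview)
Your approach is exactly the paper's: both sides are shown to satisfy a recursion under truncation at $\hat 1$, using Lemma~\ref{lemma:zeta_g} for the Chow-ring side and Lemma~\ref{lemma:deg_beta} for the constant term, with base case $\cd(\hat 1)=1$. One correction, though: truncating at $\hat 1$ does \emph{not} leave the poset of flats unchanged. Any coatom $c$ of $L$ (a flat with $\cd(c)=\cd(\hat 1)-1$) acquires the same codimension as $\hat 1$ in $\tr_{\hat 1}P$ and therefore closes up to $\hat 1$; thus $\tr_{\hat 1}L$ is $L$ with all coatoms removed, and $\mu_{\tr_{\hat 1}L}(\hat 0,\hat 1)\neq\mu_L(\hat 0,\hat 1)$ in general. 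The paper uses instead the weaker (and correct) observation that $\mu_L(\hat 0,g)=\mu_{\tr_{\hat 1}L}(\hat 0,g)$ whenever $\dim_P(g)>1$, since the interval $[\hat 0,g]$ is the same in both lattices; this already forces $\chi_P-\lambda\,\chi_{\tr_{\hat 1}P}$ to be linear and hence equal to $-(\lambda-1)\mu_L(\hat 0,\hat 1)$. Your displayed recursion for $\overline{\chi}$ is correct, but the reason you give for it is not.

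On the sign bookkeeping you defer: the two recursions you write down are genuinely different, namely $\Phi_P=(-1)^{\cd(E)}\mu_L(\hat 0,\hat 1)-\lambda\,\Phi_{\tr_{\hat 1}P}$ versus $\overline{\chi}_P=-\mu_L(\hat 0,\hat 1)+\lambda\,\overline{\chi}_{\tr_{\hat 1}P}$, so feeding in the hypothesis $\Phi_{\tr P}=\overline{\chi}_{\tr P}$ does not produce $\Phi_P=\overline{\chi}_P$; what the induction actually yields is $\Phi_P=(-1)^{r}\overline{\chi}_P$. This is consistent with the leading coefficient of the theorem's right-hand side being $(-1)^r\deg(\alpha^r)=(-1)^r$, while $\overline{\chi}_P$ is monic. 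The paper's own proof glosses over the same point (and the worked example in Section~\ref{sect:example} has $r$ even, so the discrepancy is invisible there); your parenthetical about parities ``cancelling'' does not resolve it either. The clean fix is to carry the global factor $(-1)^r$ through the induction explicitly rather than asserting it away.
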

\begin{proof}
We show that $\overline{\chi}_P(\lambda)$ and the right hand side satisfy the same recurrence:
\[ q_P(\lambda) - \lambda q_{\tr_{\hat{1}} P}(\lambda) = - \mu_L(\hat{0}, \hat{1})\]
where $L$ is the poset of flats of $P$.

Let $\tr_{\hat{1}} L$ be the poset of flats of $\tr_{\hat{1}} P$ and notice that $\mu_L(\hat{0},g)= \mu_{\tr_{\hat{1}} L}(\hat{0},g)$ for all $g$ such that $\dim (g)>1$.
Therefore $\chi_P(\lambda)- \lambda \chi_{\tr P}(\lambda)$ is a polynomial of degree one divisible by $\lambda-1$.
Hence $\overline{\chi}_P(\lambda) - \lambda \overline{\chi}_{\tr P}(\lambda)$ is constant and equal to $ \overline{\chi}_P(0)= - \mu_L(\hat{0},\hat{1})$.
This proves that $\overline{\chi}_P(\lambda)$ satisfies the recurrence.

Now observe that for $i>0$ $\deg_P (\alpha_P^i \beta_P^{r-i})= \deg_{\tr P} (\alpha_{\tr P}^{i-1} \beta_{\tr P}^{r-i})$ by Lemma \ref{lemma:zeta_g}.
This proves that 
\[ \begin{split}
    \sum_{i=0}^{r} (-1)^i \deg_P (\alpha_P^i \beta_P^{r-i}) \lambda^i - \lambda \sum_{i=0}^{r-1} (-1)^i \deg_{\tr P} (\alpha_{\tr P}^{i-1} \beta_{\tr P}^{r-i}) \lambda^i= \\= (-1)^r \deg_P(\beta^r),
\end{split}\]
and so Lemma \ref{lemma:deg_beta} proves the recurrence.

The base case $\cd(E)=1$ is trivial, so the proof follows.
\end{proof}
For an explicit example check Section \ref{sect:example}.
\begin{corollary}
The coefficient of $\lambda^i$ of the reduced characteristic polynomial $\overline{\chi}_P(\lambda)$ is (up to the sign) the reduced Euler characteristic of the order complex of the poset $(\tr_{\hat{1}}^{i} L) \setminus \{ \hat{0}, \hat{1}\}$: 
\[ [\lambda^i]\overline{\chi}_P(\lambda)= (-1)^{\cd(E)}
\tilde{\chi}(\Delta((\tr_{\hat{1}}^{i} L) \setminus \{ \hat{0}, \hat{1}\})). \let\qedsymbol\openbox\qedhere \]
\end{corollary}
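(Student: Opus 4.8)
The plan is to peel off the powers of $\alpha_P$ one at a time, reduce to the top power of $\beta$ on an iterated truncation, and then apply Lemma~\ref{lemma:deg_beta} together with the classical identification of the M\"obius function with a reduced Euler characteristic. By Theorem~\ref{lemma:chi_deg},
\[ [\lambda^i]\overline{\chi}_P(\lambda) = (-1)^i \deg_P(\alpha_P^i \beta_P^{r-i}) \]
for $0\le i\le r$, while both sides vanish for $i>r$ since $\deg\overline{\chi}_P = r$; so it suffices to evaluate $\deg_P(\alpha_P^i\beta_P^{r-i})$.

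First I would note that for $i\le r$ the truncation $\tr_{\hat 1}$ may be iterated $i$ times: for $0\le j<i$ one has $\cd_{\tr_{\hat 1}^j P}(E) = \cd(E)-j \ge \cd(E)-r+1 = 2 > 1$, so Lemma~\ref{lemma:tr_p_is_polymatroid} applies at each stage, the polymatroid stays loopless, and $\hat 1$ remains the top flat. Then I would iterate the identity used inside the proof of Theorem~\ref{lemma:chi_deg} — a consequence of Lemma~\ref{lemma:zeta_g} for $g=\hat 1$ together with $\alpha_P=-\sigma_{\hat 1}$ — namely $\deg_Q(\alpha_Q^k\beta_Q^m)=\deg_{\tr_{\hat 1}Q}(\alpha_{\tr_{\hat 1}Q}^{k-1}\beta_{\tr_{\hat 1}Q}^m)$ for $k\ge 1$, removing all $i$ copies of $\alpha$:
\[ \deg_P(\alpha_P^i\beta_P^{r-i}) = \deg_{\tr_{\hat 1}^i P}\bigl(\beta_{\tr_{\hat 1}^i P}^{\,r-i}\bigr). \]
Since $r-i = \cd_{\tr_{\hat 1}^i P}(E)-1$, Lemma~\ref{lemma:deg_beta} applies to $\tr_{\hat 1}^i P$ and yields
\[ \deg_{\tr_{\hat 1}^i P}\bigl(\beta_{\tr_{\hat 1}^i P}^{\,r-i}\bigr) = (-1)^{\cd(E)-i}\,\mu_{\tr_{\hat 1}^i L}(\hat 0,\hat 1). \]

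Finally I would invoke the classical identity relating the M\"obius function to the reduced Euler characteristic of the order complex (already used in the proof of Lemma~\ref{lemma:deg_beta}): $\mu_{\tr_{\hat 1}^i L}(\hat 0,\hat 1)=\tilde{\chi}\bigl(\Delta((\tr_{\hat 1}^i L)\setminus\{\hat 0,\hat 1\})\bigr)$. Combining the three displays,
\[ [\lambda^i]\overline{\chi}_P(\lambda) = (-1)^i(-1)^{\cd(E)-i}\,\tilde{\chi}\bigl(\Delta((\tr_{\hat 1}^i L)\setminus\{\hat 0,\hat 1\})\bigr) = (-1)^{\cd(E)}\,\tilde{\chi}\bigl(\Delta((\tr_{\hat 1}^i L)\setminus\{\hat 0,\hat 1\})\bigr), \]
which is the claim. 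The argument is short, and I expect the only delicate points to be bookkeeping: verifying that iterating $\tr_{\hat 1}$ stays within the hypotheses of Lemmas~\ref{lemma:tr_p_is_polymatroid}, \ref{lemma:zeta_g} and \ref{lemma:deg_beta}, and handling the extremes $i=0$ (where the statement is just Lemma~\ref{lemma:deg_beta} and the M\"obius--Euler identity) and $i=r$ (where $\tr_{\hat 1}^r P$ has $\cd(E)=1$, $\beta^0=1$, and one uses the rank-zero convention $\deg(\beta^0)=1$ together with $\mu=-1$ for the two-element chain) — there is no genuine difficulty beyond this.
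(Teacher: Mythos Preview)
Your proof is correct and is essentially the same argument the paper has in mind: the paper's proof reads in full ``It follows from Theorem~\ref{lemma:chi_deg} and Lemma~\ref{lemma:deg_beta},'' and you have simply unpacked this by iterating the $\alpha$-stripping identity from the proof of Theorem~\ref{lemma:chi_deg} (via Lemma~\ref{lemma:zeta_g}), then applying Lemma~\ref{lemma:deg_beta} and the M\"obius--Euler identity. One small terminological slip: in the case $i=r$ the polymatroid $\tr_{\hat 1}^r P$ has $\cd(E)=1$, not $0$, so the ``rank-zero convention'' is not what is being invoked---rather $\deg(\beta^0)=\deg(1)=1$ holds directly from the normalization $\deg(x_{\hat 1}^{0})=(-1)^0$, and Lemma~\ref{lemma:deg_beta} applies as stated since $\cd(E)>0$.
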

\begin{proof}
It follows from Theorem \ref{lemma:chi_deg} and Lemma \ref{lemma:deg_beta}.
\end{proof}

\begin{remark} \label{remark:not_log_concave}
The coefficients of the characteristic polynomial $\chi_P$ and of the reduced characteristic polynomial $\overline{\chi}_P$ do not form a log-concave sequence.
Indeed if $P_1$ is the polymatroid associated to $4$ subspaces of codimension $2,3,4,4$ in $\C^5$ in general position, then
\[ \chi_{P_1}(\lambda)= \lambda^5 - \lambda^3 -\lambda^2 -2\lambda +3,\]
which is not log-concave.
Let $P_2$ be the polymatroid on $E=\{ a,b,c,d,e \}$ with rank defined by $\cd(a)=2$, $\cd(b)=3$, $\cd(c)=4$, $\cd(d)=4$, $\cd(e)=1$, by $\cd(A)=6$ if $\lvert A \rvert \geq 3$ and $\cd(\{x,y\})= \min \{5, \cd(x)+\cd(y)\}$.
The reduced characteristic polynomial is not log-concave because
\[ \overline{\chi}_{P_2}(\lambda)= \lambda^5-\lambda^3-\lambda^2 -2\lambda +6. \let\qedsymbol\openbox\qedhere\]
\end{remark}

\section{An example} \label{sect:example}
Let $E=\{a,b,c\}$ and $\cd \colon 2^E \to \mathbb{N}$ the function defined by 
\begin{align*}
     &\cd(a)=\cd(b)=2, \quad \quad \cd(ab)=\cd(c)=4, \\ 
    &\cd(ac)=\cd(bc)=\cd(abc)=5.
\end{align*}
This function defines a polymatroid $P$ with poset of flats $L$ shown in Figure \ref{fig:Hasse}.
Near every cover relation, the relative codimension of the two flats is shown.
\begin{figure}
    \centering
    \begin{tikzpicture}
         \node (top) at (0,3) {$\hat{1}$};
         \node (bottom) at (0,0) {$\hat{0}$};
         \node (a) at (-1,1) {$a$};
         \node (b) at (0,1) {$b$};
         \node (c) at (1,1.5) {$c$};
         \node (ab) at (-0.5,2) {$ab$};
         \draw (ab) -- (a) node [midway, left] {$\scriptstyle 2$};
         \draw (a) -- (bottom) node [midway, left] {$\scriptstyle 2$};
         \draw (bottom) -- (b) node [midway, left] {$\scriptstyle 2$};
         \draw (b) -- (ab) node [midway, right] {$\scriptstyle 2$};
         \draw (ab) -- (top) node [midway, left] {$\scriptstyle 1$};
         \draw (c) -- (bottom) node [midway, right] {$\scriptstyle 4$};
         \draw (c) -- (top) node [midway, right] {$\scriptstyle 1$};
    \end{tikzpicture}
    \caption{The Hasse diagram of the poset of flats $L$ of \Cref{sect:example}}
    \label{fig:Hasse}
\end{figure}
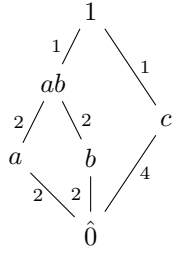
This polymatroid is realizable: a realization is the collection in $\C^5$ of two subspace of dimension $3$ and a line in general position.

Consider the (minimal) building set $\mG=\{a,b,c, \hat{1}\}$; the nested set complex $n(P,\mG)$ is shown in \Cref{fig:nested}.
\begin{figure}
    \centering
     \begin{tikzpicture}
     \tikzstyle{point}=[circle,thick,draw=black,fill=black,inner sep=0pt,minimum width=4pt,minimum height=4pt]
         \node[label=above:{$\hat{1}$}] (uno)[point] at (0,1) {};
         \node[label=left:{$a$}] (a)[point] at (-2,2) {};
         \node[label=left:{$b$}] (b)[point] at (-2,0) {};
         \node[label=right:{$c$}] (c)[point] at (2,1) {};
         \fill[color=gray!60] (a.center) -- (b.center) -- (uno.center);
         \draw (uno) -- (a) -- (b) -- (uno) -- (c);
    \end{tikzpicture}
    \caption{The nested set complex $n(P,\mG)$.}
    \label{fig:nested}
\end{figure}

The algebra $B(P,\mG)$ is generated by $x_a,x_b,x_c,x_{\hat{1}}, e_a,e_b,e_c,e_{\hat{1}}$ with relations:
\begin{align*}
    & e_a e_c=e_b e_c=0 & & x_a x_c=x_b x_c=0 \\
    & x_a e_c=x_b e_c=0 & & e_a x_c=e_b x_c=0 \\
    & (x_a+x_{\hat{1}})^2= (x_b+x_{\hat{1}})^2=0 & & (x_c+x_{\hat{1}})^4=0 \\
    & x_{\hat{1}}^5=0 & & x_c x_{\hat{1}}= e_c x_{\hat{1}}= 0 \\
    & x_a x_{\hat{1}}^3= e_a x_{\hat{1}}^3= 0 & & x_b x_{\hat{1}}^3= e_b x_{\hat{1}}^3= 0 \\
    & x_a x_b x_{\hat{1}}= e_a x_b x_{\hat{1}}= 0 & & e_a e_b x_{\hat{1}}= x_a e_b x_{\hat{1}}= 0
\end{align*}
The homogeneous component $B^{4,1}(P,\mG)$ has dimension $12$ and the additive basis provided by Corollary \ref{cor:base_additiva_e_x} is:
\begin{align*}
    &e_{\hat{1}}x_ax_b, \, e_{\hat{1}}x_ax_{\hat{1}}, \, e_{\hat{1}}x_b x_{\hat{1}}, \, e_{\hat{1}} x_c^2, \, e_{\hat{1}}x_{\hat{1}}^2, \\
    & e_ax_ax_b, \, e_ax_ax_{\hat{1}}, \, e_a x_{\hat{1}}^2, \, e_bx_ax_b, \, e_bx_ax_{\hat{1}}, \, e_b x_{\hat{1}}^2, \, e_c x_c^2.
\end{align*}

\begin{table}
    \centering
    \begin{tabular}{c|ccccc}
         \tiny{\textbf{2}} & 1 & 2 & 1  \\
         \tiny{\textbf{1}} & 3 & 7 & 7 & 3 \\
         \tiny{\textbf{0}} & 1 & 4 & 5 & 4 & 1 \\
         \hline
         & \tiny{\textbf{0}} & \tiny{\textbf{1}} & \tiny{\textbf{2}} & \tiny{\textbf{3}} & \tiny{\textbf{4}} 
    \end{tabular}
    \caption{The dimensions of $B^{2p,q}(P,\mG)/(e_1)$ in position $(p,q).$}
    \label{tab:dim_B_mod_e}
\end{table}

\begin{table}
    \centering
    \begin{tabular}{c|ccccc}
         \tiny{\textbf{3}} & 1 & 2 & 1  \\
         \tiny{\textbf{2}} & 4 & 9 & 8 & 3 \\
         \tiny{\textbf{1}} & 4 & 11 & 12 & 7 & 1 \\
         \tiny{\textbf{0}} & 1 & 4 & 5 & 4 & 1 \\
         \hline
         & \tiny{\textbf{0}} & \tiny{\textbf{1}} & \tiny{\textbf{2}} & \tiny{\textbf{3}} & \tiny{\textbf{4}}
    \end{tabular}
    \caption{The dimensions of $B^{2p,q}(P,\mG)$ in position $(p,q).$}
    \label{tab:dim_B}
\end{table}

Notice that $B(P,\mG)= B(P,\mG)/(e_{\hat{1}}) \otimes \langle 1, e_{\hat{1}} \rangle$ and their dimensions are reported in \Cref{tab:dim_B_mod_e,tab:dim_B}.

The other presentation of $B(P,\mG)$ is given by generators
$\sigma_a, \sigma_b, \sigma_c, \sigma_{\hat{1}}$, $\tau_a, \tau_b, \tau_c, \tau_{\hat{1}}$ and relations:
\begin{align*}
    & (\tau_a-\tau_{\hat{1}}) (\tau_c-\tau_{\hat{1}})= 0 && (\tau_b-\tau_{\hat{1}}) (\tau_c-\tau_{\hat{1}})=0  & &  \sigma_c^4=0 \\
    & (\sigma_a-\sigma_{\hat{1}}) (\sigma_c-\sigma_{\hat{1}})=0 & & (\sigma_b-\sigma_{\hat{1}}) (\sigma_c-\sigma_{\hat{1}})=0 & & \sigma_a^2=0 \\
    & (\sigma_a-\sigma_{\hat{1}}) (\tau_c-\tau_{\hat{1}})=0 && (\sigma_b-\sigma_{\hat{1}}) (\tau_c-\tau_{\hat{1}})=0 &  &  \sigma_b^2=0\\
    &(\tau_a-\tau_{\hat{1}}) (\sigma_c-\sigma_{\hat{1}})=0 && (\tau_b-\tau_{\hat{1}}) (\sigma_c-\sigma_{\hat{1}})=0 && \sigma_{\hat{1}}^5=0 \\
    & (\sigma_c - \sigma_{\hat{1}}) \sigma_{\hat{1}}=0 && (\tau_c - \tau_{\hat{1}}) \sigma_{\hat{1}}=0 & & (\sigma_a - \sigma_{\hat{1}}) \sigma_{\hat{1}}^3=0 \\
    & (\tau_a - \tau_{\hat{1}}) \sigma_{\hat{1}}^3= 0 & & (\sigma_b - \sigma_{\hat{1}}) \sigma_{\hat{1}}^3=0 && (\tau_b - \tau_{\hat{1}}) \sigma_{\hat{1}}^3= 0 \\
    & (\sigma_a - \sigma_{\hat{1}})(\sigma_b - \sigma_{\hat{1}}) \sigma_{\hat{1}}=0 && (\tau_a - \tau_{\hat{1}})(\sigma_b - \sigma_{\hat{1}}) \sigma_{\hat{1}}=0 \\
    &  (\tau_a - \tau_{\hat{1}})(\tau_b - \tau_{\hat{1}}) \sigma_{\hat{1}}=0 & & (\sigma_a - \sigma_{\hat{1}})(\tau_b - \tau_{\hat{1}}) \sigma_{\hat{1}}=0
\end{align*}

The homogeneous component $B^{4,1}(P, \mG)$ has dimension $12$ and the additive basis provided by Corollary \ref{cor:base_additiva_sigma_tau} is:
\begin{align*}
    &\tau_{\hat{1}} \sigma_a \sigma_b, \, \tau_{\hat{1}}\sigma_a\sigma_{\hat{1}}, \, \tau_{\hat{1}}\sigma_b \sigma_{\hat{1}}, \, \tau_{\hat{1}} \sigma_c^2, \, \tau_{\hat{1}}\sigma_{\hat{1}}^2, \\
    & \tau_a\sigma_a\sigma_b, \, \tau_a\sigma_a\sigma_{\hat{1}}, \, \tau_a \sigma_{\hat{1}}^2, \, \tau_b\sigma_a\sigma_b, \, \tau_b\sigma_a\sigma_{\hat{1}}, \, \tau_b \sigma_{\hat{1}}^2, \, \tau_c \sigma_c^2.
\end{align*}
The set of critical monomials is:
\begin{align*}
    & 1, \tau_a \sigma_a, \, \tau_b \sigma_b, \, \tau_c \sigma_c^3, \, \tau_{\hat{1}} \sigma_{\hat{1}}^4, \, \tau_a \tau_b \sigma_a \sigma_b,\, \tau_a \tau_{\hat{1}} \sigma_a \sigma_{\hat{1}}^2,\, \tau_b \tau_{\hat{1}} \sigma_b \sigma_{\hat{1}}^2,
    \tau_c \tau_{\hat{1}} \sigma_c^3, \, \tau_a \tau_b \tau_{\hat{1}} \sigma_a \sigma_b,
\end{align*}
and the dimensions of $\CM^{2p,q}(P,\mG)$ are given in \Cref{tab:es_cmu}.
The rank of the cohomology group of $(B(P,\mG),\dd)$ are given in \Cref{tab:cohom_B}

\begin{table}
    \centering
    \begin{tabular}{c|ccccc}
         \tiny{\textbf{3}}  & 0 & 0 & 1  \\
         \tiny{\textbf{2}}  & 0 & 0 & 1 & 3 \\
         \tiny{\textbf{1}}  & 0 & 2 & 0 & 1 & 1 \\
         \tiny{\textbf{0}}  & 1 & 0 & 0 & 0 & 0 \\
         \hline
         & \tiny{\textbf{0}} & \tiny{\textbf{1}} & \tiny{\textbf{2}} & \tiny{\textbf{3}} & \tiny{\textbf{4}}
    \end{tabular}
    \caption{The dimensions of $\CM(P,\mG)$ in position $(p,q).$}
    \label{tab:es_cmu}
\end{table}

\begin{table}
    \centering
    \begin{tabular}{c|ccccc}
         \tiny{\textbf{3}} & 0 & 0 & 0  \\
         \tiny{\textbf{2}} & 0 & 0 & 1 & 1 \\
         \tiny{\textbf{1}} & 0 & 2 & 0 & 1 & 0 \\
         \tiny{\textbf{0}} & 1 & 0 & 0 & 0 & 0 \\
         \hline
         & \tiny{\textbf{0}} & \tiny{\textbf{1}} & \tiny{\textbf{2}} & \tiny{\textbf{3}} & \tiny{\textbf{4}}
    \end{tabular}
    \caption{The dimensions of $H^{2p,q}(B(P,\mG), \dd)$ in position $(p,q).$}
    \label{tab:cohom_B}
\end{table}
As an example we have
\begin{align*}
  \dd (c\mu(ab\hat{1})) &= \dd(\tau_a \tau_b \tau_{\hat{1}} \sigma_a \sigma_b) = \tau_a \tau_b \sigma_{\hat{1}} \sigma_a \sigma_b \\
  &= \tau_b \tau_{\hat{1}} \sigma_b \sigma_{\hat{1}}^2 - \tau_a \tau_{\hat{1}} \sigma_a \sigma_{\hat{1}}^2 = c\mu(b\hat{1})-c\mu(a\hat{1}),
\end{align*}
that coincides with $\dd((a,b,\hat{1}))=(b,\hat{1})- (a,\hat{1})$ in the differential algebra $\CM(P,\mG)$.
Moreover, in $\CM(P,\mG)$ we have
\[(a)\cdot (b)= \lambda(a,b) - \lambda(b,a) = (a,b),\]
because $a \prec b$ and it corresponds to the equality 
\[ c\mu(a)c\mu(b)= \tau_a \sigma_a \tau_b \sigma_b=\tau_a \tau_b \sigma_a  \sigma_b = c\mu(ab).\]

The posets related to $P$ and $a$ are shown in \Cref{fig:four_Hasse}.
The polymatroids $P(a)$ and $P\setminus a$ are equal by coincidence;
see below for the poset $P(a)$ relative to the maximal building set.

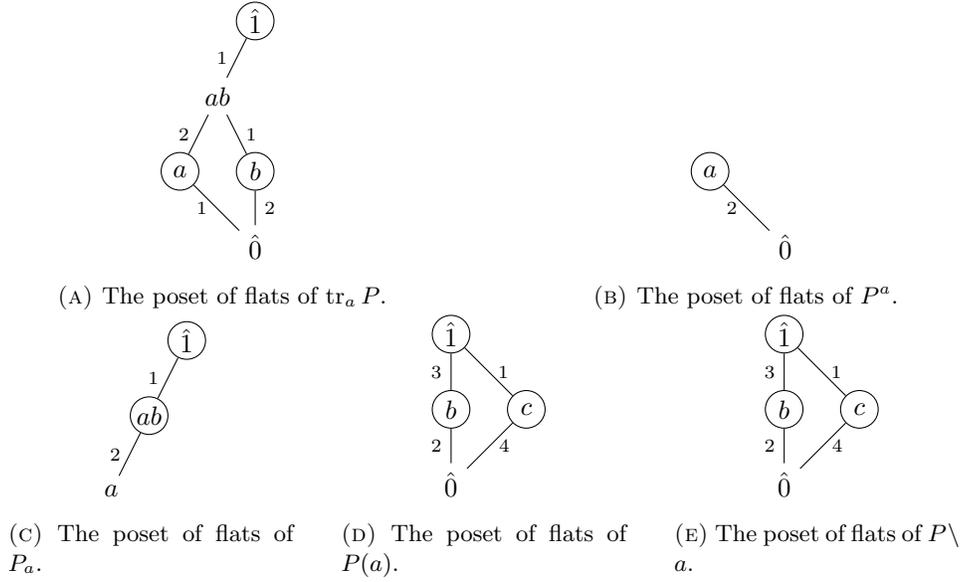
\begin{figure}
    \centering
    \begin{subfigure}[b]{0.45\textwidth}
    \centering
    \begin{tikzpicture}
         \node[circle,draw,minimum size=0.5cm,inner sep=0pt] (top) at (0,3) {$\hat{1}$};
         \node (ab) at (-0.5,2) {$ab$};
         \node (bottom) at (0,0) {$\hat{0}$};
         \node[circle,draw,minimum size=0.5cm,inner sep=0pt] (a) at (-1,1) {$a$};
         \node[circle,draw,minimum size=0.5cm,inner sep=0pt] (b) at (0,1) {$b$};
         \draw (ab) -- (a) node [midway, left] {$\scriptstyle 2$};
         \draw (a) -- (bottom) node [midway, left] {$\scriptstyle 1$};
         \draw (bottom) -- (b) node [midway, right] {$\scriptstyle 2$};
         \draw (b) -- (ab) node [midway, right] {$\scriptstyle 1$};
         \draw (ab) -- (top) node [midway, left] {$\scriptstyle 1$};
    \end{tikzpicture}
    \caption{The poset of flats of $\tr_a P$.}
    \label{fig:tr_a_P}
    \end{subfigure}
    \hfill
    \begin{subfigure}[b]{0.45\textwidth}
    \centering
    \begin{tikzpicture}
         \node[circle,draw,minimum size=0.5cm,inner sep=0pt] (a) at (-1,1) {$a$};
         \node (bottom) at (0,0) {$\hat{0}$};
         \draw (a)--(bottom) node [midway, left] {$\scriptstyle 2$};
    \end{tikzpicture}
    \caption{The poset of flats of $P^a$.}
    \label{fig:P_up_a}
    \end{subfigure}
    
    \begin{subfigure}[b]{0.3\textwidth}
    \centering
    \begin{tikzpicture}
         \node[circle,draw,minimum size=0.5cm,inner sep=0pt] (top) at (0,3) {$\hat{1}$};
         \node (a) at (-1,1) {$a$};
         \node[circle,draw,minimum size=0.5cm,inner sep=0pt] (ab) at (-0.5,2) {$ab$};
         \draw (top) -- (ab) node [midway, left] {$\scriptstyle 1$};
         \draw (ab)--(a) node [midway, left] {$\scriptstyle 2$};
    \end{tikzpicture}
    \caption{The poset of flats of $P_a$.}
    \label{fig:P_down_a}
    \end{subfigure}
    \hfill
    \begin{subfigure}[b]{0.3\textwidth}
    \centering
    \begin{tikzpicture}
         \node[circle,draw,minimum size=0.5cm,inner sep=0pt] (top) at (0,2) {$\hat{1}$};
         \node (bottom) at (0,0) {$\hat{0}$};
         \node[circle,draw,minimum size=0.5cm,inner sep=0pt] (b) at (0,1) {$b$};
         \node[circle,draw,minimum size=0.5cm,inner sep=0pt] (c) at (1,1) {$c$};
         \draw (bottom)--(c) node [midway, right] {$\scriptstyle 4$};
         \draw (c) -- (top) node [midway, right] {$\scriptstyle 1$};
         \draw (top) -- (b) node [midway, left] {$\scriptstyle 3$};
         \draw (b) -- (bottom) node [midway, left] {$\scriptstyle 2$};
    \end{tikzpicture}
    \caption{The poset of flats of $P(a)$.}
    \label{fig:P(a)}
    \end{subfigure}
        \hfill
    \begin{subfigure}[b]{0.3\textwidth}
    \centering
    \begin{tikzpicture}
         \node[circle,draw,minimum size=0.5cm,inner sep=0pt] (top) at (0,2) {$\hat{1}$};
         \node (bottom) at (0,0) {$\hat{0}$};
         \node[circle,draw,minimum size=0.5cm,inner sep=0pt] (b) at (0,1) {$b$};
         \node[circle,draw,minimum size=0.5cm,inner sep=0pt] (c) at (1,1) {$c$};
         \draw (bottom)--(c) node [midway, right] {$\scriptstyle 4$};
         \draw (c) -- (top) node [midway, right] {$\scriptstyle 1$};
         \draw (top) -- (b) node [midway, left] {$\scriptstyle 3$};
         \draw (b) -- (bottom) node [midway, left] {$\scriptstyle 2$};
    \end{tikzpicture}
    \caption{The poset of flats of $P \setminus a$.}
    \label{fig:P_setminus_a}
    \end{subfigure}
    
    \caption{The Hasse diagram of some posets related to $a$. The circled nodes are in the corresponding building sets.}
    \label{fig:four_Hasse}
\end{figure}

The $\sigma$-cone $\Sigma_{P,\mG}$ is given by the linear combinations $-d_a\sigma_a -d_b \sigma_b -d_c\sigma_c- d_{\hat{1}}\sigma_{\hat{1}}$ with positive coefficients $d_g>0$.

We have $\Ann(x_a)=(x_c, x_b \sigma_{\hat{1}}, \sigma_{\hat{1}}^3)$ and so in $\DP(P,\mG)/\Ann(x_a)$ we have $\sigma_c=\sigma_{\hat{1}}$, $(\sigma_b-\sigma_{\hat{1}})\sigma_{\hat{1}}=0$, and $\sigma_{\hat{1}}^3=0$.
The last two equations correspond to the defining relation for $\DP(P_a,\mG_a)$.
Similarly, $\Ann (\sigma_a)=(\sigma_c-\sigma_{\hat{1}}, \sigma_a, \sigma_{\hat{1}}^4, (\sigma_b-\sigma_{\hat{1}}) \sigma_{\hat{1}}^2)$ and these are exactly the equations defining $\DP(\tr_a P, \tr_a \mG)$ that do not appear in $\DP(P,\mG)$.

The relative Lefschetz decomposition with respect to the atom $a$ is 
\[ \DP(P,\mG)= \DPi \oplus x_a \DPi,\]
where 
\[ \DPi = \langle 1, \sigma_b, \sigma_c, \sigma_{\hat{1}}, \sigma_b\sigma_{\hat{1}}, \sigma_c^2, \sigma_{\hat{1}}^2, \sigma_b\sigma_{\hat{1}}^2, \sigma_c^3, \sigma_{\hat{1}}^3, \sigma_{\hat{1}}^4, \rangle\]
and 
\[\DP_a = x_a \DPi  = \langle x_a, x_a \sigma_b, x_a \sigma_{\hat{1}}, x_a \sigma_{\hat{1}}^2 \rangle \simeq \DP(P_a, \mG_a)[1].\]
The relative Lefschetz decomposition with respect to the atom $c$ is 
\[ \DP(P,\mG)= \DP_{(c)} \oplus \DP_{\hat{1}} \oplus \DP_c,\]
where $\DP_{(c)}= \langle 1, \sigma_a, \sigma_b, \sigma_a \sigma_b \rangle$ and the other $\DP_{(c)}$-modules are $\DP_c=\langle x_c, x_c^2, x_c^3 \rangle$ and $\DP_{\hat{1}}=x_{\hat{1}} \DP_{(c)} \oplus x_{\hat{1}}^2 \DP_{(c)}$.
Moreover we have $x_{\hat{1}} \DP_{(c)}[-1] \simeq x_{\hat{1}}^2 \DP_{(c)}[-2] \simeq \DP((P \setminus c)^{ab}, (\mG \setminus c)^{ab})$.

\subsection*{Maximal building set}

Now consider the same polymatroid $P$ with the maximal building set $\mG_{\max} = \{a,b,c,ab, \hat{1}\}$.
The polymatroid $P(a)$ relative to the maximal building set is shown in \Cref{fig:Hasse_P_gmax} and the groundset $E(a)$ is $\{b,c,ab\}$.
This polymatroid $P(a)$ associated with $\mG_{\max}$ is different from the polymatroid $P(a)$ defined from the minimal building set $\mG$ (shown in \Cref{fig:P(a)}).

\begin{figure}
    \centering
    \begin{tikzpicture}
         \node (top) at (0,3) {$\hat{1}$};
         \node (bottom) at (0,0) {$\hat{0}$};
         \node (b) at (0,1) {$b$};
         \node (c) at (1,1.5) {$c$};
         \node (ab) at (-0.5,2) {$\{b,ab\}$};
         \draw (bottom) -- (b) node [midway, left] {$\scriptstyle 2$};
         \draw (b) -- (ab) node [midway, right] {$\scriptstyle 2$};
         \draw (ab) -- (top) node [midway, left] {$\scriptstyle 1$};
         \draw (c) -- (bottom) node [midway, right] {$\scriptstyle 4$};
         \draw (c) -- (top) node [midway, right] {$\scriptstyle 1$};
    \end{tikzpicture}
    \caption{The Hasse diagram of the poset of flats of $P(a)$ with maximal building set on the groundset $\{b,c,ab\}$.}
    \label{fig:Hasse_P_gmax}
\end{figure}
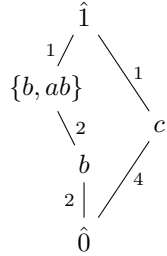

The characteristic polynomial is $\chi_P(\lambda)=\lambda^5-2 \lambda^3+1$ and the reduced one is
\[ \overline{\chi}_P(\lambda)= \lambda^4+ \lambda^3-\lambda^2-\lambda-1.\]
We have $\alpha= -x_{\hat{1}}$, $\beta=x_a+x_b+x_c+x_{ab}+x_{\hat{1}}$ and $\deg(\alpha^4)=1$, $\deg(\alpha^3 \beta)=-1$, $\deg(\alpha^2 \beta^2)=-1$, $\deg(\alpha \beta^3)=1$, and $\deg(\beta^4)=-1$.
\bibliographystyle{amsalpha}
\bibliography{bibv2}

\end{document}